\documentclass[12pt]{amsart}
\usepackage{subfiles}
\usepackage{fullpage} 
\usepackage{graphicx} % Required for inserting images
\usepackage{prefix}
\usepackage{amsmath}
\usepackage{amsfonts}
\usepackage{tikz-cd}
\usepackage{tfrupee}
\usepackage{braket} % simplest option
\usepackage{quiver}

\title{A Hilbert Module Approach to Classical and Quantum Reference Frames}
\author{Lucus Brady}

\address{Department of Mathematical Sciences\\Montana State University\\Bozeman, MT 59717}
\email{lucusbrady@montana.edu}
\date{\today}

\begin{document}

\maketitle
	
	\begin{abstract}
		
		In this paper, we present a model for classical and quantum reference frames for $C^*$-algebras by utilizing Hilbert modules and equivariant correspondences of $C^*$-dynamical systems. We show the classical model provides a commutative baseline for the quantum model, and we discuss how the quantum model encodes the Hilbert space operational formulation of quantum reference frames. We also include various explicit examples for both classical and quantum reference frames.
	\end{abstract}
	
\tableofcontents

\section{Introduction}

Quantum reference frames are a recent growing tool in algebraic approaches to developing models for quantum gravity. Conceptually, a quantum reference frame is a physical system utilized for constructing relational invariant observables for a quantum system. Philosophically, these objects are utilized to remove the idea of an absolute background by establishing an environment where the meaningful observables are those which are both relational and invariant under a predetermined group of symmetries. For example, the notion of absolute position of a particle is both physically meaningless as well as not invariant under translation; however, by introducing another particle to act as the origin (reference system), the relative position represents both a physically meaningful relational quantity as well as a translation invariant observable.

While there are different approaches to modeling quantum reference frames, one formulation that has found recent influence in algebraic quantum field theory is the Hilbert space operational approach \cite{Loveridge_2018, Carette_2025}. In the work of \cite{Fewster_2024}, it was shown that for suitable Hilbert space operational quantum reference frames, one can reduce a type $\text{III}_1$ von Neumann algebra to type $\text{II}_{\infty}$ von Neumann algebra (Corollary 5.17 in \cite{Fewster_2024}). As an application, the authors of \cite{Fewster_2024} demonstrate the type reduction result for quantum fields on de Sitter spacetime (see Section $6$ of \cite{fewster2025semilocalobservablesedgemodes} for details). Additionally, using this type reduction result and extending some of the ideas of the relativization map in the Hilbert space operational formulation to $C^*$-algebras, a semi-local and operational perspective of edge modes in quantum electromagnetism has been developed \cite{fewster2025semilocalobservablesedgemodes}.

In the physical setup of \cite{Fewster_2024}, the algebra of observables for the main system is derived in the algebraic formulation of quantum mechanics \cite{strocchi2008introduction, haag2012local}, while the quantum reference frame is derived in the Hilbert space operational formulation of quantum mechanics \cite{busch1997operational, busch2016quantum}.  The difference of the formulations for building the two objects presents the question as to whether one can model quantum reference frames using von Neumann algebras or, more generally, $C^*$-algebras. While there are many approaches for modeling quantum reference frames using von Neumann algebras \cite{Vanrietvelde2020changeof, PhysRevD.110.065003, KLINGER2024116453, De_Vuyst_2025}, the literature is sparse with approaches using $C^*$-algebras due to the ability to pass to a von Neumann algebra through a representation or via the enveloping von Neumann algebra. As far as we can tell, the only recent work describing quantum reference frames in a representation/von Neumann algebra free setting for $C^*$-algebras is in the work \cite{fewster2025semilocalobservablesedgemodes} where the authors extend the notion of a relativization map to nuclear $C^*$-algebras (see Section 5.4 in \cite{fewster2025semilocalobservablesedgemodes} for more information). While this approach is a start to a $C^*$-algebra formulation, this perspective does not accommodate natural non-nuclear $C^*$-algebras which show up in physics such as bounded operators on an infinite dimensional Hilbert space. Therefore, one of the goals of this paper is to develop a model of quantum reference frames using $C^*$-algebras which is agnostic to the algebras being nuclear.

To start modeling quantum reference frames using $C^*$-algebras, we first begin investigating how to use $C^*$-algebras to model classical reference frames. In the classical setting, we tend to think of a reference frame being modeled by a choice of local coordinates, a choice of a local frame for a vector bundle, or, more generally, a choice of section of some frame bundle. In all three cases, we typically view these objects representing an abstract deterministic choice on the system of interest rather than arising from the use of another physical system. However, by keeping track of the spaces involved, these models for a classical reference frames can be viewed as using another space to derive relational information. For example, if we have an $N$-dimensional manifold, a choice of coordinates uses $\mathbb{R}^N$ (or $\mathbb{H}^N$) as a reference space. 

Beyond deterministic classical reference frames, there also exist probabilistic frames. For example, suppose we have an unmarked two-state switch. As the switch is unmarked, we are unable to distinguish its absolute states; however, by utilizing another unmarked two-state switch as a reference system, we can define the relational observables of aligned and anti-aligned. If the states of the two switches are correlated deterministically, the probability of being aligned or anti-aligned is strictly zero or one. If the switches are correlated non-deterministically, these probabilities become non-trivial. Note the deterministic correlation shares the same flavor of determinism found in the abstract notions of classical reference frames, whereas the non-deterministic correlation characterizes an unsharp classical reference frame. Therefore, a $C^*$-algebraic model of classical reference frames should incorporate deterministic/sharp frames as well as non-deterministic/unsharp frames.  

As with $C^*$-algebra models of quantum reference frames, the literature appears to lack models of classical reference frames using $C^*$-algebras. The lack of a classical description of reference frames in terms of  $C^*$-algebras presents a structural problem for building a quantum model as we don't have a baseline providing appropriate structure and interpretation that can be utilized in the quantum model. Therefore, it is the goal of this paper to present a $C^*$-algebra model of both classical reference frames and quantum reference frames. We achieve this goal by constructing reference frames using the data of Hilbert modules and non-degenerate positive equivariant $C^*$-correspondences.

The use of Hilbert modules to describe classical and quantum reference frames for $C^*$-algebras arises from our interpretation of the relationship between a system and a reference system. In particular, if $A$ is the $C^*$-algebra associated to the main system and $B$ is the $C^*$-algebra for the reference system, we interpret $B$ as a background system utilized to understand $A$. For example, the extra particle to act as an origin is utilized so that we can construct relational information about the original particle of interest. Similarly, the reference two-state switch is utilized so that we can understand relational information about the original two-state switch. Due to the structural asymmetry, the joint system for $A$ and $B$ needs to remember $B$ is a background system so that the observables for the joint system represent relational observables. As the $C^*$-algebra tensor product for $A$ and $B$ treats both algebras symmetrically, the joint system for $A$ and $B$ cannot be modeled by  $A\otimes_\gamma B$ for any $C^*$-norm $\gamma$ when $B$ is viewed as a background.

Outside of the symmetric relationship encoded by $A\otimes_{\gamma}B$, there are additional structural problems for trying to utilize $A\otimes_{\gamma}B$ to model relational observables. Critically, the notion of relational quantities makes sense in the absence of symmetries. Being able to construct the relative position of two particles does not require considering the translation symmetry. Similarly, forming the relational observables of aligned or anti-aligned for an unmarked two state switch does not require the consideration of the transitive $\mathbb{Z}/2\mathbb{Z}$ action. The fact that relational quantities do not depend on symmetries requires models for relational observables to make sense when one takes the symmetry group to be trivial. When working with $A\otimes_\gamma B$, the invariant observables are provided by the subalgebra $(A\otimes_\gamma B)^G$. Therefore, when $G$ is trivial, one recovers all of $A\otimes_\gamma B$ which does not model information about $A$ relative to $B$.

To codify the asymmetric relationship between the algebras and to have a model of relational observables in the absence of symmetries, we view the $C^*$-algebras as Hilbert modules over themselves and apply both the KSGNS construction and interior tensor product to obtain a Hilbert $B$-module. This module represents the relational joint space of the two systems while the $C^*$-algebra of adjointable operators on the module represents the relational joint system. When $A$ and $B$ carry symmetries from a group (forming $C^*$-dynamical systems), we apply an equivariant version of both the KSGNS construction and interior tensor product to obtain a canonical action on both the relational joint space and the relational joint system. Using the induced action on the relational joint system, we obtain invariant relational observables as elements in the fixed-point subalgebra. 

Many mathematically and physically interesting Hilbert modules do not arise simply by viewing $C^*$-algebras as modules over themselves. Common examples include Hilbert spaces and continuous sections of a complex Hermitian vector bundle that vanish at infinity. To accommodate these examples, we generalize the input data of our model from the $C^*$-algebras $A$ and $B$ to the data of Hilbert modules over $A$ and $B$. In this generalized framework, the KSGNS construction and interior tensor product still construct the relational joint space and the relational joint system. Similarly, when handling group symmetries, we use the equivariant version of both the KSGNS construction and interior tensor product to build the $C^*$-algebra of invariant relational observables.

	In addition to using the KSGNS construction and interior tensor product to build the joint space, we utilize the map induced by the interior tensor product on the algebra of adjointable operators to define the relativization map. Specifically, a relativization map is a choice of an extension for the induced unital $*$-algebra map on the fixed-point algebras derived from the equivariant interior tensor product. We note that this approach does not rule out building relativization maps from generalized semi-spectral measures; indeed, when $A = \mathbb{C} = B$, the Hilbert space operational formulation provides explicit examples within our generalized framework (see section 4.2 for details).

Now, let us briefly outline the layout of this paper. In Section 2, we review the notions of correspondences for $C^*$-algebras as well as some of the operations associated to them. In Section 3, we build a model of classical reference frames using $G$-invariant weakly continuous Markov kernels. In Section 4, we generalize the classical reference frame model to quantum reference frames. In this section, we also show how the Hilbert space operational framework provides examples in our framework.

We conclude the introduction with a comment on how this paper fits into a research program for building $C^*$-algebra models of quantum reference frames for algebraic quantum field theories. First, in the paper \cite{brady2026functorialityksgnsconstructionintertwiners}, we built a category to view strict positive $C^*$-correspondences between $C^*$-algebras as objects with morphisms given by intertwiners. Using this category, we proved the KSGNS construction defines an idempotent endofunctor which is topological under reasonable conditions. Using this functorial perspective, we provided a functorial proof  of a general equivariant KSGNS construction for strict positive equivariant $C^*$-correspondences. In this paper, we utilize the equivariant KSGNS construction result as a technical tool for building the joint system in our model of both classical and quantum reference frames. Combining the framework developed here with the categorical structure in \cite{brady2026functorialityksgnsconstructionintertwiners}, a future paper we will outline a categorical description of our model of quantum reference frames. This categorical description will enable us to do two things. First, it will allow us to build a unitary-like group for a quantum reference frame representing the transformation group of the frame. Second, it will provide a pathway to building a $C^*$-algebra model of quantum reference frames for algebraic quantum field theories.

\subsection*{Acknowledgments}

I would like to thank my advisor, Ryan Grady, for his support and encouragement as I have worked on this project. Additionally, I would like to thank Chris Fewster, Daan Janssen,  Kasia Rejzner,  Leon Loveridge, and James Waldron for the insightful conversations related to  this work.

\subsection*{Funding} I was sponsored in part by the Air Force Research Laboratory under Agreement Number FA8750-24-1-1019. The U.S.Government is authorized to reproduce and distribute reprints for government purposes not withstanding any copyright notation thereon. Any opinions, findings, and conclusions or recommendations expressed in this material are those of the authors and do not necessarily reflect the view of the funder.

\section{Correspondences for $C^*$-algebras}
		
In this section, we review correspondences of $C^*$-algebras and operations associated to them. We assume the reader is familiar with both Hilbert modules and completely positive maps. For information on Hilbert modules, see \cite{lance1995hilbert},\cite{raeburn1998morita}, or \cite{blackadar2006operator}. For information on completely positive maps, see \cite{paulsen2002completely} or \cite{blackadar2006operator}.

	\subsection{$C^*$-correspondences and the interior tensor product}
	
	A \textit{$C^*$-correspondence} from a $C^*$-algebra $A$ to a $C^*$-algebra $B$ is a pair $(E_B, \rho)$ where $E_B$ is a Hilbert $B$-module and $\rho:A\to \mathcal{L}(E_B)$ is a non-degenerate $*$-algebra homomorphism\footnote{A $*$-algebra map $\rho:A\to \mathcal{L}(E_B)$ is non-degenerate if $\rho(A)E_B:=\text{Span}(\{\rho(a)x: a\in A, x\in E_B\})$ is dense in $E_B$ with respect to the norm topology.}. Conceptually, a $C^*$-correspondence represents a generalized $*$-algebra homomorphism from $A$ to $B$; in fact, this perspective plays a foundational role in building the Morita category for $C^*$-algebras \cite{echterhoff2005categoricalapproachimprimitivitytheorems}. In the Morita category, composition of $C^*$-correspondences is given by utilizing the interior/balanced tensor product. More generally,  the interior tensor product can be viewed as a mechanism for converting Hilbert modules over one $C^*$-algebra to Hilbert modules over another. Let us briefly review this construction; a detailed account can be found in chapter 4 of \cite{lance1995hilbert}. 
    
    Fix $C^*$-algebras $A$ and $B$, a $C^*$-correspondence $(E_B, \rho)$ from $A$ to $B$, and a Hilbert $A$-module $E_A$. Using $\rho$ to view $E_B$ as a left $A$-module, we construct the tensor product of $E_A$ and $E_B$ over $A$, denoted by $E_A\otimes_AE_B$. Recall, $E_A\otimes_A E_B$ is constructed by starting with the algebraic tensor product of the underlying complex vector spaces for $E_A$ and $E_B$, denoted by $E_A\otimes_{\textnormal{alg}}E_B$, and quotienting out by the submodule $N$ generated by the set 
	$$\{xa\otimes y-x\otimes \rho(a)y: x\in E_A, y\in E_B, a\in A\}.$$
	On $E_A\otimes_A E_B$, we define the pairing
	$$\langle\cdot, \cdot\rangle:(E_A\otimes_A E_B)^2\to B\quad\quad \langle x_1\otimes y_1+N, x_2\otimes y_2+N\rangle=\langle y_1, \rho(\langle x_1, x_2\rangle_{E_A})y_2\rangle_{E_B}$$
	which yields a $B$-valued inner product. With respect to the induced norm, we take the completion of $E_A\otimes_A E_B$, denoted by $E_A\otimes_\rho E_B$, to obtain a Hilbert $B$-module called the interior tensor product of $E_A$ and $E_B$ (with respect to $\rho$). 
	
	Given an adjointable operator $T\in \mathcal{L}(E_A)$, we obtain an adjointable operator $T\otimes_\rho I$ on $E_A\otimes_\rho E_B$ as follows. First, define 
	$$T\otimes I:E_A\otimes_{A}E_B\to E_A\otimes_{\rho}E_B\quad\quad (T\otimes I)(x\otimes y+N)=Tx\otimes y+N$$
	which is a well-defined, bounded, $B$-linear map. As $E_A\otimes_A E_B$ is dense in $E_A\otimes_\rho E_B$, continuity and $B$-linearity of $T\otimes I$ imply $T\otimes I$ extends to a unique bounded $B$-linear map on $E_A\otimes_\rho E_B$, denoted by $T\otimes_\rho I$. Observe $T\otimes_\rho I$ is adjointable with adjoint $T^*\otimes_\rho I$. As we vary over $\mathcal{L}(E_A)$, we obtain a unital $*$-algebra homomorphism 
	$$\mathcal{L}(E_A)\to \mathcal{L}(E_A\otimes_\rho E_B)\quad\quad T\mapsto T\otimes_\rho I$$
    which is strictly\footnote{Given a Hilbert $A$-module $E_A$, the strict topology on $\mathcal{L}(E_A)$ is the topology generated by the semi-norms $||\cdot||_{x}:\mathcal{L}(E_A)\xrightarrow{T\mapsto ||Tx||} [0, \infty)$ and $||\cdot||_{x, *}:\mathcal{L}(E_A)\xrightarrow{T\mapsto ||T^*x||} [0, \infty)$ for all $x\in E_A$.} continuous on the unit ball in $\mathcal{L}(E_A)$.
		
	\subsection{Non-degenerate positive $C^*$-correspondences and the KSGNS construction}
	
	Just as $*$-algebra homomorphisms can be weakened to completely positive maps, so too can we weaken $C^*$-correspondences to an appropriate notion of a generalized completely positive map. Unlike the $*$-algebra case, there are two appropriate definitions of a generalized completely positive morphism of $C^*$-algebras: strict positive $C^*$-correspondences and non-degenerate positive $C^*$-correspondences. As we will only utilize non-degenerate positive $C^*$-correspondences in this work, we will only review these correspondences.
	
	Fix $C^*$-algebras $A$ and $B$ as well as a Hilbert $B$-module $E_B$. First, a completely positive map $\phi:A\to \mathcal{L}(E_B)$ is \textit{non-degenerate} if there exists an approximate unit in $A$ which converges under $\phi$ strictly to $1_{E_B}$. Equivalently, the map $\phi$ is non-degenerate if there exists a unital completely positive map $\tilde{\phi}:M(A)\to \mathcal{L}(E_B)$ which is strictly continuous on the unit ball in $M(A)$ and restricts to $\phi$ on $A\subset M(A)$ (Corollary 5.7 in \cite{lance1995hilbert}). A \textit{non-degenerate positive $C^*$-correspondence} from $A$ to $B$ is the data $(E_B, \phi)$ where $E_B$ is a Hilbert $B$-module, and $\phi:A\to \mathcal{L}(E_B)$ is a non-degenerate completely positive map. 
	
	Given a non-degenerate completely positive map $\phi:A\to \mathcal{B}(H)$ for $H$ a Hilbert space (that is, a non-degenerate positive $C^*$-correspondence from $A$ to $\mathbb{C}$), the Stinespring Dilation Theorem (Theorem 4.1 and Proposition 4.2 in \cite{paulsen2002completely}) can be applied to obtain a unitarily unique triple $(H_\phi, \pi_\phi, V_\phi)$ where $H_\phi$ is a Hilbert space, $\pi_\phi:A\to \mathcal{B}(H_\phi)$ is a $*$-algebra homomorphism, and $V_\phi:H\to H_\phi$ is a bounded linear map, and the triple satisfies the two following conditions:
	\begin{enumerate}
		\item for all $a\in A$, $\phi(a)=V_\phi^*\pi_\phi(a)V_\phi$, and
		\item $\pi_\phi(A)V_\phi H$ is dense in $H_\phi$.
	\end{enumerate}
	Note, $(1)$ and non-degeneracy of $\phi$ imply $V_\phi$ is an isometry. Furthermore, $(2)$ implies $(H_\phi, \pi_\phi)$ is a $C^*$-correspondence from $A$ to $\mathbb{C}$. Thus, the Stinespring Dilation Theorem provides a way of dilating the non-degenerate positive $C^*$-correspondence to a $C^*$-correspondence. This dilation result holds more generally for non-degenerate positive $C^*$-correspondences between any two $C^*$-algebras, and it goes under the name of the KSGNS\footnote{Named after Kasparov, Stinespring, Gelfand, Naimark, and Segal} construction. We note that when $B=\mathbb{C}$, the KSGNS dilation coincides with Stinespring dilation. Let us briefly review the KSGNS construction, details can be found in chapter 5 of \cite{lance1995hilbert}. 
    
    Again, fix $C^*$-algebras $A$ and $B$ as well as a non-degenerate positive $C^*$-correspondence $(E_B, \phi)$ from $A$ to $B$. Similar to the interior tensor product, we begin with the algebraic tensor product $A\otimes_{\textnormal{alg}}E_B$ which can be made into a right  $B$-module where $B$ acts on $E_B$. On this $B$-module, we define the $B$-valued pairing
	$$\langle\cdot, \cdot\rangle_\phi:(A\otimes_{\textnormal{alg}}E_B)^2\to B\quad\quad\langle a_1\otimes x_1, a_2\otimes x_2\rangle_\phi=\langle x_1, \phi(a_1^*a_2)x_2\rangle_{E_B}$$
	which satisfies all the conditions for making $A\otimes_{\textnormal{alg}}E_B$ into an inner product $B$-module except potentially non-degeneracy; that is, there may exist $z\in A\otimes_{\textnormal{alg}}E_B$ with $\langle z, z\rangle_\phi=0$ though $z\neq 0$. Let $N_\phi$ be the submodule of $A\otimes_{\textnormal{alg}}E_B$ for which $\langle z, z\rangle_\phi=0$.
	
	Pushing $\langle\cdot, \cdot\rangle_\phi$ onto the quotient $(A\otimes_{\textnormal{alg}}E_B)/N_\phi$ makes the quotient $B$-module into an inner product $B$-module. Completing the inner product $B$-module with respect to the induced norm yields a Hilbert $B$-module which we denote by $A\otimes_\phi E_B$. Similar to the interior tensor product, we obtain a unital $*$-algebra homomorphism
	$$\tilde{\pi}_\phi:M(A)\to \mathcal{L}(A\otimes_\phi E_B)\quad\quad T\mapsto T\otimes_\phi I$$
	which is strictly continuous on the unit ball in $M(A)$. Therefore, restricting $\tilde{\pi}_\phi$ to $A\subset M(A)$ yields a non-degenerate $*$-algebra homomorphism which we denote by $\pi_\phi$.
	
	As $\phi$ is non-degenerate, there exists an approximate unit $(a_\lambda)_{\lambda\in\Lambda}$ such that $(\phi(a_\lambda))_{\lambda\in\Lambda}$ converges strictly to $1_{E_B}$. Using the net $(a_\lambda)_{\lambda\in\Lambda}$, we define for each $\lambda\in \Lambda$
	$$V_\lambda:E_B\to A\otimes_\phi E_B\quad\quad V_\lambda x=a_\lambda\otimes x+N_\phi$$
	which is a well-defined, $B$-linear map. As $(\phi(a_\lambda))_{\lambda\in\Lambda}$ is a convergent net with respect to the strict topology, for each $x\in E_B$, the net $(V_\lambda x)_{\lambda\in \Lambda}$ converges in the norm topology on $A\otimes_\phi E_B$ to a point $V_\phi(x)$.  Varying over $x\in E_B$ yields a map $V_\phi \in \mathcal{L}(E_B, A\otimes_\phi E_B)$ for which $V_\phi x=\lim\limits_{\lambda\to \infty }V_\lambda x$ for all $x\in E_B$. Observe 
	$$V_\phi^*:A\otimes_\phi E_B\to E_B\quad\quad V_\phi^*(a\otimes x+N_\phi)=\phi(a)x$$
	
	While the net $(V_\lambda)_{\lambda\in\Lambda}$ is dependent on the approximate unit $(a_\lambda)_{\lambda\in\Lambda}$, the limit $V_\phi$ is independent of the approximate unit. Indeed, if $(a_\mu)_{\mu\in M}$ is another approximate unit, then using $\tilde{\phi}:M(A)\to \mathcal{L}(E_B)$ shows $(\phi(a_\mu))_{\mu\in M}$ converges to $\tilde{\phi}(1)=1_{E_B}$ in the strict topology. Therefore, we can construct a net $(V_\mu)_{\mu\in M}$ and a function $V'\in \mathcal{L}(E_B, A\otimes_\phi E_B)$ which is the strong limit of $(V_\mu)_{\mu\in M}$. From the observation at the end of the last paragraph, we have $V'^*=V_\phi^*$. Thus, by uniqueness of adjoints, $V'=V_\phi$.
	
	With respect to $V_\phi$ and $\pi_\phi$, we have $V_\phi^*\pi_\phi(a)V_\phi=\phi(a)$ for all $a\in A$ and the submodule $\pi_\phi(A)V_\phi E_B\subset A\otimes_\phi E_B$ defined as
	$$\pi_\phi(A)V_\phi E_B=\textnormal{Span}(\{\pi_\phi(a)V_\phi y: a\in A, y\in E_B\})$$
	is dense in $A\otimes_\phi E_B$. As in the Stinespring Dilation Theorem, the triple $(A\otimes_\phi E_B, \pi_\phi, V_\phi)$ is unitarily uniquely characterized by these  two properties. 
	
	\begin{theorem}[KSGNS Construction \cite{lance1995hilbert}]\label{Theorem: KSGNS}\
		
		\noindent Let $A$ and $B$ be $C^*$-algebras. Let $E_B$ be a Hilbert $B$-module, and let $\phi:A\to \mathcal{L}(E_B)$ be a non-degenerate completely positive map. Then there exists a triple $(F_\phi, \pi_\phi, V_\phi)$ where $F_\phi$ is a Hilbert $B$-module, $\pi_\phi\in \textnormal{hom}_{*\textnormal{-alg}}(A, \mathcal{L}(F_\phi))$, and $V_\phi\in \mathcal{L}(E_B, F_\phi)$, and the triple satisfies the following conditions:
		\begin{enumerate}
			\item for all $a\in A$, $\phi(a)=V_\phi^*\pi_\phi(a)V_\phi$.
			\item the submodule $\pi_\phi(A)V_\phi E_B$ is dense in $F_\phi$.
		\end{enumerate}
		If $(F', \pi', V')$ is another triple satisfying the two conditions, then there exists a $B$-linear unitary $U\in \mathcal{L}(F_\phi, F')$ such that for all $a\in A$, $\pi'(a)=U\pi_\phi(a)U^*$ and $V'=UV_\phi$.
		
	\end{theorem}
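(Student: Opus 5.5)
Existence is already essentially assembled in the preceding discussion, so the plan is to take $F_\phi=A\otimes_\phi E_B$, with $\pi_\phi$ the restriction of $\tilde\pi_\phi$ and $V_\phi$ the strong limit of the $V_\lambda$.

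The first task is to verify the inner product. Positivity of $\langle\cdot,\cdot\rangle_\phi$ on $A\otimes_{\textnormal{alg}}E_B$ follows from complete positivity. For $z=\sum_i a_i\otimes x_i$ with $i=1,\dots,n$, we have $\langle z,z\rangle_\phi=\langle \mathbf{x},\phi_n([a_i^*a_j])\mathbf{x}\rangle$ in $E_B^n$. Since $[a_i^*a_j]\ge 0$ in $M_n(A)$, the matrix $\phi_n([a_i^*a_j])$ is positive in $M_n(\mathcal{L}(E_B))=\mathcal{L}(E_B^n)$. The Cauchy--Schwarz inequality for semi-inner-product modules then shows that $N_\phi$ is a submodule.

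Next, left multiplication by $a$ (or by $T\in M(A)$) must be bounded. I would show $\langle Tz,Tz\rangle_\phi\le\|T\|^2\langle z,z\rangle_\phi$. This follows by writing $\|T\|^2-T^*T=S^*S$ in $M(A)$, so that the difference equals $\langle Sz,Sz\rangle_\phi\ge0$; here the extension $\tilde\phi$ on $M(A)$ is used. This gives $\tilde\pi_\phi$, with adjoint $T^*\otimes I$.

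For $V_\phi$, the key is Cauchy-ness of $(a_\lambda\otimes x)$. We have
\[\|(a_\lambda-a_\mu)\otimes x\|^2=\|\langle x,\phi((a_\lambda-a_\mu)^2)x\rangle\|\le\|\langle x,\phi(a_\lambda-a_\mu)x\rangle\|\cdot\text{const},\]
using $0\le a_\lambda\le1$ and $(a_\lambda-a_\mu)^2\le 2|a_\lambda-a_\mu|$ type estimates. I expect this to be the main obstacle. The easiest route is to compute $\|a_\lambda\otimes x-a_\mu\otimes x\|^2$ by expanding into the terms $\langle x,\phi(a_\lambda a_\mu)x\rangle$. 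Each of these converges to $\langle x,x\rangle$, because $\phi(a_\lambda a_\mu)\to 1$ strictly, via the strictly continuous extension $\tilde\phi$ and the fact that $a_\lambda a_\mu\to 1$ strictly in $M(A)$ jointly.

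With $V_\phi$ in hand, the identity $\langle V_\phi x,a\otimes y\rangle=\lim\langle x,\phi(a_\lambda a)y\rangle=\langle x,\phi(a)y\rangle$ gives the adjoint formula. It also gives $V_\phi^*\pi_\phi(a)V_\phi=\phi(a)$, since $\pi_\phi(a)V_\phi x=\lim aa_\lambda\otimes x=a\otimes x$. This last identity shows that $\pi_\phi(A)V_\phi E_B$ contains all elementary tensors and is therefore dense, and likewise gives non-degeneracy of $\pi_\phi$.

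For uniqueness, given $(F',\pi',V')$, define $U$ on the dense span by $U(\sum\pi_\phi(a_i)V_\phi x_i)=\sum\pi'(a_i)V'x_i$. This map preserves inner products, because both sides equal $\sum\langle x_i,\phi(a_i^*a_j)x_j\rangle$ by condition (1). Hence $U$ is well defined and isometric, and it extends to a $B$-linear isometry with dense range by condition (2) for $F'$, so $U$ is surjective. An inner-product-preserving surjection between Hilbert modules is a unitary and adjointable, with $U^*=U^{-1}$.

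The intertwining $U\pi_\phi(a)=\pi'(a)U$ holds on generators. For $UV_\phi=V'$, I would use $V_\phi x=\lim\pi_\phi(a_\lambda)V_\phi x$ together with the analogous statement for $V'$. The latter follows from $\|\pi'(a_\lambda)V'x-V'x\|^2$ expanding to $\langle x,(\phi(a_\lambda^2)-2\phi(a_\lambda)+1)x\rangle\to0$, again using non-degeneracy of $\phi$ and $V'^*V'=1$. That identity is obtained from $V'^*\pi'(a_\lambda)V'=\phi(a_\lambda)\to1$, provided $\pi'$ is non-degenerate, which follows from condition (2).
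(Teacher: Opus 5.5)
Your proposal is correct and follows the same construction the paper sketches before the theorem (after Lance): $F_\phi=A\otimes_\phi E_B$, $\pi_\phi$ the restriction of $\tilde\pi_\phi$, $V_\phi x=\lim_\lambda a_\lambda\otimes x$ with $V_\phi^*(a\otimes x)=\phi(a)x$, and uniqueness via the inner-product-preserving map $\pi_\phi(a)V_\phi x\mapsto\pi'(a)V'x$ on the dense span.

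Two cautions about the Cauchy step. First, the inequality in your first display is false for non-monotone approximate units: since $a_\lambda-a_\mu$ need not be positive, $\langle x,\phi(a_\lambda-a_\mu)x\rangle$ can vanish while $\langle x,\phi((a_\lambda-a_\mu)^2)x\rangle$ does not. Second, your working argument relies on the extension $\tilde\phi$; the paper adopts it as an equivalent form of non-degeneracy, but Lance obtains it as Corollary 5.7 of this very theorem. For a self-contained proof you should instead check that $\phi(e_\mu)\to 1$ strictly for an increasing approximate unit $(e_\mu)$ (using $e_\mu a_\lambda e_\mu\le e_\mu$). Then use $(e_\mu-e_\lambda)^2\le e_\mu-e_\lambda$ for $\lambda\le\mu$.
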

	
	Observe, as $\phi$ is non-degenerate, the first condition on the triple $(F_\phi, \pi_\phi, V_\phi)$ implies $V_\phi$ is an isometry. Indeed,  suppose we have an approximate unit $(a_\lambda)_{\lambda\in\Lambda}$ for $A$, then $\phi(a_\lambda)\to 1_{E_B}$ strictly. As $(a_\lambda)_{\lambda\in\Lambda}$ is in the unit ball of $M(A)$ and converges strictly to $1\in M(A)$, $\pi_\phi$ being non-degenerate implies $\pi_\phi(a_\lambda)\to 1_{F_\phi}$ strictly. Therefore, with respect to the strict topology,
		$$1_{F_\phi}=\lim\limits_{\lambda\to\infty}\phi(a_\lambda)=\lim\limits_{\lambda\to\infty}V_\phi^* \pi_\phi(a_\lambda)V_\phi=V_\phi^*V_\phi.$$
	Additionally, observe the second condition implies $(F_\phi, \pi_\phi)$ is a $C^*$-correspondence from $A$ to $B$.
	
	\subsection{Equivariant correspondences}
	
	We conclude our review of correspondences with equivariant $C^*$-correspondences and non-degenerate positive equivariant $C^*$-correspondences. As with the other correspondences we've discussed so far, these correspondences can be viewed as generalized equivariant $*$-algebra morphisms of $C^*$-dynamical systems and generalized equivariant completely positive morphisms of $C^*$-dynamical systems, respectively. Before introducing these correspondences, we quickly review the notion of $C^*$-dynamical systems and Hilbert module dynamical systems. For more information about $C^*$-dynamical systems, see \cite{williams2007crossed}.
	
	\subsubsection{Review of $C^*$-dynamical systems}
	
	Let $A$ be a $C^*$-algebra, and let $G$ be a group. A group action of $G$ on $A$ is the data of a group homomorphism $\alpha:G\to \textnormal{Aut}_{*\textnormal{-alg}}(A)$. If $G$ is a topological group, then the action is continuous if $\alpha$ is continuous with respect to the point-norm topology\footnote{Given $C^*$-algebras $A$ and $B$, the point-norm topology on $\text{hom}_{*\text{-alg}}(A, B)$ is the topology generated by the family of semi-norms $||\cdot||_a:\text{hom}_{*\text{-alg}}(A, B)\xrightarrow{\rho\mapsto ||\rho(a)||} [0, \infty)$ for all $a\in A$.} on $\textnormal{Aut}_{*\textnormal{-alg}}(A)$. We refer to the triple $(A, G, \alpha)$ as a \textit{$C^*$-dynamical system}. In the case $G$ is a topological group and $\alpha$ is continuous, we refer to the triple as a \textit{continuous $C^*$-dynamical system}.

	We note that there is typically no distinction made between $C^*$-dynamical systems and continuous $C^*$-dynamical systems; in fact, if $G$ is not a topological group, one can equip $G$ with the discrete topology to make any $C^*$-dynamical system into a continuous $C^*$-dynamical system. We make the distinction in this work as we want to keep track of whether a group is topological and if the action from the group is continuous. Additionally, we note that  topological groups for continuous $C^*$-dynamical systems are typically assumed to be locally compact and Hausdorff. We do not make such an assumption here.
	
	\subsubsection{Group actions on Hilbert modules and Hilbert module dynamical systems}
	
	The notion of a group action on a Hilbert module is conceptually akin to representations of groups on Hilbert spaces; however, as Hilbert modules are over an arbitrary $C^*$-algebra rather than $\mathbb{C}$, a group action on the module has the additional degree of freedom to also act on the underlying $C^*$-algebra. This additional degree of freedom allows the group action to twist the module structure which causes the maps for the representation to be linear and adjointable with respect to the twisting. Note, in the Hilbert space setting, a group action cannot twist a Hilbert space as the group of $*$-automorphisms for $\mathbb{C}$ is trivial.
	
	Let us first review how $*$-automorphisms on the underlying $C^*$-algebra of a Hilbert module can twist the module structure. Let $E_B$ be a Hilbert $B$-module, and let $\beta$ be a $*$-algebra automorphism of $B$. Letting $\langle\cdot, \cdot\rangle$ and $r$ denote the $B$-valued pairing for $E_B$ and right action of $B$ on $E_B$, respectively, we utilize the automorphism $\beta$ to twist these maps as follows: 
	$$(\cdot, \cdot):E_B\times E_B\to B\quad\quad (x, y)=\beta(\langle x, y\rangle)$$
	and
	$$r':E_B\times B\to E_B\quad\quad r'(x, b)=r(x, \beta^{-1}(b)).$$
	It is clear that $r'$ makes $E_B$ into a right $B$-module as well as $(\cdot, \cdot)$ makes $E_B$ into an inner product $B$-module with respect to the action $r'$. Since the induced norm from $(\cdot, \cdot)$ is the same as $\langle\cdot, \cdot\rangle$, then $E_B$ is a Hilbert $B$-module with respect to $(\cdot, \cdot)$ and $r'$. Let us denote $E_B$ with this $\beta$-twisted structure as $E^\beta_B$. An alternative construction of the twisted $B$-module $E^{\beta}_B$ is obtained using the interior tensor product. 
	
	\begin{lemma}[Lemma 6.1 in \cite{brady2026functorialityksgnsconstructionintertwiners}]\label{Lemma: beta unitary}\

		\noindent Let $E_B$ be a Hilbert $B$-module, and let $\beta$ be a $*$-automorphism of $B$. Then the map
		$$U_\beta:E_B\otimes_{\textnormal{inc}\circ\beta} B\to E^\beta_B\quad\quad U_\beta(x\dot{\otimes}b)=x\beta^{-1}(b)$$
		defines a $B$-linear unitary.
		
	\end{lemma}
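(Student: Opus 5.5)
To prove Lemma \ref{Lemma: beta unitary}, I will first define $U_\beta$ on the algebraic balanced tensor product and show that it preserves inner products. It will then extend to an isometry on the completion $E_B\otimes_{\textnormal{inc}\circ\beta}B$. Surjectivity of that isometry will give a unitary.

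Here the correspondence is $(B,\textnormal{inc}\circ\beta)$ from $B$ to $B$. The left action is $b'\cdot b=\beta(b')b$, and the inner product on $B$ is $\langle b_1,b_2\rangle=b_1^*b_2$.

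\textbf{Step 1: the formula is well defined on the algebraic tensor product.} The assignment $(x,b)\mapsto x\beta^{-1}(b)$ is bilinear, so it induces a linear map on $E_B\otimes_{\textnormal{alg}}B$. This map kills the generators of $N$:
$$xb'\otimes b - x\otimes \beta(b')b \;\mapsto\; xb'\beta^{-1}(b)-x\beta^{-1}(\beta(b')b)=0.$$
Since $N$ is generated as a submodule and the map is right $B$-linear (checked in Step 2), it descends to $E_B\otimes_B B$.

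\textbf{Step 2: module compatibility and the inner product.} For right $B$-linearity into $E^\beta_B$, recall that the twisted action is $r'(y,c)=y\beta^{-1}(c)$. Then
$$U_\beta(x\otimes bc)=x\beta^{-1}(b)\beta^{-1}(c)=r'\big(U_\beta(x\otimes b),c\big).$$
For the inner product on elementary tensors,
$$\langle x_1\otimes b_1,x_2\otimes b_2\rangle=b_1^*\,\beta(\langle x_1,x_2\rangle)\,b_2 .$$
In $E^\beta_B$ we compute
$$\big(x_1\beta^{-1}(b_1),\,x_2\beta^{-1}(b_2)\big)=\beta\big(\beta^{-1}(b_1)^*\langle x_1,x_2\rangle\beta^{-1}(b_2)\big)=b_1^*\beta(\langle x_1,x_2\rangle)b_2 .$$
So $U_\beta$ preserves inner products on finite sums. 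In particular it is isometric: elements of $N$ have norm zero, and null vectors map to zero. Hence $U_\beta$ extends by continuity to an isometric $B$-linear map on the completion.

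\textbf{Step 3: surjectivity and unitarity.} The Cohen factorization theorem, or simply an approximate unit, gives $E_B=E_BB$, so $E_B\beta^{-1}(B)=E_BB$ is dense in $E_B$. The image of $U_\beta$ is therefore dense. Because it is the image of an isometry from a complete space, it is closed, so it is all of $E^\beta_B$.

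A surjective inner-product-preserving $B$-linear map between Hilbert $B$-modules is a unitary in $\mathcal{L}$. Its inverse serves as its adjoint, since $(U z,w)=(Uz,UU^{-1}w)=\langle z,U^{-1}w\rangle$.

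The only point that needs care is the twisted bookkeeping in Step 2: the inner product must be compared with $\beta$ applied, and linearity must be checked with respect to $r'$ rather than $r$. Apart from that, the argument is a routine density argument.
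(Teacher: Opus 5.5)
Your proof is correct. You verified that the balancing relation is killed, and the identity $\bigl(x_1\beta^{-1}(b_1),x_2\beta^{-1}(b_2)\bigr)=b_1^*\beta(\langle x_1,x_2\rangle)b_2$ gives an inner-product-preserving twisted-$B$-linear map. Density of $E_BB$, together with closedness of the range of an isometry, then gives surjectivity and hence unitarity.

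This paper does not prove the lemma; it imports it from the companion paper, so there is no proof here to compare against. Your argument is the standard direct verification. It follows the same define-on-simple-tensors, check-balancing, extend-by-density pattern the paper uses in the proof of Proposition~\ref{Prop: equiv interior tensor product}.
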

	
	As the underlying Banach spaces of $E_B$ and $E^\beta_B$ are the same, the map $U_\beta$ is a well-defined, bijective \textit{$\mathbb{C}$-linear} isometry from $E_B\otimes_{\textnormal{inc}\circ\beta} B$ to $E_B$ satisfying the following two properties:
	\begin{enumerate}
		\item for all $x\in E_B\otimes_{\textnormal{inc}\circ\beta} B$ and $y\in E_B$, $\langle U_\beta(x), y\rangle_{E_B}=\beta^{-1}(\langle x, U^{-1}_\beta(y)\rangle_{E_B\otimes_{\textnormal{inc}\circ\beta} B})$.
		\item for all $x\in E_B\otimes_{\textnormal{inc}\circ\beta} B$ and $b\in B$, $U_\beta(xb)=U_\beta(x)\beta^{-1}(b)$.
		
	\end{enumerate}
	Generalizing these properties for $\mathbb{C}$-linear maps between Hilbert modules yields the following definitions.

	\begin{definition}[Definition 6.3 in \cite{brady2026functorialityksgnsconstructionintertwiners}]\

		\noindent Let $E_1$ and $E_2$ be Hilbert $B$-modules, and let $\beta\in\textnormal{Aut}_{*\textnormal{-alg}}(B)$. 
		\begin{itemize}
			\item A (bounded) $\beta$-linear map from $E_1$ to $E_2$ is a (bounded) $\mathbb{C}$-linear map $T:E_1\to E_2$ such that for all $x\in E_1$ and $b\in B$, $T(xb)=T(x)\beta(b)$.
			\item A function $T:E_1\to E_2$  is $\beta$-adjointable if there exists a map $T^*:E_2\to E_1$ such that for all $x\in E_1$ and $y\in E_2$, $\langle T(x), y\rangle_{E_2}=\beta(\langle x, T^*(y)\rangle_{E_1})$. Denote the set of $\beta$-adjointable maps from $E_1$ to $E_2$ as $\mathcal{L}^\beta(E_1, E_2)$.
			\item A $\beta$-adjointable map $T:E_1\to E_2$ is unitary if $TT^*=1_{E_2}$ and $T^*T=1_{E_1}$. Denote the set of $\beta$-adjointable unitaries from $E_1$ to $E_2$ as $\mathcal{U}^\beta(E_1, E_2)$.
			
		\end{itemize}
		
	\end{definition}
	
	While not assumed, it follows $\beta$-adjointable maps are $\beta$-linear and have unique adjoints. Using the Uniform Boundedness Prinicple, it follows $\beta$-adjointable maps are bounded. Additionally, using the isomorphism between $E_1^\beta$ and $E_1\otimes_{\text{inc}\circ \beta}B$ from Lemma \ref{Lemma: beta unitary}, we obtain an identification of $\beta$-adjointable maps on $E_1$ and adjointable $B$-linear maps on $E_1\otimes_{\text{inc}\circ \beta} B$.
	
	\begin{proposition}[Proposition 6.4 in \cite{brady2026functorialityksgnsconstructionintertwiners}]\label{Prop: rel between B-linear and beta-linear}\

		\noindent Let $E_1$ and $E_2$ be Hilbert $B$-modules, and let $\beta\in\textnormal{Aut}_{*\textnormal{-alg}}(B)$. Let $U_\beta$ be the $\beta^{-1}$-adjointable unitary given by
		$$U_\beta:E_1\otimes_{\textnormal{inc}\circ\beta} B\to E_1\quad\quad U_\beta(x\dot{\otimes} b)=x\beta^{-1}(b)$$
		Then the map
		$$\mathcal{L}^\beta(E_1, E_2)\to \mathcal{L}(E_1\otimes_{\textnormal{inc}\circ\beta} B, E_2)\quad\quad T\mapsto T\circ U_\beta$$
		defines a linear bijection. Furthermore, the map restricts to a bijection $$\mathcal{U}^\beta(E_1, E_2)\to \mathcal{U}(E_1\otimes_{\textnormal{inc}\circ\beta} B, E_2).$$
		
	\end{proposition}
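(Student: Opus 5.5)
The plan is to use Lemma \ref{Lemma: beta unitary} to turn $\beta$-adjointability on $E_1$ into ordinary adjointability on $E_1\otimes_{\textnormal{inc}\circ\beta} B$, and then check that composition with $U_\beta$ preserves the relevant identities. I would first record the precise adjoint behaviour of $U_\beta$. By Lemma \ref{Lemma: beta unitary} together with the two listed properties, $U_\beta$ is a bijective $\mathbb{C}$-linear isometry satisfying $\langle U_\beta x, y\rangle_{E_1}=\beta^{-1}(\langle x, U_\beta^{-1}y\rangle)$. So $U_\beta\in\mathcal{U}^{\beta^{-1}}(E_1\otimes_{\textnormal{inc}\circ\beta}B, E_1)$ with $U_\beta^*=U_\beta^{-1}$. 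Equivalently, $U_\beta^{-1}$ is $\beta$-adjointable with adjoint $U_\beta$, since applying $\beta$ to the identity gives $\langle U_\beta^{-1}y, x\rangle=\beta(\langle y, U_\beta x\rangle)$.

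The key computational step is a composition rule for twisted adjointable maps. If $S$ is $\gamma$-adjointable and $T$ is $\delta$-adjointable, then $T\circ S$ is $\delta\gamma$-adjointable with adjoint $S^*T^*$. This is a one-line check:
$$\langle TSx,z\rangle=\delta(\langle Sx,T^*z\rangle)=\delta\gamma(\langle x,S^*T^*z\rangle).$$
Applying this with $S=U_\beta$ ($\gamma=\beta^{-1}$) and $T\in\mathcal{L}^\beta(E_1,E_2)$ ($\delta=\beta$) shows that $T\circ U_\beta$ is $\mathrm{id}$-adjointable, i.e. lies in $\mathcal{L}(E_1\otimes_{\textnormal{inc}\circ\beta}B,E_2)$, with adjoint $U_\beta^{-1}T^*$. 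Conversely, for $R\in\mathcal{L}(E_1\otimes_{\textnormal{inc}\circ\beta}B,E_2)$, the same rule with $S=U_\beta^{-1}$ ($\gamma=\beta$) and $T=R$ ($\delta=\mathrm{id}$) shows that $R\circ U_\beta^{-1}$ is $\beta$-adjointable. The maps $T\mapsto TU_\beta$ and $R\mapsto RU_\beta^{-1}$ are mutually inverse and clearly $\mathbb{C}$-linear, which gives the linear bijection.

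For the unitary statement, take $T\in\mathcal{U}^\beta(E_1,E_2)$. Then
$$(TU_\beta)(TU_\beta)^*=TU_\beta U_\beta^{-1}T^*=TT^*=1,\qquad (TU_\beta)^*(TU_\beta)=U_\beta^{-1}T^*TU_\beta=1.$$
The converse follows symmetrically using $U_\beta^{-1}$ and its adjoint $U_\beta$.

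I expect the main obstacle to be purely bookkeeping: making sure that the twisted adjoint of $U_\beta$ goes in the correct direction ($\beta^{-1}$ rather than $\beta$). This must be read off carefully from the inner product on $E_1^\beta$ in Lemma \ref{Lemma: beta unitary}. I would verify it directly on elementary tensors $x\dot\otimes b$, where
$$\langle x\dot\otimes b, x'\dot\otimes b'\rangle=b^*\beta(\langle x,x'\rangle)b',$$
and then compare with $\langle x\beta^{-1}(b), x'\beta^{-1}(b')\rangle=\beta^{-1}(b^*)\langle x,x'\rangle\beta^{-1}(b')$. Applying $\beta^{-1}$ to the first expression gives the second, which confirms the $\beta^{-1}$-twist.
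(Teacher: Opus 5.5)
Your proof is correct and follows essentially the route the paper indicates: the paper quotes this result from the companion paper and attributes it to the isomorphism $E_1^\beta\cong E_1\otimes_{\textnormal{inc}\circ\beta}B$ of Lemma \ref{Lemma: beta unitary}. The only difference is packaging. The paper's remark reads a $\beta$-adjointable $T$ as an ordinary adjointable map $E_1^\beta\to E_2$ and composes it with the $B$-linear unitary $U_\beta$, whereas you stay on the untwisted $E_1$ and let the twists cancel through your rule that a $\delta$-adjointable map composed with a $\gamma$-adjointable one is $\delta\gamma$-adjointable with adjoint $S^*T^*$.
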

	
	Using the definition of $\beta$-adjointable maps and $\beta$-adjointable unitary maps, we obtain the following definitions for $G$-actions on Hilbert modules and Hilbert module dynamical systems. 
	
	\begin{definition}\
		
	\begin{itemize}
		\item Let $E_B$ be a Hilbert $B$-module, and let $G$ be a group. A $G$-action on $E_B$ is a pair $(\beta, U)$ where $\beta:G\to\text{Aut}_{*\text{-alg}}(B)$ and $U:G\to \mathcal{B}(E_B)$ are group homomorphisms, and the pair satisfies the following condition: for all $g\in G$, $U_g$ is  $\beta_g$-adjointable. When $G$ is a topological group, the action is continuous if $\beta$ is continuous with respect to the point-norm topology on $\text{Aut}_{*\text{-alg}}(B)$ and $U$ is continuous with respect to the strong operator topology on $\mathcal{B}(E_B)$.
		\item A Hilbert module dynamical system is given by the data $((B, G, \beta), E_B, U)$ where $(B, G, \beta)$ is a $C^*$-dynamical system, $E_B$ is a Hilbert $B$-module, and $U:G\to \mathcal{B}(E_B)$ is a group homomorphism, and the data satisfies the following condition: $(\beta, U)$ is a $G$-action on $E_B$ with $U_g$ a $\beta_g$-adjointable unitary for each $g\in G$. When $G$ is a topological group, the dynamical system is continuous if the $G$-action $(\beta, U)$ is continuous.
		
	\end{itemize}

	\end{definition}

    Observe that when $B=\mathbb{C}$, a $G$-action on $E_{\mathbb{C}}$ is equivalent to the data of a representation of $G$ on $E_{\mathbb{C}}$. Similarly, a Hilbert module dynamical system of the form $((\mathbb{C}, G, \beta), E_{\mathbb{C}}, U)$ is equivalent to the data of a unitary representation of $G$ on $E_{\mathbb{C}}$.

	\subsubsection{Equivariant correspondences and the equivariant KSGNS construction}	
	
    Let $B$ be a $C^*$-algebra. For each $\beta\in \text{Aut}_{*\text{-alg}}(B)$, we obtain a unique $\tilde{\beta}\in \text{Aut}_{*\text{-alg}}(M(B))$  such that for all $T\in M(B)$ and $b\in B$, $\tilde{\beta}(T)b=\beta(T\beta^{-1}(b))$. Note, $\tilde{\beta}|_B=\text{inc}\circ\beta$. Furthermore, if we view $B$ as a Hilbert module over itself so that $M(B)=\mathcal{L}(B)$, then $\beta$ defines a $\beta$-adjointable unitary on $B$ and $\tilde{\beta}=\text{Ad}(\beta)$.

    Let $(A, G, \alpha)$ and $(B, G, \beta)$ be $C^*$-dynamical systems. For each $g\in G$, denote the unique $*$-algebra isomorphism induced from $\beta_g$ as $\tilde{\beta}_g$. Let $\rho:A\to M(B)$ be a non-degenerate, equivariant, $*$-algebra homomorphism; that is, for all $g\in G$ and $a\in A$, $\rho(\alpha_g(a))=\tilde{\beta}_g(\rho(a))$. Viewing $E:=B$ as a Hilbert module over itself, each $U_g:=\beta_g$ defines a $\beta_g$-adjointable unitary on $E$ with adjoint given by $U_{g^{-1}}$. Furthermore, the intertwining condition for $\rho$ implies for all $g\in G$, $a\in A$, $U_g\circ \rho(a)=\rho(\alpha_g(a))\circ U_g$. Generalizing the module $E$ and maps $U_g$ as well as the intertwining relation between $U_g$ and $\rho$, we obtain the appropriate notion of a generalized equivariant $*$-algebra homomorphism and generalized equivariant completely positive map.
	
	\begin{definition}[Definition 6.5 in \cite{brady2026functorialityksgnsconstructionintertwiners}]\

		\noindent Let $(A, G, \alpha)$ and $(B, G, \beta)$ be $C^*$-dynamical systems. An equivariant $C^*$-correspondence from $(A, G, \alpha)$ to $(B, G, \beta)$ is the data $((E_B, \rho), U)$ where 
		\begin{enumerate}
			\item $(E_B, \rho)$ is a $C^*$-correspondence from $A$ to $B$.
			\item $U:G\to \mathcal{B}(E_B)$ is a group homomorphism 
		\end{enumerate}
		and the data satisfies the following conditions:
		\begin{enumerate}
			\item For all $g\in G$, $U_g$ is a $\beta_g$-adjointable unitary on $E_B$.
			\item For all $g\in G$ and $a\in A$, $U_g\circ \rho(a)=\rho(\alpha_g(a))\circ U_g$.
		\end{enumerate}
		If $(E_B, \rho)$ is a non-degenerate positive $C^*$-correspondence, then $((E_B, \rho), U)$ is called a non-degenerate positive equivariant $C^*$-correspondence. If $(A, G, \alpha)$ and $(B, G, \beta)$ are continuous $C^*$-dynamical systems, then the (non-degenerate positive) equivariant $C^*$-correspondence $((E_B, \rho), U)$ is continuous if $U$ is strongly continuous.  
		
	\end{definition}
	
	For Hilbert modules, a $C^*$-correspondence provides a means for converting a Hilbert module over one $C^*$-algebra to a Hilbert module over another. In a similar manner, an equivariant $C^*$-correspondence enables one to convert a Hilbert module dynamical system over one $C^*$-dynamical system to a Hilbert module dynamical system over another $C^*$-dynamical system. Note, in the following proposition and proof, we follow the notational convention discussed in \cite{brady2026functorialityksgnsconstructionintertwiners} for simple tensors in the module constructed from the KSGNS construction and interior tensor product.
	\begin{proposition}\label{Prop: equiv interior tensor product}\
		
		\noindent Let $(A, G, \alpha)$ and $(B, G, \beta)$ be $C^*$-dynamical systems, and let $((E_B, \rho), W)$ be an equivariant $C^*$-correspondence from $(A, G, \alpha)$ to $(B, G, \beta)$. If $((A, G, \alpha), E_A, U)$ is a Hilbert module dynamical system, then
		\begin{enumerate}
			\item for each $g\in G$, there exists a unique $\beta_g$-adjointable unitary $U_g\otimes_{\rho} W_g\in\mathcal{B}(E_A\otimes_\rho E_B)$ such that for all $x\in E_A$ and $y\in E_B$, $(U_g\otimes_\rho W_g)(x\dot{\otimes}y)=U_gx\dot{\otimes}W_gy$.
			\item the map $$U\otimes_{\rho}W:G\to \mathcal{B}(E_A\otimes_\rho E_B)\quad\quad g\mapsto U_g\otimes_{\rho} W_g$$
			is a group homomorphism.
			\item The unital $*$-algebra homomorphism
			$$\mathcal{L}(E_A)\to \mathcal{L}(E_A\otimes_\rho E_B) \quad\quad T\mapsto T\otimes_\rho I$$
			is $G$-equivariant with respect to $\text{Ad}(U)$ and $\text{Ad}(U\otimes_\rho W)$.
			\item $((B, G, \beta), E_A\otimes_\rho E_B, U\otimes_\rho W)$ is a Hilbert module dynamical system which is continuous whenever $((A, G, \alpha), E_A, U)$ is a continuous Hilbert module dynamical system and $((E_B, \rho), W)$ is a continuous equivariant $C^*$-correspondence.
		\end{enumerate} 
		
	\end{proposition}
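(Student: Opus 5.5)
The plan is to define the operator on simple tensors, check that it preserves the twisted inner product, and extend it by continuity; parts (2)--(4) then follow from computations on simple tensors.

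\textbf{Step 1: defining $U_g\otimes_\rho W_g$.} On the algebraic tensor product $E_A\otimes_{\textnormal{alg}}E_B$ set $x\otimes y\mapsto U_gx\otimes W_gy$. For simple tensors we compute, using that $U_g$ is $\alpha_g$-adjointable unitary, the intertwining relation $W_g\rho(a)=\rho(\alpha_g(a))W_g$, and that $W_g$ is a $\beta_g$-adjointable unitary:
$$\langle U_gx_1\otimes W_gy_1, U_gx_2\otimes W_gy_2\rangle=\langle W_gy_1,\rho(\alpha_g(\langle x_1,x_2\rangle))W_gy_2\rangle=\langle W_gy_1,W_g\rho(\langle x_1,x_2\rangle)y_2\rangle=\beta_g(\langle x_1\otimes y_1,x_2\otimes y_2\rangle).$$
The middle step uses $\langle U_gx_1,U_gx_2\rangle=\alpha_g(\langle x_1,x_2\rangle)$, which holds because $U_g^*=U_{g^{-1}}$ is the $\alpha_g$-adjoint. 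The last step uses $\langle W_gy,W_gy'\rangle=\beta_g(\langle y,y'\rangle)$ for a $\beta_g$-unitary.

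By sesquilinearity this identity extends to finite sums. Since $\beta_g$ is isometric, the map is norm-preserving, so it kills the null space and sends the balancing submodule $N$ into the zero of the completion. The map is therefore well defined on $E_A\otimes_A E_B$ and extends uniquely to an isometry on $E_A\otimes_\rho E_B$. The same construction with $g^{-1}$ gives a two-sided inverse on simple tensors, hence everywhere. It also gives the $\beta_g$-adjoint: the identity $\langle Tz,w\rangle=\beta_g(\langle z,T^{-1}w\rangle)$ follows from the preservation identity by substituting $w=T(T^{-1}w)$. So $U_g\otimes_\rho W_g$ is a $\beta_g$-adjointable unitary. Uniqueness holds because simple tensors span a dense subspace.

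This step is the main obstacle, because well-definedness modulo $N$ is subtle for maps that are only $\beta$-linear. I handle it through the inner product computation rather than by checking elements of $N$ directly. As a cross-check, $U_g(xa)=U_g(x)\alpha_g(a)$ together with the intertwining relation shows that $xa\otimes y-x\otimes\rho(a)y$ is mapped to $U_gx\,\alpha_g(a)\otimes W_gy-U_gx\otimes\rho(\alpha_g(a))W_gy\in N$.

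\textbf{Step 2: part (2).} The homomorphism property holds on simple tensors because $U$ and $W$ are homomorphisms. It then holds everywhere by density and continuity.

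\textbf{Step 3: part (3).} For $T\in\mathcal{L}(E_A)$, the two operators $(U_g\otimes_\rho W_g)(T\otimes_\rho I)(U_g\otimes_\rho W_g)^*$ and $(U_gTU_g^*)\otimes_\rho I$ both send $x\otimes y$ to $U_gTU_g^*x\otimes y$. This is checked by applying them to $U_gx'\otimes W_gy'$, which also range over a spanning set. Bounded operators agreeing on a dense set are equal.

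\textbf{Step 4: part (4).} The first claim is Steps 1--2. For continuity, fix a simple tensor $x\otimes y$. The bound
$$\|U_gx\otimes W_gy-U_hx\otimes W_hy\|\le\|U_gx-U_hx\|\,\|y\|+\|U_hx\|\,\|W_gy-W_hy\|$$
follows from $\|x\otimes y\|\le\|x\|\|y\|$, and it gives strong continuity at $x\otimes y$. Continuity extends to finite sums. Since the family is uniformly bounded (the operators are isometries), a standard $\varepsilon/3$ argument extends it to the whole completion. Continuity of $\beta$ is part of the hypothesis on $(B,G,\beta)$.
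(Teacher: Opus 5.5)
Your proposal is correct and follows essentially the same route as the paper. Both proofs define $U_g\otimes_\rho W_g$ on simple tensors, show it preserves the pairing up to $\beta_g$, extend by density with the inverse coming from $g^{-1}$, and then verify (2)--(4) on simple tensors, using a triangle-inequality estimate and uniform boundedness for strong continuity. The differences are cosmetic: you obtain well-definedness modulo $N$ and the $\beta_g$-adjoint $T^*=T^{-1}$ abstractly from the identity $\langle Tz,Tw\rangle=\beta_g(\langle z,w\rangle)$, whereas the paper checks the balancing relations directly and verifies the adjoint $U_{g^{-1}}\otimes_\rho W_{g^{-1}}$ by a separate simple-tensor computation.
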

	
	\begin{proof}\
		
		\noindent We start by proving $(1)$. Fix $g\in G$. As
		$$E_A\times E_B\to E_A\otimes_\rho E_B\quad\quad (x, y)\mapsto U_gx\dot{\otimes}W_gy$$
		is $\mathbb{C}$-bilinear and $\beta_g$-linear in the second slot, then we obtain a well-defined $\beta_g$-linear map
		$$E_A\otimes_{\text{alg}}E_B\to E_A\otimes_\rho E_B\quad\quad x\otimes y\mapsto U_gx\dot{\otimes}W_gy$$
		Given $x\in E_A$, $y\in E_B$, and $a\in A$, we have
		\begin{align*}
			U_g(xa)\dot{\otimes}W_gy-U_g(x)\dot{\otimes}W_g(\rho(a)y)&=U_g(x)\alpha_g(a)\dot{\otimes}W_gy-U_g(x)\dot{\otimes}\rho(\alpha_g(a))W_gy=0
		\end{align*}	
		Therefore, we obtain a well-defined $\beta_g$-linear map
		$$U_g\otimes_\rho W_g:E_A\otimes_A E_B\to E_A\otimes_\rho E_B \quad\quad x\dot{\otimes}y\mapsto U_gx\dot{\otimes}W_gy$$
		As
		\begin{align*}
			\left\|\sum\limits_{k=1}^n U_gx_k\dot{\otimes}W_gy_k\right\|^2&=\left\|\sum\limits_{i,j=1}^n \langle W_g y_i, \rho(\langle U_g x_i, U_gx_j\rangle)W_g y_j\rangle\right\|_B=\left\|\sum\limits_{i,j=1}^n \langle W_g y_i, \rho(\alpha_g(\langle x_i, x_j\rangle))W_g y_j\rangle\right\|_B\\
			&=\left\|\sum\limits_{i,j=1}^n \langle W_g y_i, W_g\rho(\langle x_i, x_j\rangle) y_j\rangle\right\|_B=\left\|\sum\limits_{i,j=1}^n \beta_g(\langle  y_i, \rho(\langle x_i, x_j\rangle) y_j\rangle)\right\|_B\\
			&=\left\|\sum\limits_{i,j=1}^n \langle  y_i, \rho(\langle x_i, x_j\rangle) y_j\rangle\right\|_B=\left\|\sum\limits_{k=1}^n x_k\dot{\otimes}y_k\right\|^2
		\end{align*}	
		the map $U_g\otimes_\rho W_g$ is a $\beta_g$-linear isometry. Therefore the map extends to a $\beta_g$-linear isometry on $E_A\otimes_\rho E_B$ which we denote by $U_g\otimes_\rho W_g$. 
		
		We claim $U_g\otimes_\rho W_g$ is $\beta_g$ adjointable with adjoint $U_{g^{-1}}\otimes_\rho W_{g^{-1}}$. First, by the construction in the prior paragraph, $U_{g^{-1}}\otimes_\rho W_{g^{-1}}$ is a $\beta_{g}^{-1}$-linear map. For the inner product property, it suffices to prove this on the dense submodule $E_A\otimes_A E_B$. Using linearity, it suffices to prove the property on simple tensors in $E_A\otimes_A E_B$. Given $x_1\dot{\otimes}y_1, x_2\dot{\otimes}y_2\in E_A\otimes_AE_B$, we have
		\begin{align*}
			\langle (U_g\dot{\otimes}W_g)(x_1\dot{\otimes}y_1), x_2\dot{\otimes}y_2\rangle&=\langle W_gy_1, \rho(\langle U_gx_1, x_2\rangle) y_2\rangle\\
			&=\beta_g(\langle y_1, W_{g^{-1}}\rho(\alpha_g(\langle x_1, U_{g^{-1}}x_2\rangle))y_2\rangle\\
			&=\beta_g(\langle y_1, \rho(\langle x_1, U_{g^{-1}}x_2)W_{g^{-1}}y_2\rangle)\\
			&=\beta_g\langle x_1\dot{\otimes}y_1, (U_{g^{-1}}\dot{\otimes}W_{g^{-1}})(x_2\dot{\otimes}y_2)\rangle 
		\end{align*}
		Thus $U_g\dot{\otimes} W_g$ is $\beta_g$-adjointable map with adjoint $U_{g^{-1}}\otimes_\rho W_{g^{-1}}$. As $$U_gU_{g^{-1}}=1_{E_A}=U_{g^{-1}}U_g\quad\quad W_gW_{g^{-1}}=1_{E_B}=W_{g^{-1}}W_g$$
		it follows $U_g\dot{\otimes}W_g$ is a $\beta_g$-unitary map. Finally, as $E_A\otimes_A E_B$ is dense in $E_A\otimes_\rho E_B$, any continuous $\beta_g$-linear map on $E_A\otimes_\rho E_B$ is completely determined by evaluation on simple tensors in $E_A\otimes_A E_B$. Thus $U_g\otimes_\rho W_g$ is unique.
		
		Now we prove $(2)$. Let $g, h\in G$, and let $x\dot{\otimes}y\in E_A\otimes_\rho E_B$. Then
		\begin{align*}
			(U_g\otimes_\rho W_g)(U_h\otimes_\rho W_h)(x\dot{\otimes}y)&=U_gU_hx\dot{\otimes}W_gW_hy\\
			&=U_{gh}x\dot{\otimes}W_{gh}y=(U_{gh}\otimes_\rho W_{gh})(x\dot{\otimes}y)
		\end{align*}
		Therefore, by the uniqueness in $(1)$, we have 
		$$(U_g\otimes_\rho W_g)(U_h\otimes_\rho W_h)=U_{gh}\otimes_\rho W_{gh}$$
		Hence 
		$$U\otimes_\rho W:G\to \mathcal{B}(E_A\otimes_\rho E_B)\quad\quad g\mapsto U_g\otimes_\rho W_g$$ 
		is a group homomorphism. 
		
		Now we prove $(3)$. Let $g\in G$, and let $T\in \mathcal{L}(E_A)$. Given $x\in E_A$ and $y\in E_B$, we have
		\begin{align*}
			(U_gTU_{g^{-1}}\otimes_\rho I)(x\dot{\otimes}y)&=U_gTU_{g^{-1}}x\dot{\otimes}y=U_gTU_{g^{-1}}x\dot{\otimes}W_gW_{g^{-1}}y\\
			&=(U_g\otimes_\rho W_g)(T\otimes_\rho I)(U_{g^{-1}}\otimes_\rho W_g^{-1})(x\dot{\otimes}y)
		\end{align*}
		As $E_A\otimes_A E_B$ is dense in $E_A\otimes_\rho E_B$ and both $U_gTU_{g^{-1}}\otimes_\rho I$ and $(U_g\otimes_\rho W_g)(T\otimes_\rho I)(U_{g^{-1}}\otimes_\rho W_g^{-1})$ are continuous linear maps, the equality above implies 
		$$(\text{Ad}(U_g)T)\otimes_\rho I=\text{Ad}(U_g\otimes_\rho W_{g})(T\otimes_\rho I)$$
		Hence the unital $*$-algebra map is $G$-equivariant.
		
		Finally, we prove $(4)$. Using $(1)$ and $(2)$, the data $((B, G, \beta), E_A\otimes_\rho E_B, U\otimes_\rho W)$ defines a Hilbert module dynamical system. Now suppose $((A, G, \alpha), E_A, U)$ is a continuous Hilbert module dynamical system and $((E_B, \rho), W)$ is a continuous equivariant $C^*$-correspondence; that is, $U$ and $W$ are strongly continuous. Let $(g_\lambda)_{\lambda\in\Lambda}$ be a net in $G$ which converges to $g\in G$. As $(U_{g_\lambda}\otimes_\rho W_{g_\lambda})_{\lambda\in\Lambda}$ is uniformly bounded by one and $E_A\otimes_A E_B$ is dense in $E_A\otimes_\rho E_B$, then, by the standard approximation argument, it suffices to prove $(U_{g_\lambda}\otimes_\rho W_{g_\lambda})_{\lambda\in\Lambda}$ converges strongly to $U_g\otimes_\rho W_g$ on $E_A\otimes_\rho E_B$. Using linearity, it suffices to prove the converges on simple tensors. Given $x\dot{\otimes}y\in E_A\otimes_A E_B$ and $\lambda\in\Lambda$, we have
		\begin{align*}
			&\left\|(U_{g_\lambda}\otimes_\rho W_{g_\lambda})(x\dot{\otimes}y)-(U_g\otimes_\rho W_g)(x\dot{\otimes}y)\right\|\\
			&=\left\|(U_{g_\lambda}\otimes_\rho W_{g_\lambda})(x\dot{\otimes}y)+(U_{g_\lambda}\otimes_\rho W_{g})(x\dot{\otimes}y)-(U_{g_\lambda}\otimes_\rho W_{g})(x\dot{\otimes}y)-(U_g\otimes_\rho W_g)(x\dot{\otimes}y)  \right\|\\
			&\leq\left\| (U_{g_\lambda}\otimes_\rho W_{g_\lambda})(x\dot{\otimes}y)-(U_{g_\lambda}\otimes_\rho W_{g})(x\dot{\otimes}y)\right\|+\left\|(U_{g_\lambda}\otimes_\rho W_{g})(x\dot{\otimes}y)-(U_g\otimes_\rho W_g)(x\dot{\otimes}y)\right\|\\
			&=\left\|\langle (W_{g_\lambda}-W_g)y, \rho(\langle U_{g_\lambda}x, U_{g_\lambda}x\rangle)(W_{g_\lambda}-W_g)y \rangle\right\|^{\frac{1}{2}}+\left\|\langle W_gy, \rho(\langle (U_{g_\lambda}-U_g)x, (U_{g_\lambda}-U_g)x\rangle)W_gy \rangle\right\|^{\frac{1}{2}}\\
			&\leq\left\|\rho\right\|^{\frac{1}{2}}\left\|x\right\|^{\frac{1}{2}}\left\|W_{g_\lambda}y-W_gy\right\|+\left\|\rho\right\|^{\frac{1}{2}}\left\|U_{g_\lambda}x-U_gx\right\|^{\frac{1}{2}}\left\|y\right\|
		\end{align*}
		Using $U$ and $W$ are strongly continuous, the inequality above implies $(U_{g_\lambda}x\dot{\otimes} W_{g_\lambda}y)_{\lambda\in\Lambda}$ converges to $U_gx\dot{\otimes}W_gy$. Hence we conclude $U\otimes_\rho W$ is strongly continuous.
		
	\end{proof}	
	
	For non-degenerate positive $C^*$-correspondences, we can apply the KSGNS construction to dilate the correspondence to a $C^*$-correspondence. As non-degenerate positive equivariant $C^*$-correspondences are like non-degenerate positive $C^*$-correspondence, it is reasonable to ask whether this dilation result extends to the equivariant setting. A positive answer to this question was given by Kasparov in the case $A$ is separable and $B$ is $\sigma$-unital (Theorem 3 of \cite{kasparov1980hilbert}) with subsequent results in the case $B=\mathbb{C}$ and $A$ a general $C^*$-algebra (Theorem 2.1 in \cite{Paulsen1982}). In the case of arbitrary $C^*$-algebras and $C^*$-dynamical systems, there is still a positive answer which we record below.
	
	\begin{theorem}[Theorem 6.11 in \cite{brady2026functorialityksgnsconstructionintertwiners}]\label{Thrm: Equiv Dil}\

		\noindent Let $(A, G, \alpha)$ and $(B, G, \beta)$ be $C^*$-dynamical systems. If $((E, \phi), U)$ is a  non-degenerate positive equivariant $C^*$-correspondence from $(A, G, \alpha)$ to $(B, G, \beta)$, then there exists a quadruple $(F_\phi, \pi_\phi, V_\phi, \widetilde{U})$ where
		\begin{enumerate}
			\item $F_\phi$ is a Hilbert $B$-module.
			\item $\pi_\phi:A\to \mathcal{L}(F_\phi)$ is a $*$-algebra homomorphism.
			\item $V_\phi:E\to F_\phi$ is an adjointable $B$-linear map.
			\item $\widetilde{U}:G\to \mathcal{B}(F_\phi)$ is a  representation for which $\widetilde{U}_g\in \mathcal{U}^{\beta_g}(F_\phi)$ for all $g\in G$.
		\end{enumerate}
		and the quadruple satisfies the following conditions:
		\begin{enumerate}
			\item $((F_\phi, \pi_\phi), \widetilde{U})$ is an equivariant $C^*$-correspondence from $(A, G, \alpha)$ to $(B, G, \beta)$.
			\item $\pi_\phi(A)V_\phi E$ is dense in $F_\phi$.
			\item For all $a\in A$, $\phi(a)=V_\phi^*\pi_\phi(a)V_\phi$.
			\item For all $g\in G$, $V_\phi\circ U_g=\tilde{U}_g\circ V_\phi$ 
		\end{enumerate}
		If $(F', \pi', V', U')$ is another quadruple satisfying conditions $(1)-(4)$, then there exists a $B$-linear unitary $W:F'\to F_\phi$ such that
		\begin{enumerate}
			\item $\pi_\phi(a)=W\pi'(a)W^*$ for all $a\in A$.
			\item $V_\phi=WV'$
			\item For all $g\in G$, $\widetilde{U}_g=WU'_gW^*$
		\end{enumerate}
		
	\end{theorem}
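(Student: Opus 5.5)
We build the quadruple from the KSGNS construction of Theorem \ref{Theorem: KSGNS}, taking $F_\phi=A\otimes_\phi E$, with $\pi_\phi$ and $V_\phi$ as constructed there. This already gives conditions $(2)$ and $(3)$ and the required properties of $F_\phi$, $\pi_\phi$, $V_\phi$, so the real work is to construct $\widetilde{U}$.

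\textbf{Defining $\widetilde{U}_g$.} For $g\in G$ we set $\widetilde{U}_g(a\otimes x+N_\phi)=\alpha_g(a)\otimes U_gx+N_\phi$ on $A\otimes_{\textnormal{alg}}E$. The map $(a,x)\mapsto\alpha_g(a)\otimes U_gx$ is bilinear and $\beta_g$-linear in $x$, so it descends to the algebraic tensor product. The key computation uses the intertwining relation $U_g\phi(a)=\phi(\alpha_g(a))U_g$ and $\beta_g$-adjointability of $U_g$:
$$\langle \alpha_g(a_1)\otimes U_gx_1,\alpha_g(a_2)\otimes U_gx_2\rangle_\phi=\langle U_gx_1,U_g\phi(a_1^*a_2)x_2\rangle=\beta_g(\langle a_1\otimes x_1,a_2\otimes x_2\rangle_\phi).$$
Extended to finite sums, this shows $\widetilde{U}_g$ preserves $N_\phi$. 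It also shows that $\widetilde{U}_g$ is isometric for the induced norm, since $\beta_g$ is isometric. Hence $\widetilde{U}_g$ extends to a $\beta_g$-linear isometry of $F_\phi$.

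\textbf{Unitarity and the homomorphism property.} The same identity, read with $g^{-1}$ on the right slot, shows $\widetilde{U}_g$ is $\beta_g$-adjointable with adjoint $\widetilde{U}_{g^{-1}}$. On simple tensors we have $\widetilde{U}_g\widetilde{U}_h=\widetilde{U}_{gh}$, so density gives both the homomorphism property and unitarity, exactly as in the proof of Proposition \ref{Prop: equiv interior tensor product}.

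\textbf{Condition $(1)$.} Equivariance of $\pi_\phi$ is checked on simple tensors:
$$\widetilde{U}_g\pi_\phi(b)(a\otimes x)=\alpha_g(b)\alpha_g(a)\otimes U_gx=\pi_\phi(\alpha_g(b))\widetilde{U}_g(a\otimes x).$$

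\textbf{Condition $(4)$, the main obstacle.} Here $V_\phi$ is only defined as a strict limit along an approximate unit. We fix an approximate unit $(a_\lambda)$. Then $\widetilde{U}_gV_\phi x=\lim_\lambda\alpha_g(a_\lambda)\otimes U_gx$, and $(\alpha_g(a_\lambda))$ is again an approximate unit for $A$. The argument in the text shows $V_\phi$ is independent of the choice of approximate unit: for any approximate unit, $\phi$ of it converges strictly to $1$ via the strict extension $\tilde\phi$. So this limit equals $V_\phi U_gx$.

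A cleaner alternative is to compare adjoints instead. We have $V_\phi^*\widetilde{U}_g(a\otimes x)=\phi(\alpha_g(a))U_gx=U_g\phi(a)x=U_gV_\phi^*(a\otimes x)$. Using $\beta_g$-adjointability, this gives $V_\phi^*\widetilde U_g=U_gV_\phi^*$, hence $\widetilde{U}_{g^{-1}}V_\phi=V_\phi U_{g^{-1}}$ after taking $\beta$-adjoints.

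\textbf{Uniqueness.} Let $W$ be the unitary from Theorem \ref{Theorem: KSGNS}. It is determined by $W\pi'(a)V'x=\pi_\phi(a)V_\phi x$, and it remains to check $WU'_g=\widetilde{U}_gW$. Both sides are $\beta_g$-linear and bounded, so it suffices to compare them on the dense set $\pi'(A)V'E$. There both sides send $\pi'(a)V'x$ to $\pi_\phi(\alpha_g(a))V_\phi U_gx$, using conditions $(1)$ and $(4)$ for each quadruple.
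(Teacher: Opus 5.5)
Your proof is correct, and it takes a genuinely different route from the one the paper relies on. The paper gives no proof of this statement. It imports the result from the companion paper, where, as the introduction says, the equivariant dilation is obtained functorially from the KSGNS construction viewed as a functor on positive correspondences with intertwiners as morphisms. The tools recorded just before the statement, Lemma~\ref{Lemma: beta unitary} and Proposition~\ref{Prop: rel between B-linear and beta-linear}, appear to be what make that work: they let a $\beta_g$-adjointable unitary $U_g$ be traded for a $B$-linear unitary $E\otimes_{\textnormal{inc}\circ\beta_g}B\to E$, an honest intertwiner to which the functor can be applied, before converting back to $\widetilde{U}_g\in\mathcal{U}^{\beta_g}(F_\phi)$.

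What the functorial approach buys is model-independence. The homomorphism property, the compatibility with $\pi_\phi$ and $V_\phi$, and the uniqueness clause come from functoriality rather than from computations in a particular model. The same machinery also delivers the strong-continuity statement and the strict-positive generalization mentioned in the introduction.

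Your proof instead works directly in the model $F_\phi=A\otimes_\phi E$:
\begin{itemize}
\item The single identity $\langle\widetilde{U}_g z,\widetilde{U}_g w\rangle_\phi=\beta_g(\langle z,w\rangle_\phi)$ on $A\otimes_{\textnormal{alg}}E$, together with $\widetilde{U}_g\widetilde{U}_{g^{-1}}=1$ on simple tensors, gives well-definedness modulo $N_\phi$, the isometry property, and the $\beta_g$-adjoint. This is exactly parallel to Proposition~\ref{Prop: equiv interior tensor product}.
\item Your adjoint argument for condition $(4)$ uses only $V_\phi^*(a\otimes x)=\phi(a)x$. It cleanly avoids any appeal to approximate units.
\end{itemize}
This is more elementary and self-contained, since it needs no twisted modules or categorical setup. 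The cost is that it is tied to the explicit construction.

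One small wording point. The equality $V_\phi^*\widetilde{U}_g=U_gV_\phi^*$ follows from density and continuity, not from $\beta_g$-adjointability. Adjointability is used in the next step, when you take adjoints (with $U_g^*=U_{g^{-1}}$ and $\widetilde{U}_g^*=\widetilde{U}_{g^{-1}}$) and cancel the injective $\beta_g$.
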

	
	In the case the $C^*$-dynamical systems are continuous and the representation $U$ is strongly continuous, the resulting representation $\tilde{U}$ is also strongly continuous (Theorem 6.10 in \cite{brady2026functorialityksgnsconstructionintertwiners}). Additionally, similar to the KSGNS construction, condition $(3)$ on the quadruple and non-degeneracy of $\phi$ imply $V_\phi$ is an isometry.

    \section{Hilbert Module Classical Reference Frames}

In this section we present a model of classical reference frames using commutative $C^*$-algebras. We begin with a brief review of Gelfand duality for commutative $C^*$-algebras. From this duality, we motivate the use of Markov kernels to model one commutative $C^*$-algebra being utilized as a background for another commutative $C^*$-algebra. Using Markov kernels and conditional probability theory, we generate an appropriate notion of a relational joint space, a relational joint system, and invariant relational observables. From this construction, we formulate a definition of a Hilbert module classical reference frame as well as present various examples. We conclude the section with a discussion on how our model of classical reference frames relates to the frame bundle perspective of classical reference frames. 

Throughout this section, we use the following terminology and notational conventions. Let $X$ be a locally compact Hausdorff topological space.
\begin{itemize}
    \item Denote the Borel $\sigma$-algebra on $X$ as $\text{Bor}(X)$.
    \item Denote the set of regular complex measures on $(X, \text{Bor}(X))$ as $\mathcal{M}_r(X, \text{Bor}(X))$.
    \item For each $g\in C_b(X)$, let
$$\Phi_g:\mathcal{M}_r(X, \text{Bor}(X))\to \mathbb{C}\quad\quad \Phi_g(\mu)=\int_{X} g(x)\;d\mu(x)$$
Recall, the weak topology on  $\mathcal{M}_r(X, \text{Bor}(X))$ is the weakest topology making the family $\{\Phi_{g}\}_{g\in C_b(X)}$ of linear functionals continuous. 
\item Denote the set of Radon probability measures on $X$ as $\text{Prob}_R(X,\text{Bor}(X))$. Recall, we have  $\text{Prob}_R(X,\text{Bor}(X))\subset \mathcal{M}_r(X, \text{Bor}(X))$. Define the weak topology on $\text{Prob}_R(X, \text{Bor}(X))$ as the subspace topology from $\mathcal{M}_r(X, \text{Bor}(X))$.

\end{itemize}

\subsection{Review of Gelfand Duality}
Let $\text{lcHTop}$ denote the category of locally compact Hausdorff topological spaces with morphisms given by continuous maps, and let $\text{ComC}^*\text{-Alg}_{\text{nd}}$ denote the category of commutative $C^*$-algebras with morphisms $A\to B$ given by non-degenerate $*$-algebra homomorphisms $A\to M(B)$. From Gelfand Duality, we obtain two contravariant functors
\begin{align*}
\text{lcHTop}&\to\text{ComC}^*\text{-Alg}_{\text{nd}} &   \text{ComC}^*\text{-Alg}_{\text{nd}}&\to\text{lcHTop}\\
X&\mapsto C_0(X) & A&\mapsto \Omega(A)\\
\left(f:X\to Y\right)&\mapsto \left(f^*:C_0(Y)\to C_b(X)\right) & (\rho:A\to M(B))&\mapsto (\rho^*:\Omega(B)\to \Omega(A))
\end{align*}
which form an equivalence of categories. Thus, given a commutative $C^*$-algebra $C_0(X)$, the equivalence implies that for each $\alpha\in \text{Aut}_{*\text{-alg}}(C_0(X))$, there exists a unique $\tilde{\alpha}\in \text{Homeo}(X)$ such that for all $f\in C_0(X)$ and $x\in X$, $\alpha(f)(x)=f(\tilde{\alpha}(x))$. Therefore, we obtain a bijective group homomorphism
$$\text{Aut}_{*\text{-alg}}(C_0(X))\to \text{Homeo}(X)\quad\quad \alpha\mapsto \tilde{\alpha}^{-1}$$
When $\text{Aut}_{*\text{-alg}}(C_0(X))$ is equipped with the point-norm topology and $\text{Homeo}(X)$ is equipped with a modified version of the compact-open topology, both groups form topological groups, and the bijective group homomorphism is a homeomorphism (see \cite{williams2007crossed} for additional details).

\begin{comment}

Let $G$ be a group. Given a group homomorphism
$$\gamma:G\to \text{Aut}_{*\text{-alg}}(C_0(X))\quad\quad g\mapsto \gamma_g,$$
we obtain for each $g\in G$ a unique $\tilde{\gamma}_g\in\text{Homeo}(X)$ such that for all $x\in X$ and $f\in C_0(X)$,
$$\gamma_g(f)(x)=f(\tilde{\gamma}_g(x))$$
Using properties of $\gamma$ as well as uniqueness of $\tilde{\gamma}_g$, the map
$$\tilde{\gamma}:G\to\text{Homeo}(X)\quad\quad g\mapsto \tilde{\gamma}_{g^{-1}}$$
defines a group homomorphism. From the homeomorphism between $\text{Aut}_{*\text{-alg}}(C_0(X))$ and $\text{Homeo}(X)$, the map $\gamma$ is continuous with respect to the point-norm topology if and only if $\tilde{\gamma}$ is continuous with respect to the modified version of compact open topology.
    
\end{comment}

\subsection{Viewing one space from the perspective of another space: Markov kernels}   

An important conceptual component for how we model classical/quantum reference frames is that the reference system acts a background for understanding the original system. While this statement can be understood intuitively, the notion of a "background" is not mathematically defined especially when working with $C^*$-algebras. In the commutative setting, we can understand what it means for a commutative $C^*$-algebra to be a background for another commutative $C^*$-algebra via Gelfand Duality.

Let $C_0(X)$ be a commutative $C^*$-algebra for the main system, and let $C_0(Y)$ be the commutative $C^*$-algebra for the reference system. Having $C_0(Y)$ as a background for $C_0(X)$ requires us to model what it means to view $C_0(X)$ from the perspective of $C_0(Y)$. Algebraically, it is not apparent how we would model an answer to this inquiry. In the spirit of Gelfand Duality, we can adapt our question to the locally compact Hausdorff spaces $X$ and $Y$: how do we model viewing $X$ from the perspective of $Y$. Using probability theory, we can model such a relationship using Markov kernels to condition $X$ by $Y$.

\begin{definition}\

\noindent Let $X$ and $Y$ be locally compact Hausdorff topological spaces. A Markov kernel from $Y$ to $X$ is the data of a map $\kappa:Y\times\text{Bor}(X)\to [0, 1]$ such that
\begin{enumerate}
    \item for all $y\in Y$, $\kappa(y, \cdot):\text{Bor}(X)\to [0, 1]$ defines a Radon probability measure on $(X, \text{Bor}(X))$, and
    \item for all $B\in \text{Bor}(X)$, the function $\kappa(\cdot, B):Y\to [0, 1]$ is Borel measurable.
\end{enumerate}
The Markov kernel is weakly continuous if the induced map $Y\xrightarrow{y\mapsto \kappa(y, \cdot)} \text{Prob}_R(X, \text{Bor}(X))$ is continuous with respect to the weak topology on $\text{Prob}_R(X, \text{Bor}(X))$.
    
\end{definition}

\begin{example}\

\noindent Let $X$ and $Y$ be finite discrete sets with $|X|=n$ and $|Y|=m$. As $Y$ has the discrete topology, every Markov kernel  from $Y$ to $X$ is weakly continuous. Furthermore, as $X$ and $Y$ are both discrete finite sets, Markov kernels from $Y$ to $X$ are in one-to-one correspondence with $n$-by-$m$ real valued matrices for which each column sums to one and each column contains non-negative entries. In particular, if $X=\{x_1, \ldots, x_n\}$, $Y=\{y_1, \ldots, y_m\}$, and we have such a matrix $[a^i_j]$, then the associated Markov kernel is given by
$$\kappa:Y\times\text{Bor}(X)\to [0, 1]\quad\quad \kappa(y_j, B)=\sum\limits_{k=1}^n a^k_j\delta_{x_k}(B)$$
where $\delta_{x_k}$ is the Dirac measure at $x_k$.

\end{example}

\begin{example}\

\noindent Let $X$ and $Y$ be locally compact Hausdorff spaces. Let $f:Y\to X$ be a Borel measurable function, then the map
$$\kappa:Y\times\text{Bor}(X)\to[0, 1]\quad\quad \kappa(y, B)=\delta_{f(y)}(B)$$
defines a Markov kernel. Using Urysohn's Lemma, $\kappa$ is weakly continuous if and only if $f$ is continuous. 

\begin{comment}
    Indeed, if $f$ is continuous, then for all $g\in C_b(X)$, we have the map
$$Y\to \mathbb{C}\quad\quad y\mapsto \int_X g(x)\;\kappa(y, dx)=(g\circ f)(y)$$
Since $g\circ f$ is continuous, then it follows $\kappa$ is weakly continuous. Now suppose $\kappa$ is weakly continuous, and fix $y_0\in Y$. Let $U\subset X$ be an open subset with $f(y_0)\in U$. Using Urysohn's Lemma, there exists a continuous function $g:X\to [0, 1]$ such that $g(y_0)=1$ and $g(X\backslash U)=0$. As $\kappa$ is weakly continuous and $g\in C_b(X)$, the map
$$Y\to [0, 1]\quad\quad y\mapsto \int_X g(x)\;\kappa(y, dx)=(g\circ f)(y)$$
is continuous. Let $V=(g\circ f)^{-1}((0.5, 1])$ which is an open subset of $Y$ with $y_0\in V$. Furthermore, $f(V)\subset U$. Thus, as $y_0\in Y$ was arbitrary, we conclude $f$ is continuous.
\end{comment}

\end{example}

\begin{example}\

\noindent Taking $X=\mathbb{S}^1=Y$ where we identify $\mathbb{S}^1=[0, 2\pi]/_{0\sim 2\pi}$, the map
$$\kappa:\mathbb{S}^1\times\text{Bor}(\mathbb{S}^1)\to [0, 1]\quad\quad \kappa(\varphi, B)=\frac{1}{2\pi}\int_B 1+\sin(\theta-\varphi)\;d\theta$$
defines a weakly continuous Markov kernel.

\end{example}

\begin{example}\

\noindent Let $X=\mathbb{R}=Y$. For each $\sigma\in (0, \infty)$, the map
$$\kappa:\mathbb{R}\times\text{Bor}(\mathbb{R})\to [0, 1]\quad\quad \kappa(y, B)=\frac{1}{\sqrt{2\pi \sigma}}\int_{B} \exp\left(\frac{-(x-y)^2}{2\sigma}\right) dx$$
defines a weakly continuous Markov kernel.
    
\end{example}

Using integration and the Riesz-Markov-Kakutani Representation Theorem, we obtain a bijective correspondence between weakly continuous Markov kernels and non-degenerate completely positive maps. Using this correspondence, we obtain an appropriate answer to the original question of modeling a commutative $C^*$-algebra acting as a background for another commutative $C^*$-algebra. In the case the spaces are compact Hausdorff topological spaces, this correspondence is explicitly written in the literature such as in \cite{parzygnat2017discrete}. However, the locally compact Hausdorff case does not appear to be written up. Therefore, we present the bijective correspondence for locally compact Hausdorff spaces below. As the proof of the correspondence is somewhat technical and sidetracks our development of classical reference frames, we include the proof of the statements in the following proposition in Appendix \ref{App; A}.

\begin{proposition}\label{Prop: bij cor btw MK and NCP maps}\

\noindent Let $X$ and $Y$ be locally compact Hausdorff topological spaces. 
\begin{enumerate}
    \item If $\kappa:Y\times\text{Bor}(X)\to [0, 1]$ is a weakly continuous Markov kernel, then the map
$$\phi_{\kappa}:C_0(X)\to C_b(Y)\quad\quad \phi_{\kappa}(f)(y)=\int_{X}f(x)\;\kappa(y, dx)$$
is a well-defined non-degenerate completely positive map.
\item  Let $\phi:C_0(X)\to C_b(Y)$ be a non-degenerate completely positive map. For each $y\in Y$, let $\mu_y\in \mathcal{M}_r(X, \text{Bor}(X))$ be the unique positive regular measure associated to positive linear functional
$$\text{ev}_y\circ \phi:C_0(X)\to \mathbb{C}\quad\quad (\text{ev}_y\circ\phi)(f)=\phi(f)(y)$$ from the Riesz-Markov-Kakutani Representation Theorem. Then the map
$$\kappa_\phi:Y\times\text{Bor}(X)\to [0, 1]\quad\quad \kappa_\phi(y, B)=\mu_y(B)$$
defines a weakly continuous Markov kernel which uniquely satisfies the following condition: for all $f\in C_0(X)$ and $y\in Y$,
$$\phi(f)(y)=\int_{X}f(x)\;\kappa_\phi(y, dx).$$
\item The following assignments are well-defined and inverses of each other:
$$\left\{\begin{matrix} 
  \text{weakly continuous}\\ 
  \text{Markov kernels}\\ 
  \kappa:Y\times \text{Bor}(X)\to [0, 1] 
\end{matrix}\right\}
\xrightleftharpoons[\displaystyle\kappa_\phi\mapsfrom \phi]{\quad\displaystyle\kappa\mapsto \phi_\kappa\quad}
\left\{\begin{matrix} 
  \text{non-degenerate completely}\\ 
  \text{positive maps}\\ 
  \phi:C_0(X)\to C_b(Y) 
\end{matrix}\right\}$$

\end{enumerate}
    
\end{proposition}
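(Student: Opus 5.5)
For part (1), I will first check that $\phi_\kappa$ is well defined. For $f\in C_0(X)$, the integral $\int_X f\,d\kappa(y,\cdot)$ is bounded by $\|f\|_\infty$, since each $\kappa(y,\cdot)$ is a probability measure. Because $C_0(X)\subset C_b(X)$, weak continuity of $y\mapsto\kappa(y,\cdot)$ says exactly that $y\mapsto\phi_\kappa(f)(y)$ is continuous, so $\phi_\kappa(f)\in C_b(Y)$. Linearity and positivity are immediate. Complete positivity then follows because the codomain $C_b(Y)$ is commutative: every positive map into a commutative $C^*$-algebra is completely positive.

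For non-degeneracy in part (1), I will use an approximate unit $(e_\lambda)$ of $C_0(X)$ consisting of compactly supported functions $0\le e_\lambda\le 1$, increasing to $1$ pointwise (indexed by compact sets, via Urysohn). Since $M(C_b(Y))=C_b(Y)$ is unital, strict convergence in $\mathcal{L}(C_b(Y))$ is norm convergence. So I need $\phi_\kappa(e_\lambda)\to 1$ uniformly on $Y$. This is where I expect the main obstacle.

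Pointwise convergence follows from inner regularity and monotone convergence. To upgrade to uniform convergence, I will use the remark in Section 2.2 that non-degeneracy is equivalent to the existence of a strictly continuous unital extension to $M(C_0(X))=C_b(X)$. The natural candidate is $\tilde\phi(g)(y)=\int g\,d\kappa(y,\cdot)$, which is unital because each $\kappa(y,\cdot)$ has total mass $1$. I then need strict continuity on the unit ball. My first attempt will be a Dini-type argument: $\phi_\kappa(e_\lambda)$ is an increasing net of continuous functions converging pointwise to the continuous function $1$, which gives uniform convergence on compact subsets of $Y$. But $Y$ is only locally compact, and convergence in the strict topology of $\mathcal{L}(C_b(Y))$ requires norm convergence. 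So I will instead try to argue via the Stone–Čech compactification. The functions $\phi_\kappa(e_\lambda)$ extend to $\beta Y$, and I will try to show the extended pointwise limit on $\beta Y$ is still $1$, so that Dini on the compact space $\beta Y$ applies. If this fails, I will check whether the intended notion of non-degeneracy is strict convergence in $\mathcal{L}(C_0(Y))$ acting on $C_0(Y)$, which only needs local uniform convergence, and adjust accordingly.

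For part (2), fix $y$. The functional $\mathrm{ev}_y\circ\phi$ is positive on $C_0(X)$, and Riesz–Markov–Kakutani gives a Radon measure $\mu_y$ with $\mu_y(X)=\|\mathrm{ev}_y\circ\phi\|=\lim_\lambda\phi(e_\lambda)(y)=1$ by non-degeneracy, so $\mu_y$ is a probability measure. Borel measurability of $y\mapsto\mu_y(B)$ I will prove in three steps:
\begin{enumerate}
\item it holds for open sets $B$, by writing $\mu_y(U)=\sup\{\phi(f)(y): f\prec U\}$ as a monotone limit of continuous functions (passing to countable suprema if needed, or using lower semicontinuity, which gives measurability directly);
\item it extends to all Borel sets by a Dynkin $\pi$–$\lambda$ argument;
\item uniqueness follows from the uniqueness clause in Riesz–Markov–Kakutani.
\end{enumerate}
Weak continuity against $g\in C_b(X)$ will follow by extending $\phi$ strictly continuously to $C_b(X)$, using the extension characterization of non-degeneracy, and identifying $\tilde\phi(g)(y)=\int g\,d\mu_y$ by monotone convergence on $g e_\lambda$. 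Then $y\mapsto\int g\,d\mu_y=\tilde\phi(g)(y)$ is continuous.

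For part (3), the composite $\phi\mapsto\kappa_\phi\mapsto\phi_{\kappa_\phi}$ is the identity by the defining property in (2). The other composite $\kappa\mapsto\phi_\kappa\mapsto\kappa_{\phi_\kappa}$ returns $\kappa$ by uniqueness of the Radon measure representing each functional $\mathrm{ev}_y\circ\phi_\kappa$.
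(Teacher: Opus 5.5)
The one real problem is in your non-degeneracy step for (1): your primary target is the wrong one, and it is false. The codomain $C_b(Y)$ must be read as $\mathcal{L}(C_0(Y))=M(C_0(Y))$, not as $\mathcal{L}(C_b(Y))$; the paper uses the correspondence $(C_0(Y),\phi)$ throughout Section 3. Suppose you insisted on $\phi_\kappa(e_\lambda)\to 1$ in norm. Then (1) already fails for $X=Y=\mathbb{R}$ and $\kappa(y,\cdot)=\delta_y$. In that case $\phi_\kappa$ is the inclusion $C_0(\mathbb{R})\hookrightarrow C_b(\mathbb{R})$, and $\|e-1\|_{\sup}\ge 1$ for every $e\in C_0(\mathbb{R})$. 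The Stone--\v{C}ech detour cannot rescue this. The extensions of the $e_\lambda$ to $\beta\mathbb{R}$ vanish on $\beta\mathbb{R}\setminus\mathbb{R}$, so the pointwise limit there is $0$, not $1$.

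So commit to your fallback. With it, your Dini argument is already a complete proof. For bounded nets, strict convergence in $M(C_0(Y))$ is exactly uniform convergence on compact subsets of $Y$ (Lemma~\ref{Lemma: Strict topology and uni con on compact}), and that is what Dini delivers. Two repairs are needed.
\begin{enumerate}
\item Dini requires a genuinely increasing net, and Urysohn functions indexed by compact sets need not increase. Use $\{f\in C_c(X):0\le f\le 1\}$ ordered pointwise; it is directed under $\max$ and is an approximate unit.
\item Monotone convergence is not available for nets. Instead, get $\int e_\lambda\,d\mu\to 1$ from a compact $K$ with $\mu(K)>1-\epsilon$ together with $e_\lambda\to 1$ uniformly on $K$. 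The same estimate, $\bigl|\int g e_\lambda\,d\mu_y-\int g\,d\mu_y\bigr|\le\|g\|_{\sup}\bigl(1-\int e_\lambda\,d\mu_y\bigr)$, is what identifies $\tilde\phi(g)(y)$ with $\int g\,d\mu_y$ in your part (2).
\end{enumerate}

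With these repairs, your route to non-degeneracy is genuinely different from the paper's, and lighter. The paper uses the extension characterization. It shows the integral extension $\tilde\phi_\kappa:C_b(X)\to C_b(Y)$ is strictly continuous on the unit ball. For this it uses that $\{\kappa(y,\cdot):y\in K\}$ is weakly compact for compact $K\subset Y$, hence uniformly tight (Lemma~\ref{Lemma: Prokhorov Result}, a Prokhorov-type result from Bogachev). It then invokes Lance's Corollary 5.7. Your argument lands directly on the definition: an approximate unit mapped strictly to $1$. It needs only continuity of $y\mapsto\int f\,d\kappa(y,\cdot)$ for $f\in C_0(X)$ plus total mass one, and avoids tightness altogether.

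In (2) you also differ mildly. Your measurability argument is self-contained and works directly on $Y$. You use lower semicontinuity of $y\mapsto\mu_y(U)=\sup\{\phi(f)(y): f\in C_c(X),\ 0\le f\le 1,\ \text{supp}\,f\subset U\}$, followed by Dynkin. The paper instead composes the weakly continuous map $y\mapsto\mu_y$ with the Borel map $\text{ev}_B$ on $\text{Prob}_R(X,\text{Bor}(X))$, and defers that lemma to Parzygnat. For weak continuity, the paper uses a three-term estimate with an approximate unit converging uniformly on a compact neighbourhood of $y_0$; you go through the strictly continuous extension $\tilde\phi$ instead. Part (3) is the same as in the paper.
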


Now let us suppose the algebras are carrying symmetries from a group $G$; that is, we have $C^*$-dynamical systems $(C_0(X), G, \alpha)$ and $(C_0(Y), G, \beta)$. As before, we want to answer the question of how we model viewing $C_0(X)$ from the perspective of $C_0(Y)$; however, as we now have symmetries from $G$, we modify the question to ask how to model viewing $C_0(X)$ from the perspective of $C_0(Y)$ while preserving the symmetries of $\alpha$ and $\beta$. Translating the question in terms of the topological spaces via Gelfand Duality, we look to model viewing $X$ through $Y$ while preserving the symmetries from the induced actions $\tilde{\alpha}$ and $\tilde{\beta}$ on $X$ and $Y$, respectively. As our answer to the reformulated inquiry should reduce to our answer when $G$ is trivial, we look to use Markov kernels to condition $X$ by $Y$. To preserve the symmetries on $X$ and $Y$, we require the conditioning to be $G$-invariant; that is, we utilize $G$-invariant Markov kernels.

\begin{definition}\

\noindent Let $X$ and $Y$ be locally compact Hausdorff spaces, and let $G$ be a group acting on $X$ and $Y$ by $\tilde{\alpha}$ and $\tilde{\beta}$, respectively. A Markov kernel  $\kappa:Y\times \text{Bor}(X)\to [0, 1]$ is $G$-invariant if for all $g\in G$, $y\in Y$, and $B\in\text{Bor}(X)$, $\kappa(\tilde{\beta}_g(y), \tilde{\alpha}_g(B))=\kappa(y, B)$.

\end{definition}

\begin{example}\

\noindent Let $X=\{x_1, x_2\}$ and $Y=\{y_1, y_2\}$. Let $\tilde{\alpha}$ and $\tilde{\beta}$ be the unique transitive $\mathbb{Z}/2\mathbb{Z}$ actions on $X$ and $Y$, respectively. We claim, with respect to these actions, every $\mathbb{Z}/2\mathbb{Z}$-invariant Markov kernel from $Y$ to $X$ is of the form
$$\kappa_p:Y\times \text{Bor}(X)\to [0, 1]\quad\quad \kappa_p(y, B)=\begin{cases}
    (1-p)\delta_{x_1}(B)+p\delta_{x_2}(B) & y=y_1\\
    p\delta_{x_1}(B)+(1-p)\delta_{x_2}(B) & y=y_2
\end{cases}$$
for $p\in [0, 1]$. First, it is clear that $\kappa_p$ is a $\mathbb{Z}/2\mathbb{Z}$-invariant Markov kernel. To see that all such kernels are of this form, suppose $\kappa$ is a $\mathbb{Z}/2\mathbb{Z}$-invariant Markov kernel, then we can uniquely write $\kappa$ as
$$\kappa(y, B)=\begin{cases}
    a\delta_{x_1}(B)+b\delta_{x_2}(B) & y=y_1\\
    c\delta_{x_1}(B)+d\delta_{x_2}(B) & y=y_2
\end{cases}$$
where $a, b, c, d\in \mathbb{R}$. As $\kappa$ is a Markov kernel, we have $a+b=1=c+d$. As $\kappa$ is $\mathbb{Z}/2\mathbb{Z}$ invariant, we have $a=d$ and $c=b$. Therefore 
$$\kappa(y, B)=\begin{cases}
    (1-b)\delta_{x_1}(B)+b\delta_{x_2}(B) & y=y_1\\
    b\delta_{x_1}(B)+(1-b)\delta_{x_2}(B) & y=y_2
\end{cases}$$
for $b\in \mathbb{R}$. As we must have $1-b, b\geq 0$, it follows $b\in [0, 1]$. Thus $\kappa=\kappa_b$.

\end{example}

\begin{example}\

\noindent Let $X=\{x_0, \ldots, x_{n-1}\}$ and $Y=\{y_0, \ldots, y_{m-1}\}$ with $n, m>1$ and $\text{gcd}(n, m)=1$. Take $G=\mathbb{Z}/n\mathbb{Z}\times \mathbb{Z}/m\mathbb{Z}$. As $\text{gcd}(n,m)=1$, then $([1], [1])$ generates $G$. Define the action of $G$ on $X$ and $Y$ generated by
$$([1], [1])\cdot x_i=x_{i+1\text{ mod }n}\quad\quad ([1], [1])\cdot y_{j}=y_{j+1\text{ mod }m}$$

We claim the only $G$ invariant Markov kernel from $Y$ to $X$ with respect to the actions above is 
$$\kappa:Y\times \text{Bor}(X)\to [0, 1]\quad\quad \kappa(y_j, B)=\frac{1}{n}\sum\limits_{k=0}^{n-1}\delta_{x_k}(B)$$ First, it is clear the Markov kernel above is $G$-invariant. To see that this is the only $G$-invariant Markov kernel, suppose $\kappa:Y\times \text{Bor}(X)\to [0, 1]$ is a $G$-invariant Markov kernel with associated matrix $[a^i_j]$. In order for $\kappa$ to be $G$-invariant, we must have for each $0\leq i\leq n-1$ and $0\leq j\leq m-1$
\begin{align*}
    a^{1+i}_{1+j}&=\kappa(y_j, \{x_i\})=\kappa(([1], [1])\cdot y_j, ([1], [1])\cdot\{x_i\})\\
    &=\sum\limits_{k=0}^{n-1} a_{1+(j+1) \text{mod }m }^{1+k}\delta_{x_{k}}(\{x_{i+1\text{ mod }n}\})=a^{1+(i+1)\text{mod }n}_{1+(j+1)\text{mod }m }
\end{align*}
Using this as well as  $\text{gcd}(n, m)=1$, it follows $a^{i}_j=a^{k}_l$ for all $1\leq i, k\leq n$ and $1\leq j, l\leq m$. Since the columns of $[a^i_j]$ must sum to one, we have $a^i_j=\frac{1}{n}$ for each $1\leq i\leq n$ and $1\leq j\leq m$. Hence
$$\kappa(y_j, B)=\frac{1}{n}\sum\limits_{k=0}^{n-1} \delta_{x_k}(B).$$

\end{example}

\begin{example}\

\noindent Let $X=\mathbb{S}^1=Y$ where we identify $\mathbb{S}^1=[0, 2\pi]/_{0\sim 2\pi}$. Let $G=\mathbb{S}^1$, and let $\mathbb{S}^1$ act by multiplication on $\mathbb{S}^1$. Then the weakly continuous Markov kernel
$$\kappa:\mathbb{S}^1\times\text{Bor}(\mathbb{S}^1)\to [0, 1]\quad\quad \kappa(\varphi, B)=\frac{1}{2\pi}\int_B 1+\sin(\theta-\varphi)\;d\theta$$
is $\mathbb{S}^1$-invariant.

\end{example}

\begin{example}\

\noindent Let $X=\mathbb{R}=Y$. Take $G=\mathbb{R}$, and let $\mathbb{R}$ act on $\mathbb{R}$ via translation. For each $\sigma\in (0, \infty)$, the map
$$\kappa:\mathbb{R}\times\text{Bor}(\mathbb{R})\to [0, 1]\quad\quad \kappa(y, B)=\frac{1}{\sqrt{2\pi \sigma}}\int_{B} \exp\left(\frac{-(x-y)^2}{2\sigma}\right) dx$$
defines a $\mathbb{R}$-invariant weakly continuous Markov kernel.
    
\end{example}

Using Proposition \ref{Prop: bij cor btw MK and NCP maps}, we obtain a bijective correspondence between $G$-invariant weakly continuous Markov kernels and non-degenerate completely positive $G$-equivariant maps. Using this correspondence, we obtain an appropriate answer to the original question modeling $C_0(X)$ from the perspective of $C_0(Y)$ while preserving the symmetries from $\alpha $ and $\beta$.

\begin{proposition}\label{Prop: G-invariant Markov kernel stuff}\

\noindent Let $X$ and $Y$ be locally compact Hausdorff spaces, and let $(C_0(X), G, \alpha)$ and $(C_0(Y), G, \beta)$ be $C^*$-dynamical systems.
\begin{enumerate}
    \item If $\kappa:Y\times\text{Bor}(X)\to [0, 1]$ is a $G$-invariant weakly continuous Markov kernel, then the map $\phi_\kappa:C_0(X)\to C_b(Y)$ given in Proposition \ref{Prop: bij cor btw MK and NCP maps} is a non-degenerate completely positive $G$-equivariant map.
    \item If $\phi:C_0(X)\to C_b(Y)$ is a unital completely positive $G$-equivariant map, then the map $\kappa_\phi:Y\times \text{Bor}(X)\to [0, 1]$ given in Proposition \ref{Prop: bij cor btw MK and NCP maps} is a $G$-invariant weakly continuous Markov kernel.
    \item The following assignment are well-defined and inverses of each other:
\begin{align*}
    \left\{\begin{matrix}
   G \text{-invariant weakly continuous}\\
    \text{Markov kernels}\\
    \kappa:Y\times \text{Bor}(X)\to [0, 1]
\end{matrix}\right\}&\xrightleftharpoons[\displaystyle\kappa_\phi\mapsfrom \phi]{\quad\displaystyle\kappa\mapsto \phi_\kappa\quad} \left\{\begin{matrix}
    \text{non-degenerate completely positive}\\
   G\text{-equivariant maps}\\
    \phi:C_0(X)\to C_b(Y)
\end{matrix}\right\}
\end{align*}

\end{enumerate}

\end{proposition}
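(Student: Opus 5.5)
Since Proposition \ref{Prop: bij cor btw MK and NCP maps} already establishes the bijection between weakly continuous Markov kernels and non-degenerate completely positive maps $C_0(X)\to C_b(Y)$, the plan is to show that, under this bijection, $G$-invariance of $\kappa$ corresponds exactly to $G$-equivariance of $\phi$. Part (3) then follows by restriction. Here equivariance means $\phi(\alpha_g(f))=\tilde{\beta}_g(\phi(f))$, where $\tilde{\beta}_g$ is the extension of $\beta_g$ to $M(C_0(Y))=C_b(Y)$. Via Gelfand duality, $\alpha_g(f)=f\circ\tilde{\alpha}_g$ and $\tilde{\beta}_g(h)=h\circ\tilde{\beta}_g$ for $h\in C_b(Y)$, with the homeomorphisms indexed consistently with the convention fixed in the definition of a $G$-invariant Markov kernel. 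The first step is to check this description of the extension on $C_b(Y)$: from $\tilde{\beta}(T)b=\beta(T\beta^{-1}(b))$ one gets pointwise that $\tilde{\beta}_g(h)(y)=h(\tilde{\beta}_g(y))$.

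For (1), fix a $G$-invariant weakly continuous kernel $\kappa$. By Proposition \ref{Prop: bij cor btw MK and NCP maps}, $\phi_\kappa$ is non-degenerate and completely positive, so only equivariance remains. Invariance says the pushforward of $\kappa(\tilde{\beta}_g(y),\cdot)$ under $\tilde{\alpha}_g^{-1}$ equals $\kappa(y,\cdot)$. Equivalently, for every Borel $B$, $\kappa(\tilde\beta_g(y),B)=\kappa(y,\tilde\alpha_g^{-1}(B))$. The change of variables formula for pushforward measures then gives
$$\phi_\kappa(\alpha_g f)(y)=\int_X f(\tilde\alpha_g(x))\,\kappa(y,dx)=\int_X f(x)\,\kappa(\tilde\beta_g(y),dx)=\phi_\kappa(f)(\tilde\beta_g(y)).$$
Care is needed here to match left versus right actions against the sign conventions in the definition of invariance; I expect the direction to work out for whichever compatible convention is used, possibly with $g$ replaced by $g^{-1}$.

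For (2), let $\phi$ be equivariant and non-degenerate. The statement says unital, but in $C_b(Y)$ the strict extension makes non-degenerate equivalent to unital on $M(C_0(X))$, and I would note this. Proposition \ref{Prop: bij cor btw MK and NCP maps} gives a weakly continuous Markov kernel $\kappa_\phi$. Reading the displayed computation backwards shows that, for each fixed $y$, the two Radon probability measures $B\mapsto\kappa_\phi(\tilde\beta_g(y),\tilde\alpha_g(B))$ and $B\mapsto\kappa_\phi(y,B)$ integrate every $f\in C_0(X)$ to the same value. The uniqueness part of Riesz--Markov--Kakutani then forces them to be equal on $\text{Bor}(X)$.

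The one point requiring genuine care is that the pushforward of a Radon measure under the homeomorphism $\tilde{\alpha}_g^{-1}$ is again Radon. This holds because homeomorphisms preserve compact and open sets, hence inner and outer regularity, so uniqueness applies. Part (3) then follows because the mutually inverse maps of Proposition \ref{Prop: bij cor btw MK and NCP maps} carry the $G$-invariant subset onto the $G$-equivariant subset by (1) and (2). The main obstacle is therefore bookkeeping rather than analysis: keeping the Gelfand-duality identification $\alpha\mapsto\tilde\alpha^{-1}$ and the multiplier extension $\tilde\beta_g$ consistent, so that invariance and equivariance line up with the same group element.
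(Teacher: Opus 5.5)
Your proposal is correct and follows essentially the same route as the paper: a pushforward change-of-variables computation for (1), uniqueness in Riesz--Markov--Kakutani for (2), and restriction of the earlier kernel/CP-map bijection for (3), with the hypothesis in (2) read as non-degenerate exactly as the paper's proof does. Your explicit checks add detail the paper leaves implicit: that $\tilde{\beta}_g$ acts on $C_b(Y)$ by composition, and that the pushforward of a Radon measure under a homeomorphism is Radon. Your indexing $\alpha_g(f)=f\circ\tilde\alpha_g$ differs from the paper's $f\circ\tilde\alpha_{g^{-1}}$, but harmlessly, since invariance is imposed for every $g$ and is unchanged under $g\mapsto g^{-1}$.
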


\begin{proof}\

\noindent The proof of $(3)$ follows from both $(1)$ and $(2)$ as well as from the assignments given in Proposition \ref{Prop: bij cor btw MK and NCP maps}. Therefore we only prove $(1)$ and $(2)$. Furthermore, using the induced maps from Proposition \ref{Prop: bij cor btw MK and NCP maps} and Proposition \ref{Prop: bij cor btw MK and NCP maps}, we only verify in $(1)$ that $\phi_\kappa$ is $G$-equivariant, and we only verify in $(2)$ that $\kappa_\phi$ is $G$-invariant.

We start by proving $(1)$. Let $\kappa:Y\times\text{Bor}(X)\to [0, 1]$ be a $G$-invariant weakly continuous Markov kernel. To verify $\tilde{\kappa}$ is $G$-equivariant, let $f\in C_0(X)$, $y\in Y$, and $g\in G$; then, we have
\begin{align*}
    \phi_\kappa(\alpha_g(f))(y)&=\int_{X} \alpha_g(f)(x)\;\kappa(y, dx)=\int_X f(\tilde{\alpha}_{g^{-1}}(x))\;\kappa(y, dx)\\
    &=\int_X f(x)\;d(\kappa(y, \cdot)\circ\tilde{\alpha}_{g})(x)=\int_X f(x)\;\kappa(\tilde{\beta}_{g^{-1}}(y), dx)\\
    &=\phi_\kappa(f)(\tilde{\beta}_{g^{-1}}(y))=\beta_g(\phi_\kappa(f))(y)
\end{align*}
Therefore $\phi_\kappa(\alpha_g(f))=\beta_g(\phi_\kappa(f))$ for all $f\in C_0(X)$ and $g\in G$; that is, $\phi_\kappa$ is $G$-equivariant.

We now prove $(2)$. Let $\phi:C_0(X)\to C_b(Y)$ be a non-degenerate completely positive $G$-equivariant map. For each $y\in Y$, let $\mu_y\in\text{Prob}_r(X, \text{Bor}(X))$ be the unique Radon probability measure such that for all $f\in C_0(X)$
$$\phi(f)(y)=\int_X f(x)\;d\mu_y(x).$$
Given $f\in C_0(X)$, $y\in Y$, and $g\in G$, we have
\begin{align*}
    \phi(\alpha_g(f))(y)&=\int_X \alpha_g(f)(x)\;d\mu_y(x)=\int_{X}f(\tilde{\alpha}_{g^{-1}}(x))\;d\mu_y(x)=\int_Xf(x) \;d(\mu_y\circ\tilde{\alpha}_{g})(x)
\end{align*}
As $\phi$ is $G$-equivariant, we also have
\begin{align*}
    \phi(\alpha_g(f))(y)&=\beta_g(\phi(f))(y)=\phi(f)(\tilde{\beta}_{g^{-1}}(y))=\int_{X}f(x)\;d\mu_{\tilde{\beta}_{g^{-1}}(y)}(x)
\end{align*}
Therefore
$$\phi(f)(\tilde{\beta}_{g^{-1}}(y))=\int_{X}f(x)\;d\mu_{\tilde{\beta}_{g^{-1}}(y)}(x)=\int_Xf(x) \;d(\mu_y\circ\tilde{\alpha}_g)(x)$$
for all $f\in C_0(X)$, $y\in Y$, and $g\in G$. From the uniqueness of $\mu_y$, we know $\mu_y\circ \tilde{\alpha}_{g}=\mu_{\tilde{\beta}_{g^{-1}}(y)}$ for all $y\in Y$ and $g\in G$. Therefore, given $y\in Y$, $B\in\text{Bor}(X)$, and $g\in G$, we have
\begin{align*}
    \kappa_\phi(\tilde{\beta}_g(y), \tilde{\alpha}_g(B))&=\mu_{\tilde{\beta}_g(y)}(\tilde{\alpha}_g(B))=\mu_y(\tilde{\alpha}_{g^{-1}}(\tilde{\alpha}_g(B)))=\mu_y(B)=\kappa_\phi(y, B)
\end{align*}
Hence $\kappa_\phi$ is a $G$-invariant weakly continuous Markov kernel.

\end{proof}
    
\subsection{Building relational invariant observables from a Markov kernel}

Let $C_0(X)$ and $C_0(Y)$ be $C^*$-algebras for the main system and reference system, respectively. Given a non-degenerate completely positive map $\phi:C_0(X)\to C_b(Y)$ encoding how $C_0(Y)$ is a background for $C_0(X)$, we want to build a $C^*$-algebra for which the observables represent relational observables. As remarked in the introduction, this algebra is given by a joint system which remembers that $C_0(Y)$ is the background. Using the weakly continuous Markov kernel associated to $\phi$, denoted by $\kappa_\phi$, we are able to construct such an algebra as the $C^*$-algebra of adjointable operators on a Hilbert module.

Let us begin with the usual joint system of $C_0(X)$ and $C_0(Y)$ given by the $C^*$-algebra $$C_0(X)\otimes_{*}C_0(Y)\cong C_0(X\times Y).$$
Under pointwise multiplication, we make $ C_0(X\times Y)$ into a $C_0(Y)$-module. Using $\kappa_\phi$, we define a $C_0(Y)$-valued pairing given by
$$\langle\cdot, \cdot\rangle: C_0(X\times Y)\times C_0(X\times Y)\to C_0(Y)\quad\quad \langle f, g\rangle(y)=\int_{X}\overline{f(x, y)}g(x, y)\;\kappa_\phi(y, dx).$$
Using this pairing, $C_0(X\times Y)$ becomes a pre-inner product $C_0(Y)$-module (see Lemma \ref{Lemma: building Gamma_0}). Note, we do not obtain an inner product module as the pairing is not necessarily non-degenerate; indeed, consider the case $X=[0, 1]=Y$, $\kappa(y, B)=\delta_{0}(B)$, and $f(x, y)=x$. 

To obtain an inner product $C_0(Y)$-module, we quotient $C_0(X\times Y)$ by the submodule generated by elements in $C_0(X\times Y)$ with norm zero:
$$N_{\kappa_\phi}=\left\{f\in C_0(X\times Y): \int_{X}|f(x, y)|^2\;\kappa_\phi(y, dx)=0\text{ for all }y\in Y\right\}.$$
The equivalences classes in $C_0(X\times Y)/N_{\kappa_\phi}$ represent observables in the usual joint system which are indistinguishable relative to the background provided by $C_0(Y)$ and $\phi$. To obtain a $C^*$-algebra representing the joint system relative to the background, we complete the module $C_0(X\times Y)/N_{\kappa_\phi}$ to a Hilbert $C_0(Y)$-module, denoted by $\Gamma_{\kappa_\phi}$, and take the unital $C^*$-algebra $\mathcal{L}(\Gamma_{\kappa_\phi})$ as the joint system. We interpret $\Gamma_{\kappa_\phi}$ as a conditional/relational joint space for $C_0(X)$ and $C_0(Y)$, and we interpret observables in $\mathcal{L}(\Gamma_{\kappa_\phi})$ as conditional/relational observables. We note that in the case we have $C^*$-dynamical systems $(C_0(X), G, \alpha)$ and $(C_0(Y), G, \beta)$ as well as $\kappa_\phi$ is $G$-invariant, the actions of $\alpha $ and $\beta$ descend to defining a unique Hilbert module dynamical system on $\Gamma_{\kappa}$ (see Lemma \ref{Lemma: G-action on the module}). Thus the invariant relational observables are in the fixed point subalgebra $\mathcal{L}(\Gamma_{\kappa_\phi})^G$. 

We now determine in what sense $\Gamma_{\kappa_\phi}$ is unique. By viewing $C_0(Y)\subset C_0(X\times Y)$ and viewing $C_0(Y)$ as a Hilbert module over itself, we obtain an adjointable map $$V_{\kappa_\phi}:C_0(Y)\to \Gamma_{\kappa_\phi}\quad\quad V_{\phi}(f)=f+N_{\kappa_\phi}\in \Gamma_{\kappa_\phi}.$$
(see Lemma \ref{Lemma: KSGNS construction in terms of kappa}). Furthermore, using pointwise multiplication, we obtain a $*$-algebra homomorphism
$$C_0(X)\to \mathcal{L}(C_0(X\times Y))\quad\quad f\mapsto M_f$$
which descends to a well-defined $*$-algebra homomorphism $\pi_{\kappa_\phi}:C_0(X)\to \mathcal{L}(\Gamma_{\kappa_\phi})$ (see Lemma \ref{Lemma: KSGNS construction in terms of kappa}). The triple $(\Gamma_{\kappa_\phi}, \pi_{\kappa_\phi}, V_{\kappa_\phi})$ satisfies the conditions of the KSGNS construction for the non-degenerate positive $C^*$-correspondences $(C_0(Y), \phi)$ (see Lemma \ref{Lemma: KSGNS construction in terms of kappa}). In the case we have $C^*$-dynamical systems, the triple along with the induced group homomorphism $G\to \mathcal{B}(\Gamma_\kappa)$ satisfy the conditions for the quadruple when applying the equivariant KSGNS construction to the non-degenerate positive equivariant $C^*$-correspondence $((C_0(Y), \phi), \beta)$ (see Lemma \ref{Lemma: G-action on the module}). Thus the algebra of relational observables is unitarily uniquely given by the (equivariant) KSGNS dilation of the correspondence determined by $\phi$ which can be explicitly constructed in terms of the associated Markov kernel $\kappa_\phi$.

We now verify the various claims we made in the construction above.

\begin{lemma}\label{Lemma: building Gamma_0}\

\noindent Let $X$ and $Y$ be locally compact Hausdorff spaces, and let $\kappa$ be a weakly continuous Markov kernel from $Y$ to $X$. Then $C_0(X\times Y)$ forms a pre-inner product $C_0(Y)$-module under pointwise multiplication from $C_0(Y)$ and with respect to the $C_0(Y)$-valued pairing
$$\langle\cdot, \cdot\rangle: C_0(X\times Y)\times C_0(X\times Y)\to C_0(Y)\quad\quad \langle f, g\rangle (y)=\int_{X}\overline{f(x, y)}g(x, y)\;\kappa(y, dx).$$

\end{lemma}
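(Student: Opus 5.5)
The algebraic axioms are routine; the substance of the statement is that the pairing actually lands in $C_0(Y)$. For $f,g\in C_0(X\times Y)$ and $h\in C_0(Y)$, the function $(fh)(x,y)=f(x,y)h(y)$ lies in $C_0(X\times Y)$ because $h$ is bounded and $f$ vanishes at infinity. Linearity of $\langle f,\cdot\rangle$ and the identity $\langle f,gh\rangle=\langle f,g\rangle h$ follow from pulling $h(y)$ out of the $x$-integral. Conjugate symmetry $\langle f,g\rangle^*=\langle g,f\rangle$ is immediate because each $\kappa(y,\cdot)$ is a positive measure. Positivity $\langle f,f\rangle(y)=\int_X|f(x,y)|^2\,\kappa(y,dx)\ge 0$ holds pointwise, and in the commutative algebra $C_0(Y)$ this is exactly positivity.

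Each integrand $x\mapsto \overline{f(x,y)}g(x,y)$ is bounded and continuous, so the integrals exist. The bound
$$|\langle f,g\rangle(y)|\le \sup_{x\in X}|f(x,y)|\,|g(x,y)|$$
uses that $\kappa(y,\cdot)$ is a probability measure.

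The main step is to show that $y\mapsto\langle f,g\rangle(y)$ is continuous and vanishes at infinity. Since $F:=\bar f g\in C_0(X\times Y)$, it suffices to prove that $\Psi(F)(y):=\int_X F(x,y)\,\kappa(y,dx)$ lies in $C_0(Y)$ for every $F\in C_0(X\times Y)$.

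For vanishing at infinity, fix $\varepsilon>0$ and choose a compact $K\subset X\times Y$ with $|F|<\varepsilon$ off $K$. For $y$ outside the compact projection $\mathrm{pr}_Y(K)$, we have $|F(x,y)|<\varepsilon$ for all $x$, hence $|\Psi(F)(y)|\le\varepsilon$.

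For continuity, I would first treat elementary tensors $F=f_1\otimes f_2$ with $f_1\in C_0(X)$ and $f_2\in C_0(Y)$. Then $\Psi(F)=\phi_\kappa(f_1)\,f_2$, and this is continuous because $\kappa$ is weakly continuous; equivalently, $\phi_\kappa(f_1)\in C_b(Y)$ by Proposition \ref{Prop: bij cor btw MK and NCP maps}. By linearity, $\Psi$ maps the algebraic tensor product $C_0(X)\otimes_{\textnormal{alg}}C_0(Y)$ into $C_0(Y)$. By Stone--Weierstrass, this algebraic tensor product is sup-norm dense in $C_0(X\times Y)\cong C_0(X)\otimes_* C_0(Y)$. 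Since $\|\Psi(F)\|_\infty\le\|F\|_\infty$, the map $\Psi$ is a contraction, and $C_0(Y)$ is closed in $C_b(Y)$, so $\Psi(F)\in C_0(Y)$ for all $F$. This density argument disposes of the continuity question in one stroke, and the hand-computed vanishing at infinity is then redundant.

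I expect the main obstacle to be making this approximation step rigorous with no countability assumptions on $X$ or $Y$. The Stone--Weierstrass density avoids needing joint continuity estimates for $(x,y)\mapsto F(x,y)$ against the varying measures. The remaining point to check is that uniform limits commute with the integrals, and this holds because each $\kappa(y,\cdot)$ has total mass one.
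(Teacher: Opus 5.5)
Your proof is correct. It differs from the paper's in how you show continuity of $y\mapsto\int_X F(x,y)\,\kappa(y,dx)$.

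The paper reduces to this map $\tilde{\kappa}(F)$ exactly as you do. It also handles vanishing at infinity exactly as you do, via $\mathrm{pr}_Y(K)$. For continuity, however, it gives a direct $\varepsilon$-argument at a fixed $y_0$, splitting the difference into two terms:
\begin{itemize}
\item $\int_X|F(x,y)-F(x,y_0)|\,\kappa(y,dx)$, controlled by a compactness/tube-lemma estimate on $\mathrm{pr}_X(K)$ together with smallness of $F$ off $K$;
\item $\bigl|\int_X F(x,y_0)\,\kappa(y,dx)-\int_X F(x,y_0)\,\kappa(y_0,dx)\bigr|$, controlled by weak continuity applied to $F(\cdot,y_0)\in C_0(X)$.
\end{itemize}

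You instead note three facts: $\Psi$ is a sup-norm contraction; it sends $f_1\otimes f_2$ to $\phi_\kappa(f_1)f_2\in C_0(Y)$; and elementary tensors are dense by Stone--Weierstrass. The paper itself proves this density later, in Lemma \ref{Lemma: KSGNS construction in terms of kappa}, so there is no circularity. Your route avoids the joint-continuity estimate entirely and yields continuity and vanishing at infinity at once, at the cost of invoking an approximation theorem. The paper's route is elementary and pointwise.

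One small correction: at the closure step, $\Psi(F)$ is a priori only a bounded function. The fact you need is that $C_0(Y)$ is closed in $\ell^\infty(Y)$ under the sup norm, not merely in $C_b(Y)$. This holds, so your argument goes through unchanged.
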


\begin{proof}\

\noindent It is clear $C_0(X\times Y)$ is a $C_0(Y)$-module under pointwise addition, pointwise scalar multiplication, and pointwise multiplication from $C_0(Y)$. Provided the pairing is well-defined, it is clear the pairing makes $C_0(X\times Y)$ into into a pre-inner product $C_0(Y)$-module. Thus, we only verify the pairing is well-defined.

Let $f\in C_0(X\times Y)$, and let 
$$\tilde{\kappa}(f):Y\to\mathbb{C}\quad\quad \tilde{\kappa}(f)(y)=\int_{X}f(x, y)\;\kappa(y, dx).$$
Note, for all $f, g\in C_0(X\times Y)$, $\langle f, g\rangle=\kappa(\overline{f}g)$. Thus, to know the pairing is well-defined, it suffices to show for each $f\in C_0(X\times Y)$ that $\tilde{\kappa}(f)$ is a well-defined, continuous function which vanishes at infinity. Fix $f\in C_0(X\times Y)$. First, for each $y\in Y$, the map 
$$X\to \mathbb{C}\quad\quad x\mapsto f(x, y)$$
is a continuous and vanishes at infinity. As $\kappa(y, \cdot)$ is a finite Borel measure, it follows
$$\int_{X}f(x, y)\;\kappa(y, dx)$$ is well-defined for each $y\in Y$. Thus, as a function, $\tilde{\kappa}(f)(y)$ is well-defined. 

To see $\tilde{\kappa}(f)$ vanishes at infinity, let $\epsilon>0$. As $f\in C_0(X\times Y)$, there exists a compact set $K\subset X\times Y$ such that $|f(x, y)|<\epsilon$ for all $(x, y)\in (X\times Y)\backslash K$. Let $K_Y=\text{pr}_Y(K)$ which is a compact subset of $Y$. Fix $y\in Y\backslash K_Y$, then for all $x\in X$, $(x, y)\not\in K$. Therefore $|f(x, y)|<\epsilon$. Hence for all $y\in Y\backslash K_Y$,
$$|\tilde{\kappa}(f)(y)|\leq \int_{X}|f(x, y)|\;\kappa(y, dx)\leq \int_{X}\epsilon\;\kappa(y, dx)=\epsilon$$
showing $\tilde{\kappa}(f)$ vanishes at infinity.

Now we verify $\tilde{\kappa}(f)$ is continuous. Fix $y_0\in Y$, and let $\epsilon>0$. Our goal is to determine an open subset $U\subset Y$ with $y_0\in U$ such that for all $y\in U$, $|\tilde{\kappa}(f)(y)-\tilde{\kappa}(f)(y_0)|<\epsilon$. Note, using the Triangle Inequality, we have the bound
\begin{align*}
    |\tilde{\kappa}(f)(y)-\tilde{\kappa}(f)(y_0)|&=\left|\int_{X}f(x, y)\;\kappa(y, dx)-\int_{X}f(x, y_0)\;\kappa(y_0, dx)\right|\\
    &\leq \int_{X}|f(x, y)-f(x, y_0)|\;\kappa(y, dx)\\
    &\quad\quad\quad\quad\quad+\left|\int_{X}f(x, y_0)\;\kappa(y, dx)-\int_{X}f(x, y_0)\;\kappa(y_0, dx)\right|
\end{align*}
Therefore it suffices to find an open set $U$ with $y_0\in U$ for which the last two terms are bounded by $\frac{\epsilon}{2}$ for all $y\in U$. We will bound the first term using the fact that $f\in C_0(X\times Y)$, while the bound for the second term will come from the fact that $\kappa$ is weakly continuous.

We start by bounding the first term. As $f\in C_0(X\times Y)$, there exists  a compact subset $K\subset X\times Y$ such that $|f(x, y)|<\frac{\epsilon}{8}$ for all $(x, y)\in K$. Let $K_X=\text{pr}_X(K)$ which is compact in $X$. Given $x\not\in K_X$, we have $(x, y)\not\in K$ for all $y\in Y$. Therefore, for all $y\in Y$,
$$ \int_{X\backslash K_X}|f(x, y)-f(x, y_0)|\;\kappa(y, dx)<\frac{\epsilon}{4}.$$

As $f$ is continuous, for each $x\in K_X$, there exists an open subset $U_x\subset X$ with $x\in U_x$ and open subset $W_x\subset Y$ with $y_0\in W_x$ such that for all $(x', y)\in U_x\times W_x$
$$|f(x', y)-f(x, y_0)|<\frac{\epsilon}{8}.$$
As $\{U_x\}_{x\in K_X}$ forms an open cover of $K_X$, there exists a finite subcover which we denote by $\{U_{x_1}, \ldots, U_{x_n}\}$. Let $W=\bigcap\limits_{k=1}^{n}W_{x_k}$ which is an open subset of $Y$ with $y_0\in W$. Given $x\in K_X$, there exists $1\leq k\leq n$ such that $x\in U_{x_k}$. Thus, for all $(x, y)\in K_X\times W$, we have
$$|f(x, y)-f(x, y_0)|\leq |f(x, y)-f(x_k, y_0)|+|f(x_k, y_0)-f(x, y_0)|<\frac{\epsilon}{8}+\frac{\epsilon}{8}=\frac{\epsilon}{4}.$$
Thus, if $y\in W$,
$$\int_{K_X} |f(x, y)-f(x, y_0)|\;\kappa(y, dx)<\frac{\epsilon}{4}.$$
Therefore, for all $y\in W$,
\begin{align*}
    \int_{X}|f(x, y)-f(x, y_0)|\;\kappa(y, dx)&=\int_{K_X}|f(x, y)-f(x, y_0)|\;\kappa(y, dx)\\
    &\quad\quad\quad\quad\quad+\int_{X\backslash K_X}|f(x, y)-f(x, y_0)|\;\kappa(y, dx)<\frac{\epsilon}{2}
\end{align*}

To obtain the second bound, we utilize $\kappa$ is a weakly continuous Markov kernel. As $f\in C_0(X\times Y)$, the map
$$F:X\to \mathbb{C}\quad\quad F(x)= f(x, y_0)$$ is a continuous function which vanishes at infinity. Let
$$\Phi_F:\text{Prob}_R(X, \text{Bor}(X))\to \mathbb{C}\quad\quad \Phi_F(\mu)=\int_{X}F(x)\;d\mu(x)=\int_{X}f(x, y_0)\;d\mu(x)$$
With respect to the weak topology, $\Phi_F$ is continuous. Thus, $\Phi_F^{-1}(B_{\frac{\epsilon}{2}}(\Phi_F(\kappa(y_0, -))))$ is an open subset of $\text{Prob}_R(X, \text{Bor}(X))$ containing $\kappa(y_0, -)$. As $\kappa$ is a weakly continuous Markov kernel, the map
$$Y\to \text{Prob}_R(X, \text{Bor}(X))\quad\quad y\mapsto \kappa(y, \cdot)$$
is a continuous with respect to the weak topology. Thus, the pre-image of the open set $\Phi_F^{-1}(B_{\frac{\epsilon}{2}}(\Phi_F(\kappa(y_0, -))))$ under this map yields an open set $W'\subset Y$ with $y_0\in W'$. Note, for all $y\in W'$, we have
$$\left|\int_{X}f(x, y_0)\;\kappa(y, dx)-\int_{X}f(x, y_0)\;\kappa(y_0, dx)\right|<\frac{\epsilon}{2}$$

Let $U=W\cap W'$ which is an open subset of $Y$ with $y_0\in U$. For all $y\in U$, we have
$$\int_{X}|f(x, y)-f(x, y_0)|\;\kappa(y, dx)+\left|\int_{X}f(x, y_0)\;\kappa(y, dx)-\int_{X}f(x, y_0)\;\kappa(y_0, dx)\right|<\epsilon$$
showing $|\tilde{\kappa}(f)(y)-\tilde{\kappa}(f)(y_0)|<\epsilon$. Therefore $\tilde{\kappa}(f)(U)\subset B_{\epsilon}(\tilde{\kappa}(f)(y_0))$. As $y_0\in Y$ was arbitrary, we conclude $\tilde{\kappa}(f)$ is continuous.

\end{proof}

\begin{lemma}\label{Lemma: KSGNS construction in terms of kappa}\

\noindent Let $X$ and $Y$ be locally compact Hausdorff topological spaces, and let $\kappa$ be a weakly continuous Markov kernel with associated non-degenerate completely positive map $\phi_\kappa: C_0(X)\to C_b(Y)$. 
\begin{enumerate}
    \item For each $f\in C_0(X)$, the map
$$C_0(X\times Y)/N_\kappa\to C_0(X\times Y)/N_\kappa\quad\quad g+N_\kappa\mapsto  fg+N_{\kappa}$$
defines a bounded, $C_0(Y)$-linear map which extends uniquely to an adjointable map $\pi_\kappa(f)$ on $\Gamma_\kappa$. Furthermore, the induced map
$$\pi_\kappa:C_0(X)\to \mathcal{L}(\Gamma_\kappa)\quad\quad f\mapsto \pi_\kappa(f)$$
defines a $*$-algebra homomorphism.
\item The map
$$V_{\kappa}:C_0(Y)\to C_0(X\times Y)/N_\kappa\subset \Gamma_\kappa\quad\quad V_\phi(f)=f+N_\kappa$$
defines an adjointable $C_0(Y)$-linear map.
\end{enumerate}
Additionally, the two maps satisfy the following conditions:
\begin{enumerate}
    \item For all $f\in C_0(X)$, $\phi_\kappa(f)=V_\kappa^*\pi_\kappa(f)V_\kappa$.
    \item $\pi_\kappa(C_0(X))V_\kappa C_0(Y)$ is a dense submodule in $\Gamma_\kappa$.
\end{enumerate}

\end{lemma}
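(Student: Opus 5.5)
The plan is to verify each claim directly on the dense submodule $C_0(X\times Y)/N_\kappa$ and extend by continuity. Throughout, I use the function $\tilde{\kappa}(g)(y)=\int_X g(x,y)\,\kappa(y,dx)$ from Lemma \ref{Lemma: building Gamma_0}. That lemma shows $\tilde{\kappa}(g)\in C_0(Y)$ for $g\in C_0(X\times Y)$, and that $\langle g_1,g_2\rangle=\tilde{\kappa}(\overline{g_1}g_2)$.

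\textbf{Step 1: the operators $\pi_\kappa(f)$.} For $f\in C_0(X)$ and $g\in C_0(X\times Y)$, the product $fg$ (with $f$ viewed as a function of $x$) lies in $C_0(X\times Y)$. Pointwise,
$$|fg|^2\le \|f\|_\infty^2|g|^2 .$$
Integrating against $\kappa(y,\cdot)$ gives $\langle fg,fg\rangle\le \|f\|^2_\infty\langle g,g\rangle$ in $C_0(Y)$. This inequality shows two things:
\begin{itemize}
\item multiplication by $f$ preserves $N_\kappa$, so it descends to the quotient;
\item the descended map is bounded with norm at most $\|f\|_\infty$.
\end{itemize}
$C_0(Y)$-linearity is clear, since both actions are pointwise multiplication. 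The identity $\langle fg_1,g_2\rangle=\tilde{\kappa}(\overline{f}\,\overline{g_1}g_2)=\langle g_1,\overline{f}g_2\rangle$ shows that multiplication by $\overline{f}$ is an adjoint on the dense submodule. Hence both maps extend to $\Gamma_\kappa$, and the extensions are mutually adjoint. Multiplicativity and $*$-preservation of $f\mapsto\pi_\kappa(f)$ hold on the dense submodule, hence everywhere.

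\textbf{Step 2: the map $V_\kappa$.} This is the step I expect to be the main obstacle. For $h\in C_0(Y)$, the function $(x,y)\mapsto h(y)$ is not in $C_0(X\times Y)$ unless $X$ is compact, so the formula in the statement must be interpreted as a limit. I would fix an approximate unit $(u_\lambda)$ of $C_0(X)$ with $0\le u_\lambda\le 1$ and set
$$V_\kappa h:=\lim_\lambda\,(u_\lambda\otimes h)+N_\kappa .$$
This mirrors the definition of $V_\phi$ in the KSGNS construction. To show the net is Cauchy, I would use
$$|u_\lambda-u_\mu|^2\le |1-u_\lambda|+|1-u_\mu|,$$
which gives
$$\|(u_\lambda-u_\mu)\otimes h\|^2\le \big\|\,|h|^2\big(2-\phi_\kappa(u_\lambda)-\phi_\kappa(u_\mu)\big)\big\|_\infty .$$
The right-hand side tends to $0$ because non-degeneracy gives $\phi_\kappa(u_\lambda)\to 1$ strictly (Proposition \ref{Prop: bij cor btw MK and NCP maps}); this is exactly where the weak continuity of $\kappa$ enters.

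Next I compute the pairing with an element of the dense submodule:
$$\langle V_\kappa h,\,g+N_\kappa\rangle=\lim_\lambda \overline{h}\,\tilde{\kappa}(u_\lambda g)=\overline{h}\,\tilde{\kappa}(g).$$
The limit is justified by dominated convergence pointwise in $y$, and it is uniform in $y$ by the same estimate as above. So the candidate adjoint is
$$V_\kappa^*(g+N_\kappa)=\tilde{\kappa}(g).$$
It is bounded, since $|\tilde{\kappa}(g)|^2\le\tilde{\kappa}(|g|^2)$ by Cauchy–Schwarz for the probability measures $\kappa(y,\cdot)$. Therefore it extends to $\Gamma_\kappa$, and $V_\kappa$ is adjointable and $C_0(Y)$-linear. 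Independence of the approximate unit follows as in the paper's discussion of $V_\phi$, by uniqueness of adjoints.

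\textbf{Step 3: the two conditions.} First, $\pi_\kappa(f)V_\kappa h$ is the class of $f(x)h(y)$, because $fu_\lambda\to f$ uniformly. Therefore
$$V_\kappa^*\pi_\kappa(f)V_\kappa h\,(y)=\int_X f(x)\,\kappa(y,dx)\,h(y)=\phi_\kappa(f)(y)\,h(y),$$
which gives condition (1) as operators on the Hilbert module $C_0(Y)$. For density, the span of the functions $f(x)h(y)$ is dense in $C_0(X\times Y)$ in sup norm by Stone–Weierstrass. The module norm is dominated by the sup norm, since $\|\langle g,g\rangle\|\le\|g\|_\infty^2$. Hence this span is dense in $C_0(X\times Y)/N_\kappa$, and therefore in $\Gamma_\kappa$, which gives condition (2).
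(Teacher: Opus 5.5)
Your proof is correct. Steps 1 and 3 match the paper's argument:
\begin{itemize}
\item multiplication operators bounded by $\|f\|_{\sup}$, with adjoint $\pi_\kappa(\overline{f})$;
\item the adjoint of $V_\kappa$ given by integrating against $\kappa$ (the paper's map $W$, your $\tilde{\kappa}$);
\item a direct computation of $V_\kappa^*\pi_\kappa(f)V_\kappa$;
\item Stone--Weierstrass together with $\|g\|\le\|g\|_{\sup}$ for density.
\end{itemize}

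The real difference is Step 2. The paper regards $h\in C_0(Y)$ as an element of $C_0(X\times Y)$, sets $V_\kappa h=h+N_\kappa$, and calls boundedness clear. As you observe, that inclusion fails unless $X$ is compact. So the paper's argument only literally covers compact $X$, and its own example $X=\mathbb{R}$ is not of that kind.

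Your definition of $V_\kappa h$ as the norm limit of $[u_\lambda\otimes h]$ is what is needed in general. The Cauchy estimate comes from $|u_\lambda-u_\mu|^2\le(1-u_\lambda)+(1-u_\mu)$ and the strict convergence $\phi_\kappa(u_\lambda)\to 1$. This mirrors the abstract KSGNS definition of $V_\phi$ in Section 2, and it shows exactly where non-degeneracy of $\phi_\kappa$ (hence weak continuity of $\kappa$) is used.

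It also shows that the codomain in the statement should be read as $\Gamma_\kappa$ rather than $C_0(X\times Y)/N_\kappa$. Take $Y$ a point, $X=\mathbb{R}$, and $\kappa$ a Gaussian probability measure. Then $\Gamma_\kappa=L^2(\mathbb{R},\kappa)$ and $V_\kappa 1$ is the constant function $1$, which is not a.e.\ equal to any element of $C_0(\mathbb{R})$.

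The paper's route is shorter when $X$ is compact; yours costs one Cauchy-net argument and works for every locally compact $X$.

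A minor remark: for $\langle V_\kappa h,g+N_\kappa\rangle=\overline{h}\,\tilde{\kappa}(g)$ you do not need dominated convergence, which is not automatic for nets. The bound $\|g\|_{\sup}\,\|h(1-\phi_\kappa(u_\lambda))\|_{\sup}$ already gives uniform convergence in $y$.
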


\begin{proof}\

\noindent We begin by proving $(1)$. Fix $f\in C_0(X)$, and define
$$\pi_0(f):C_0(X\times Y)\to C_0(X\times Y)\quad\quad \pi_0(f)g=fg$$
As $f\in C_0(X)$, it follows $fg\in C_0(X\times Y)$. Therefore the map is well-defined. It is clear $\pi_0(f)$ is $C_0(Y)$-linear. We claim $\pi_0(f)$ is bounded with respect to the semi-norm on $C_0(X\times Y)$. Indeed, given $g\in C_0(X\times Y)$,
$$||\pi_0(f)g||^2=\sup\limits_{y\in Y}\int_{X}|f(x)|^2| g(x, y)|^2\kappa(y, dx)\leq ||f||_{\sup}^2\sup\limits_{y\in Y}\int_{X}| g(x, y)|^2\kappa(y, dx)=||f||_{\sup}^2||g||^2.$$
Therefore $\pi_0(f)$ induces a unique bounded $C_0(Y)$-linear map on $C_0(X\times Y)/N_\kappa$ which uniquely extends to a bounded $C_0(Y)$-linear map on all of $\Gamma_\kappa$. Denote the extension by $\pi_\kappa(f)$.

Now we claim $\pi_\kappa(f)$ is adjointable with adjoint $\pi_\kappa(f^*)$. As both maps are continuous, it suffices to show $\pi_\kappa(f^*)$ is the adjoint of $\pi_\kappa(f)$ on the dense subspace $C_0(X\times Y)/N_\kappa$. Given $[g], [h]\in C_0(X\times Y)/N_\kappa$ we have
\begin{align*}
    \langle \pi_\kappa(f)[g], [h]\rangle&=\int_{X}\overline{f(x)g(x, y)}h(x, y)\;\kappa(y, dx)\\
    &=\int_{X}\overline{g(x, y)}\left(\overline{f(x)}h(x, y)\right)\;\kappa(y, dx)=\langle [g], \pi_\kappa(\overline{f})[h]\rangle
\end{align*}
Hence $\pi_\kappa(f)$ is adjointable with $\pi_\kappa(f)^*=\pi_\kappa(\overline{f})$.

Define the map
$$\pi_\kappa:C_0(X)\to \mathcal{L}(\Gamma_{\kappa})\quad\quad f\mapsto \pi_\kappa(f)$$
We claim $\pi_\kappa$ is a $*$-algebra homomorphism. The argument above shows $\pi_\kappa$ is well-defined as well as preserves the adjoint. For all $f_1, f_2\in C_0(X)$ and $\lambda\in\mathbb{C}$, it is clear we have 
$$\pi_0(f_1f_2)=\pi_0(f_1)\pi_0(f_2)\quad\quad \pi_0(f_1+\lambda f_2)=\pi_0(f_1)+\lambda \pi_0(f_2).$$
By the uniqueness of the induced map on $C_0(X\times Y)/N_\kappa$ as well as the uniqueness of the extension to $\Gamma_\kappa$, we have
$$\pi_\kappa(f_1f_2)=\pi_{\kappa}(f_1)\pi_\kappa(f_2)\quad\quad \pi_\kappa(f_1+\lambda f_2)=\pi_\kappa(f_1)+\lambda \pi_\kappa(f_2).$$ showing $\pi_\kappa$ is multiplicative and $\mathbb{C}$-linear. Thus $\pi_\kappa$ forms a $*$-algebra homomorphism.

Now we establish $(2)$. It is clear the map $V_\kappa$ is $C_0(Y)$-linear as well as bounded. Define
$$W:C_0(X\times Y)\to C_0(Y)\quad\quad W(g)(y)=\int_{X}g(x, y)\;\kappa(y, dx)$$
From the continuity argument in Lemma \ref{Lemma: building Gamma_0}, $W$ is a well-defined map. It is clear $W$ is $C(Y)$-linear. Given $g\in C_0(X\times Y)$, we have
\begin{align*}
    ||Wg||&=\sup\limits_{y\in Y}|(Wg)(y)|=\sup\limits_{y\in Y}\left|\int_{X}g(x,y)\;\kappa(y, dx)\right|\\
    &\leq \sup\limits_{y\in Y}\int_{X}|g(x, y)|\;\kappa(y, dx)\leq \sup\limits_{y\in Y}\left(\int_{X}|g(x, y)|^2\;\kappa(y, dx)\right)^\frac{1}{2}\left(\int_{X}\;\kappa(y, dx)\right)^{\frac{1}{2}}\\
    &=\sup\limits_{y\in Y}\left(\int_{X}|g(x, y)|^2\;\kappa(y, dx)\right)^\frac{1}{2}=\left(\sup\limits_{y\in Y}\int_{X}|g(x, y)|^2\;\kappa(y, dx)\right)^\frac{1}{2}=||g||.
\end{align*}
Thus $W$ is a bounded map. Therefore $W$ induces a unique bounded $C_0(Y)$-linear map on $C_0(X\times Y)/N_\kappa$ which uniquely extends to a bounded $C_0(Y)$-linear map on $\Gamma_\kappa$. Denote the  induced map on $\Gamma _\kappa$ as $\widetilde{W}$.

We claim $V_\kappa$ is adjointable with adjoint $\widetilde{W}$. As both maps are continuous and linear, it suffices to show this on the dense submodule $C_0(X\times Y)/N_\kappa$ and $C_0(Y)$. Let $g\in C_0(Y)$ and $[h]\in C_0(X\times Y)/N_\kappa$, then
\begin{align*}
    \langle V_\kappa g, [h]\rangle(y)=\int_{X}\overline{g(y)}h(x, y)\;\kappa(y, dx)=\overline{g(y)} \widetilde{W}(h)(y)=\langle g, \widetilde{W}h\rangle(y)
\end{align*}
Hence $V_\kappa$ is adjointable with adjoint $\widetilde{W}$.

Now we verify $\pi_\kappa$ and $V_\kappa$ satisfy the two conditions. We start with condition $(1)$. Fix $f\in C_0(X)$ and fix $g\in C_0(Y)$. Then for each $y\in Y$,
$$(\phi_\kappa(f)g)(y)=g(y)\int_{X}f(x)\;\kappa(y, dx)$$
and
$$(V_\kappa^*\pi_\kappa(f)V_\kappa g)(y)=\int_{X}(\pi_\kappa(f)V_\kappa g)(x, y)\;\kappa(y, dx)=\int_{X} f(x)g(y)\;\kappa(y, dx)$$
As $y\in Y$ was arbitrary, we have $\phi_\kappa(f)g=
V_\kappa^*\pi_\phi(f)V_\kappa g$; hence, $\phi_\kappa(f)=V_\kappa^*\pi_\phi(f)V_\kappa$. 

Now we verify condition $(2)$. First, we claim
$$A=\text{Span}(\{fg: g\in C_0(Y), f\in C_0(X)\})\subset C_0(X\times Y)$$
is dense with respect to the sup-norm. To prove this, we utilize the Stone-Weierstrass Theorem. As it is clear that $A$ is a $*$-subalgebra of $C_0(X\times Y)$, we only verify $A$ vanishes no-where and $A$ separates points. To see that $A$ vanishes no-where, let $(x, y)\in X\times Y$. As $X$ is a locally compact space, there exists an open subset $U\subset X$ with $x\in U$ and $\overline{U}$ compact in $X$. Using Urysohn's Lemma, there exists $f\in C_0(X)$ such that $f(\overline{U})=1$. Similarly, there exists an open subset $V\subset Y$ with $y\in V$ and $\overline{V}$ compact in $Y$. Applying Urysohn's Lemma again, there exists $g\in C_0(Y)$ such that $g(\overline{V})=1$. Therefore, $f(x)g(y)=1$ with $fg\in A$ showing $A$ vanishes no-where. To see $A$ separates points, let $(x_1, y_1), (x_2, y_2), X\times Y$ with $(x_1, y_1)\neq (x_2, y_2)$. Without loss of generality, we assume $x_1\neq x_2$. As $X$ is locally compact and Hausdorff, Urysohn's Lemma implies there exists $f\in C_0(X)$ such that $f(x_1)=1$ and $f(x_2)=0$. As $Y$ is locally compact and Hausdorff, we can apply Urysohn's Lemma to know there exists $g\in C_0(X)$ such that $g(y_1)=1$. Taking $fg\in A$, we have $f(x_1)g(y_1)=1$ while $f(x_2)g(y_2)=0$. Thus $A$ is able to separate points in $X\times Y$. Therefore, by the Stone-Weierstrass Theorem, $A$ is dense in $C_0(X\times Y)$ with respect to the sup-norm.

Observe, the sup-norm on $C_0(X\times Y)$ is stronger than the semi-norm from the $C_0(Y)$-valued semi-inner product. Indeed, given $h\in C_0(X\times Y)$, we have for all $y\in Y$
$$|\langle h, h\rangle(y)|=\int_{X}|h(x, y)|^2\;\kappa(y, dx)\leq ||h||_{\sup}^2$$
so that $||h||\leq ||h||_{\sup}$. Therefore, $A$ is dense in $C_0(X\times Y)$ with respect to the semi-norm. As the map $C_0(X\times Y)\to C_0(X\times Y)/N_\kappa$ is a continuous surjection with respect to the semi-norm, the subspace $A/N_\kappa$ is dense in $C_0(X\times Y)/N_\kappa$. As $C_0(X\times Y)/N_\kappa$ is dense in $\Gamma_\kappa$, we have $A/N_\kappa$ is dense in $\Gamma_\kappa$. We observe
$$A/N_\kappa=\text{Span}\left(\{[f*g]\in C_0(X\times Y)/N_\kappa: f\in C_0(X), g\in C_0(Y)\}\right)=\pi_\kappa(C_0(Y))V_\kappa C_0(Y)$$
Therefore we conclude $(2)$ holds.

\end{proof}

\begin{lemma}\label{Lemma: G-action on the module}\

\noindent Let $(C_0(X), G, \alpha)$ and $(C_0(Y), G, \beta)$ be $C^*$-dynamical systems, and let $\phi:C_0(X)\to C_b(Y)$ be a non-degenerate completely positive $G$-equivariant map. Let $\kappa_\phi$ be the $G$-invariant weakly continuous Markov kernel associated to $\phi$.
\begin{enumerate}
    \item For each $g\in G$, the map
$$C_0(X\times Y)\to C_0(X\times Y)\quad\quad f\mapsto \left((x, y)\mapsto f(\tilde{\alpha}_{g^{-1}}(x), \tilde{\beta}_{g^{-1}}(y))\right)$$
defines a bounded $\beta_g$-linear map which uniquely descends to a well-defined $\beta_g$-adjointable unitary $W_g$ on $\Gamma_\kappa$.
    \item The map 
$$W_{\kappa}:G\to \mathcal{B}(\Gamma_k)\quad\quad g\mapsto W_g$$ defines a group homomorphism. Furthermore, if $\alpha$ and $\beta$ are continuous, then $W_\kappa$ is strongly continuous.
\item Let $\pi_\kappa:C_0(X)\to \mathcal{L}(\Gamma_k)$ be the $*$-algebra homomorphism from Lemma \ref{Lemma: KSGNS construction in terms of kappa}. Then for all $g\in G$ and $f\in C_0(X)$, $W_g\circ \pi_\kappa(f)=\pi_\kappa(\alpha_g(f))\circ W_g$.
\end{enumerate}

\end{lemma}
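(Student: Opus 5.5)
Write $\Gamma_\kappa$ for $\Gamma_{\kappa_\phi}$ and $N_\kappa$ for $N_{\kappa_\phi}$. For $g\in G$, let $T_g f(x,y)=f(\tilde{\alpha}_{g^{-1}}(x),\tilde{\beta}_{g^{-1}}(y))$ on $C_0(X\times Y)$. The plan is to prove everything at this pre-Hilbert level using the $G$-invariance of $\kappa=\kappa_\phi$ from Proposition \ref{Prop: G-invariant Markov kernel stuff}, and then pass to the completion $\Gamma_\kappa$ by density.

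\textbf{Part (1).} Since $\tilde{\alpha}_{g^{-1}}\times\tilde{\beta}_{g^{-1}}$ is a homeomorphism of $X\times Y$, $T_g$ maps $C_0(X\times Y)$ into itself and is $\mathbb{C}$-linear. It is $\beta_g$-linear because
$$T_g(fh)=T_g(f)\,\big(h\circ\tilde{\beta}_{g^{-1}}\big)=T_g(f)\,\beta_g(h)$$
for $h\in C_0(Y)$. The key computation is the inner product identity $\langle T_gf,T_gh\rangle=\beta_g(\langle f,h\rangle)$. Evaluating at $y$ gives
$$\int_X \overline{f(\tilde{\alpha}_{g^{-1}}x,\tilde{\beta}_{g^{-1}}y)}\,h(\tilde{\alpha}_{g^{-1}}x,\tilde{\beta}_{g^{-1}}y)\;\kappa(y,dx).$$
Invariance in the form $\kappa(y,\cdot)\circ\tilde{\alpha}_g=\kappa(\tilde{\beta}_{g^{-1}}y,\cdot)$ lets us change variables to obtain
$$\int_X \overline{f(x,\tilde{\beta}_{g^{-1}}y)}\,h(x,\tilde{\beta}_{g^{-1}}y)\;\kappa(\tilde{\beta}_{g^{-1}}y,dx)=\langle f,h\rangle(\tilde{\beta}_{g^{-1}}y),$$
which is exactly $\beta_g(\langle f,h\rangle)(y)$. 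This identity does three things:
\begin{itemize}
\item it shows $T_g$ is isometric for the seminorm, since $\beta_g$ is isometric;
\item it shows $T_g(N_\kappa)\subset N_\kappa$, so $T_g$ descends to the quotient;
\item it gives an extension $W_g$ to $\Gamma_\kappa$, which is unique by density.
\end{itemize}
Next, $T_gT_h=T_{gh}$ and $T_e=\mathrm{id}$. Combined with the identity, this gives
$$\langle W_g\xi,\eta\rangle=\langle W_g\xi,W_gW_{g^{-1}}\eta\rangle=\beta_g(\langle \xi,W_{g^{-1}}\eta\rangle).$$
So $W_g$ is $\beta_g$-adjointable with adjoint $W_{g^{-1}}$, and $W_gW_{g^{-1}}=1=W_{g^{-1}}W_g$. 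Hence $W_g$ is a unitary. This also proves the homomorphism claim in (2).

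\textbf{Part (3).} This is pointwise on simple elements:
$$T_g(fh)=\big(f\circ\tilde{\alpha}_{g^{-1}}\big)\,T_gh=\alpha_g(f)\,T_gh.$$
Thus $W_g\pi_\kappa(f)=\pi_\kappa(\alpha_g(f))W_g$ holds on the dense submodule $C_0(X\times Y)/N_\kappa$, and therefore everywhere by continuity.

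\textbf{Strong continuity in (2).} This is the main obstacle. The $W_g$ are uniformly bounded by $1$, so by an $\varepsilon/3$ argument it suffices to check continuity of $g\mapsto W_g\xi$ on a dense set. By Lemma \ref{Lemma: KSGNS construction in terms of kappa} we may take $\xi=[fh]$ with $f\in C_0(X)$ and $h\in C_0(Y)$. The semi-norm is dominated by the sup-norm, so
$$\|W_{g_\lambda}[fh]-W_g[fh]\|\le \big\|\alpha_{g_\lambda}(f)\beta_{g_\lambda}(h)-\alpha_g(f)\beta_g(h)\big\|_{\sup}.$$
The right-hand side tends to $0$ by point-norm continuity of $\alpha$ and $\beta$, together with the estimate
$$\|ab-a'b'\|_{\sup}\le\|a-a'\|\,\|b\|+\|a'\|\,\|b-b'\|.$$
This avoids any direct argument about continuity of the action on $X\times Y$, which is the step that would otherwise need care.
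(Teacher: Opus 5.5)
Your proof is correct. Parts (1) and (3) follow the paper's argument: a change of variables using $\kappa(y,\cdot)\circ\tilde{\alpha}_g=\kappa(\tilde{\beta}_{g^{-1}}(y),\cdot)$ on $C_0(X\times Y)$, then passage to $\Gamma_{\kappa}$ by density. The only organisational difference is that you prove the full identity $\langle T_gf,T_gh\rangle=\beta_g(\langle f,h\rangle)$ once. From it you read off the isometry, the invariance of $N_{\kappa}$, and $\beta_g$-adjointability (via $W_gW_{g^{-1}}=1$). The paper instead does a norm computation and a separate change-of-variables computation of $\langle W_g[f],[h]\rangle$.

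The genuine difference is in strong continuity. The paper forms the diagonal action $\tilde{\gamma}$ of $G$ on $X\times Y$. It asserts that $\tilde{\gamma}$ is continuous into $\mathrm{Homeo}(X\times Y)$ with the modified compact-open topology because $\alpha$ and $\beta$ are continuous. From this it deduces point-norm continuity of the action $\gamma$ on $C_0(X\times Y)$, and then bounds $\|W_{g_\lambda}[f]-W_g[f]\|$ by $\|\gamma_{g_\lambda}(f)-\gamma_g(f)\|_{\sup}$ for arbitrary $f\in C_0(X\times Y)$.

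You instead test only on the dense span of the classes $[fh]$, $f\in C_0(X)$, $h\in C_0(Y)$, supplied by Lemma \ref{Lemma: KSGNS construction in terms of kappa}. There $W_g[fh]=[\alpha_g(f)\beta_g(h)]$, so point-norm continuity of $\alpha$ and $\beta$ separately suffices. The trade-off is as follows. The paper's route controls every $[f]$ directly, but it relies on a topological claim it does not justify: continuity of the product map into $\mathrm{Homeo}(X\times Y)$, or equivalently point-norm continuity of $\gamma$. Your route costs an appeal to the density lemma but avoids that claim entirely. Indeed, the cleanest proof of the paper's claim is essentially your elementary-tensor estimate plus Stone--Weierstrass.
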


\begin{proof}\

\noindent We begin by proving $(1)$. Fix $g\in G$. First, define
$$W_g:C_0(X\times Y)\to C_0(X\times Y)\quad\quad W_g(f)(x, y)=f(\tilde{\alpha}_{g^{-1}}(x), \tilde{\beta}_{g^{-1}}(y))$$
As $\tilde{\alpha}_{g^{-1}}:X\to X$ and $\tilde{\beta}_{g^{-1}}:Y\to Y$ are homeomorphisms, it follows $W_g(f)\in C_0(X\times Y)$. It is clear $W_g$ is $\mathbb{C}$-linear. Given $f\in C_0(X\times Y)$ and $h\in C_0(Y)$, we have
\begin{align*}
    W_g(fh)(x, y)&=f(\tilde{\alpha}_{g^{-1}}(f), \tilde{\beta}_{g^{-1}}(y))*h(\tilde{\beta}_{g^{-1}}(y))\\
    &=f(\tilde{\alpha}_{g^{-1}}(f), \tilde{\beta}_{g^{-1}}(y))*\beta_g(h)(y)=(W_g(f)\beta_g(h))(x, y)
\end{align*}
Thus $W_g(fh)=W_g(f)\beta_g(h)$ showing $W_g$ is $\beta_g$-linear. We claim $W_g$ is bounded. Given $f\in C_0(X\times Y)$, $\tilde{\beta}_g$ being a bijection and $\kappa_\phi$ being $G$-invariant implies
\begin{align*}
    ||W_g(f)||^2&=\sup\limits_{y\in Y}\int_{X}|f(\tilde{\alpha}_{g^{-1}}(x), \tilde{\beta}_{g^{-1}}(y))|^2\;\kappa_\phi(y, dx)\\
    &=\sup\limits_{y'\in Y}\int_{X}|f(\tilde{\alpha}_{g^{-1}}(x), y')|^2\;\kappa(\tilde{\beta}_{g}(y'), dx)\\
    &=\sup\limits_{y'\in Y}\int_{X}|f(x, y')|^2\; d(\kappa(\tilde{\beta}_g(y'), -)\circ\tilde{\alpha}_g)(x)\\
    &=\sup\limits_{y'\in Y}\int_{X}|f(x, y')|^2\;\kappa(y', dx)=||f||^2
\end{align*}
Thus $W_g$ is an isometry. Therefore $W_g$ defines a unique $\beta_g$-linear isometry on $C_0(X\times Y)/N_\kappa$ which extends uniquely to a $\beta_g$-linear isometry on $\Gamma_\kappa$.  Denote extension to $\Gamma_\kappa$ also as $W_g$.

We claim $W_g$ is $\beta_g$-adjointable with adjoint given by $W_{g^{-1}}$. As $W_g$ and $W_{g^{-1}}$ are continuous and $\mathbb{C}$-linear, it suffices to shows this on the dense submodule $C_0(X\times Y)/N_\kappa$. Let us fix $[f], [h]\in C_0(X\times Y)/N_\kappa$, then for each $y\in Y$
\begin{align*}
    \langle W_g [f], [h]\rangle(y)&=\int_{X}\overline{f(\tilde{\alpha}_{g^{-1}}(x), \tilde{\beta}_{g^{-1}}(y))} h(x, y)\;\kappa(y, dx)\\
    &=\int_{X}\overline{f(x, \tilde{\beta}_{g^{-1}}(y)}h(\tilde{\alpha}_{g}(x), y)\;d(\kappa(y, -)\circ\tilde{\alpha}_g)(x)\\
    &=\int_{X}\overline{f(x, \tilde{\beta}_{g^{-1}}(y))}h(\tilde{\alpha}_{g}(x), y)\;\kappa(\tilde{\beta}_{g^{-1}}(y), x)\\
    &=\langle [f], W_{g^{-1}}[h]\rangle(\tilde{\beta}_{g^{-1}}(y))=\beta_g(\langle [f], W_{g^{-1}}[h]\rangle)(y)
\end{align*}
Therefore $\langle W_g [f], [h]\rangle=\beta_{g}(\langle [f], W_{g^{-1}}[h]\rangle)$ showing $W_g$ is $\beta_g$-adjointable with adjoint $W_{g^{-1}}$. As $W_gW_{g^{-1}}$ and $W_{g^{-1}}W_g$ are the identity on $C_0(X\times Y)$, uniqueness of the extensions to $C_0(X\times Y)/N_\kappa$ and $\Gamma_\kappa$ implies $W_gW_{g^{-1}}$ and $W_{g^{-1}}W_g$ are the identity map on $\Gamma_{\kappa}$. Therefore $W_g$ is a $\beta_g$-adjointable unitary.

Now we prove $(2)$. Give $g_1, g_2\in G$ and $f\in C_0(X\times Y)$, we have
\begin{align*}
    (W_{g_1}W_{g_2}f)(x, y)&=(W_{g_2}f)(\tilde{\alpha}_{g_1^{-1}}(x), \tilde{\beta}_{g_1^{-1}}(y))=f(\tilde{\alpha}_{g_2^{-1}}(\tilde{\alpha}_{g_1^{-1}}(x)), \tilde{\beta}_{g_2^{-1}}(\tilde{\beta}_{g_1^{-1}}(y)))\\
    &=f(\tilde{\alpha}_{(g_1g_2)^{-1}}(x), \tilde{\beta}_{(g_1g_2)^{-1}}(y))=W_{g_1g_2}(f)(x, y)
\end{align*}
Thus, by the uniqueness of the map on $C_0(X\times Y)/N_\kappa$ and extension to $\Gamma_\kappa$,  $W_{g_1}W_{g_2}=W_{g_1g_2}$ showing $W$ is a group homomorphism.

Now suppose $\alpha$ and $\beta$ are continuous. Define the group homomorphism
$$\tilde{\gamma}:G\to \text{Homeo}(X\times Y)\quad\quad \tilde{\gamma}_g(x, y)=(\tilde{\alpha}_{g^{-1}}(x), \tilde{\beta}_{g^{-1}}(y))$$
As the maps $\alpha$ and $\beta$ are continuous, the maps
$$G\xrightarrow{g\mapsto \tilde{\alpha}_{g^{-1}}} \text{Homeo}(X)\quad\quad G\xrightarrow{g\mapsto \tilde{\beta}_{g^{-1}}} \text{Homeo}(Y)$$
are continuous. Therefore $\tilde{\gamma}$ is continuous. Let $\gamma:G\to \text{Aut}_{*\text{-alg}}(C_0(X\times Y))$ be the group homomorphism associated to $\tilde{\gamma}$. As $\tilde{\gamma}$ is continuous, $\gamma$ is continuous with respect to the point-norm topology. Let $(g_\lambda)_{\lambda\in\Lambda}$ be a net in $G$ which converges to $g\in G$. We claim $(W_{g_\lambda})_{\lambda\in\Lambda}$ converges to $W_g$ in the strong operator topology on $\mathcal{B}(\Gamma_{\kappa})$. As $(W_{g_\lambda})_{\lambda\in\Lambda}$ is uniformly norm bounded, it suffices to prove strong convergence on  $C_0(X\times Y)/N_\kappa$. Fix $[f]\in C_0(X\times Y)$. For each $\lambda\in\Lambda$, we have
\begin{align*}
    \left\| W_{g_\lambda} [f]-W_{g}[f]\right\|^2&=\sup\limits_{y\in Y}\int_{X}|f(\tilde{\alpha}_{g_\lambda^{-1}}(x), \tilde{\beta}_{g_\lambda^{-1}}(y))-f(\tilde{\alpha}_{g^{-1}}(x), \tilde{\beta}_{g^{-1}}(y))|^2\;\kappa(y, dx)\\
    &\leq \sup\limits_{(x, y)\in X\times Y}|f(\tilde{\alpha}_{g_\lambda^{-1}}(x), \tilde{\beta}_{g_\lambda^{-1}}(y))-f(\tilde{\alpha}_{g^{-1}}(x), \tilde{\beta}_{g^{-1}}(y))|^2\\
    &=||\gamma_{g_\lambda}(f)-\gamma_g(f)||^2_{\sup}
\end{align*}
As $\gamma$ is continuous with respect to the point-norm topology, for any $\epsilon>0$, there exists $\lambda_0\in \Lambda$ such that for all $\lambda\in\Lambda$ with $\lambda\geq \lambda_0$, 
$$\left\| W_{g_\lambda} [f]-W_{g}[f]\right\|^2\leq ||\gamma_{g_\lambda}(f)-\gamma_g(f)||_{\sup}^2<\epsilon^2.$$
Thus $(W_{g_\lambda}[f])_{\lambda\in\Lambda}$ converges to $W_g[f]$ showing $W_\kappa$ is continuous with respect to the strong operator topology.

Finally, we prove $(3)$. Fix $f\in C_0(X)$ and $g\in G$. As $W_g\circ \pi_\kappa(f)$ and $\pi_\kappa(\alpha_g(f))\circ W$ are continuous and $\beta_g$-linear, it suffices to show the two maps agree on the dense submodule $C_0(X\times Y)/N_\kappa$. As $W_g\circ \pi_\kappa(f)$ and $\pi_\kappa(\alpha_g(f))\circ W_g$ are defined on $C_0(X\times Y)$, it suffices to show the two maps agree on $C_0(X\times Y)$.  Given $h\in C_0(X\times Y)$, we have
\begin{align*}
    (W_g\circ \pi_\kappa(f))(h)(x, y)&=(\pi_\kappa(f)h)(\tilde{\alpha}_{g^{-1}}(x), \tilde{\beta}_{g^{-1}}(y))=f(\tilde{\alpha}_{g^{-1}}(x))h(\tilde{\alpha}_{g^{-1}}(x), \tilde{\beta}_{g^{-1}}(y))\\
    &=f(\tilde{\alpha}_{g^{-1}}(x))*(W_gh)(x, y)=\alpha_g(f)(x)*(W_gh)(x, y)\\
    &=(\pi_{\kappa}(\alpha_g(f))(W_gh))(x, y)
\end{align*}
Thus $W_g\circ \pi_\kappa(f)=\pi_{\kappa}(\alpha_g(f))\circ W_g$ for all $g\in G$ and $f\in C_0(X)$.

\end{proof}

\begin{theorem}\label{Theorem: KSGNS data for MK}\

\noindent Let $(C_0(X), G, \alpha)$ and $(C_0(Y), G, \beta)$ be $C^*$-dynamical systems. If $\phi:C_0(X)\to C_b(Y)$ is a non-degenerate completely positive $G$-equivariant map with associated $G$-invariant weakly continuous Markov kernel $\kappa_\phi$, then the quadruple $(\Gamma_{\kappa_\phi}, \pi_{\kappa_\phi}, V_{\kappa_\phi}, W_{\kappa_\phi})$ satisfies the conditions of Theorem \ref{Thrm: Equiv Dil} for $((C_0(Y), \phi), \beta)$.

\end{theorem}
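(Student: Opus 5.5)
The plan is to assemble the quadruple from Lemmas \ref{Lemma: KSGNS construction in terms of kappa} and \ref{Lemma: G-action on the module}, then check the four conditions of Theorem \ref{Thrm: Equiv Dil} one at a time. Throughout we write $\kappa=\kappa_\phi$. The input correspondence is $((C_0(Y),\phi),\beta)$. Here $C_0(Y)$ is viewed as a Hilbert module over itself, and $\beta_g$ is a $\beta_g$-adjointable unitary on it. Since $C_b(Y)=M(C_0(Y))=\mathcal{L}(C_0(Y))$, the map $\phi$ lands in $\mathcal{L}(C_0(Y))$. By Proposition \ref{Prop: bij cor btw MK and NCP maps}(3), $\phi=\phi_\kappa$.

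The data requirements $(1)$--$(4)$ are supplied directly by the lemmas. First, $\Gamma_\kappa$ is a Hilbert $C_0(Y)$-module by construction, with Lemma \ref{Lemma: building Gamma_0} providing the pre-inner product. Second, $\pi_\kappa$ is a $*$-homomorphism into $\mathcal{L}(\Gamma_\kappa)$ and $V_\kappa$ is adjointable and $C_0(Y)$-linear, both by Lemma \ref{Lemma: KSGNS construction in terms of kappa}. Third, $W_\kappa$ is a group homomorphism with $W_g\in\mathcal{U}^{\beta_g}(\Gamma_\kappa)$, by Lemma \ref{Lemma: G-action on the module}(1)--(2).

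Next come the conditions on the quadruple. Condition $(2)$, density of $\pi_\kappa(C_0(X))V_\kappa C_0(Y)$, and condition $(3)$, the identity $\phi(f)=V_\kappa^*\pi_\kappa(f)V_\kappa$, are exactly the two "additionally" statements of Lemma \ref{Lemma: KSGNS construction in terms of kappa}. Condition $(1)$ requires $((\Gamma_\kappa,\pi_\kappa),W_\kappa)$ to be an equivariant $C^*$-correspondence, which needs three things:
\begin{itemize}
\item the covariance relation $W_g\pi_\kappa(f)=\pi_\kappa(\alpha_g(f))W_g$, which is Lemma \ref{Lemma: G-action on the module}(3);
\item the unitarity of each $W_g$, already noted above;
\item non-degeneracy of $\pi_\kappa$.
\end{itemize}
Non-degeneracy is not stated explicitly in the lemmas, but it follows from condition $(2)$: the inclusion $\pi_\kappa(C_0(X))V_\kappa C_0(Y)\subset \pi_\kappa(C_0(X))\Gamma_\kappa$ and density of the smaller set force density of the larger one.

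The remaining condition $(4)$ is the intertwining $V_\kappa\circ\beta_g=W_g\circ V_\kappa$. I would check it on $h\in C_0(Y)$, regarded as the function $(x,y)\mapsto h(y)$ in $C_0(X\times Y)$ as in the definition of $V_\kappa$. The direct computation is
$$W_g(V_\kappa h)(x,y)=h(\tilde\beta_{g^{-1}}(y))=\beta_g(h)(y)=V_\kappa(\beta_g h)(x,y).$$
Both sides are bounded and are represented by the same element of $C_0(X\times Y)/N_\kappa$, so they agree on $\Gamma_\kappa$.

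I expect the main obstacle to be the interpretive point that $V_\kappa h$ is defined by viewing $h\in C_0(Y)$ inside $C_0(X\times Y)$, since $(x,y)\mapsto h(y)$ need not vanish at infinity in the $X$-direction. I would first try to handle this by noting that the pairing and all the operators still make sense for $C_b(X)$-factors against a probability kernel, or by approximating $h$ with $e_\lambda(x)h(y)$ for an approximate unit $(e_\lambda)$ of $C_0(X)$. In the latter approach, weak continuity and Dini-type arguments give convergence in the $\Gamma_\kappa$-norm, and the intertwining identity persists in the limit because $W_g$ is continuous and $\alpha_g(e_\lambda)$ is again an approximate unit. Once this well-definedness is settled, or simply taken as it stands in Lemma \ref{Lemma: KSGNS construction in terms of kappa}, every condition reduces to a cited lemma and the result follows. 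If $\alpha$ and $\beta$ are continuous, Lemma \ref{Lemma: G-action on the module} additionally gives strong continuity of $W_\kappa$.
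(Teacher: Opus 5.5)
Your proposal is correct and follows the paper's route. The paper gives no separate argument for this theorem and lets Lemmas \ref{Lemma: building Gamma_0}, \ref{Lemma: KSGNS construction in terms of kappa} and \ref{Lemma: G-action on the module} supply the conditions of Theorem \ref{Thrm: Equiv Dil}; your explicit checks of non-degeneracy of $\pi_\kappa$ and of the intertwining $W_g V_\kappa = V_\kappa\beta_g$ (which none of those lemmas actually states), and your remark that $(x,y)\mapsto h(y)$ lies in $C_0(X\times Y)$ only when $X$ is compact (so $V_\kappa h$ should be read as the limit of $e_\lambda h$, as in the general KSGNS construction), usefully fill in details the paper leaves implicit.
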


Theorem \ref{Theorem: KSGNS data for MK} suggest that we view $C_0(Y)$ not as a $C^*$-algebra but as a Hilbert module over itself; however, this perspective makes the starting data for the two systems nonequivalent: $C_0(X)$ is a viewed as a $C^*$-algebra while $C_0(Y)$ is viewed as a Hilbert module. To restore equivalence in the starting data, we view both $C_0(X)$ and $C_0(Y)$ as Hilbert modules over themselves. With this new perspective, we recognize the data needed for forming the relational joint system as a Hilbert module dynamical system $((C_0(X), G, \alpha), C_0(X), \alpha)$ together with a non-degenerate positive equivariant $C^*$-correspondence $((C_0(Y), \phi), \beta)$ which enables us to construct a Hilbert module dynamical system over $(C_0(Y), G, \beta)$. Combining this data together leads us to make the following definition.

\begin{definition}\

\noindent Let $C_0(X)$ be a commutative $C^*$-algebra, and let $G$ be a group. Define a classical $G$-equivariant coupling for $C_0(X)$ as the data
$$(((C_0(X), G, \alpha), (C_0(Y), G, \beta), ((C_0(Y), \phi), \beta)), ((C_0(X), G, \alpha), C_0(X), \alpha))$$
where
\begin{itemize}
    \item $(C_0(X), G, \alpha)$ is a $C^*$-dynamical system.
    \item $(C_0(Y), G, \beta)$ is a $C^*$-dynamical system.
    \item $((C_0(Y), \phi), \beta)$ is a non-degenerate positive equivariant $C^*$-correspondence from \\$(C_0(X), G, \alpha)$ to $(C_0(Y), G, \beta)$.
    \item $((C_0(X), G, \alpha), C_0(X), \alpha)$ is a Hilbert module dynamical system.
\end{itemize}

\end{definition}

\begin{definition}\

\noindent Let $C_0(X)$ be a commutative $C^*$-algebra, and let $G$ be a group, and let
$$(((C_0(X), G, \alpha), (C_0(Y), G, \beta), ((C_0(Y), \phi), \beta)), ((C_0(X), G, \alpha), C_0(X), \alpha))$$ be a classical $G$-equivariant coupling for $C_0(X)$. Let $(F_\phi, \pi_\phi, V_\phi, W)$ be the unitarily unique quadruple obtained from applying Theorem \ref{Thrm: Equiv Dil} to $((C_0(Y), \phi), \beta)$. 
\begin{itemize}
    \item  Define the relational joint space of the classical $G$-equivariant coupling as $F_\phi$.
    \item Define the relational joint system of the classical $G$-equivariant coupling as $\mathcal{L}(F_\phi)$.
    \item Define the reference frame system of the classical $G$-equivariant coupling as $\mathcal{L}(F_\phi)^G$.
\end{itemize}

\end{definition}

\subsection{Relativization maps}

Now that we have built the relational joint system and described the relational invariant observables, we turn our attention to describing relativization maps. To help setup our conversation, we fix a commutative $C^*$-algebra $C_0(X)$, and we fix a classical $G$-equivariant coupling for $C_0(X)$ given by
$$(((C_0(X), G, \alpha), (C_0(Y), G, \beta), ((C_0(Y), \phi), \beta)), ((C_0(X), G, \alpha), C_0(X), \alpha)).$$

As elements of $\mathcal{L}(\Gamma_{\kappa_\phi})$ represent relational observables, we interpret the non-degenerate $*$-algebra homomorphism $\pi_{\kappa_\phi}:C_0(X)\to \mathcal{L}(\Gamma_{\kappa_\phi})$ as relativizing observables in $C_0(X)$. As $\pi_{\kappa_\phi}$ is a non-degenerate $*$-algebra map, we can extend this interpretation to the unique extension $$\tilde{\pi}_{\kappa_\phi}:\mathcal{L}(C_0(X))=C_b(X)\to \mathcal{L}(\Gamma_{\kappa_\phi}).$$

From the introduction, the meaningful relational observables are those which are invariant, i.e. the observables in the reference frame system $\mathcal{L}(\Gamma_{\kappa_\phi})^G$; thus, for the relativization $\tilde{\pi}_{\kappa_\phi}$ to be physically meaningful, we need to map into $\mathcal{L}(\Gamma_{\kappa_\phi})^G$.  As the map $\tilde{\pi}_{\kappa_\phi}$ is $G$-equivariant, we obtain a unital $*$-algebra homomorphism
$$\mathcal{L}(C_0(X))^G\to \mathcal{L}(\Gamma_{\kappa_\phi})^{G} \quad\quad f\mapsto \tilde{\pi}_{\kappa_\phi}(f)$$
Therefore we just need to fix the relativization for observables in $\mathcal{L}(C_0(X))\backslash \mathcal{L}(C_0(X))^G$; that is, we want our relativization map to be a function $\pounds:\mathcal{L}(C_0(X))\to \mathcal{L}(\Gamma_{\kappa_\phi})^G$  making the following diagram commute: 
\begin{center}
    \begin{tikzcd}
	{\mathcal{L}(C_0(X))} &&& \\
	\\
	{\mathcal{L}(C_0(X))^G} &&& {\mathcal{L}(\Gamma_{\kappa_\phi})^G}
	\arrow["\pounds"{description}, dashed, from=1-1, to=3-4]
	\arrow[hook', from=3-1, to=1-1]
	\arrow["{T\mapsto \tilde{\pi}_\phi(T)}"{description}, from=3-1, to=3-4]
\end{tikzcd}
\end{center}
In addition to making the diagram commute, we require the map $\pounds$ to be completely positive as well as strictly continuous on the unit ball. Both of these conditions come from generalizing that $\tilde{\pi}_{\kappa_\phi}$ is a $*$-algebra homomorphism which is strictly continuous on the unit ball in $\mathcal{L}(C_0(X))$. Note, we do not require $\pounds$ to be unital as that is guaranteed by the commutativity of the diagram. 

\begin{definition}\

\noindent Let $C_0(X)$ be a commutative $C^*$-algebra, let $G$ be a group, and let
$$(((C_0(X), G, \alpha), (C_0(Y), G, \beta), ((C_0(Y), \phi), \beta)), ((C_0(X), G, \alpha), C_0(X), \alpha))$$ be a classical $G$-equivariant coupling for $C_0(X)$. Let $(F_\phi, \pi_\phi, V_\phi, W)$ be the unitarily unique quadruple obtained from applying Theorem \ref{Thrm: Equiv Dil} to $((C_0(Y), \phi), \beta)$. Define a relativization map for the classical $G$-equivariant coupling  as the data of a function $$\pounds:\mathcal{L}(C_0(X))\to \mathcal{L}( F_\phi)^{G}$$
which satisfies the following conditions:
\begin{enumerate}
    \item $\pounds$ is completely positive.
    \item $\pounds$ is strictly continuous on the unit ball.
    \item $\pounds$ makes the following diagram commute:
\begin{center}
    \begin{tikzcd}
	{\mathcal{L}(C_0(X))} &&& \\
	\\
	{\mathcal{L}(C_0(X))^G} &&& {\mathcal{L}(F_\phi)^G}
	\arrow["\pounds"{description}, dashed, from=1-1, to=3-4]
	\arrow[hook', from=3-1, to=1-1]
	\arrow["{T\mapsto \tilde{\pi}_\phi(T)}"{description}, from=3-1, to=3-4]
\end{tikzcd}
\end{center}

\end{enumerate}

\end{definition}

Before moving on, let us make a few comments. First, if $\pounds$ is a relativization map for the classical equivariant coupling, then, as $I\in \mathcal{L}(C_0(X))^{G}$ and the bottom map in the diagram is a unital $*$-algebra homomorphism, it follows $\pounds$ is a unital completely positive map. Therefore, the pullback $\pounds^*$ yields a well-defined map on states. Second, this formulation of the relativization map frames the existence and possible uniqueness of $\pounds$ as an extension problem. Third, as $\pounds$ is a unital completely positive map which is strictly continuous on the unit ball in $\mathcal{L}(C_0(X))$, the restriction $\pounds|_{\mathcal{K}(C_0(X))}$ defines a non-degenerate completely positive map. Finally, as all of the examples in the next subsection show, it is not true that a relativization map is unique. We do not have a commutative example in mind showing a relativization map can fail to exist; however, when we go to the non-commutative setting, relativization maps can fail to exist (see Example \ref{Ex: non-existence example} for details). 

We conclude the subsection with a result for building relativization maps for classical $G$-equivariant couplings when the actions are both continuous and transitive. 

\begin{lemma}\

\noindent Let $C_0(X)$ be a commutative $C^*$-algebra, and let
$$(((C_0(X), G, \alpha), (C_0(Y), G, \beta), ((C_0(Y), \phi), \beta)), ((C_0(X), G, \alpha), C_0(X), \alpha))$$
be a classical $G$-equivariant coupling for $C_0(X)$. Let $\kappa_\phi$ be the $G$-invariant weakly continuous Markov associated to $\phi$. 
\begin{enumerate}
    \item For each $f\in C_b(X\times Y)$, the map
$$\mathcal{M}_0(f):C_0(X\times Y)\to C_0(X\times Y)\quad\quad (\mathcal{M}_0(f)g)(x, y)=f(x, y)g(x, y)$$ descends to a unique adjointable $C_0(Y)$-linear map $\mathcal{M}_f:\Gamma_{\kappa}\to \Gamma_{\kappa}.$
\item The map
$$\mathcal{M}:C_b(X\times Y)\to \mathcal{L}(\Gamma_{\kappa})\quad\quad f\mapsto \mathcal{M}_f$$ defines a unital $*$-algebra homomorphism which is strictly continuous on the unit ball in $C_b(X\times Y)$.
\item Let $\tilde{\gamma}$ denote the diagonal action of $G$ on $X\times Y$, and let $\gamma$ denote the corresponding action on $C_b(X\times Y)$. For each $g\in G$ and $f\in C_b(X\times Y)$, $\mathcal{M}_{\gamma_g(f)}=\text{Ad}(W_g)\mathcal{M}_f$.
\end{enumerate}

\end{lemma}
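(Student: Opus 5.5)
I would model all three parts on the proofs of Lemma \ref{Lemma: KSGNS construction in terms of kappa} and Lemma \ref{Lemma: G-action on the module}: work first on the dense pre-inner product module $C_0(X\times Y)$, pass to the quotient by $N_{\kappa}$, and then extend by continuity to $\Gamma_\kappa$.

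For $(1)$, fix $f\in C_b(X\times Y)$. Since $fg\in C_0(X\times Y)$ whenever $g\in C_0(X\times Y)$, the map $\mathcal{M}_0(f)$ is well defined, and it is clearly $C_0(Y)$-linear. The key estimate is
$$\langle fg,fg\rangle(y)=\int_X|f(x,y)|^2|g(x,y)|^2\,\kappa(y,dx)\le \|f\|_{\sup}^2\langle g,g\rangle(y).$$
This gives $\|\mathcal{M}_0(f)g\|\le\|f\|_{\sup}\|g\|$ and also shows $\mathcal{M}_0(f)N_\kappa\subset N_\kappa$. Hence $\mathcal{M}_0(f)$ descends to the quotient and extends uniquely to a bounded map $\mathcal{M}_f$ on $\Gamma_\kappa$. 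The same pointwise computation as in Lemma \ref{Lemma: KSGNS construction in terms of kappa} gives $\langle fg,h\rangle=\langle g,\overline{f}h\rangle$. So $\mathcal{M}_f$ is adjointable with $\mathcal{M}_f^*=\mathcal{M}_{\overline f}$.

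For $(2)$, linearity, multiplicativity and unitality hold on $C_0(X\times Y)$, so they pass to $\Gamma_\kappa$ by uniqueness of the extensions. The main obstacle is strict continuity on the unit ball. Take a bounded net $f_\lambda\to f$ strictly in $C_b(X\times Y)=M(C_0(X\times Y))$ with $\|f_\lambda\|\le 1$; strictly this means $f_\lambda h\to fh$ uniformly for every $h\in C_0(X\times Y)$. Because the family $\{\mathcal{M}_{f_\lambda}\}$ is uniformly bounded and the adjoints are $\mathcal{M}_{\overline{f_\lambda}}$ (with $\overline{f_\lambda}\to\overline f$ strictly as well), it suffices to check convergence $\mathcal{M}_{f_\lambda}[g]\to\mathcal{M}_f[g]$ for $g\in C_0(X\times Y)$. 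For such $g$,
$$\|\mathcal{M}_{f_\lambda}[g]-\mathcal{M}_f[g]\|^2=\sup_y\int_X|(f_\lambda-f)g|^2\,\kappa(y,dx)\le\|(f_\lambda-f)g\|_{\sup}^2\to0,$$
where the last bound uses that $\kappa(y,\cdot)$ is a probability measure. An $\epsilon/3$ argument then upgrades this to all of $\Gamma_\kappa$.

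For $(3)$, I would compute on $h\in C_0(X\times Y)$, with $W_g$ the operator of Lemma \ref{Lemma: G-action on the module}, composing $W_g\,\mathcal{M}_f\,W_{g^{-1}}$:
$$(W_g\mathcal{M}_fW_{g^{-1}}h)(x,y)=f(\tilde\alpha_{g^{-1}}x,\tilde\beta_{g^{-1}}y)\,h(x,y)=(\gamma_g(f)h)(x,y).$$
By density and continuity this gives $\mathcal{M}_{\gamma_g(f)}=W_g\mathcal{M}_fW_g^*=\text{Ad}(W_g)\mathcal{M}_f$, since $W_g^*=W_{g^{-1}}$ by Lemma \ref{Lemma: G-action on the module}.
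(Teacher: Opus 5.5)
Your proposal is correct and follows essentially the same route as the paper. Both use the sup-norm bound to descend $\mathcal{M}_0(f)$ to $\Gamma_\kappa$ with adjoint $\mathcal{M}_{\overline{f}}$, the estimate $\|\mathcal{M}_{f_\lambda}[g]-\mathcal{M}_f[g]\|\le\|(f_\lambda-f)g\|_{\sup}$ plus a density argument for strict continuity, and the same pointwise computation of $W_g\mathcal{M}_fW_{g^{-1}}$ for (3). Your version is slightly more careful than the paper's: you work with nets rather than sequences, and you also check convergence of the adjoints $\mathcal{M}_{\overline{f_\lambda}}$, which the strict topology on $\mathcal{L}(\Gamma_\kappa)$ requires.
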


\begin{proof}\

\noindent We start by proving $(1)$. It is clear $\mathcal{M}_0(f)$ is $C_0(Y)$-linear. We claim $\mathcal{M}_0(f)$ is bounded with respect to the semi-norm on $C_0(X\times Y)$. Let $g\in C_0(X\times Y)$, then
\begin{align*}
    \left\|\mathcal{M}_0(f)g\right\|^2&=\sup\limits_{y\in Y} \int_{X}|f(x, y)|^2|g(x, y)|^2\;\kappa_\phi(y, dx)\leq  ||f||_{\sup}^2||g||^2
\end{align*}
Thus $\mathcal{M}_0(f)$ is bounded. Let $\mathcal{M}_f$ denote the unique $C_0(Y)$-linear bounded map induced by $\mathcal{M}_0(f)$ on $\Gamma_{\kappa_\phi}$.  

We now claim $\mathcal{M}_f$ is adjointable with adjoint $\mathcal{M}_{f^*}$. As both maps are linear and continuous, it suffices to prove this claim on the dense submodule $C_0(X\times Y)/N_{\kappa_\phi}$. Given $[h]$ and $[g]$ in $C_0(X\times Y)/N_{\kappa_\phi}$, we have
\begin{align*}
    \langle \mathcal{M}_f[g], [h]\rangle(y)&=\int_{X}\overline{f(x, y)g(x, y)}h(x, y)\;\kappa_\phi(y, dx)=\langle [g], \mathcal{M}_{f^*}[h]\rangle(y).
\end{align*}
Thus $\mathcal{M}_f$ is adjointable with adjoint $\mathcal{M}_{f^*}$. Therefore $(1)$ holds.

Now we prove $(2)$. Given $f_1, f_2\in C_b(X\times Y)$ and $\lambda\in\mathbb{C}$, it is clear that
$$\mathcal{M}_0(f_1f_2)=\mathcal{M}_0(f_1)\mathcal{M}_0(f_2)\quad\quad \mathcal{M}_0(f_1+\lambda f_2)=\mathcal{M}_0(f_1)+\lambda \mathcal{M}_0(f_2).$$
Thus, by the uniqueness of the induced maps on $\Gamma_{\kappa_\phi}$, we have
$$\mathcal{M}_{f_1f_2}=\mathcal{M}_{f_1}\mathcal{M}_{f_2}\quad\quad \mathcal{M}_{f_1+\lambda f_2}=\mathcal{M}_{f_1}+\lambda \mathcal{M}_{f_2}$$
showing $\mathcal{M}$ is $\mathbb{C}$-linear and multiplicative. Furthermore, from $(1)$ we have $\mathcal{M}$ preserves the $*$-operation. Thus $\mathcal{M}$ is a $*$-algebra homomorphism. As $\mathcal{M}_0(1)$ is the identity map on $C_0(X\times Y)$, we have $\mathcal{M}_1=I$; thus, $\mathcal{M}$ is unital. 

Now we prove $\mathcal{M}$ is strictly continuous on the unit ball. Let $(g_n)_{n\in\mathbb{N}}$ be sequence in the unit ball of $C_b(X\times Y)$ which converges strictly to $g\in C_b(X\times Y)$. For each $[h]\in C_0(X\times Y)/N_{\kappa_\phi}$, we have
$$||\mathcal{M}_{g_n}[h]-\mathcal{M}_{g}[h]||\leq ||g_nh-gh||_{\sup}^{\frac{1}{2}}.$$
As $g_nh\to gh$ with respect to $||\cdot||_{\sup}$, it follows $\mathcal{M}_{g_n}[h]\to \mathcal{M}_{g}[h]$ with respect to the norm on $\Gamma_{\kappa_\phi}$. As $C_0(X\times Y)/N_{\kappa_\phi}$ is dense in $\Gamma_\kappa$ and $(\mathcal{M}_{g_n})_{n\in\mathbb{N}}$ is uniformly norm bounded, the standard approximation argument implies $\mathcal{M}_{g_n}x\to \mathcal{M}_{g}x$ for all $x\in \Gamma_{\kappa_\phi}$. Thus $\mathcal{M}$ is strictly continuous on the unit ball. 

Now we prove $(3)$. Fix $f\in C_b(X\times Y)$ and $s\in G$. As $\mathcal{M}_{\gamma_sf}$ and $\text{Ad}(W_s)\mathcal{M}_f$ are continuous, it suffices to show equality on the dense submodule $C_0(X\times Y)/N_{\kappa_\phi}$. As both maps are defined on $C_0(X\times Y)$, it suffices to show the maps agree on $C_0(X\times Y)$. Let $g\in C_0(X\times Y)$, then
\begin{align*}
    ((W_s\mathcal{M}_{f} W_{-s})g)(x, y)&=(\mathcal{M}_fW_{-s}g)(\tilde{\alpha}_{s^{-1}}x, \tilde{\beta}_{s^{-1}}y)\\
    &=f(\tilde{\alpha}_{s^{-1}}x, \tilde{\beta}_{s^{-1}}y)(W_{-s}g)(\tilde{\alpha}_{s^{-1}}x, \tilde{\beta}_{s^{-1}}y)\\
    &=f(\tilde{\alpha}_{s^{-1}}x, \tilde{\beta}_{s^{-1}}y)g(x, y)=(\gamma_sf)(x, y)g(x, y)\\
    &=(\mathcal{M}_{\gamma_s(f)}g)(x, y)
\end{align*}
Thus $\text{Ad}(W_s)\mathcal{M}_f=\mathcal{M}_{\gamma_s(f)}$ as claimed.

\end{proof}

\begin{proposition}\

\noindent Let $C_0(X)$ be a commutative $C^*$-algebra, and let
$$(((C_0(X), G, \alpha), (C_0(Y), G, \beta), ((C_0(Y), \phi), \beta)), ((C_0(X), G, \alpha), C_0(X), \alpha))$$
be a classical $G$-equivariant coupling. Let $\kappa_\phi$ be the $G$-invariant weakly continuous Markov associated to $\phi$. If $\tilde{\alpha}$ defines a transitive action of $G$ on $X$, then whenever $\iota: X\times Y\to X$ is a continuous map with 
$$\iota(\tilde{\alpha}_s(x), \tilde{\beta}_s(y))=\iota(x, y)$$
for all $x\in X$, $y\in Y$, $s\in G$, the unique extension of the map 
$$\mathcal{M}\circ\iota^*:C_0(X)\to \mathcal{L}(\Gamma_{\kappa_\phi})$$
to $C_b(X)$ defines a relativization map for the classical $G$-equivariant coupling.

\end{proposition}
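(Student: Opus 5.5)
The plan is to verify the three defining conditions of a relativization map directly for the composite
$$\pounds:=\mathcal{M}\circ\iota^*:C_b(X)=\mathcal{L}(C_0(X))\to \mathcal{L}(\Gamma_{\kappa_\phi}),\qquad \pounds(f)=\mathcal{M}_{f\circ\iota}.$$
This makes sense on all of $C_b(X)$ because $f\circ\iota\in C_b(X\times Y)$ whenever $f\in C_b(X)$. I will then check that this composite is the unique extension referred to in the statement. First, $\iota^*:C_b(X)\to C_b(X\times Y)$ is a unital $*$-algebra homomorphism. By part $(2)$ of the preceding lemma, $\mathcal{M}$ is one as well, so $\pounds$ is a unital $*$-algebra homomorphism and in particular completely positive.

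Next I show that $\pounds$ lands in the fixed-point algebra. By part $(3)$ of the preceding lemma, for $s\in G$ we have
$$\text{Ad}(W_s)\pounds(f)=\mathcal{M}_{\gamma_s(f\circ\iota)}.$$
Moreover,
$$\gamma_s(f\circ\iota)(x,y)=f\bigl(\iota(\tilde{\alpha}_{s^{-1}}x,\tilde{\beta}_{s^{-1}}y)\bigr)=f(\iota(x,y))$$
by the invariance hypothesis on $\iota$. Hence $\pounds(f)\in\mathcal{L}(\Gamma_{\kappa_\phi})^G$.

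For commutativity of the diagram I use transitivity. If $f\in C_b(X)^G$, then $f(\tilde{\alpha}_{s^{-1}}x)=f(x)$ for all $s\in G$ and $x\in X$, and since $\tilde{\alpha}$ is transitive, $f=c\cdot 1$ for a constant $c$. Both $\pounds$ and $\tilde{\pi}_{\kappa_\phi}$ are unital and linear, so $\pounds(f)=cI=\tilde{\pi}_{\kappa_\phi}(f)$. This is the only place where transitivity enters.

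The step I expect to be the main obstacle is strict continuity on the unit ball, which is also what justifies calling $\pounds$ \emph{the} unique extension of $\mathcal{M}\circ\iota^*|_{C_0(X)}$. I will show that $\iota^*$ maps bounded strictly convergent nets to strictly convergent nets and then invoke part $(2)$ of the preceding lemma. That lemma is phrased for sequences, but its proof works verbatim for nets, and since $\mathcal{M}_f^*=\mathcal{M}_{\overline{f}}$ the adjoint seminorms are handled the same way. Let $(f_\lambda)$ be a net in the unit ball of $C_b(X)$ converging strictly to $f$. On bounded sets the strict topology of $C_b(X)=M(C_0(X))$ coincides with uniform convergence on compacta, so $f_\lambda\to f$ uniformly on compact subsets of $X$. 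Fix $h\in C_0(X\times Y)$ and $\epsilon>0$, and choose a compact $K\subset X\times Y$ with $|h|<\epsilon$ off $K$. Off $K$ we have $|(f_\lambda\circ\iota-f\circ\iota)h|\le 2\epsilon$. On $K$, the set $\iota(K)$ is compact, so $f_\lambda\circ\iota\to f\circ\iota$ uniformly there. Hence $(f_\lambda\circ\iota)h\to(f\circ\iota)h$ in sup-norm, which is strict convergence in $C_b(X\times Y)=M(C_0(X\times Y))$, and $\pounds$ is strictly continuous on the unit ball.

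Finally, $C_0(X)$ is strictly dense in $C_b(X)$, because an approximate unit of $C_0(X)$ lies in the unit ball and converges strictly to $1$. Therefore any extension of $\mathcal{M}\circ\iota^*|_{C_0(X)}$ that is strictly continuous on the unit ball agrees with $\pounds$. So the unique extension in the statement is $\pounds$, and it is a relativization map.
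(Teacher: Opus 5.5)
Your proposal is correct and follows essentially the same route as the paper. You compose the unital $*$-homomorphism $\iota^*$ with $\mathcal{M}$, use the invariance of $\iota$ together with part $(3)$ of the preceding lemma to land in $\mathcal{L}(\Gamma_{\kappa_\phi})^G$, and use transitivity (so $C_b(X)^G\cong\mathbb{C}$) to reduce commutativity of the diagram to unitality. The only difference is that you check strict continuity of $\iota^*$ on the unit ball directly, via uniform convergence on compacta, whereas the paper gets the strictly continuous unital extension by citing non-degeneracy of $\iota^*:C_0(X)\to C_b(X\times Y)$.
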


\begin{proof}\

\noindent First, note the action $\tilde{\alpha}$ being transitive on $X$ implies $C_b(X)^G\cong \mathbb{C}$. Therefore, a relativization map for the equivariant coupling 
$$(((C_0(X), G, \alpha), (C_0(Y), G, \beta), ((C_0(Y), \phi), \beta)), ((C_0(X), G, \alpha), C_0(X), \alpha))$$
is equivalent to the data of a unital completely positive map $C_b(X)\to \mathcal{L}(\Gamma_{\kappa_\phi})^G$ which is strictly continuous on the unit ball. 

Suppose we have a continuous map  $\iota: X\times Y\to X$ with $\iota(\tilde{\alpha}_s(x), \tilde{\beta}_s(y))=\iota(x, y)$ for all $x\in X$, $y\in Y$, $s\in G$. As $\iota$ is a continuous map between locally compact Hausdorff spaces, the map
$$\iota^*:C_0(X)\to C_b(X\times Y)\quad \quad f\mapsto f\circ\iota$$
defines a non-degenerate $*$-algebra homomorphism. Therefore the unique extension 
$$\tilde{\iota}^*:C_b(X)\to C_b(X\times Y)\quad\quad \tilde{\iota}^*(f)=f\circ\iota$$
defines a unital $*$-algebra homomorphism which is strictly continuous on the unit ball. Therefore
$$\mathcal{M}\circ\tilde{\iota}^*:C_0(X)\to \mathcal{L}(\Gamma_{\kappa_\phi})$$
defines a unital $*$-algebra homomorphism which is strictly continuous on the unit ball. 

To conclude $\mathcal{M}\circ\tilde{\iota}^*$ is a relativization map, we just need to show the map is valued in $\mathcal{L}(\Gamma_{\kappa_\phi})^G$. Let $\tilde{\gamma}$ denote the diagonal action of $G$ on $X\times Y$, and let $\gamma$ denote the corresponding action on $C_b(X\times Y)$. For all $f\in C_0(X)$ and $s\in G$ we have
$$\gamma_s(\tilde{\iota}^*f)(x, y)=(f\circ\iota)(\tilde{\alpha}_{s^{-1}}x, \tilde{\beta}_{s^{-1}}y)=(f\circ\iota)(x, y)=(\tilde{\iota}^*f)(x, y)$$
Therefore $\tilde{\iota}^*(C_b(X))\subset C_b(X\times Y)^G$. As $\mathcal{M}$ is $G$-equivariant, it follows $\mathcal{M}\circ\tilde{\iota}^*$ is valued in $\mathcal{L}(\Gamma_{\kappa_\phi})^G$. Thus the map defines a relativization map for the classical $G$-equivariant coupling.

\end{proof}

\begin{lemma}\

\noindent Let $X$ and $Y$ be locally compact Hausdorff spaces, and let $G$ be a topological group acting on $X$ by $\tilde{\alpha}$. If $X/G$ is equipped with the quotient topology, then, as sets,
$$\{f\in C(X, Y): f\circ\tilde{\alpha}_s=f\text{ for all }s\in S\}\cong C(X/G, Y).$$

\end{lemma}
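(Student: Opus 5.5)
This is the universal property of the quotient topology, applied to the quotient map $q:X\to X/G$, $q(x)=Gx$. (I read the index set ``$s\in S$'' in the statement as ``$s\in G$''.) The plan is to write down the two obvious maps between the sets and check that they are mutually inverse.

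\emph{From $C(X/G,Y)$ to invariant maps.} Given $h\in C(X/G,Y)$, set $\Psi(h)=h\circ q$.
\begin{itemize}
\item $\Psi(h)$ is continuous, since $q$ is continuous by definition of the quotient topology.
\item $\Psi(h)$ is $G$-invariant, since $q(\tilde{\alpha}_s(x))=q(x)$ for all $s\in G$ and $x\in X$.
\end{itemize}
So $\Psi(h)$ lies in the left-hand set.

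\emph{From invariant maps to $C(X/G,Y)$.} Given $f\in C(X,Y)$ with $f\circ\tilde{\alpha}_s=f$ for all $s\in G$, define $\bar f:X/G\to Y$ by $\bar f(Gx)=f(x)$.
\begin{itemize}
\item $\bar f$ is well defined: if $Gx=Gx'$, then $x'=\tilde{\alpha}_s(x)$ for some $s$, so $f(x')=f(x)$.
\item $\bar f$ is continuous: for $U\subset Y$ open, $q^{-1}(\bar f^{-1}(U))=f^{-1}(U)$, which is open in $X$. By the definition of the quotient topology, $\bar f^{-1}(U)$ is then open in $X/G$.
\end{itemize}

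\emph{The two maps are inverse.}
\begin{itemize}
\item $\bar f\circ q=f$ holds by construction.
\item $\overline{h\circ q}=h$ holds because $q$ is surjective.
\end{itemize}
This gives the claimed bijection of sets.

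There is no real obstacle. The only step needing care is the continuity of $\bar f$, and it rests on exactly the defining property of the quotient topology. None of the hypotheses that $X$, $Y$ are locally compact Hausdorff, or that $G$ is a topological group, are needed for a set-level bijection. They would only matter for a stronger statement, such as a homeomorphism for compact-open topologies or $X/G$ being Hausdorff, and the plan does not use them.
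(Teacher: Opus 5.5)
Your proposal is correct and takes the same route as the paper, which simply cites the universal property of the quotient topology. You have written out that universal property explicitly via $h\mapsto h\circ q$ and $f\mapsto \bar f$, including reading the index set $S$ as $G$.
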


\begin{proof}\

\noindent The bijection follows from the universal property of the quotient topology.
    
\end{proof}

\begin{lemma}\

\noindent Let $X$ and $Y$ be locally compact Hausdorff spaces, and let $G$ be a topological group acting continuously on $X$ and $Y$ by $\tilde{\alpha}$ and $\tilde{\beta}$, respectively. Let $\tilde{\gamma}$ denote the diagonal action of $G$ on $X\times Y$. If $\tilde{\alpha}$ and $\tilde{\beta}$ are transitive actions, then $(X\times Y)/ G\cong K\backslash G/H$ where $K=\text{Stab}_G(x_0)$ and $H=\text{Stab}_G(y_0)$ for any $x_0\in X$ and $y_0\in Y$. 

\end{lemma}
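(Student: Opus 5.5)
We prove the statement first at the level of sets, by building an explicit bijection, and then check it is a homeomorphism. Fix $x_0\in X$ and $y_0\in Y$, and set $K=\text{Stab}_G(x_0)$ and $H=\text{Stab}_G(y_0)$. Define
$$\Psi:K\backslash G/H\to (X\times Y)/G,\qquad \Psi(KsH)=[(\tilde{\alpha}_{s^{-1}}(x_0), y_0)].$$
Equivalently, using the diagonal action, this is the orbit of $(x_0,\tilde{\beta}_s(y_0))$. We will work with the representative $(x_0,\tilde{\beta}_s(y_0))$, since it makes the roles of $K$ and $H$ transparent.

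\textbf{Well-definedness.} Replace $s$ by $ksh$ with $k\in K$ and $h\in H$. Then
$$\tilde{\beta}_{ksh}(y_0)=\tilde{\beta}_k(\tilde{\beta}_s(y_0)).$$
Applying $\tilde{\gamma}_k$ to $(x_0,\tilde{\beta}_s(y_0))$ gives $(\tilde{\alpha}_k x_0,\tilde{\beta}_k\tilde{\beta}_s y_0)=(x_0,\tilde{\beta}_{ks}y_0)$. Hence the orbit does not depend on the representative.

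\textbf{Surjectivity.} Let $(x,y)\in X\times Y$. Transitivity of $\tilde{\alpha}$ gives $t$ with $\tilde{\alpha}_t x=x_0$, so $(x,y)$ lies in the orbit of $(x_0,\tilde{\beta}_t y)$. Transitivity of $\tilde{\beta}$ then gives $s$ with $\tilde{\beta}_s y_0=\tilde{\beta}_t y$.

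\textbf{Injectivity.} Suppose $(x_0,\tilde{\beta}_s y_0)$ and $(x_0,\tilde{\beta}_{s'} y_0)$ are in the same orbit. Then some $g$ satisfies $\tilde{\alpha}_g x_0=x_0$, so $g\in K$, and $\tilde{\beta}_{gs}y_0=\tilde{\beta}_{s'}y_0$. The second condition gives $s'^{-1}gs\in H$, so $s'\in KsH$.

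\textbf{Topology.} It remains to upgrade this bijection to a homeomorphism. We give $K\backslash G/H$ the quotient topology from $G$, and $(X\times Y)/G$ the quotient topology from $X\times Y$.

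Continuity of $\Psi$ follows from the universal property of quotients, as in the preceding lemma. The map $G\to X\times Y$, $s\mapsto(x_0,\tilde{\beta}_s y_0)$, is continuous because the action is continuous. Composing with the quotient map to $(X\times Y)/G$ gives a continuous map that is constant on double cosets, so it descends to $\Psi$.

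For the inverse, the natural route is to factor through $X\times Y\cong G/K\times G/H$. This requires the orbit maps $G/K\to X$ and $G/H\to Y$ to be homeomorphisms. We then need a continuous map $G/K\times G/H\to K\backslash G/H$ that is $G$-invariant, namely $(sK,tH)\mapsto Ks^{-1}tH$.

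\textbf{Main obstacle.} The orbit maps $G/K\to X$ need not be homeomorphisms without extra hypotheses. The standard sufficient condition is the open mapping theorem for homogeneous spaces, which needs $G$ locally compact and $\sigma$-compact. The paper does not assume this. There is a second, smaller issue: the map $(sK,tH)\mapsto Ks^{-1}tH$ must be continuous on the product $G/K\times G/H$. This holds because $G\times G\to G/K\times G/H$ is open, so the product of quotient maps is again a quotient map.

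So the plan has two outcomes. In general I would prove the statement as a bijection of sets, or as a continuous bijection; this is what the following relativization discussion needs. If the full homeomorphism is required, I would add the hypothesis that $G$ is locally compact and $\sigma$-compact and invoke the open mapping theorem for homogeneous spaces.
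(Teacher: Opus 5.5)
Your argument is the paper's argument wherever that argument is valid. Under the identifications $X\cong G/K$ and $Y\cong G/H$, your $\Psi$ is exactly the paper's $\psi^{-1}(KgH)=[K,gH]$. The paper proves well-definedness, bijectivity and continuity of $\psi^{-1}$ with the same universal-property argument you use.

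The difference is in the other direction. The paper gets continuity of $\psi:(X\times Y)/G\to K\backslash G/H$ by asserting ``as $\tilde{\alpha}$ and $\tilde{\beta}$ are transitive actions, we have $X\cong G/K$ and $Y\cong G/H$''. It then invokes continuity of $(g_1K,g_2H)\mapsto Kg_1^{-1}g_2H$. You justify that last continuity correctly, via openness of $G\times G\to G/K\times G/H$; the paper skips this point.

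Your suspicion of the assertion $X\cong G/K$ is justified: at the stated generality the homeomorphism claim is false. Take $G=\mathbb{R}$ with the discrete topology, acting on $X=Y=\mathbb{R}$ (usual topology) by translation.
\begin{itemize}
\item The action is continuous and transitive, and $K=H=\{0\}$.
\item So $K\backslash G/H$ is $\mathbb{R}$ with the discrete topology.
\item The map $(x,y)\mapsto y-x$ is an open continuous surjection $\mathbb{R}^2\to\mathbb{R}$ whose fibres are exactly the diagonal orbits. Hence $(X\times Y)/G$ is $\mathbb{R}$ with its usual, connected topology.
\end{itemize}
So without extra hypotheses nothing beyond your continuous bijection $K\backslash G/H\to(X\times Y)/G$ can be proved. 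The hypothesis you propose is the right one: $G$ locally compact, $\sigma$-compact and Hausdorff, so that the open mapping theorem gives $G/K\cong X$ and $G/H\cong Y$. Note that $\mathbb{R}$ with the discrete topology is locally compact Hausdorff, so $\sigma$-compactness is genuinely needed, not just the usual convention.

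One correction to your closing remark. The unconditional continuous bijection is \emph{not} what the subsequent theorem on relativization maps needs. That theorem turns $\iota\in C(K\backslash G/H,G/K)$ into the invariant map
$X\times Y\to(X\times Y)/G\to K\backslash G/H\xrightarrow{\iota}G/K\to X$.
Continuity of this composite requires continuity of $\psi=\Psi^{-1}$, which is exactly the direction that fails. In the example above, every function $\mathbb{R}\to\mathbb{R}$ lies in $C(K\backslash G/H,G/K)$. Yet $(x,y)\mapsto\iota(y-x)$ is continuous only when $\iota$ is continuous for the usual topology. Without the extra hypothesis, the correct statement is the preceding proposition, parametrized by continuous $G$-invariant maps $X\times Y\to X$.
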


\begin{proof}\

\noindent First, as $\tilde{\alpha}$ and $\tilde{\beta}$ are transitive actions, $\text{Stab}_G(x_0)\cong\text{Stab}_G(x_0')$ and  $\text{Stab}_G(y_0)\cong\text{Stab}_G(y_0')$ for all $x_0, x_0'\in X$ and $y_0, y_0'\in Y$. Therefore, fixing $x_0\in X$ and $y_0\in Y$, let $K=\text{Stab}_G(x_0)$ and $H=\text{Stab}_G(y_0)$. As $\tilde{\alpha}$ and $\tilde{\beta}$ are transitive actions, we have $X\cong G/K$ and $Y\cong G/H$. Therefore we have $X\times Y\cong G/K\times G/H$ so that $(X\times Y)/G\cong (G/K\times G/H)/G$. Let
$$\psi:(G/K\times G/H)/G\to K\backslash G/H\quad\quad \psi([g_1K, g_2H])=Kg_1^{-1}g_2H$$
We claim this is a well-defined homeomorphism. 

To $\psi$ is well-defined, suppose $(g_1K, g_2H)$ and $(g_3K, g_4H)$ are in the same orbit. Let $g\in G$ such that $(gg_1K, gg_2H)=(g_3K, g_4H)$, then $gg_1K=g_3K$ and $gg_2H=g_4H$. Therefore
$$Kg_1^{-1}g_2H=(Kg_1^{-1}g^{-1})(gg_2H)=(Kg_3^{-1})(g_4H)=Kg_3^{-1}g_4H$$
showing $\psi([g_1K, g_2H])=\psi([g_3K, g_4H])$. Thus $\psi$ is a well-defined function.

Now we prove $\psi$ is a bijection. Since $\psi([K, gH])=KgH$, the map $\psi$ is surjective. For injectivity, suppose $\psi([g_1K, g_2H])=\psi([g_3K, g_4H])$, then there exists $k\in K$ and $h\in H$ such that $g_1^{-1}g_2=kg_3^{-1}g_4h$.  Thus we have
$$(g_1K, g_2H)\sim (K, g_1^{-1}g_2H)\sim (K, kg_3^{-1}g_4hH)=(K, kg_3^{-1}g_4H)\sim (g_3k^{-1}K, g_4H)=(g_3K, g_4H)$$
showing $[g_1K, g_2H]=[g_3K, g_4H]$. Hence $\psi$ is injective.

Now we verify $\psi$ is continuous. Note, as $G$ is a topological group, we have the continuous map
$$f:G/K\times G/H\to K\backslash G/H\quad\quad f(g_1K, g_2H)=Kg_1^{-1}g_2H$$
Since $f$ is constant on the orbits of the diagonal action, then there exists a unique continuous map $\tilde{f}:(G/K\times G/H)/G\to K\backslash G/H$ such that $f=\tilde{f}\circ q$ where $q$ is the canonical quotient map from $G/K\times G/H$ to $(G/K\times G/H)/G$. Since $\tilde{f}=\psi$, then we know $\psi$ is continuous.

Now we prove $\psi^{-1}$ is continuous. Note, 
$$\psi^{-1}:K\backslash G/H\to (G/K\times G/H)/G\quad\quad \psi^{-1}(KgH)=[K, gH]$$
Since the map 
$$f:G\to (G/K\times G/H)/G\quad\quad f(g)=[K, gH]$$ 
is a continuous function with $f(kgh)=f(g)$ for all $k\in K$, $h\in H$ and $g\in G$, then, by the universal property of the quotient topology, there exists a unique continuous map $\tilde{f}:K\backslash G/H\to (G/K\times G/H)/G$ such that $f=\tilde{f}\circ q$ where $q$ is the canonical surjection from $G$ to $K\backslash G/H$. Since  $\tilde{f}=\psi^{-1}$, then we know $\psi^{-1}$ is continuous.

\end{proof}

\begin{theorem}\label{Theorem: rel map for transitive actions}\

\noindent Let $C_0(X)$ be a commutative $C^*$-algebra, and let
$$(((C_0(X), G, \alpha), (C_0(Y), G, \beta), ((C_0(Y), \phi), \beta)), ((C_0(X), G, \alpha), C_0(X), \alpha))$$
be a classical $G$-equivariant coupling for $C_0(X)$ with $G$ a topological group.  If $\tilde{\alpha}$ and $\tilde{\beta}$ define continuous transitive actions of $G$ on $X$ and $Y$, respectively, then, letting $K=\text{Stab}_G(x_0)$ and $H=\text{Stab}_G(y_0)$ for $x_0\in X$ and $y_0\in Y$,  every $\iota\in C(K\backslash G/ H, G/K)$ defines a relativization map for the classical $G$-equivariant coupling.

\end{theorem}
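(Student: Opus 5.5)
The plan is to reduce the theorem to the preceding proposition on transitive actions. That proposition says any continuous map $\iota': X\times Y\to X$ that is invariant under the diagonal action yields a relativization map, via the extension of $\mathcal{M}\circ(\iota')^*$ to $C_b(X)$. So from a given $\iota\in C(K\backslash G/H, G/K)$ we only need to produce such an invariant map $\iota': X\times Y\to X$.

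The first step fixes the identifications. Choose $x_0\in X$ and $y_0\in Y$ with stabilizers $K$ and $H$. By transitivity the orbit maps $G/K\to X$, $sK\mapsto\tilde{\alpha}_s(x_0)$, and $G/H\to Y$ are continuous bijections. The preceding lemma on double cosets already treats $X\cong G/K$ and $Y\cong G/H$ as homeomorphisms, and I will use them as such. This gives a homeomorphism $\theta: G/K\to X$.

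Next, let $\tilde{\gamma}$ be the diagonal action and $q: X\times Y\to (X\times Y)/G$ the quotient map. The double coset lemma supplies a homeomorphism $\psi:(X\times Y)/G\to K\backslash G/H$. Define
$$\iota' = \theta\circ\iota\circ\psi\circ q : X\times Y\to X.$$
This is a composite of continuous maps, hence continuous. Since $q$ is constant on $G$-orbits, $\iota'(\tilde{\alpha}_s(x),\tilde{\beta}_s(y))=\iota'(x,y)$ for all $s\in G$. Equivalently, by the lemma identifying $G$-invariant continuous maps with $C((X\times Y)/G, X)$, the element $\iota$ transported through $\psi$ and $\theta$ is exactly such an invariant map. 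The proposition on transitive actions then makes the extension of $\mathcal{M}\circ(\iota')^*$ to $C_b(X)$ a relativization map. The proposition's proof already checks that this map is completely positive (indeed a $*$-homomorphism), unital, strictly continuous on the unit ball, and valued in $\mathcal{L}(\Gamma_{\kappa_\phi})^G$. Because $C_b(X)^G\cong\mathbb{C}$, the commutativity of the diagram is automatic from unitality.

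The main obstacle is the first step: the orbit map $G/K\to X$ being a homeomorphism rather than merely a continuous bijection. This needs a hypothesis such as $G$ being $\sigma$-compact and locally compact (open mapping theorem for homogeneous spaces), which the paper does not state. I would follow the paper's previous lemma and take these identifications as given. If that is not acceptable, I would add the hypothesis that the orbit maps are homeomorphisms; alternatively, it suffices that $\theta$ be continuous as a map $G/K\to X$, which always holds. In fact only this continuity is needed, since $\theta$ is used in the forward direction only, together with continuity of $\psi$, which the lemma proves.

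A second, minor check is that $\iota$ takes values in $G/K$, which is exactly why composing with $\theta$ lands in $X$. One more small point: the extension to $C_b(X)$ is well defined because $(\iota')^*$ is non-degenerate, as argued in the proposition.
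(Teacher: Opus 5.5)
Your reduction is the intended one. The paper gives no separate proof of this theorem. It is meant to follow by transporting $\iota$ through $X\cong G/K$, the double-coset homeomorphism $\psi$, and the lemma on invariant maps, which yields a continuous diagonally invariant $\iota'\colon X\times Y\to X$; one then invokes the proposition on transitive actions. If the orbit maps are homeomorphisms, as the paper tacitly assumes in its double-coset lemma, your argument is complete.

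However, your closing claim that only continuity of $\theta\colon G/K\to X$ is needed is false. The double-coset lemma proves continuity of $\psi$ on $(G/K\times G/H)/G$, not on $(X\times Y)/G$. To get continuity of $\psi\circ q$ on $X\times Y$, i.e. of $(\tilde{\alpha}_{g_1}(x_0),\tilde{\beta}_{g_2}(y_0))\mapsto Kg_1^{-1}g_2H$, you must precompose with $X\to G/K$ and $Y\to G/H$. These are the inverses of the orbit maps, which is exactly the direction that needs an open-mapping argument.

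Here is a concrete failure. Let $G=\mathbb{R}$ with the discrete topology act by translation on $X=Y=\mathbb{R}$; this is a continuous transitive action, and the Gaussian kernel gives a classical $G$-equivariant coupling. Then $K=H=\{0\}$, and both $K\backslash G/H$ and $G/K$ are $\mathbb{R}$ with the discrete topology. So every function $\mathbb{R}\to\mathbb{R}$ lies in $C(K\backslash G/H, G/K)$, $\theta$ is the (continuous) identity from discrete $\mathbb{R}$ to $\mathbb{R}$, and $\iota'(x,y)=\iota(y-x)$. For $\iota=\chi_{(0,\infty)}$ this $\iota'$ is discontinuous across the diagonal. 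Hence $f\circ\iota'\notin C_b(X\times Y)$ whenever $f(0)\neq f(1)$, and the proposition cannot be applied.

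So drop the ``alternative''. Keep an explicit hypothesis making $G/K\to X$ and $G/H\to Y$ homeomorphisms, for instance $G$ locally compact and $\sigma$-compact, via the open mapping theorem for homogeneous spaces.
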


\subsection{Hilbert module classical reference frames and examples}

Combining together the data of a classical $G$-equivariant coupling and relativization map, we arrive at the definition of a Hilbert module classical reference frame.

\begin{definition}\

\noindent Let $C_0(X)$ be a commutative $C^*$-algebra. Define a Hilbert module classical reference frame for $C_0(X)$ as the data
$$((((C_0(X), G, \alpha), (C_0(Y), G, \beta), ((C_0(Y), \phi), \beta)), ((C_0(X), G, \alpha), C_0(X), \alpha)), \pounds)$$
where 
\begin{itemize}
    \item $(((C_0(X), G, \alpha), (C_0(Y), G, \beta), ((C_0(Y), \phi), \beta)), ((C_0(X), G, \alpha), C_0(X), \alpha))$ is a classical $G$-equivariant coupling for $C_0(X)$.
    \item $\pounds$ is a relativization for the classical $G$-equivariant coupling.
\end{itemize}
    
\end{definition}

\begin{example}\

\noindent A two state switch can be modeled by the $C^*$-algebra $C(X)$ where $X=\{x_1, x_2\}$. Assuming the switch is unmarked, we are unable to distinguish the absolute states $x_1$ and $x_2$ which is encoded by the unique transitive $\mathbb{Z}/2\mathbb{Z}$ action on $X$. Denote the action on $X$ by $\tilde{\alpha}$, and denote the corresponding action on $C(X)$ by $\alpha$. Without a reference system, the algebra of invariant observables is given by $C(X)^{\mathbb{Z}/2\mathbb{Z}}\cong\mathbb{C}$. Suppose we use another unmarked two state switch as a reference system. Let us model the system as $C(Y)$ with $Y=\{y_1, y_2\}$. As before, denote the unique transitive $\mathbb{Z}/2\mathbb{Z}$ action on $Y$ by $\tilde{\beta}$, and denote the corresponding action on $C(Y)$ as $\beta$. 

With respect to the actions on $X$ and $Y$, every $\mathbb{Z}/2\mathbb{Z}$-invariant Markov kernel from $Y$ to $X$ is of the form
$$\kappa_p:Y\times \text{Bor}(X)\to [0, 1]\quad\quad\kappa_p(y, B)=\begin{cases}
    (1-p)\delta_{x_1}(B)+p\delta_{x_2}(B) & y=y_1\\
    p\delta_{x_1}(B)+(1-p)\delta_{x_2}(B) & y=y_2
\end{cases}$$
for $p\in [0, 1]$. Physically, when $p=0$ or $p=1$ we have the deterministic situation of aligned and anti-aligned, respectively. When $p\in (0, 1)$, there is a probability of the states being aligned or anti-aligned given by $1-p$ and $p$, respectively. Using $\kappa_p$, we obtain a classical $\mathbb{Z}/2\mathbb{Z}$-equivariant coupling given by
$$(((C(X), \mathbb{Z}/2\mathbb{Z}, \alpha), (C(Y), \mathbb{Z}/2\mathbb{Z}, \beta), ((C(Y), \phi_{\kappa_p}), \beta)), ((C(X), \mathbb{Z}/2\mathbb{Z}, \alpha), C(X), \alpha)).$$

To build the relational joint space for the two switches, we begin with the pre-inner product $C(Y)$-module $C(X\times Y)$. Note, the $C(Y)$-valued pairing is given by
$$\langle f, g\rangle(y)=\begin{cases}
    (1-p)\overline{f(x_1, y_1)}g(x_1, y_1)+p\overline{f(x_2, y_1)}g(x_2, y_1) & y=y_1\\
    p\overline{f(x_1, y_2)}g(x_1, y_2)+(1-p)\overline{f(x_2, y_2)}g(x_2, y_2) & y=y_2
\end{cases}$$
Therefore, it follows
$$C(X\times Y)/N_{\kappa_p}\cong \begin{cases}
    C(X\times Y) & p\in (0, 1)\\
   C(\{(x_1, y_1), (x_2, y_2)\})  & p=0\\
  C(\{(x_1, y_2), (x_2, y_1)\})   & p=1
\end{cases}.$$
Note, in all three cases $C(X\times Y)/N_{\kappa_p}$ forms a Hilbert $C(Y)$-module since it a finite dimensional $C(Y)$-module.

Now let us compute $\mathcal{L}(C(X\times Y)/N_{\kappa_p})^{\mathbb{Z}/2\mathbb{Z}}$. In the case $p=0$ or $p=1$, we have 
$$C(X\times Y)/N_{\kappa_p}\cong C(Y)$$ so that
$$\mathcal{L}(C(X\times Y)/N_{\kappa_p})\cong M_1(C(Y))\cong M_1(\mathbb{C}\oplus \mathbb{C})\cong M_1(\mathbb{C})\oplus M_1(\mathbb{C})\cong \mathbb{C}\oplus \mathbb{C}$$
Under this identification, the $\mathbb{Z}/2\mathbb{Z}$ action on $\mathbb{C}\oplus \mathbb{C}$ is given by swapping the two factors. Therefore
$$\mathcal{L}(C(X\times Y)/N_{\kappa_p})^{\mathbb{Z}/2\mathbb{Z}}\cong (\mathbb{C}\oplus\mathbb{C})^{\mathbb{Z}/2\mathbb{Z}}\cong\mathbb{C}.$$
Thus a relativization map in the case $p=0$ or $p=1$ is equivalent to the choice of state on $C(X)$.

Now suppose $p\in (0, 1)$. As we have $C(X\times Y)\cong C(Y)\oplus C(Y)$, it follows
$$\mathcal{L}(C(X\times Y))\cong M_2(\mathbb{C}\oplus\mathbb{C})\cong M_2(\mathbb{C})\oplus M_2(\mathbb{C}).$$
Let $\tau\in M_2(\mathbb{C})$ given by
$$\tau=\begin{bmatrix}
    0 & 1\\
    1 & 0
\end{bmatrix},$$
then, under the identification of $\mathcal{L}(C(X\times Y))\cong M_2(\mathbb{C})\oplus M_2(\mathbb{C})$, the $\mathbb{Z}/2\mathbb{Z}$ action on $M_2(\mathbb{C})\oplus M_2(\mathbb{C})$ is generated by
$$[1](A, B)=(\tau B\tau, \tau A\tau).$$
Thus, the fix point algebra is given by
$$\mathcal{L}(C(X\times Y))^{\mathbb{Z}/2\mathbb{Z}}\cong\{(A, \tau A\tau)\in M_2(\mathbb{C})^{\oplus 2}: A\in M_2(\mathbb{C})\}\cong M_2(\mathbb{C}).$$
Therefore, when $p\in (0, 1)$, a relativization map for the classical $\mathbb{Z}/2\mathbb{Z}$ equivariant coupling is equivalent to a unital completely positive map $\pounds:C(X)\to M_2(\mathbb{C})$.

An example of a relativization map in the case $p\in (0, 1)$ is given by

$$\pounds:C(X)\to M_2(\mathbb{C})\quad\quad 
    \pounds(f)=\begin{bmatrix}
    \mathbb{E}[f(X) | Y=y_1] & 0\\
    0 & \mathbb{E}[f(X) | Y=y_2]
\end{bmatrix}$$
It is clear the map $\pounds$ is positive (thus completely positive) as well as unital. Therefore $\pounds$ defines relativization map. For $i=1, 2$, let
$$\chi_{i}:X\to \mathbb{C}\quad\quad \chi_i(x)=\begin{cases}
    1 & x=x_i\\
    0 & x\neq x_i
\end{cases},$$
then
$$\pounds(\chi_1)=\begin{bmatrix}
    1-p & 0\\
    0 & p
\end{bmatrix}\quad\quad \pounds(\chi_2)=\begin{bmatrix}
    p & 0\\
    0 & 1-p
\end{bmatrix}$$
The observables $\chi_i$ represent checking if the system $C(X)$ is in state $x_i$.  Thus $\pounds(\chi_i)$ represents the observable of checking if $X$ is in the state $x_i$ relative to the state of $Y$. Notice we obtain the probability from the conditioning as the eigenvalues. 

\end{example}

\begin{example}\

\noindent  We can model a three-state switch by the $C^*$-algebra $C(X)$ where $X=\{x_1, x_2, x_3\}$. Assuming the switch is unmarked, we are unable to distinguish the absolute states leading to a symmetry group of $D_6$:  
$$D_6=\langle R, S \;|\; R^3=I=S^2, SR=R^2S\rangle.$$
We encode this symmetry group via the action of $D_6$ on $X$ generated by
$$Rx_1=x_2\quad\quad Rx_2= x_3\quad\quad Rx_3=x_1\quad\quad Sx_1=x_3\quad\quad Sx_2= x_2\quad\quad S x_3=x_1$$
Denote action of $D_6$ on $X$ by $\tilde{\alpha}$, and denote the induced action on $C(X)$ by $\alpha$. Note, as the $D_6$ action is transitive, the algebra of invariant observables without a reference frame is given by $C(X)^{D_6}\cong\mathbb{C}$. Suppose we use a similar switch as a reference system. Denote the $C^*$-algebra for the reference switch as $C(Y)$ with $Y=\{y_1, y_2, y_3\}$, and denote the action from $D_6$ on $C(Y)$ and $Y$ by $\beta$ and $\tilde{\beta}$, respectively.

With respect to the actions on $X$ and $Y$, every $D_6$-invariant Markov kernel from $Y$ to $X$ is of the form
$$\kappa_p:Y\times\text{Bor}(X)\to [0, 1]\quad\quad \kappa_p(y, B)=\begin{cases}
    (1-p)\delta_{x_1}(B)+ \frac{p}{2}\delta_{x_2}(B)+\frac{p}{2}\delta_{x_3}(B) & y=y_1\\
   \frac{p}{2}\delta_{x_1}(B)+ (1-p)\delta_{x_2}(B)+\frac{p}{2}\delta_{x_3}(B)  & y=y_2\\
  \frac{p}{2}\delta_{x_1}(B)+ \frac{p}{2}\delta_{x_2}(B)+(1-p)\delta_{x_3}(B)   & y=y_3
\end{cases}$$
where $p\in [0, 1]$. Thus, for each $p\in [0, 1]$, we have the classical $D_6$-equivariant coupling
$$(((C(X), D_6, \alpha), (C(Y), D_6, \beta), ((C(Y), \phi_{\kappa_p}), \beta))((C(X), D_6, \alpha), C(X), \alpha)).$$

From the Markov kernel, the $C(Y)$-valued pairing on $C(X\times Y)$ is given by
$$\langle f, g\rangle(y)=\begin{cases}
    (1-p)\overline{f(x_1, y_1)}g(x_1, y_1)+ \frac{p}{2}\overline{f(x_2, y_1)}g(x_2, y_1)+\frac{p}{2}\overline{f(x_2, y_1)}g(x_2, y_1) & y=y_1\\
   \frac{p}{2}\overline{f(x_1, y_2)}g(x_1, y_2)+ (1-p)\overline{f(x_2, y_2)}g(x_2, y_2)+\frac{p}{2}\overline{f(x_3, y_2)}g(x_3, y_2)  & y=y_2\\
  \frac{p}{2}\overline{f(x_1, y_3)}g(x_1, y_3)+ \frac{p}{2}\overline{f(x_2, y_3)}g(x_2, y_3)+(1-p)\overline{f(x_3, y_3)}g(x_3, y_3)   & y=y_3
\end{cases}.$$
Therefore, it follows
$$C(X\times Y)/N_{\kappa_p}\cong \begin{cases}
    C(X\times Y) & 0<p<1\\
  C(\{(x_1, y_1), (x_2, y_2), (x_3, y_3)\})   & p=0\\
  C((X\times Y)\backslash \{(x_1, y_1), (x_2, y_2), (x_3, y_3)\})  & p=1
\end{cases}.$$
As $C(X\times Y)/N_{\kappa_p}$ is a finite dimensional inner product $C(Y)$-module, $C(X\times Y)/N_{\kappa_p}$ is a Hilbert $C(Y)$-module for all $p\in [0, 1]$.

In the case $p=0$, we have $C(X\times Y)/N_{\kappa_0}\cong C(Y)$ so that
$$\mathcal{L}(C(X\times Y)/N_{\kappa_0})\cong M_1(C(Y))\cong \mathbb{C}\oplus\mathbb{C}\oplus \mathbb{C}.$$
Under this identification, the $D_6$ action on $\mathbb{C}^{\oplus 3}$ is generated by
$$S(a, b, c)=(c, b, a)\quad\quad R(a, b, c)=(c, b, a)$$
Thus if follows $\mathcal{L}(C(X\times Y)/N_{\kappa_0})^{D_6}\cong \mathbb{C}$ making a relativization map equivalent to the choice of a state on $C(X)$.

In the cases $p=1$, we have $C(X\times Y)/N_{\kappa_1}\cong C(Y)\oplus C(Y)$ so that 
$$\mathcal{L}(C(X\times Y)/N_{\kappa_1})=M_2(C(Y))\cong M_2(\mathbb{C})\oplus M_2(\mathbb{C})\oplus M_2(\mathbb{C}).$$
For $i=1, 2, 3$, let
$$\chi_i:X\times Y\to \mathbb{C}\quad\quad \chi_i(x, y)=\begin{cases}
    1 & x=x_i\\
    0 & x\neq x_i
\end{cases}$$
and let $a, b:Y\to \mathbb{C}$ where
$$a(y)=\begin{cases}
    1 & y=y_2, y_3\\
    -1 & y=y_1
\end{cases}\quad\quad b(y)=\begin{cases}
    1 & y=y_1, y_2\\
    -1 & y=y_3
\end{cases}.$$
With respect to the basis $\chi_1+\chi_2$ and $\chi_2+\chi_3$ on $C(X\times Y)/N_{\kappa_1}$, the $\beta$-adjointable unitaries $W_S, W_R,$ and $W_{R^2}$ are given, respectively, by the elements $\tau_S, \rho_R, \rho_{R^2}\in M_2(C(Y))$ where
$$\tau_S=\begin{bmatrix}
    0 & 1\\
    1 & 0
\end{bmatrix}\quad\quad \rho_R=\begin{bmatrix}
    0 & a\\
    1 & b
\end{bmatrix}\quad\quad \rho_{R^2}=\begin{bmatrix}
    a & 1\\
    b & 0
\end{bmatrix}.$$
Under the identification of $\mathcal{L}(C(X\times Y)/N_{\kappa_1})$ with $M_2(\mathbb{C})^{\oplus 3}$, the $D_6$ action on $M_2(\mathbb{C})^{\oplus 3}$ is generated by
\begin{align*}
    S(A, B, C)&=(\tau(y_1) C\tau(y_3), \tau(y_2) B\tau(y_2), \tau(y_3) A\tau(y_1))\\
    R(A, B, C)&=(\rho_{R}(y_1) C \rho_{R^2}(y_3), \rho_R(y_2)A \rho_{R^2}(y_1), \rho_{R}(y_3) B\rho_{R^2}(y_2))
\end{align*}
Thus it follows
\begin{align*}
    \mathcal{L}(C(X\times Y)/N_{\kappa_1})^{D_6}&\cong \left\{(aI, aI, aI)\in M_2(\mathbb{C})^{\oplus 3}: a\in\mathbb{C}\right\}\cong \mathbb{C}
\end{align*}
Therefore a relativization map in this case is equivalent to the choice of a state on $C(X)$.

In the case $p\in (0, 1)$, we have $C(X\times Y)\cong C(Y)^{\oplus 3}$ so that
$$\mathcal{L}(C(X\times Y))\cong M_3(C(Y))\cong M_3(\mathbb{C})\oplus M_3(\mathbb{C})\oplus M_3(\mathbb{C}).$$
Under this identification, the $D_6$ action on $M_3(\mathbb{C})^{\oplus 3}$ is generated by
$$R(A, B, C)=(\sigma C\sigma^{-1}, \sigma A\sigma^{-1}, \sigma B\sigma^{-1})\quad\quad S(A, B, C)=(\phi C\phi^{-1}, \phi B\phi^{-1}, \phi A\phi^{-1})$$
where
$$\sigma=\begin{bmatrix}
    0 & 0 & 1\\
    1 & 0 & 0\\
    0 & 1 & 0
\end{bmatrix}\quad\quad \phi=\begin{bmatrix}
    0 & 0 & 1\\
    0 & 1 & 0\\
    1 & 0 & 0
\end{bmatrix}$$
Thus, it follows
\begin{align*}
    \mathcal{L}(C(X\times Y))^{D_6}&\cong \left\{(A, \sigma A\sigma^{-1}, \sigma^{-1} A\sigma)\in M_3(\mathbb{C})^{\oplus 3}: A=\begin{bmatrix}
        a & b & b\\
        b & a & b\\
        b & b & a
    \end{bmatrix}\text{ for }a, b\in\mathbb{C} \right\}\\
    &\cong \mathbb{C}\oplus \mathbb{C}
\end{align*}
Note, the last $*$-algebra isomorphism is given by $(A, \sigma A\sigma^{-1}, \sigma^{-1} A\sigma)\mapsto (a-b, a+2b)$. Therefore a relativization map in this case is equivalent to the data of two states on $C(X)$.

\end{example}

\begin{example}\

\noindent  Suppose we have both of our three-states switches $C(X)$ and $C(Y)$, but now suppose the symmetry group is from the rotation subgroup of $D_6$; that is, we have $\mathbb{Z}/3\mathbb{Z}$ symmetry group with actions on $X$ and $Y$ generated, respectively, by
$$[1]x_1=x_2\quad\quad [1]x_2=x_3\quad\quad [1]x_3=x_1$$
and
$$[1]y_1=y_2\quad\quad [1]y_2=y_3\quad\quad [1]y_3=y_1.$$
Denote the actions on $C(X)$ and $X$ by $\alpha$ and $\tilde{\alpha}$, respectively. Denote the actions on $C(Y)$ and $Y$ by $\beta$ and $\tilde{\beta}$, respectively.  Note, as the action of $\mathbb{Z}/3\mathbb{Z}$ is transitive, the algebra of invariant observables on $C(X)$ is given by $C(X)^{\mathbb{Z}/3\mathbb{Z}}\cong \mathbb{C}$.

It follows every $\mathbb{Z}/3\mathbb{Z}$-invariant Markov kernel from $Y$ to $X$ is of the form
$$\kappa_{p, q}:Y\times\text{Bor}(X)\to [0, 1]\quad\quad \kappa_{p, q}(y, B)=\begin{cases}
    (1-p-q)\delta_{x_1}(B)+ p\delta_{x_2}(B)+q\delta_{x_3}(B) & y=y_1\\
   q\delta_{x_1}(B)+ (1-p-q)\delta_{x_2}(B)+p\delta_{x_3}(B)  & y=y_2\\
  p\delta_{x_1}(B)+ q\delta_{x_2}(B)+(1-p-q)\delta_{x_3}(B)   & y=y_3
\end{cases}$$
where $p, q\in [0, 1]$ with $p+q\leq 1$. Thus, for each such $p$ and $q$, we have the classical $\mathbb{Z}/3\mathbb{Z}$-equivariant coupling
$$(((C(X),\mathbb{Z}/3\mathbb{Z}, \alpha), (C(Y),\mathbb{Z}/3\mathbb{Z}, \beta), ((C(Y), \phi_{\kappa_{p, q}}), \beta)), ((C(X), \mathbb{Z}/3\mathbb{Z}, \alpha), C(X), \alpha)).$$

From the Markov kernel, the $C(Y)$-valued pairing on $C(X\times Y)$ is given by
$$\langle f, g\rangle(y)=\begin{cases}
    (1-p-q)\overline{f(x_1, y_1)}g(x_1, y_1)+ p\overline{f(x_2, y_1)}g(x_2, y_1)+q\overline{f(x_2, y_1)}g(x_2, y_1) & y=y_1\\
  q\overline{f(x_1, y_2)}g(x_1, y_2)+ (1-p-q)\overline{f(x_2, y_2)}g(x_2, y_2)+p\overline{f(x_3, y_2)}g(x_3, y_2)  & y=y_2\\
  p\overline{f(x_1, y_3)}g(x_1, y_3)+ q\overline{f(x_2, y_3)}g(x_2, y_3)+(1-p-q)\overline{f(x_3, y_3)}g(x_3, y_3)   & y=y_3
\end{cases}.$$
Therefore, it follows
$$C(X\times Y)/N_{\kappa_{p, q}}\cong \begin{cases}
    C(X\times Y) &  p, q\in (0, 1), p+q<1\\
   C((X\times Y)\backslash \{(x_1, y_3), (x_2, y_1), (x_3, y_2)\})  & p=0, q\in (0, 1)\\
   C((X\times Y)\backslash \{(x_1, y_2), (x_2, y_3), (x_3, y_1)\}) & p\in (0, 1), q=0\\
C((X\times Y)\backslash \{(x_1, y_1), (x_2, y_2), (x_3, y_3)\})  & p, q\in (0, 1), p+q=1\\
C(\{(x_1, y_1), (x_2, y_2), (x_3, y_3)\})  & p= 0, q=0\\
C(\{(x_1, y_3), (x_2, y_1), (x_3, y_2)\}) & p=1, q=0\\
C(\{(x_1, y_2), (x_2, y_3), (x_3, y_1)\}) & p=0, q=1\\
\end{cases}.$$

We now compute the fixed-points of adjointable operators on $C(X\times Y)/N_{\kappa_{p, q}}$. First, consider the cases of $p, q\in (0, 1)$ and $p+q<1$.  As we have $C(X\times Y)\cong C(Y)^{\oplus 3}$, it follows
$$\mathcal{L}(C(X\times Y))\cong M_3(C(Y))\cong M_3(\mathbb{C})^{\oplus 3}.$$
Let $\rho\in M_3(\mathbb{C})$ where
$$\rho=\begin{bmatrix}
    0 & 0 & 1\\
    1 & 0 & 0\\
    0 & 1 & 0
\end{bmatrix},$$
then the action of $\mathbb{Z}/3\mathbb{Z}$ on $M_3(\mathbb{C})^{\oplus 3}$ is generated by
$$[1](A, B, C)=(\rho C\rho^{-1}, \rho A\rho^{-1}, \rho B \rho^{-1}).$$
Therefore it follows
$$\mathcal{L}(C(X\times Y))^{\mathbb{Z}/2\mathbb{Z}}\cong \left\{(A, \rho A\rho^{-1}, \rho A\rho^{-1}: A\in M_3(\mathbb{C})\right\}\cong  M_3(\mathbb{C}).$$
Thus a relativization map in this case is equivalent to a unital completely positive map $\pounds:C(X)\to M_3(\mathbb{C})$. Note, an example of such a map is given by
$$\pounds:C(X)\to M_3(\mathbb{C})\quad\quad \pounds(f)=\begin{bmatrix}
    \mathbb{E}[f(X)| Y=y_1] & 0 & 0\\
    0 & \mathbb{E}[f(X)| Y=y_2] & 0 \\
    0 & 0 & \mathbb{E}[f(X)| Y=y_3]
\end{bmatrix}.$$

Now let us consider the cases $p=0=q$, $p=1$ and $q=0$, and the case $p=0$ and $q=1$. In these three cases, we have $C(X\times Y)/N_{\kappa_{p, q}}\cong C(Y)$ so that
$$\mathcal{L}(C(X\times Y)/N_{\kappa_{p, q}})\cong M_1(C(Y))\cong \mathbb{C}\oplus \mathbb{C}\oplus \mathbb{C}.$$
Under this identification, the $\mathbb{Z}/3\mathbb{Z}$ action on $\mathbb{C}^{\oplus 3}$ is generated by 
$$[1](a, b, c)=(c, a, b)$$
Therefore $\mathcal{L}(C(X\times Y)/N_{\kappa_{p, q}})^{\mathbb{Z}/3\mathbb{Z}}\cong \mathbb{C}$ so that the data of relativization map in these cases is equivalent to the data of a state on $C(X)$.

Now let us consider the last three generic cases. As $C(X\times Y)/N_{\kappa_{p, q}}\cong C(Y)\oplus C(Y)$, it follows
$$\mathcal{L}(C(X\times Y)/N_{\kappa_{p, q}})=M_2(C(Y))\cong M_2(\mathbb{C})\oplus M_2(\mathbb{C})\oplus M_2(\mathbb{C}).$$
For $i=1, 2, 3$, let
$$\chi_i:X\times Y\to \mathbb{C}\quad\quad \chi_i(x, y)=\begin{cases}
    1 & x=x_i\\
    0 & x\neq x_i
\end{cases}$$
and let $a, b:Y\to \mathbb{C}$ where
$$a(y)=\begin{cases}
    1 & y=y_2, y_3\\
    -1 & y=y_1
\end{cases}\quad\quad b(y)=\begin{cases}
    1 & y=y_1, y_2\\
    -1 & y=y_3
\end{cases}.$$
With respect to the basis $\chi_1+\chi_2$ and $\chi_2+\chi_3$ on $C(X\times Y)/N_{\kappa_{p, q}}$, the $\beta$-adjointable unitaries $W_{[1]}$ and  $W_{[2]}$ are given, respectively, by the elements $\rho_{[1]}, \rho_{[2]}\in M_2(C(Y))$ where
$$\rho_R=\begin{bmatrix}
    0 & a\\
    1 & b
\end{bmatrix}\quad\quad \rho_{R^2}=\begin{bmatrix}
    a & 1\\
    b & 0
\end{bmatrix}.$$
Under the identification of $\mathcal{L}(C(X\times Y)/N_{\kappa_{p, q}})$ with $M_2(\mathbb{C})^{\oplus 3}$, the $\mathbb{Z}/2\mathbb{Z}$ action on $M_2(\mathbb{C})^{\oplus 3}$ is generated by
\begin{align*}
    [1](A, B, C)&=(\rho_{[1]}(y_1) C \rho_{[2]}(y_3), \rho_{1}(y_2)A \rho_{[2]}(y_1), \rho_{[1]}(y_3) B\rho_{[2]}(y_2))
\end{align*}
Thus it follows
\begin{align*}
    \mathcal{L}&(C(X\times Y)/N_{\kappa_{p, q}})^{\mathbb{Z}/3\mathbb{Z}}\\
    &\cong \left\{\left(A,\begin{bmatrix}
        0 & 1\\
        1 & 1
    \end{bmatrix} A\begin{bmatrix}
        -1 & 1\\
        1 & 0
    \end{bmatrix}, \begin{bmatrix}
        1 & -1\\
        1 & 0
    \end{bmatrix}A\begin{bmatrix}
        0 & -1\\
        1 & 1
    \end{bmatrix}\right)\in M_2(\mathbb{C})^{\oplus 3}: A\in M_2(\mathbb{C})\right\}\\
    &\cong M_2(\mathbb{C})
\end{align*}
Therefore a relativization map in this case is equivalent to the data of a unital completely positive map $\pounds:C(X)\to M_2(\mathbb{C})$.

\end{example}

\begin{example}\

\noindent We can model a classical particle on a line by the $C^*$-algebra $C_0(\mathbb{R})$. Assuming the line is not marked, we are unable to distinguish the absolute states (position on the line) which we encode by the translation action of $\mathbb{R}$ on $C_0(\mathbb{R})$. Denote this action by $\alpha$. Thus, without a reference frame, the algebra of invariant observables is given by $C_b(\mathbb{R})^{\mathbb{R}}\cong\mathbb{C}$. 

Suppose we use another particle on the line to act as reference system. Let
$$\kappa:\mathbb{R}\times\text{Bor}(\mathbb{R})\to [0, 1]\quad\quad \kappa(y, B)=\frac{1}{\sqrt{2\pi \sigma}}\int_{B}\exp\left(\frac{-(y-x)^2}{2\sigma}\right)\;dx$$
which defines an $\mathbb{R}$-invariant weakly continuous Markov kernel with respect to the action $\tilde{\alpha}$ on $\mathbb{R}$. Therefore we obtain the classical $\mathbb{R}$-equivariant coupling 
$$(((C_0(\mathbb{R}), \mathbb{R}, \alpha), (C_0(\mathbb{R}), \mathbb{R}, \alpha), ((C_0(\mathbb{R}), \phi_\kappa), \alpha)), ((C_0(\mathbb{R}), \mathbb{R}, \alpha), C_0(\mathbb{R}), \alpha)).$$

As the stabilizer for the translation action on $\mathbb{R}$ is trivial, Theorem \ref{Theorem: rel map for transitive actions} implies that every map in $C(\mathbb{R}, \mathbb{R})$ defines a relativization map for the equivariant coupling. For example, if we take the identity map, then the induced relativization map is given by
$$\pounds_{\text{id}}:C_b(\mathbb{R})\to \mathcal{L}(\Gamma_{\kappa_\phi})\quad\quad \pounds(f)=\mathcal{M}^{\text{id}}_{f}$$
where for each $g\in C_0(\mathbb{R}\times \mathbb{R})$
$$(\mathcal{M}^{\text{id}}_fg)(x, y)=f(x-y)g(x, y).$$
More generally, if $h\in C(\mathbb{R}, \mathbb{R})$, then the induced relativization map is given by
$$\pounds_h:C_b(\mathbb{R})\to \mathcal{L}(\Gamma_{\kappa_\phi})\quad\quad \pounds_h(f)=\mathcal{M}^h_{f}$$
where for each $g\in C_0(\mathbb{R}\times \mathbb{R}),$
$$(\mathcal{M}^h_fg)(x, y)=f(h(x-y))g(x, y).$$

\end{example}

\begin{example}\

\noindent Suppose we want to model a classical particle on $\mathbb{S}^2$. As a $C^*$-algebra, we take the unital $C^*$-algebra $C(\mathbb{S}^2)$. Assuming the sphere is not marked, we unable to distinguish the absolute states (positions on the sphere) which we encode by the standard action of $SO(3)$ on $\mathbb{S}^2$. Denote this action by $\tilde{\alpha}$, and denote the corresponding action on $C(\mathbb{S}^2)$ by $\alpha$. Note, as the action is transitive, we have $C(\mathbb{S}^2)^{SO(3)}\cong\mathbb{C}$. 

Suppose we use another particle on the sphere to acts as reference system. Let $\sigma$ denote surface measure on $\mathbb{S}^2$, and let $\gamma>0$. Consider the von Mises-Fischer kernel for $\mathbb{S}^2$: 
$$\kappa_\gamma:\mathbb{S}^2\times\text{Bor}(\mathbb{S}^2)\to [0, 1]\quad\quad \kappa_\gamma(\vec{y}, B)=\frac{\gamma}{4\pi \sinh(\gamma)}\int_{B}\exp(\gamma (\vec{x}\cdot \vec{y}))\;d\sigma(\vec{x})$$
Note $\kappa_\gamma$ defines a $SO(3)$-invariant weakly continuous Markov kernel with respect to the action $\tilde{\alpha}$ on $\mathbb{S}^2$. Therefore, we obtain the classical $SO(3)$-equivariant coupling
$$(((C(\mathbb{S}^2), SO(3), \alpha), (C(\mathbb{S}^2), SO(3), \alpha), ((C(\mathbb{S}^2), \phi_{\kappa_\gamma}), \alpha)), ((C(\mathbb{S}^2), SO(3), \alpha), C(\mathbb{S}^2), \alpha)).$$

The stabilizer of the $SO(3)$ action on $\mathbb{S}^2$ is isomorphic to $SO(2)$. Therefore, we have  $SO(2)\backslash SO(3)/SO(2)\cong [-1, 1]$ given by
$$SO(2)\backslash SO(3)/SO(2)\cong (\mathbb{S}^2\times \mathbb{S}^2)/SO(3)\to [-1, 1]\quad\quad [(\vec{x}, \vec{y})]\mapsto \vec{x}\cdot\vec{y}.$$
Furthermore, we have $SO(3)/SO(2)\cong \mathbb{S}^2$. By Theorem \ref{Theorem: rel map for transitive actions}, every continuous map in $C([-1, 1], \mathbb{S}^2)$ defines a relativization map for the equivariant coupling. In particular, given $\iota\in C([-1, 1], \mathbb{S}^2)$, we obtain the relativization map
$$\pounds_{\iota}:C(\mathbb{S}^2)\to \mathcal{L}(\Gamma_{\kappa_\gamma})\quad\quad \pounds_{\iota}(f)=\mathcal{M}_{f}^{\iota}$$
where for each $g\in C(\mathbb{S}^2\times\mathbb{S}^2)$
$$(\mathcal{M}^\iota_fg)(\vec{x}, \vec{y})=f(\iota(\vec{x}\cdot\vec{y}))g(\vec{x}, \vec{y}).$$

\end{example}

\subsection{Hilbert module classical reference frames and the frame bundle}

Let $M$ be a smooth $m$-dimensional manifold. Let $\pi:\text{Fr}(M)\to M$ denote the frame bundle for $M$; that is, for each $p\in M$, 
$$\pi^{-1}(p)=\left\{(v_{1, p}, \ldots, v_{m, p})\in (T_pM)^m: v_{1, p}, \ldots, v_{m, p}\text{ is a basis for }T_pM\right\}.$$
Let $\pi':TM\to M$ denote the tangent bundle. For each $1\leq i\leq m$, define
$$\text{pr}_i:\text{Fr}(M)\to TM\quad\quad \text{pr}_i(v_{1, p}, \ldots, v_{m, p})=v_{i, p}$$
which is a smooth map with the property $\pi'\circ\text{pr}_i=\pi$.

Using the frame bundle, a classical reference frame for $M$ can be interpreted as the data $(U, s)$ where $U\subset M$ is open and $s:U\to\text{Fr}(M)$ is a local smooth section. This interpretation derives from the one-to-one correspondence between the data $(U, s)$ and a local frame of the tangent bundle. As a result of this correspondence, the data $(U, s)$ enables one to locally describe geometric information of the tangent and cotangent bundles such as metrics and connections. 

As remarked in the introduction, specifying a local section represents a deterministic/sharp classical reference frame, but a comprehensive model of classical reference frames must include a description of both sharp and unsharp frames. To model unsharp frames using the frame bundle, we utilize weakly continuous Markov kernels to describe "unsharp sections". Unlike the sharp description, these unsharp sections characterize a probability distribution (or more generally a probability measure) for the basis of the tangent space at each point. To formally enforce the geometric requirement that these kernels behave like a section, we utilize push-forwards of Markov kernels. 

First, we recall that if $X, Y, Z$ are locally compact Hausdorff spaces and if we have a weakly continuous Markov kernel $\kappa:X\times \text{Bor}(Y)\to [0, 1]$, then the push-forward of $\kappa$ along a continuous map $f:Y\to Z$ is defined as the weakly continuous Markov kernel
$$f_*(\kappa):X\times\text{Bor}(Z)\to [0, 1]\quad\quad f_*(\kappa)(x, B)=\kappa(x, f^{-1}(B)).$$
From this definition, it immediately follows that if we have another locally compact Hausdorff space $W$ and continuous map $g:Z\to W$, then $g_*(f_*(\kappa))=(g\circ f)_*(\kappa)$. 

Using the push-forward, we define an \textit{unsharp section} of the frame bundle as the data $(U, \kappa)$ where $U\subset M$ is open and $\kappa:U\times\text{Bor}(\text{Fr}(M))\to[0, 1]$ is weakly continuous Markov kernel, and the data satisfies the condition $\pi_*\kappa=\delta_{\text{inc}}$ where $\delta_{\text{inc}}$ is the induced weakly continuous Markov kernel from the inclusion map $\text{inc}:U\to M$. Note, in the case $\kappa=\delta_s$ for $s$ a section of the frame bundle, the condition $\pi_*\kappa=\delta_{\text{inc}}$ is equivalent to $\pi\circ s=\text{inc}$. Similar to sections of the frame bundle, unsharp sections induce unsharp sections of the tangent bundle.

\begin{proposition}\

\noindent Let $M$ be a smooth $m$-dimensional manifold. If $(U, \kappa)$ is an unsharp section of the frame bundle, then, for all $1\leq i\leq n$,  $\kappa_i:=(\text{pr}_i)_*(\kappa)$ is a weakly continuous Markov kernel with the property that $\pi'_*(\kappa_i)=\delta_{\text{inc}}$.

\end{proposition}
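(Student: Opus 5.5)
The plan is to derive both claims from two facts recalled just before the statement: the push-forward of a weakly continuous Markov kernel along a continuous map is again a weakly continuous Markov kernel, and push-forwards compose functorially, $g_*(f_*(\kappa))=(g\circ f)_*(\kappa)$. No new analysis should be needed beyond checking the hypotheses of these facts.

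First, I would verify that $\kappa_i:=(\text{pr}_i)_*(\kappa)$ is a weakly continuous Markov kernel from $U$ to $TM$. Since $\text{pr}_i:\text{Fr}(M)\to TM$ is smooth, hence continuous, and both $\text{Fr}(M)$ and $TM$ are manifolds and so locally compact Hausdorff, the push-forward is defined. If one wants to check this by hand rather than cite the recalled fact, there are three points:
\begin{itemize}
\item Measurability of $u\mapsto \kappa(u,\text{pr}_i^{-1}(B))$ holds because $\text{pr}_i^{-1}(B)$ is Borel.
\item Each $\kappa(u,\text{pr}_i^{-1}(\cdot))$ is a probability measure.
\item Weak continuity follows from the identity $\int_{TM} h\,d\kappa_i(u,\cdot)=\int_{\text{Fr}(M)} h\circ\text{pr}_i\,\kappa(u,d\xi)$ for $h\in C_b(TM)$, since $h\circ \text{pr}_i\in C_b(\text{Fr}(M))$.
\end{itemize}

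Second, for the section property I would compute
$$\pi'_*(\kappa_i)=\pi'_*((\text{pr}_i)_*(\kappa))=(\pi'\circ\text{pr}_i)_*(\kappa)=\pi_*(\kappa)=\delta_{\text{inc}}.$$
This uses functoriality of push-forward, then the identity $\pi'\circ\text{pr}_i=\pi$ noted when $\text{pr}_i$ was introduced, and finally the defining condition of an unsharp section.

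The only potential obstacle is the Radon property of the pushed-forward measures. A push-forward of a Radon measure along a continuous map between locally compact Hausdorff spaces need not be Radon in general. Here, however, $\text{Fr}(M)$ and $TM$ are second countable manifolds (assuming $M$ is), so every finite Borel measure on them is Radon, which settles the point. If second countability is not assumed, I would instead use inner regularity on compact sets: a compact $K\subset \text{Fr}(M)$ maps to the compact set $\text{pr}_i(K)$, which gives tightness of the pushed measure. I expect this step to be the only place where care is needed.
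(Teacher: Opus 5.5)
Your proof is correct and matches the paper's, which consists exactly of the chain $\pi'_*(\kappa_i)=\pi'_*((\text{pr}_i)_*(\kappa))=(\pi'\circ\text{pr}_i)_*(\kappa)=\pi_*(\kappa)=\delta_{\text{inc}}$, using functoriality of push-forward and $\pi'\circ\text{pr}_i=\pi$. The paper does not check that $(\text{pr}_i)_*(\kappa)$ is a weakly continuous Markov kernel; it takes this from the remark on push-forwards just before the statement, so your hand verification is an addition rather than a different route. On the Radon question, your compact-image argument is already enough without second countability: apply it to $\text{pr}_i^{-1}(B)$ for every Borel $B$, and push-forwards of Radon probability measures along continuous maps are Radon.
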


\begin{proof}\

\noindent As $\pi'\circ\text{pr}_i=\pi$ for all $1\leq i\leq m$ and $\pi_*(\kappa)=\delta_{\text{inc}}$, it follows that
$$\pi'_*(\kappa_i)=\pi'_*((\text{pr}_i)_*(\kappa))=(\pi'\circ \text{pr}_i)_*(\kappa)=\pi_*(\kappa)=\delta_{\text{inc}}.$$

\end{proof}

We can generalize our setup of unsharp sections by replacing the data $(U, \kappa)$ with $(Y, \kappa, \theta)$ where $Y$ is a locally compact Hausdorff space, $\kappa:Y\times \text{Bor}(M)\to [0, 1]$ is a weakly continuous Markov kernel, and  $\theta:Y\times\text{Bor}(\text{Fr}(M))\to [0, 1]$ is a weakly continuous Markov kernel, and the data satisfies the condition $\pi_*(\theta)=\kappa$. Given the data of $Y$ and $\kappa$, we can ask whether there always exists such a $\theta$. Note, in the case of sections, the answer is generally no as sections of the frame bundle are in one-to-one correspondence with local trivializations. However, by equipping $M$ with a Riemannian metric to reduce the structure group to the compact group $O(m)$, we can utilize the normalized Haar measure on $O(m)$ to construct $\theta$. Conceptually, we construct $\theta$ by first sampling, for each $y\in Y$, a point $p\in M$ relative to $\kappa(y, \cdot)$, and then pick a random orthonormal basis in the fiber of $p$ using the normalized Haar measure on $O(m)$.

\begin{lemma}\

\noindent Let $Z$ be a locally compact Hausdorff second countable topological space. For each bounded Borel measurable function $f:Z\to \mathbb{C}$, the function
$$\Phi_f:\text{Prob}_R(Z, \text{Bor}(Z))\to \mathbb{R}\quad\quad \Phi_f(\mu)=\int_{Z}f(z)\;d\mu(z)$$ is Borel measurable with respect to the weak topology.

\end{lemma}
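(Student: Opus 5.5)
The plan is a monotone class (Dynkin $\pi$–$\lambda$) argument. It starts from the functions for which measurability is automatic, namely continuous bounded ones. By the definition of the weak topology on $\text{Prob}_R(Z,\text{Bor}(Z))$, each $\Phi_f$ with $f\in C_b(Z)$ is continuous, and hence Borel measurable. Splitting $f$ into real and imaginary parts (and then positive and negative parts), it suffices to treat bounded real-valued $f$, since $\Phi$ is linear in $f$.

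Let $\mathcal{H}$ be the collection of bounded Borel functions $f:Z\to\mathbb{R}$ for which $\Phi_f$ is Borel measurable. We will check three properties of $\mathcal{H}$.
\begin{enumerate}
\item $\mathcal{H}$ is a vector space containing the constants and $C_b(Z,\mathbb{R})$. This holds because $\Phi_{f+\lambda g}=\Phi_f+\lambda\Phi_g$ and $\Phi_1\equiv 1$.
\item $\mathcal{H}$ is closed under bounded monotone limits. If $0\le f_n\uparrow f$ with $f$ bounded, then for every probability measure $\mu$ the monotone convergence theorem gives $\Phi_{f_n}(\mu)\to\Phi_f(\mu)$. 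Thus $\Phi_f$ is a pointwise limit of measurable functions and is itself measurable.
\item $C_b(Z,\mathbb{R})$ is closed under multiplication.
\end{enumerate}
The functional monotone class theorem then says that $\mathcal{H}$ contains all bounded functions measurable with respect to $\sigma(C_b(Z))$.

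The remaining step, and the only place the topological hypotheses are used, is to show that $\sigma(C_b(Z))=\text{Bor}(Z)$. Since $Z$ is locally compact, Hausdorff and second countable, it is metrizable by Urysohn's metrization theorem. Fix a compatible metric $d$. For an open set $U$, the functions $f_n(z)=\min(1,n\,d(z,Z\setminus U))$ are continuous, bounded, and increase to $\chi_U$. Hence every open set lies in $\sigma(C_b(Z))$, so the two $\sigma$-algebras coincide.

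Alternatively, one can bypass the functional form of the theorem. Apply Dynkin's $\pi$–$\lambda$ theorem to the class of Borel sets $B$ for which $\mu\mapsto\mu(B)$ is measurable. This class contains the open sets, which form a $\pi$-system, by the approximation just described. It is also a $\lambda$-system, using complements and countable disjoint unions. So it is all of $\text{Bor}(Z)$. One then passes to simple functions by linearity and to bounded Borel $f$ by uniform limits of simple functions.

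I expect the main obstacle to be the identification of $\text{Bor}(Z)$ with the $\sigma$-algebra generated by continuous functions, i.e. approximating indicator functions of open sets monotonically by continuous functions. This is exactly where second countability (via metrizability) enters. Without it, open sets need not be cozero sets, and the approximation $f_n\uparrow\chi_U$ could fail. Everything else reduces to monotone convergence and linearity.
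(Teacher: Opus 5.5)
Your proposal is correct and follows essentially the same route as the paper: a functional monotone class argument whose only nontrivial input is that indicators of open sets are increasing limits of continuous functions, which is where second countability enters. The paper applies Durrett's version with the $\pi$-system of open sets and approximates $\chi_U$ directly by functions in $C_0(Z)$, whereas you use the multiplicative-class version with $C_b(Z)$ and place the same approximation in the identity $\sigma(C_b(Z))=\text{Bor}(Z)$; your Dynkin alternative is even closer to the paper's argument.
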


\begin{proof}\

\noindent It suffices to prove the statement holds for bounded Borel measurable functions $f:Z\to \mathbb{R}$. Our goal is to apply the Functional Monotone Class Theorem (Theorem 5.2.2 in \cite{durrett2019probability}). Let $\mathcal{A}$ be the set of all open subsets of $Z$; observe $\mathcal{A}$ forms a $\pi$-system for $Z$. Let $\mathcal{S}$ be the set of all bounded Borel measurable function $f:Z\to \mathbb{R}$ for which $\Phi_f$ is Borel measurable with respect to the weak topology. From linearity of the integral as well as that the sum of measurable functions is measurable, $\mathcal{S}$ forms a vector space. Additionally, $C_b(Z)_{SA}\subset \mathcal{S}$ by definition of the weak topology.

We claim that for each $U\in \mathcal{A}$, $\chi_U\in \mathcal{S}$. As $Z$ is a second countable locally compact Hausdorff space, there exists an increasing sequence $(f_n)_{n\in\mathbb{N}}$ of positive functions in $C_0(Z)$ such that $f_n\to \chi_U$ pointwise. Therefore, the Monotone Convergence Theorem implies
$$\Phi_{\chi_U}(\mu)=\int_{Z}\chi_U(z)\;d\mu(z)=\lim\limits_{n\to\infty}\int_Z f_n(z)\;d\mu(z)=\lim\limits_{n\to\infty}\Phi_{f_n}(\mu)$$
for all $\mu\in \text{Prob}_R(Z, \text{Bor}(Z))$. As the pointwise limit of Borel measurable functions is Borel measurable, we conclude $\Phi_{\chi_U}$ is Borel measurable so that $\chi_U\in \mathcal{S}$.

Now suppose $(f_n)_{n\in\mathbb{N}}$ is an increasing sequence of non-negative functions in $\mathcal{S}$ which converge pointwise to a function $f:Z\to \mathbb{R}$. We claim $f\in\mathcal{S}$.  As the pointwise limit of Borel measurable functions, $f$ is a Borel measurable function. Furthermore, by the Monotone Convergence Theorem
$$\Phi_{f}(\mu)=\int_{Z}f(z)\;d\mu(z)=\lim\limits_{n\to\infty}\int_Z f_n(z)\;d\mu(z)=\lim\limits_{n\to\infty}\Phi_{f_n}(\mu)$$ By the same argument as before, it follows $f\in \mathcal{S}$.

Having verified the hypotheses of the Functional Monotone Convergence Theorem, we conclude $\mathcal{S}$ contains all real valued bounded Borel measurable functions on $Z$. 
    
\end{proof}

\begin{lemma}\

\noindent Let $X$ be a locally compact Hausdorff topological space, and let $Y$ and $Z$ be second countable locally compact Hausdorff spaces. If $\kappa:X\times\text{Bor}(Y)\to [0, 1]$ and $\sigma:Y\times\text{Bor}(Z)\to [0, 1]$ are weakly continuous Markov kernels, then the map
$$\theta:X\times\text{Bor}(Z)\to [0, 1]\quad\quad \theta(x, B)=\int_{Y}\sigma(y, B)\;\kappa(x, dy)$$
defines a weakly continuous Markov kernel.

\end{lemma}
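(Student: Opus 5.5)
We need to show three things about $\theta$: that it is well defined, that each $\theta(x,\cdot)$ is a Radon probability measure, and that $x\mapsto\theta(x,\cdot)$ is weakly continuous. Measurability in $x$ will then follow from weak continuity.

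\emph{Well-definedness.} For each $B\in\text{Bor}(Z)$, the function $y\mapsto\sigma(y,B)$ is bounded and Borel measurable by the definition of a Markov kernel. Hence the integral defining $\theta(x,B)$ exists and lies in $[0,1]$.

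\emph{Probability measure.} Fix $x$. Then $\theta(x,Z)=1$, and countable additivity of $B\mapsto\theta(x,B)$ follows from countable additivity of each $\sigma(y,\cdot)$ together with the Monotone Convergence Theorem. Inner regularity is where second countability of $Z$ enters: a locally compact Hausdorff second countable space is Polish (in particular $\sigma$-compact and metrizable), so every finite Borel measure on $Z$ is automatically Radon. Therefore $\theta(x,\cdot)\in\text{Prob}_R(Z,\text{Bor}(Z))$. I will also record the integration formula
$$\int_Z h(z)\;\theta(x,dz)=\int_Y\left(\int_Z h(z)\;\sigma(y,dz)\right)\kappa(x,dy),$$
valid for every bounded Borel $h$. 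It holds for indicators by definition, extends to simple functions by linearity, and then to bounded Borel functions by monotone convergence.

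\emph{Weak continuity.} Fix $h\in C_b(Z)$ and set $H(y)=\int_Z h(z)\,\sigma(y,dz)$. Because $\sigma$ is weakly continuous, $H$ is continuous, and it is bounded by $\|h\|_{\sup}$, so $H\in C_b(Y)$. The integration formula gives
$$\int_Z h\;d\theta(x,\cdot)=\int_Y H(y)\;\kappa(x,dy).$$
Weak continuity of $\kappa$ says exactly that the right-hand side is continuous in $x$, since $H\in C_b(Y)$. As $h\in C_b(Z)$ was arbitrary, $x\mapsto\theta(x,\cdot)$ is continuous for the weak topology.

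\emph{Measurability in $x$.} For $B\in\text{Bor}(Z)$, the map $x\mapsto\theta(x,B)$ is the composite of the continuous map $x\mapsto\theta(x,\cdot)$ with $\Phi_{\chi_B}$. By the preceding lemma, $\Phi_{\chi_B}$ is Borel measurable on $\text{Prob}_R(Z,\text{Bor}(Z))$ for the weak topology, since $Z$ is second countable. A continuous map followed by a Borel map is Borel, so $x\mapsto\theta(x,B)$ is measurable.

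The only delicate point is the Radon property of $\theta(x,\cdot)$, which rests on second countability of $Z$. Second countability of $Y$ is not needed in this argument; it would only matter if we wanted a measurable-selection version of the statement.
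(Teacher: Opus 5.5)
Your proof is correct. For well-definedness, the Radon probability property and weak continuity it tracks the paper closely: the paper also reduces weak continuity to $G'(y)=\int_Z g\,d\sigma(y,\cdot)\in C_b(Y)$ and then invokes weak continuity of $\kappa$.

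There are two genuine differences.

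\textbf{The integration formula.} The paper justifies interchanging the integrals by citing Tonelli's theorem. You derive $\int_Z h\,d\theta(x,\cdot)=\int_Y\int_Z h\,d\sigma(y,\cdot)\,\kappa(x,dy)$ by passing from indicators to simple functions and then using monotone convergence. That is the cleaner justification, since $\sigma(y,dz)\,\kappa(x,dy)$ is a composition of kernels, not a product measure.

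\textbf{Measurability of $x\mapsto\theta(x,B)$.} This is the more substantive difference.
\begin{itemize}
\item The paper factors this map as $X\xrightarrow{x\mapsto\kappa(x,\cdot)}\text{Prob}_R(Y,\text{Bor}(Y))\xrightarrow{\Phi_{\sigma(\cdot,B)}}[0,1]$. It then applies the preceding lemma on $Y$ to the bounded Borel function $\sigma(\cdot,B)$. This step does not depend on the weak-continuity argument, and it is exactly where second countability of $Y$ is used.
\item You factor it as $X\xrightarrow{x\mapsto\theta(x,\cdot)}\text{Prob}_R(Z,\text{Bor}(Z))\xrightarrow{\Phi_{\chi_B}}[0,1]$ and apply the lemma on $Z$. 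Measurability then becomes a consequence of the weak continuity you have just proved. Your argument in fact shows a general principle: any weakly continuous family of Radon probabilities on a second countable locally compact Hausdorff space automatically satisfies the measurability axiom of a Markov kernel. This lets you drop the hypothesis on $Y$.
\end{itemize}

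Your closing claim that second countability of $Y$ is unnecessary for your argument is therefore right. However, the hypothesis appears in the statement because of the paper's choice of measurability argument, not because of anything to do with measurable selection.
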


\begin{proof}\

\noindent First, we verify $\theta$ is a well-defined function. Fix $x\in X$ and $B\in \text{Bor}(Z)$. As $\sigma$ is a Markov kernel, the map
$$\sigma(\cdot, B):Y\to [0, 1]\quad\quad y\mapsto \sigma(y, B)$$
is a bounded Borel measurable function. As $\kappa$ is a Markov kernel, $\kappa(x, \cdot)$ is a probability measure on $(Y, \text{Bor}(Y))$. Therefore $\sigma(\cdot, B)$ is integrable with respect to $\kappa(y, \cdot)$ with
$$0\leq \int_{Y}\sigma(y, B)\kappa(x, dy)\leq 1$$
for all $x\in X$. Thus $\theta(x, B)$ is defined and $\theta(x, B)\in [0, 1]$. Hence $\theta$ is a well-defined function.

We now verify $\theta(x, \cdot)$ is a Radon probability measure. For each $x\in X$, we have the map
$$\theta(x, \cdot):\text{Bor}(Z)\to [0, 1]\quad\quad \theta(x, B)=\int_{Y}\sigma(y, B)\kappa(x, dy).$$
Since $\sigma(\cdot, Z)=1$ and $\sigma(\cdot, \emptyset)=0$, then $\theta(x, Z)=1$ and $\theta(x, \emptyset)=0$. Let $(B_n)_{n\in\mathbb{N}}$ be a disjoint sequence of Borel subsets of $Z$. As $\sigma(y, \cdot)$ is a measure for each $y\in Y$, we have
$$\sigma(y, \bigcup\limits_{n=1}^\infty B_n)=\sum\limits_{n=1}^\infty \sigma(y, B_n)$$
As $\left(\sum\limits_{k=1}^n \sigma(\cdot, B_k)\right)_{n\in\mathbb{N}}$ is a monotone increasing sequence of positive functions which converge pointwise to $\sigma(\cdot, \bigcup\limits_{n=1}^\infty B_n)$, we can apply the Monotone Convergence Theorem to know
\begin{align*}
    \theta(x, \bigcup\limits_{n=1}^\infty B_n)&=\int_{Y}\sigma(y, \bigcup\limits_{n=1}^\infty B_n)\;\kappa(x, dy)=\lim\limits_{n\to \infty}\sum\limits_{k=1}^n\int_{Y}\sigma(y, B_k)\;\kappa(x, dy)\\
    &=\lim\limits_{n\to\infty}\sum\limits_{k=1}^n\theta(x, B_k)=\sum\limits_{n=1}^\infty \theta(x, B_n)
\end{align*}
Therefore $\theta(x, \cdot)$ is a probability measure. As $Z$ is a second countable locally compact Hausdorff space and $\theta(x, \cdot)$ is a finite positive Borel measure, the measure is automatically a Radon measure. Thus $\theta(x, \cdot)$ is a Radon probability measure.

Now we verify the map
$$X\to \text{Prob}_R(X, \text{Bor}(X))\quad\quad x\mapsto \theta(x, \cdot)$$
is weakly continuous. Let $g\in C_b(Z)^+$, then we have
$$G:X\to \mathbb{C}\quad\quad G(x)=\int_{Z}g(z)\;\theta(x, dz)=\int_{Z}\int_Y g(z)\sigma(y, dz)\kappa(x, dy)$$
For a fixed $x\in X$, we apply Toneli's Theorem to obtain 
$$\int_{Z}\int_Y g(z)\sigma(y, dz)\kappa(x, dy)=\int_{Y}\int_Z g(z)\sigma(y, dz)\kappa(x, dy)$$
Let
$$G':Y\to \mathbb{C}\quad\quad G'(y)=\int_{Z} g(z)\sigma(y, dz)$$
Since $\sigma$ is a weakly continuous Markov kernel, $G'\in C_b(Y)$. As $\kappa$ is a weakly continuous Markov kernel, 
$$G(x)=\int_{Y}G'(y)\;\kappa(x, dy)$$
is continuous. As $g\in C_b(Z)^+$ was arbitrary, it follows the map $G$ is continuous for all $g\in C_b(Z)$. Hence
$$X\to \text{Prob}_R(X, \text{Bor}(X))\quad\quad x\mapsto \theta(x, \cdot)$$
is continuous with respect to the weak topology.

Finally, we verify $\theta(\cdot, B)$ is Borel measurable for each $B\in\text{Bor}(Z)$. Note, we can write $\theta(\cdot, B)$ as the composition
$$X\xrightarrow{x\mapsto \kappa(x, \cdot)} \text{Prob}(Y, \text{Bor}(Y))\xrightarrow{\mu\mapsto \int_{Y}\sigma(y, B)\;d\mu(y)} [0, 1]$$
As $\kappa$ is a weakly continuous Markov kernel, the map $x\mapsto \kappa(x, \cdot)$ is continuous, thus measurable. By the prior Lemma as well as that $\sigma$ is a Markov kernel, the second map is Borel measurable. Thus the composition is Borel measurable proving $\theta(\cdot, B)$ is Borel measurable.
    
\end{proof}

\begin{theorem}\label{Theorem: lifting WCMK}\

\noindent Let $M$ be a smooth $m$-dimensional  manifold, and let $Y$ be a locally compact Hausdorff space. If $\kappa:Y\times\text{Bor}(M)\to [0, 1]$ is a weakly continuous Markov kernel, then there exists a weakly continuous Markov kernel $\theta:Y\times \text{Bor}(\text{Fr}(M))\to [0, 1]$ such that $\pi_*(\theta)=\kappa$.

\end{theorem}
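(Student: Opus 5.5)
We follow the heuristic stated before the two preceding lemmas: first sample a point of $M$ with $\kappa$, then sample a random orthonormal frame in the fibre using Haar measure on $O(m)$. Concretely, we fix a Riemannian metric $g$ on $M$ (it exists by a partition of unity) and let $\text{O}(M)\subset\text{Fr}(M)$ be the orthonormal frame bundle. This is a closed embedded subbundle and a principal $O(m)$-bundle over $M$. Let $\nu$ be the normalized Haar measure on $O(m)$. For $p\in M$ we define a Radon probability measure $\sigma(p,\cdot)$ on $\text{Fr}(M)$. To do so, pick any orthonormal frame $e\in\pi^{-1}(p)\cap\text{O}(M)$ and set
$$\sigma(p,B)=\nu(\{A\in O(m): e\cdot A\in B\}).$$
This is the pushforward of $\nu$ under the continuous map $A\mapsto e\cdot A$. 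By right invariance of Haar measure it does not depend on the choice of $e$. It is supported on $\pi^{-1}(p)$, so $\pi_*\sigma(p,\cdot)=\delta_p$.

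The key step is to show that $\sigma$ is a weakly continuous Markov kernel from $M$ to $\text{Fr}(M)$. We work locally. Take a local orthonormal frame $s:U\to\text{O}(M)$, obtained by Gram--Schmidt on a coordinate frame. For $h\in C_b(\text{Fr}(M))$ we have
$$p\mapsto\int h\,d\sigma(p,\cdot)=\int_{O(m)}h(s(p)\cdot A)\,d\nu(A).$$
This map is continuous on $U$ by dominated convergence: $h(s(p)\cdot A)$ is jointly continuous in $(p,A)$ and bounded, and $O(m)$ is compact. Continuity is local, so weak continuity follows. Borel measurability of $\sigma(\cdot,B)$ then comes from the measurability lemma preceding the composition lemma: composing the continuous map $p\mapsto\sigma(p,\cdot)$ with $\mu\mapsto\mu(B)$ is Borel. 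That lemma needs $\text{Fr}(M)$ to be second countable and locally compact Hausdorff, which holds because it is a finite-dimensional manifold (assuming, as usual, that $M$ is second countable).

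Next we set
$$\theta(y,B)=\int_M\sigma(p,B)\,\kappa(y,dp).$$
By the composition lemma, with $X=Y$ (the given space), $Y=M$ and $Z=\text{Fr}(M)$, both second countable, $\theta$ is a weakly continuous Markov kernel. Finally we check that $\pi_*\theta=\kappa$. For $B\in\text{Bor}(M)$,
$$\theta(y,\pi^{-1}(B))=\int_M\sigma(p,\pi^{-1}(B))\,\kappa(y,dp)=\int_M\chi_B(p)\,\kappa(y,dp)=\kappa(y,B),$$
since $\sigma(p,\pi^{-1}(B))=\delta_p(B)$.

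The main obstacle is the weak continuity of $p\mapsto\sigma(p,\cdot)$, that is, constructing $\sigma$ carefully and uniformly. The local-section argument above handles it. The remaining technical points are the independence of $\sigma$ from the chosen frame and the second countability hypotheses needed by the two lemmas.
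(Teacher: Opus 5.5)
Your proposal is correct and follows the paper's proof essentially step for step. It uses the same Haar-pushforward fibre kernel $\sigma$ on the orthonormal frame bundle, proves weak continuity via a local trivialization (your local orthonormal frame $s$) and dominated convergence, and then forms $\theta(y,B)=\int_M\sigma(p,B)\,\kappa(y,dp)$ via the composition lemma and checks $\pi_*\theta=\kappa$ directly. The one real difference is how you get Borel measurability of $\sigma(\cdot,B)$: you compose the weakly continuous map $p\mapsto\sigma(p,\cdot)$ with the Borel map $\mu\mapsto\mu(B)$ from the preceding monotone-class lemma, which is the same trick the paper uses for $\theta(\cdot,B)$. The paper instead runs a separate Dynkin $\pi$--$\lambda$ argument over trivializing opens, followed by a countable disjoint decomposition. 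Your shortcut is valid and shorter. One small slip: independence of the frame $e$ uses \emph{left} invariance of Haar measure (since $(e\cdot C)\cdot A=e\cdot(CA)$), not right invariance; this is harmless because Haar measure on the compact group $O(m)$ is bi-invariant.
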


\begin{proof}\

\noindent Let $g$ be a Riemannian metric for $M$. Let $\rho:O(M, g)\to M$ be the orthonormal frame bundle. For each $p\in M$, pick an element $o_p\in \rho^{-1}(p)$. As $O(m)$ acts transitively and freely on $\rho^{-1}(p)$, we obtain a diffeomorphism
$$R_{o_p}:O(m)\to \rho^{-1}(p)\quad\quad R_{o_p}(A)=o_pA$$
Let $\lambda$ denote the unique normalized Haar measure on $O(m)$, and let $\sigma_{p}=(R_{o_p})_{*}(\lambda)$ which defines a Radon probability measure on $(\rho^{-1}(p), \text{Bor}(\rho^{-1}(p)))$. We claim the measure $\sigma_p$ is independent of $o_p$. Suppose we pick another element $o_p'\in \rho^{-1}(p)$ to obtain an alternative measure $(R_{o_p'})_{*}(\lambda)$. Let $g\in O(m)$ such that $o_pg=o_p'$, then $R_{o_p}\circ L_g=R_{o_p'}$. Thus
$$\sigma_p'=(R_{o_p'})_*(\lambda)=(R_{o_p}\circ L_g)_*(\lambda)=(R_{o_p})_*(\lambda)=\sigma_p.$$

Define the map
$$\sigma:M\times\text{Bor}(\text{Fr}(M))\to[0, 1]\quad\quad \sigma(p, B)=\sigma_p(B\cap \rho^{-1}(p))$$
We claim $\sigma$ is a weakly continuous Markov kernel. It is clear that for each $p\in M$, $\sigma(p, \cdot)$ is a probability measure. As $\sigma(p, \cdot)$ is a probability measure on a locally compact second countable topological space, it follows $\sigma(p, \cdot)$ is a Radon probability measure. 

We check the map
$$M\to \text{Prob}_R(\text{Fr}(M), \text{Bor}(\text{Fr}(M)))\quad\quad p\mapsto \sigma(p, \cdot)$$
is continuous with respect to the weak topology. Fix $f\in C_b(\text{Fr}(M))$, and let
$$F:M\to \mathbb{C}\quad\quad F(p)=\int_{\text{Fr}(M)}f(b)\;\sigma(p, db)$$
Let $(U, \phi)$ be a local trivialization of $\rho:O(M, g)\to M$, then, for each $p\in U$ we have 
\begin{align*}
    F(p)&=\int_{\text{Fr}(M)}f(b)\;\sigma(p, db)=\int_{\rho^{-1}(U)}f(b)\;\sigma(p, db)\\
    &=\int_{U\times O(m)}(f\circ \phi^{-1})(q, A)\; d(\delta_p\times \lambda)(q, A)=\int_{O(m)}(f\circ \phi^{-1})(p, A)\;d\lambda(A)
\end{align*}
Thus, using the Dominated Convergence Theorem, $F|_U$ is continuous. As we can cover $M$ in local trivializations, it follows $F$ is continuous. Hence the map
$$M\to \text{Prob}_R(\text{Fr}(M), \text{Bor}(\text{Fr}(M)))\quad\quad p\mapsto \sigma(p, \cdot)$$
is continuous with respect to the weak topology.

Finally, we verify $\sigma(\cdot, B)$ is measurable for each $B\in \text{Bor}(\text{Fr}(M))$. Since we have
$$\sigma(\cdot, B)=\sigma(\cdot, B\cap O(M, g)),$$
it suffices to verify measurability for $B\in \text{Bor}(O(M, g))$. First, let $(U, \phi)$ be a local trivialization of $\rho:O(M, g)\to M$. We claim that for each Borel measurable subset $B\subset \rho^{-1}(U)$, $\sigma(\cdot, B)$ is measurable. Note,
$$\sigma(\cdot, B):M\to [0, 1]\quad\quad \sigma(p, B)=\lambda(\{A\in O(m): \phi^{-1}(p, A)\in B \})$$
To prove the claim, we utilize Dykin's $\pi-\lambda$ Theorem. Let
$$\mathcal{A}=\{\phi^{-1}(W\times P): W\subset U\text{ and }P\subset O(m)\text{ are open}\}$$
which forms a $\pi$-system for $\rho^{-1}(U)$. Let 
$$\mathcal{S}=\{B\in \text{Bor}(\rho^{-1}(U)): \sigma(\cdot, B)\text{ is measurable}\}$$
We claim $\mathcal{S}$ forms a $\lambda$-system. As $\sigma(\cdot, \rho^{-1}(U))=\chi_{U}$  is measurable, $\rho^{-1}(U)\in \mathcal{S}$. Given $B\in \mathcal{S}$, we have
$$\sigma(\cdot, \rho^{-1}(U)\backslash B)=\chi_U-\sigma(\cdot, B)$$
As the difference of measurable functions is measurable, $\mathcal{S}$ is closed under complements. Suppose $(B_n)_{n\in\mathbb{N}}$ is an increasing sequence of subset in $\mathcal{S}$, then
$$\sigma(\cdot, \bigcup\limits_{n=1}^\infty B_n)=\lim\limits_{n\to \infty}\sigma(\cdot, B_n)$$
where the limit converges pointwise. As the pointwise limit measurable functions is measurable, we have $\bigcup\limits_{n=1}^\infty B_n\in \mathcal{S}$. Thus $\mathcal{S}$ is a $\lambda$ system. Lastly, we check $\mathcal{A}\subset \mathcal{S}$. Suppose $V\in \mathcal{A}$ such that $\phi(V)=W\times P$ for $W\subset U$ open and $P\subset O(m)$ open, then $\sigma(\cdot, V)=\lambda(P)\chi_W$ which is measurable. Therefore $V\in \mathcal{S}$.  Applying Dykin's $\pi-\lambda$ Theorem, we know $\text{Bor}(\rho^{-1}(U))\subset \mathcal{S}$.

Let $\{(U_n, \phi_n)\}_{n\in\mathbb{N}}$ be a cover for $M$ in terms of local trivialization for the orthonormal frame bundle. Let
$$E_1=\rho^{-1}(U_1)\quad\quad E_n=U_n\backslash \left(\bigcup\limits_{k=1}^{n-1} \rho^{-1}(U_k)\right),$$
then $(E_n)_{n\in\mathbb{N}}$ forms a cover of $O(M, g)$ in terms of disjoint Borel measurable subsets with $E_i\subset \rho^{-1}(U_i)$. Therefore, given $B\in \text{Bor}(O(M, g))$
$$\sigma(\cdot, B)=\sigma(\cdot, \bigcup\limits_{n=1}^\infty (B\cap E_n))=\sum\limits_{n=1}^\infty \sigma(\cdot, B\cap E_n)$$
As $B\cap E_n\in \text{Bor}(\rho^{-1}(U_n))$, each $\sigma(\cdot, B\cap E_n)$ is measurable. Therefore $\sigma(\cdot, B)$ is the pointwise limit of Borel measurable functions implying $\sigma(\cdot, B)$ is Borel measurable. 

Using $\sigma$, we now construct $\theta$. Define
$$\theta:Y\times \text{Bor}(\text{Fr}(M))\to [0, 1]\quad\quad \theta(y, B)=\int_{M} \sigma(p, B)\kappa(y, dp)$$
By the prior Lemma, $\theta$ is a weakly continuous Markov kernel. Furthermore, $\pi_*(\theta)=\kappa$. Indeed, let $y\in Y$ and $B\in\text{Bor}(M)$, then
\begin{align*}
    \pi_*(\theta)(y, B)&=\theta(y, \pi^{-1}(B))=\int_{M}\sigma(p, \pi^{-1}(B))\kappa(y, dp)\\
    &=\int_{M}\sigma_p(\pi^{-1}(B)\cap \rho^{-1}(p))\;\kappa(y, dp)=\int_{B}\sigma_p(\rho^{-1}(p))\;\kappa(y, dp)\\
    &=\int_{B}\kappa(y, dp)=\int_{M}\chi_B(p)\;\kappa(y, dp)=\kappa(y, B)
\end{align*}
    
\end{proof}

Now suppose $G$ is a Lie group which acts smoothly on $M$ by $\tilde{\alpha}$ and acts on $Y$ by $\tilde{\beta}$. For each $s\in G$, the smooth diffeomorphism $\tilde{\alpha}_s:M\to M$ induces a linear isomorphism $(\tilde{\alpha}_{s})_{*, p}:T_pM\to T_{\tilde{\alpha}_s(p)}M$ for each $p\in M$. Therefore we can define a smooth action of $G$ on $\text{Fr}(M)$ given by
$$\tilde{\gamma}:G\times \text{Fr}(M)  \to \text{Fr}(M)\quad\quad \tilde{\gamma}(s, (v_{1, p}, \ldots, v_{n, p}))=((\tilde{\alpha}_s)_{*, p}(v_{1, p}),\ldots,  (\tilde{\alpha}_s)_{*, p}(v_{n, p}))$$
With respect to this action, $\pi:\text{Fr}(M)\to M$ is $G$-equivariant. Given a $G$-invariant Markov kernel $\kappa:Y\times\text{Bor(M)}\to [0, 1]$, we can ask if the Markov kernel lifts to a $G$-invariant Markov kernel on $\text{Fr}(M)$; that is, does there exists a $G$-invariant weakly continuous Markov kernel $\theta:Y\times\text{Bor}(\text{Fr}(M))\to [0, 1]$ such that $\pi_*(\theta)=\kappa$. If $M$ admits a $G$-invariant Riemannian metric, then there always exists such a Markov kernel $\theta$. Recall a Riemannian metric $g$ on $M$ is $G$-invariant if $\tilde{\alpha}_s^*g=g$ for all $s\in G$; that is, for each $p\in M$ and $s\in G$, $(\tilde{\alpha}_{s})_{*, p}$ maps orthonormal bases to orthonormal bases.  Note, this implies $\tilde{\gamma}_s(O(M, g))=O(M, g)$ for all $s\in G$.

\begin{theorem}\

\noindent Let $M$ be a smooth $m$-dimensional manifold, and let $Y$ be a locally compact Hausdorff space. Let $G$ be a Lie group acting smoothly on $M$ via $\tilde{\alpha}$ and acting on $Y$ via $\tilde{\beta}$. If there exists a $G$-invariant Riemannian metric on $M$, then for every $G$-invariant weakly continuous Markov kernel $\kappa:Y\times \text{Bor}(M)\to [0, 1]$, there exists a $G$-invariant weakly continuous Markov kernel $\theta:Y\times \text{Bor}(\text{Fr}(M))\to [0, 1]$  with respect to the induced action of $G$ on $\text{Fr}(M)$ such that $\pi_*\theta=\kappa$.

\end{theorem}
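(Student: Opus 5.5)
The plan is to reuse the construction from the proof of Theorem \ref{Theorem: lifting WCMK} verbatim, now with $g$ a $G$-invariant Riemannian metric, and then check the one new property: $G$-invariance. So let $\rho:O(M,g)\to M$ be the orthonormal frame bundle and $\sigma(p,\cdot)=\sigma_p(\,\cdot\,\cap\rho^{-1}(p))$ the fiberwise pushforward of normalized Haar measure $\lambda$ on $O(m)$. Define
$$\theta(y,B)=\int_M\sigma(p,B)\,\kappa(y,dp).$$
The proof of Theorem \ref{Theorem: lifting WCMK}, together with the composition lemma preceding it, already gives that $\theta$ is a weakly continuous Markov kernel with $\pi_*\theta=\kappa$. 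It remains to show $\theta(\tilde{\beta}_s(y),\tilde{\gamma}_s(B))=\theta(y,B)$ for all $s\in G$, $y\in Y$ and $B\in\text{Bor}(\text{Fr}(M))$.

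The key step is an equivariance property of $\sigma$:
$$\sigma(\tilde{\alpha}_s(p),\tilde{\gamma}_s(B))=\sigma(p,B).$$
Since $g$ is $G$-invariant, $\tilde{\gamma}_s$ restricts to a bijection $\rho^{-1}(p)\to\rho^{-1}(\tilde{\alpha}_s(p))$. Moreover $\tilde{\gamma}_s$ acts by applying the linear map $(\tilde{\alpha}_s)_{*,p}$ to each vector of the frame, so it commutes with the right $O(m)$-action: $\tilde{\gamma}_s(o_pA)=\tilde{\gamma}_s(o_p)A$. Choose $o_{\tilde{\alpha}_s(p)}:=\tilde{\gamma}_s(o_p)$; this is legitimate because $\sigma_q$ does not depend on the chosen basepoint, as shown in the proof of Theorem \ref{Theorem: lifting WCMK}. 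Then
$$R_{o_{\tilde{\alpha}_s(p)}}=\tilde{\gamma}_s\circ R_{o_p},$$
and so $\sigma_{\tilde{\alpha}_s(p)}=(\tilde{\gamma}_s)_*\sigma_p$. Because $\tilde{\gamma}_s$ is a bijection of $\text{Fr}(M)$ carrying the fiber over $p$ onto the fiber over $\tilde{\alpha}_s(p)$, we get $\tilde{\gamma}_s(B)\cap\rho^{-1}(\tilde{\alpha}_s(p))=\tilde{\gamma}_s(B\cap\rho^{-1}(p))$. Evaluating the pushforward on this set yields the displayed identity.

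With that identity, invariance of $\theta$ follows from a change of variables:
\begin{align*}
\theta(\tilde{\beta}_s(y),\tilde{\gamma}_s(B))&=\int_M\sigma(p,\tilde{\gamma}_s(B))\,\kappa(\tilde{\beta}_s(y),dp)\\
&=\int_M\sigma(\tilde{\alpha}_s(q),\tilde{\gamma}_s(B))\,d\bigl(\kappa(\tilde{\beta}_s(y),\cdot)\circ\tilde{\alpha}_s\bigr)(q).
\end{align*}
$G$-invariance of $\kappa$ gives $\kappa(\tilde{\beta}_s(y),\tilde{\alpha}_s(\cdot))=\kappa(y,\cdot)$. Combined with the equivariance of $\sigma$, the integral reduces to $\int_M\sigma(q,B)\,\kappa(y,dq)=\theta(y,B)$. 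This change of variables is the same manipulation already used in the proof of Proposition \ref{Prop: G-invariant Markov kernel stuff}.

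The main obstacle, though it should be mild, is the bookkeeping in the equivariance step. One must confirm that $\tilde{\gamma}_s$ genuinely maps $O(M,g)$ fiberwise onto $O(M,g)$; this is exactly where $G$-invariance of the metric enters. One must also confirm that $\tilde{\gamma}_s$ commutes with the right $O(m)$-action. Finally, measurability of $p\mapsto\sigma(\tilde{\alpha}_s(p),\tilde{\gamma}_s(B))$ is needed for the integrals to make sense, and it follows because $\tilde{\alpha}_s$ and $\tilde{\gamma}_s$ are homeomorphisms, so they preserve Borel sets.
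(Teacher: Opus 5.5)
Your proposal is correct and follows the paper's proof essentially step for step. The paper also takes the Haar-pushforward kernel $\sigma$ built from a $G$-invariant metric and checks that $\tilde{\gamma}_s$ commutes with the $O(m)$-action, so that $\tilde{\gamma}_s\circ R_{o_p}=R_{\tilde{\gamma}_s(o_p)}$ and hence $\sigma(\tilde{\alpha}_s(p),\tilde{\gamma}_s(B))=\sigma(p,B)$; it then gets invariance of $\theta$ by the same change of variables using $G$-invariance of $\kappa$, while your remarks on basepoint independence and measurability only make explicit what the paper leaves implicit.
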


\begin{proof}\

\noindent Assume $g$ is a $G$-invariant Riemannian metric on $M$. Let $\rho:O(M, g)\to M$ be the orthonormal frame bundle. Let $\tilde{\gamma}$ denote the induced action of $G$ on $O(M, g)$, and let $\tilde{\zeta}$ denote the canonical $O(m)$ action on $O(M, g)$. We claim $\tilde{\gamma}$ commutes with $\tilde{\zeta}$ on $O(M, g)$. Fix $s\in G$, $A\in O(m)$, and $p\in M$. Write $A=[a^i_j]$. For each $(e_{1, p}, \ldots, e_{m, p})\in O(T_pM, g_p)\subset O(M, g)$, we have
\begin{align*}
    \tilde{\gamma}_s(\tilde{\zeta}_A(e_{1, p}, \ldots, e_{m, p}))&=\tilde{\gamma}_s\left(\sum\limits_{k=1}^m a^k_1 e_{k, p}, \ldots, \sum\limits_{k=1}^m a^k_m e_{k, p}\right)\\
    &=\left((\tilde{\alpha}_s)_{*, p}\left(\sum\limits_{k=1}^m a^k_1 e_{k, p}\right), \ldots, (\tilde{\alpha}_s)_{*, p}\left(\sum\limits_{k=1}^m a^k_m e_{k, p}\right)\right)\\
    &=\left(\sum\limits_{k=1}^m a^k_1 (\tilde{\alpha}_s)_{*, p}(e_{k, p}), \ldots, \sum\limits_{k=1}^m a^k_m (\tilde{\alpha}_s)_{*, p}(e_{k, p})\right)\\
    &=\tilde{\zeta}_A((\tilde{\alpha}_s)_{*, p}(e_{1, p}), \ldots, (\tilde{\alpha}_s)_{*, p}(e_{m, p}))\\
    &=\tilde{\zeta}_A(\tilde{\gamma}_s(e_{1, p}, \ldots, e_{m, p})).
\end{align*}
Thus $\tilde{\gamma}_s\circ \tilde{\zeta}_A=\tilde{\zeta}_A\circ\tilde{\gamma}_s$ for all $s\in G$ and $A\in O(m)$.

For each $p\in M$, pick an element $o_p\in \rho^{-1}(p)$. As $O(m)$ acts transitively and freely on $\rho^{-1}(p)$, we obtain a diffeomorphism
$$R_{o_p}:O(m)\to \pi^{-1}(p)\quad\quad R_{o_p}(A)=\tilde{\zeta}_A(o_p).$$
Let $\lambda$ denote the unique normalized Haar measure on $O(m)$, and let $\sigma_{p}=(R_{o_p})_{*}(\lambda)$ which defines a Radon probability measure on $(\rho^{-1}(p), \text{Bor}(\rho^{-1}(p)))$. As shown in Theorem \ref{Theorem: lifting WCMK}, $\sigma_p$ is independent of $o_p$ as well as the map
$$\sigma:M\times\text{Bor}(\text{Fr}(M))\to [0, 1]\quad\quad \sigma(p, B)=\sigma_p(B\cap \rho^{-1}(p))$$
defines a weakly continuous Markov kernel. 

We claim $\sigma$ is $G$-invariant. Let $p\in M$, then, for all $s\in G$, we have
$$\tilde{\gamma}_s(R_{o_p}(A))=\tilde{\gamma}_s\tilde{\zeta}_A(o_p)=\tilde{\zeta}_A(\tilde{\gamma}_s(o_p))=R_{\tilde{\gamma}_s(o_p)}(A)$$
Thus $\tilde{\gamma}_s\circ R_{o_p}=R_{\tilde{\gamma}_s(o_p)}$ so that
$$(\tilde{\gamma}_s)_*(\sigma_p)=(\tilde{\gamma}_s)_{*}((R_{o_p})_{*}(\lambda))=(\tilde{\gamma}_s\circ R_{o_p})_*(\lambda)=(R_{\tilde{\gamma}_s(o_p)})_{*}(\lambda)=\sigma_{\tilde{\alpha}_s(p)}$$
Therefore, given $p\in M$, $B\in \text{Bor}(\text{Fr}(M))$, and $s\in G$, we have
\begin{align*}
    \sigma(\tilde{\alpha}_s(p), \tilde{\gamma}_s(B))&=\sigma(\tilde{\alpha}_s(p), \tilde{\gamma}_s(B)\cap O(M, g))=\sigma(\tilde{\alpha}_s(p), \tilde{\gamma}_s(B\cap O(M, g)))\\
    &=\sigma(p, B\cap O(M, g))=\sigma(p, B)
\end{align*}
proving $\sigma$ is $G$-invariant.

Now define
$$\theta:Y\times \text{Bor}(\text{Fr}(M))\to [0, 1]\quad\quad \theta(y, B)=\int_{M} \sigma(p, B)\kappa(y, dp)$$
which is a weakly continuous Markov kernel with $\pi_*(\theta)=\kappa$. We claim $\theta$ is $G$-invariant. Let $g\in G$, $y\in Y$, $B\in \text{Bor}(\text{Fr}(M))$, then, using that $\sigma$ and $\kappa$ are $G$-invariant, we have
\begin{align*}
    \theta(\tilde{\beta}_g(y), \tilde{\gamma}_g(B))&=\int_{M}\sigma(p, \tilde{\gamma}_g(B))\;\kappa(\tilde{\beta}_g(y), dp)=\int_{M}\sigma(\tilde{\alpha}_g(p), \tilde{\gamma}_g(B))\;\kappa(y, dp)\\
    &=\int_{M}\sigma(p, B)\;\kappa(y, dp)=\theta(y, B).
\end{align*}

\end{proof}

As before, let $G$ be a Lie group acting smoothly on $M$ via $\tilde{\alpha}$, and let $\tilde{\gamma}$ denote the induced action of $G$ on $\text{Fr}(M)$. Given the data $(Y, \kappa, \theta)$ along with an action of $G$ on $Y$, denoted by $\tilde{\beta}$, such that $\kappa$ and $\theta$ are $G$-invariant weakly continuous Markov kernels, we can construct two classical $G$-equivariant couplings:
$$\mathcal{C}_1=(((C_0(\text{Fr}(M)), G, \gamma), (C_0(Y), G, \beta), ((C_0(Y), \phi_\theta), \beta)), ((C_0(\text{Fr}(M)), G, \gamma), C_0(\text{Fr}(M)), \gamma))$$
and
$$\mathcal{C}_2=(((C_0(M), G, \alpha), (C_0(Y), G, \beta), ((C_0(Y), \phi_\kappa), \beta)), ((C_0(M), G, \alpha),C_0(M), \alpha ))$$
As $\pi_*(\theta)=\kappa$, we ask if there exists a relationship between relativization maps for $\mathcal{C}_1$ and $\mathcal{C}_2$. Using the relationship $\pi_*(\theta)=\kappa$, we obtain a canonical isometric embedding of $\Gamma_\kappa$ into $\Gamma_\theta$. When this isometric embedding has orthogonally complementable range, we can utilize the embedding to build relativization maps for $\mathcal{C}_2$ from relativization maps for $\mathcal{C}_1$.

\begin{lemma}\

\noindent Let $M$ be a smooth $m$-dimensional  manifold, and let $Y$ be a locally compact Hausdorff space. Let $G$ be a Lie group which acts smoothly on $M$ by $\tilde{\alpha}$ and acts on $Y$ by $\tilde{\beta}$.  Denote the induced action from $G$ on $\text{Fr}(M)$ by $\tilde{\gamma}$. Let
$$\mathcal{C}_1=(((C_0(\text{Fr}(M)), G, \gamma), (C_0(Y), G, \beta), ((C_0(Y), \phi_\theta), \beta)), ((C_0(\text{Fr}(M)), G, \gamma), C_0(\text{Fr}(M)), \gamma))$$
and
$$\mathcal{C}_2=(((C_0(M), G, \alpha), (C_0(Y), G, \beta), ((C_0(Y), \phi_\kappa), \beta)), ((C_0(M), G, \alpha),C_0(M), \alpha ))$$
be classical $G$-equivariant couplings. Let 
$$\theta:Y\times\text{Bor}(\text{Fr}(M))\to [0, 1]\quad\quad \kappa:Y\times\text{Bor}(M)\to [0, 1]$$
be the $G$-invariant weakly continuous Markov kernels associated to $\phi_1$ and $\phi_2$, respectively. Let $(\Gamma_{\kappa}, \pi_\kappa, V_\kappa, W_\kappa)$ and $(\Gamma_\theta, \pi_\theta, V_\theta, W_\theta)$ be the quadruples associated to $\kappa$ and $\theta$, respectively. If $\pi_*(\theta)=\kappa$,  then the map 
$$U:\Gamma_\kappa\to \Gamma_\theta\quad\quad U(\pi_\kappa(f)V_\kappa g)=\tilde{\pi}_\theta(\pi^*(f))V_\theta g$$
is a well-defined $C_0(Y)$-linear isometry with the property $W_{\theta, s}U=UW_{\kappa, s}$ for all $s\in G$.
 
\end{lemma}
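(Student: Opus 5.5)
The plan is to define $U$ on the dense submodule $\pi_\kappa(C_0(M))V_\kappa C_0(Y)\subset\Gamma_\kappa$ from Lemma \ref{Lemma: KSGNS construction in terms of kappa}, show that it preserves the $C_0(Y)$-valued inner products there, and then extend by continuity. Concretely, the element $\sum_i\pi_\kappa(f_i)V_\kappa g_i$ is the class in $C_0(M\times Y)/N_\kappa$ of the function $(x,y)\mapsto\sum_i f_i(x)g_i(y)$. The prescribed image $\sum_i\tilde{\pi}_\theta(\pi^*(f_i))V_\theta g_i$ should be the class in $\Gamma_\theta$ of $(b,y)\mapsto\sum_i f_i(\pi(b))g_i(y)$.

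\textbf{Step 1: identify $\tilde{\pi}_\theta(\pi^*(f))V_\theta g$ concretely.} This function need not lie in $C_0(\text{Fr}(M)\times Y)$, since $\pi$ need not be proper, so some care is needed. I would use the multiplication homomorphism $\mathcal{M}:C_b(\text{Fr}(M)\times Y)\to\mathcal{L}(\Gamma_\theta)$ from the lemma preceding the relativization proposition. Both $f\mapsto\tilde{\pi}_\theta(f)$ and $f\mapsto\mathcal{M}_{f\otimes 1}$ are unital $*$-homomorphisms on $C_b(\text{Fr}(M))$ that are strictly continuous on the unit ball, and they agree on $C_0(\text{Fr}(M))$ because $\pi_\theta(f)$ is multiplication by $f$. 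Since the extension of a non-degenerate homomorphism is unique, $\tilde{\pi}_\theta(h)=\mathcal{M}_{h\otimes 1}$ for all $h\in C_b(\text{Fr}(M))$.

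The image is then $\mathcal{M}_{\pi^*(f)\otimes1}[k]$ for any $k\in C_0(\text{Fr}(M)\times Y)$ representing $V_\theta g$. Its inner products can be computed pointwise: for each $y$, $\langle\cdot,\cdot\rangle(y)$ is an integral against $\theta(y,\cdot)$. This is justified by approximating with $k=e_\lambda\otimes g$ for an approximate unit $(e_\lambda)$ of $C_0(\text{Fr}(M))$ and applying dominated convergence fiberwise. I expect this step to be the main technical obstacle, namely making rigorous that the inner product of such elements is given by the naive integral formula.

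\textbf{Step 2: inner products and well-definedness.} Write $\xi=\sum_i\pi_\kappa(f_i)V_\kappa g_i$ and $\eta=\sum_j\pi_\kappa(f'_j)V_\kappa g'_j$. Then
$$\langle U\xi,U\eta\rangle(y)=\sum_{i,j}\overline{g_i(y)}g'_j(y)\int_{\text{Fr}(M)}\overline{f_i(\pi(b))}f'_j(\pi(b))\,\theta(y,db).$$
The change-of-variables formula for the push-forward, $\pi_*(\theta)=\kappa$, applied to the bounded continuous function $\overline{f_i}f'_j$, turns this into $\langle\xi,\eta\rangle(y)$. In particular $\|U\xi\|=\|\xi\|$. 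Consequently, if two finite sums represent the same element of $\Gamma_\kappa$, their difference has norm zero, so their images agree, and $U$ is well defined. The map is $C_0(Y)$-linear because each $V$ is, and it is an isometry on a dense submodule. It therefore extends uniquely to a $C_0(Y)$-linear isometry $\Gamma_\kappa\to\Gamma_\theta$.

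\textbf{Step 3: equivariance.} Both $W_{\theta,s}U$ and $UW_{\kappa,s}$ are bounded and $\beta_s$-linear, so it suffices to compare them on generators. By Lemma \ref{Lemma: G-action on the module}, $W_{\kappa,s}(\pi_\kappa(f)V_\kappa g)=\pi_\kappa(\alpha_s f)V_\kappa\beta_s(g)$. On the other side, applying $W_{\theta,s}$ to the class of $(b,y)\mapsto f(\pi(b))g(y)$ gives the class of $(b,y)\mapsto f(\pi(\tilde{\gamma}_{s^{-1}}b))g(\tilde{\beta}_{s^{-1}}y)$. Since $\pi$ is $G$-equivariant, $f\circ\pi\circ\tilde{\gamma}_{s^{-1}}=(\alpha_s f)\circ\pi$, so this class is $\tilde{\pi}_\theta(\pi^*(\alpha_s f))V_\theta\beta_s(g)=UW_{\kappa,s}(\pi_\kappa(f)V_\kappa g)$.

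This last computation again uses the description from Step 1, transported through the approximate-unit representatives. Alternatively, it follows from part (3) of the $\mathcal{M}$ lemma, $\mathcal{M}_{\gamma_s(h)}=\text{Ad}(W_{\theta,s})\mathcal{M}_h$, combined with $W_{\theta,s}V_\theta=V_\theta\beta_s$ from Theorem \ref{Theorem: KSGNS data for MK}.
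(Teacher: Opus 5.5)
Your proposal is correct and follows essentially the same route as the paper. Both arguments define $U$ on the dense submodule $\pi_\kappa(C_0(M))V_\kappa C_0(Y)$, compute the $C_0(Y)$-valued inner product as an integral against $\theta(y,\cdot)$ and change variables with $\pi_*(\theta)=\kappa$ to get an isometry (hence well-definedness), extend by continuity, and check $W_{\theta,s}U=UW_{\kappa,s}$ on generators using the equivariance of $\pi$, of $\tilde{\pi}_\theta$, and $W_{\theta,s}V_\theta=V_\theta\beta_s$. Your Step 1, which identifies $\tilde{\pi}_\theta$ with multiplication via $\mathcal{M}$ and approximates $V_\theta g$ by $e_\lambda\otimes g$, supplies a justification the paper simply takes for granted, since $\pi^*(f)$ and $g$ need not give elements of $C_0(\text{Fr}(M)\times Y)$. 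One correction there: the fiberwise limit should use uniform convergence of $e_\lambda$ on compacts together with tightness of $\theta(y,\cdot)$, or a sequential approximate unit (available since $\text{Fr}(M)$ is $\sigma$-compact), because dominated convergence is not valid for general nets.
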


\begin{proof}\

\noindent We define the map
$$U:\pi_\kappa(C_0(M))V_\kappa C_0(Y)\to \Gamma_\theta\quad\quad U\left(\sum\limits_{k=1}^n \pi_\kappa(f_k)V_\kappa g_k\right)=\sum\limits_{k=1}^n \tilde{\pi}_\theta(\pi^*(f_k))V_\theta g_k$$
As
\begin{align*}
    \left\|U\left(\sum\limits_{k=1}^n \pi_\kappa(f_k)V_\kappa g_k\right)\right\|^2&=\sup\limits_{y\in Y}\sum\limits_{i, j=1}^n\int_{\text{Fr}(M)}\overline{f_i(\pi(b))g_i(y)}f_j(\pi(b))g_j(y)\;\theta(y, db)\\
    &=\sup\limits_{y\in Y}\sum\limits_{i, j=1}^n\int_{M}\overline{f_i(p)g_i(y)}f_j(p)g_j(y)\; d(\pi_*\theta(y, \cdot))(p)\\
    &=\sup\limits_{y\in Y}\sum\limits_{i, j=1}^n\int_{M}\overline{f_i(p)g_i(y)}f_j(p)g_j(y)\; \kappa(y, dp)=\left\|\sum\limits_{k=1}^n \pi_\kappa(f_k)V_\kappa g_k\right\|
\end{align*}
it follows that $U$ is a well-defined map. Furthermore, it is clear $U$ is $C_0(Y)$-linear. Thus $U$ is $C_0(Y)$-linear isometry. As $\pi_\kappa(C_0(M))V_\kappa C_0(Y)$ is dense in $\Gamma_\kappa$, $U$ extends to a $C_0(Y)$-linear isometry on $\Gamma_\kappa$ which we also denote by $U$.

To prove the last claim, fix $s\in G$. As $UW_{\kappa, s}$ and $W_{\theta, s}U$ are both continuous and $\beta_s$-linear, it suffices to prove equality on elements of the form $\pi_\theta(f)V_\theta g\in \Gamma_\theta$. Given such an element, we have
\begin{align*}
    (W_{\theta, s}U)(\pi_\kappa(f)V_\kappa g)&=W_{\theta, s}(\tilde{\pi}_\theta(\pi^*(f))V_\theta g)=\tilde{\pi}_\theta(\gamma_s(\pi^*(f)))V_\theta (\beta_s(g))\\
    &=\tilde{\pi}_\theta(\pi^*(\alpha_s(f)))V_\theta (\beta_s(g))=U(\pi_\kappa(\alpha_s(f))V_\kappa \beta_s(g))=(UW_{\kappa, s})(\pi_\kappa(f)V_\kappa(g))
\end{align*}
Hence $UW_{\theta, s}=W_{\kappa, s}U$ for all $s\in G$.

\end{proof}

\begin{proposition}\

\noindent Let $M$ be a smooth $m$-dimensional  manifold, and let $Y$ be a locally compact Hausdorff space. Let $G$ be a Lie group which acts smoothly on $M$ by $\tilde{\alpha}$ and acts on $Y$ be $\tilde{\beta}$.  Denote the induced action from $G$ on $\text{Fr}(M)$ by $\tilde{\gamma}$. Let
$$\mathcal{C}_1=(((C_0(\text{Fr}(M)), G, \gamma), (C_0(Y), G, \beta), ((C_0(Y), \phi_\theta), \beta)), ((C_0(\text{Fr}(M)), G, \gamma), C_0(\text{Fr}(M)), \gamma))$$
and
$$\mathcal{C}_2=(((C_0(M), G, \alpha), (C_0(Y), G, \beta), ((C_0(Y), \phi_\kappa), \beta)), ((C_0(M), G, \alpha),C_0(M), \alpha ))$$
be classical $G$-equivariant couplings. Let 
$$\theta:Y\times\text{Bor}(\text{Fr}(M))\to [0, 1]\quad\quad \kappa:Y\times\text{Bor}(M)\to [0, 1]$$
be the $G$-invariant weakly continuous Markov kernels associated to $\phi_1$ and $\phi_2$, respectively. Let $(\Gamma_{\kappa}, \pi_\kappa, V_\kappa, W_\kappa)$ and $(\Gamma_\theta, \pi_\theta, V_\theta, W_\theta)$ be the quadruples associated to $\kappa$ and $\theta$, respectively. If $\pi_*(\theta)=\kappa$ and the induced $C_0(Y)$-linear isometry 
$$U:\Gamma_\kappa\to \Gamma_\theta\quad\quad U(\pi_\kappa(f)V_\kappa g)=\tilde{\pi}_\theta(\pi^*(f))V_\theta g$$
has orthogonally complementable range, then whenever $\pounds$ is a relativization map for $\mathcal{C}_1$,  the map
$$\pounds':C_b(M)\to \mathcal{L}(\Gamma_{\kappa})^G\quad\quad \pounds'(f)=U^*\pounds(\tilde{\pi}^*(f))U$$
defines a relativization map for $\mathcal{C}_2$.

\end{proposition}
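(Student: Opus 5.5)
The plan is to verify the three defining conditions of a relativization map for $\mathcal{C}_2$ one at a time. Here $\tilde{\pi}^*:C_b(M)\to C_b(\text{Fr}(M))$ denotes pullback along the bundle projection $f\mapsto f\circ\pi$; it is a unital $*$-homomorphism that is strictly continuous on the unit ball and $G$-equivariant, since $\pi\circ\tilde{\gamma}_s=\tilde{\alpha}_s\circ\pi$. Because $U$ has orthogonally complementable range, $U$ is adjointable with $U^*U=1_{\Gamma_\kappa}$. This follows from the standard fact that an isometry of Hilbert modules with complemented range is adjointable: take $U^*$ to be the inverse of $U$ on the range composed with the projection onto the range.

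\textbf{Complete positivity and continuity.} The map $\pounds'$ is a composition of three maps: the $*$-homomorphism $\tilde{\pi}^*$, the completely positive map $\pounds$, and the compression $T\mapsto U^*TU$. All three are completely positive, so $\pounds'$ is as well. For strict continuity on the unit ball, note that $\tilde{\pi}^*$ sends the unit ball into the unit ball, and $\pounds$ is strictly continuous there. Compression by an adjointable $U$ is strictly continuous on bounded sets, since $\|U^*TUx\|\le\|U\|\,\|TUx\|$ and $\|(U^*TU)^*x\|\le \|U\|\,\|T^*Ux\|$.

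\textbf{Range in the $G$-fixed points.} For $s\in G$ the previous Lemma gives $W_{\theta,s}U=UW_{\kappa,s}$. Taking adjoints (in the $\beta_s$-adjointable sense) yields $U^*W_{\theta,s}^*=W_{\kappa,s}^*U^*$, and hence $U^*W_{\theta,s}=W_{\kappa,s}U^*$, using $W_{\theta,s}^*=W_{\theta,s^{-1}}$ and applying the same relation with $s^{-1}$. Therefore
$$\text{Ad}(W_{\kappa,s})(U^*TU)=U^*\,\text{Ad}(W_{\theta,s})(T)\,U,$$
so $U^*TU$ is invariant whenever $T$ is, and $\pounds'$ lands in $\mathcal{L}(\Gamma_\kappa)^G$.

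\textbf{Commutativity of the diagram.} This is the step I expect to be the main obstacle. Let $f\in C_b(M)^G$. Then $\tilde{\pi}^*(f)\in C_b(\text{Fr}(M))^G$, so $\pounds(\tilde{\pi}^*f)=\tilde{\pi}_\theta(\pi^*f)$ by the diagram for $\mathcal{C}_1$. It therefore suffices to show $U^*\tilde{\pi}_\theta(\pi^*f)U=\tilde{\pi}_\kappa(f)$ for all $f\in C_b(M)$. I will prove the intertwining relation $\tilde{\pi}_\theta(\pi^*f)U=U\tilde{\pi}_\kappa(f)$ and then apply $U^*U=1$.
\begin{enumerate}
\item For $f\in C_0(M)$, check the relation on the dense generators $\pi_\kappa(h)V_\kappa g$ using the defining formula for $U$:
$$\tilde{\pi}_\theta(\pi^*f)\tilde{\pi}_\theta(\pi^*h)V_\theta g=\tilde{\pi}_\theta(\pi^*(fh))V_\theta g=U(\pi_\kappa(fh)V_\kappa g)=U\pi_\kappa(f)\pi_\kappa(h)V_\kappa g.$$
\item Extend from $C_0(M)$ to $C_b(M)$ by strict continuity on the unit ball. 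Take an approximate unit $e_\lambda$ of $C_0(M)$; then $f e_\lambda\to f$ strictly in $C_b(M)$ with norms bounded by $\|f\|$. Both sides of the relation are strictly continuous in $f$ on bounded sets, because $\tilde{\pi}_\kappa$, $\tilde{\pi}_\theta\circ\tilde{\pi}^*$ and composition with the bounded map $U$ all are.
\end{enumerate}
The care needed here is in the unital extensions $\tilde{\pi}_\kappa$ and $\tilde{\pi}_\theta$. Once the relation holds, $U^*\tilde{\pi}_\theta(\pi^*f)U=U^*U\tilde{\pi}_\kappa(f)=\tilde{\pi}_\kappa(f)$, which gives commutativity of the diagram and finishes the proof.
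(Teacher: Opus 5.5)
Your proposal is correct and follows essentially the same route as the paper. Both arguments use the same three ingredients: adjointability of $U$ with $U^*U=1_{\Gamma_\kappa}$; equivariance of the compression $T\mapsto U^*TU$, obtained from $W_{\theta,s}U=UW_{\kappa,s}$ (which you rightly treat only as a unital completely positive map); and commutativity of the diagram, checked by evaluating $U^*\tilde{\pi}_\theta(\tilde{\pi}^*(f))U$ on the generators $\pi_\kappa(h)V_\kappa g$. The one difference is that your approximate-unit extension from $C_0(M)$ to $C_b(M)$ is unnecessary: your generator computation already works verbatim for $f\in C_b(M)$, since $fh\in C_0(M)$ whenever $h\in C_0(M)$, and this direct computation is exactly how the paper handles it.
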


\begin{proof}\

\noindent  As $U$ is an isometry with orthogonally complementable range, it follows $U$ is adjointable with $U^*U=1_{\Gamma_\kappa}$. Note, that $U^*$ is the composition of the orthogonal projection map $\Gamma_\theta\to U(\Gamma_\kappa)$ followed by the inverse of $U$ on $U(\Gamma_\kappa)$.

By the prior lemma, we have that for all $s\in G$ that $W_{\theta, s}U=UW_{\kappa, s}$ for all $s\in G$. Thus, as $U$ is adjointable, we have $W_{\kappa, s}U^*=U^*W_{\theta, s}$ for all $s\in G$. Therefore the induced map
$$C:\mathcal{L}(\Gamma_\theta)\to \mathcal{L}(\Gamma_\kappa)\quad\quad C(T)=U^*TU$$
defines a $G$-equivariant unital $*$-algebra homomorphism. As $C$ is unital, it follows $C$ is automatically non-degenerate and thus strictly continuous on the unit ball. 

Now, consider the following diagram:
\begin{center}
   \begin{tikzcd}
	{C_b(M)} && {C_b(Fr(M))} \\
	\\
	{C_b(M)^G} && {C_b(Fr(M))^G} \\
	\\
	{\mathcal{L}(\Gamma_\kappa)^G} && {\mathcal{L}(\Gamma_\theta)^G}
	\arrow["{\tilde{\pi}^*}"{description}, from=1-1, to=1-3]
	\arrow[hook', from=3-1, to=1-1]
	\arrow["{\tilde{\pi}^*}"{description}, from=3-1, to=3-3]
	\arrow["{\tilde{\pi}_\kappa}"{description}, from=3-1, to=5-1]
	\arrow[hook, from=3-3, to=1-3]
	\arrow["{\tilde{\pi}_\theta}"{description}, from=3-3, to=5-3]
	\arrow["C"{description}, from=5-3, to=5-1]
\end{tikzcd}
\end{center}
We claim both squares commute. As $\pi$ is $G$-equivariant, it follows the top square commutes. To see that the bottom square commutes, fix $f\in C_b(M)^G$. As $\tilde{\pi}_\kappa(f)$ and $U^*\tilde{\pi}_\theta(\tilde{\pi}^*(f))U$ are continuous and linear, it suffices to show the two maps agree on elements of the form $\pi_\kappa(g)V_\kappa(h)\in \Gamma_{\kappa}$. Given such an element, we have
\begin{align*}
    (U^*\tilde{\pi}_\theta(\tilde{\pi}^*(f))U)(\pi_\kappa(g)V_\kappa(h))&=U^*\tilde{\pi}_\theta(\tilde{\pi}^*f)\tilde{\pi}_\theta(\tilde{\pi}^*g)V_\theta h=U^*(\tilde{\pi}_\theta(\tilde{\pi}^*(fg))V_\theta h)\\
    &=\pi_\kappa(fg)V_\kappa h=\tilde{\pi}_\kappa(f)(\pi_\kappa(f)V_\kappa h)
\end{align*}
Therefore $\tilde{\pi}_\kappa(f)=\pi_\kappa(g)V_\kappa(h)$ proving the bottom square commutes.

Now suppose we have a relativization map $\pounds$ for $\mathcal{C}_1$. Define
$$\pounds':C_b(M)\to \mathcal{L}(\Gamma_\kappa)^G\quad\quad \pounds'(f)=V\mathcal{L}(\tilde{\pi}^*(f))U.$$
As $\pounds'$ is the composition of completely positive maps which are strictly continuous the unit ball, it follows $\pounds'$ is a completely positive map which is strictly continuous on the unit ball. Now consider the diagram
\begin{center}
    \begin{tikzcd}
	{C_b(M)} && {C_b(\text{Fr}(M))} \\
	\\
	{C_b(M)^G} && {C_b(\text{Fr}(M))^G} \\
	\\
	{\mathcal{L}(\Gamma_\kappa)^G} && {\mathcal{L}(\Gamma_\theta)^G}
	\arrow["{\tilde{\pi}^*}"{description}, from=1-1, to=1-3]
	\arrow["{\pounds'}"', curve={height=30pt}, from=1-1, to=5-1]
	\arrow["\pounds", shift left=4, curve={height=-30pt}, from=1-3, to=5-3]
	\arrow[hook', from=3-1, to=1-1]
	\arrow["{\tilde{\pi}^*}"{description}, from=3-1, to=3-3]
	\arrow["{\tilde{\pi}_\kappa}"{description}, from=3-1, to=5-1]
	\arrow[hook, from=3-3, to=1-3]
	\arrow["{\tilde{\pi}_\theta}"{description}, from=3-3, to=5-3]
	\arrow["C"{description}, from=5-3, to=5-1]
\end{tikzcd}
\end{center}
From the prior paragraph the two middle squares commute. By assumption, the far right diagram commutes. Using commutativity of these three diagrams, it follows the far left diagram commutes. Hence $\pounds'$ defines a relativization map for $\mathcal{C}_2$.

\end{proof}

\begin{lemma}\

\noindent  Let $M$ be a smooth $m$-dimensional  manifold, and let $Y$ be a locally compact Hausdorff space. Let 
$$\theta:Y\times\text{Bor}(\text{Fr}(M))\to [0, 1]\quad\quad \kappa:Y\times\text{Bor}(M)\to [0, 1]$$
be weakly continuous Markov kernels such that $\pi_*(\theta)=\kappa$. If there exists a weakly continuous Markov 
$$\lambda:Y\times M\times\text{Bor}(\text{Fr}(M))\to [0, 1]$$
such that for all $y\in Y$, $p\in M$, $B\in\text{Bor}(\text{Fr}(M))$
$$\theta(y, B)=\int_{M} \lambda(y, p, B)\;\kappa(y, dp),$$
then for all $f\in C_0(Y\times M)$, $g\in C_0(Y\times \text{Fr}(M))$, $y\in Y$, and $p\in M$
$$\int_{\text{Fr}(M)}g(y, b)f(y, \pi(b))\lambda(y, p, db)=f(y, p)\int_{\text{Fr}(M)}g(y, b)\lambda(y, p, db).$$

\end{lemma}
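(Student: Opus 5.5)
The plan is to reduce the identity to a single fibre-concentration property of $\lambda$, namely
$$\pi_*\big(\lambda(y,p,\cdot)\big)=\delta_p\qquad\text{for the relevant }(y,p).$$
Granting this, the identity is immediate. Fix $y\in Y$, $p\in M$, $f\in C_0(Y\times M)$ and $g\in C_0(Y\times \text{Fr}(M))$. The function $b\mapsto f(y,\pi(b))-f(y,p)$ is bounded and continuous, and it vanishes on $\pi^{-1}(p)$. If $\lambda(y,p,\cdot)$ is carried by $\pi^{-1}(p)$, then
$$\int_{\text{Fr}(M)}g(y,b)\big(f(y,\pi(b))-f(y,p)\big)\,\lambda(y,p,db)=0,$$
and rearranging gives the claim. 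So the work is to extract fibre concentration from the hypotheses $\theta(y,\cdot)=\int_M\lambda(y,p,\cdot)\,\kappa(y,dp)$ and $\pi_*\theta=\kappa$.

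\textbf{Step 1: concentration for $\kappa(y,\cdot)$-almost every $p$.} I read the displayed disintegration as saying that $\lambda(y,\cdot,\cdot)$ is a regular conditional distribution of the frame $b$ given $\pi(b)=p$ under $\theta(y,\cdot)$. In other words, I take it to hold in the refined form
$$\theta\big(y,B\cap\pi^{-1}(A)\big)=\int_A\lambda(y,p,B)\,\kappa(y,dp)\qquad\text{for Borel }A\subset M.$$
This is the form in which it is used to relate $\Gamma_\theta$ and $\Gamma_\kappa$. Taking $B=\pi^{-1}(C)$ gives
$$\kappa(y,A\cap C)=\int_A\lambda\big(y,p,\pi^{-1}(C)\big)\,\kappa(y,dp)$$
for all Borel $A$ and $C$. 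Hence $\lambda(y,p,\pi^{-1}(C))=\chi_C(p)$ for $\kappa(y,\cdot)$-a.e. $p$. Using second countability of $M$, I would run this over a countable base of open sets $C$. That yields a single $\kappa(y,\cdot)$-null set off which $\pi_*\lambda(y,p,\cdot)=\delta_p$.

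\textbf{Step 2: upgrade to every $p$.} Both sides of the target identity are continuous in $p$. The right side is continuous because $f$ is continuous and $\lambda$ is weakly continuous in $(y,p)$. The left side is continuous by the same weak-continuity argument used for $\tilde{\kappa}(f)$ in Lemma \ref{Lemma: building Gamma_0}. A $\kappa(y,\cdot)$-conull set is dense in $\text{supp}\,\kappa(y,\cdot)$, so the identity extends to all $p$ in the support. Equivalently, $p\mapsto\pi_*\lambda(y,p,\cdot)$ is weakly continuous and agrees with $p\mapsto\delta_p$ on a dense subset of the support, hence on the whole support.

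\textbf{Main obstacle.} The hard point is $p\notin\text{supp}\,\kappa(y,\cdot)$. There the hypothesis places no constraint on $\lambda(y,p,\cdot)$ through $\kappa$ at that fixed $y$. I would first try to use joint weak continuity in $y$: approach $(y,p)$ by pairs $(y',p')$ with $p'\in\text{supp}\,\kappa(y',\cdot)$ and pass to the limit. This works whenever $\bigcup_{y}\{y\}\times\text{supp}\,\kappa(y,\cdot)$ is dense in $Y\times M$. If that density fails, this step needs $\lambda(y,p,\cdot)$ to be chosen fibrewise off the support, for example via the Haar kernel $\sigma$ of Theorem \ref{Theorem: lifting WCMK}. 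In that case the literal "for all $p$" claim should be read as holding on $\text{supp}\,\kappa(y,\cdot)$.
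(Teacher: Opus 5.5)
Your argument does not prove the lemma as stated. Step 1 replaces the displayed hypothesis by the strictly stronger localized disintegration $\theta(y,B\cap\pi^{-1}(A))=\int_A\lambda(y,p,B)\,\kappa(y,dp)$; this is a new hypothesis, not a ``reading'' of the old one. Step 1 then gives the identity only for $\kappa(y,\cdot)$-a.e.\ $p$, and Step 2 extends it only to $\text{supp}\,\kappa(y,\cdot)$, not to all of $M$.

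Both changes are unavoidable, because the statement as written is false:
\begin{enumerate}
\item Take $\lambda(y,p,B):=\theta(y,B)$. This is a weakly continuous Markov kernel on $Y\times M$ and satisfies the displayed hypothesis, since $\kappa(y,M)=1$. But the left-hand side no longer depends on $p$, while the right-hand side $f(y,p)\int g(y,b)\,\theta(y,db)$ does whenever $\int g(y,b)\,\theta(y,db)\neq 0$ and $f(y,\cdot)$ is non-constant.
\item The same $\lambda$ fails even on the support. Let $Y$ be a point, $\kappa=\tfrac12(\delta_{p_0}+\delta_{p_1})$ and $\theta=\tfrac12(\delta_{b_0}+\delta_{b_1})$ with $\pi(b_i)=p_i$. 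Choose $f(p_0)=1$, $f(p_1)=0$, $g(b_0)=1$, $g(b_1)=0$. The left side is $\tfrac12$, while the right side at $p=p_1$ is $0$.
\item Even your localized hypothesis says nothing off the support. The data $\kappa=\delta_{p_0}$ and $\theta=\lambda(p,\cdot)=\delta_{b_0}$ for every $p$ satisfy it, yet at $p\neq p_0$ the identity reads $f(p_0)g(b_0)=f(p)g(b_0)$.
\end{enumerate}

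The paper's proof passes over exactly these two points. First, from $\int_M\lambda(y,p,\pi^{-1}(B))\,\kappa(y,dp)=\int_M\chi_B\,d\kappa(y,\cdot)$, which is an equality of total integrals only, it concludes $\lambda(y,\cdot,\pi^{-1}(B))=\chi_B$ $\kappa(y,\cdot)$-a.e. That conclusion needs the integrals to agree over every Borel $A\subset M$, i.e.\ your localized form. Second, it takes $B=\{p\}$ to get $\lambda(y,p,\pi^{-1}(p))=1$ at the given point $p$. An a.e.\ statement cannot give this, since the null set depends on $B$ and typically $\kappa(y,\{p\})=0$.

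Your two steps are the correct repairs. Step 1 uses a countable base to obtain a single null set off which $\pi_*\lambda(y,p,\cdot)=\delta_p$. Step 2 uses weak continuity in $p$ to pass from a conull set to its closure, which contains the support. Your closing caveat is the right conclusion: the lemma should assume the localized disintegration and assert the identity for $p\in\text{supp}\,\kappa(y,\cdot)$.

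That corrected version is also all the subsequent theorem on orthogonal complementability of the range of $U$ actually uses. There the identity appears only after integrating against $\kappa(y,dp)$, or as an equality in $\Gamma_\kappa$. That theorem's hypothesis would need to be strengthened in the same way.
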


\begin{proof}\

\noindent As $\pi_*\theta=\kappa$, it follows that for all $B\in\text{Bor}(M)$ and $y\in Y$
$$\theta(y, \pi^{-1}(B))=\kappa(y, B)=\int_{M}\chi_B(p)\;\kappa(y, dp).$$
Due to our assumption on $\lambda$, we have
$$\int_{M}\lambda(y, p, \pi^{-1}(B))\;\kappa(y, dp)=\int_M\chi_B(p)\;\kappa(y, dp)$$
so that $\lambda(y, \cdot, \pi^{-1}(B))=\chi_B$  almost everywhere (with respect to $\kappa(y, \cdot)$). Therefore, taking $B=\{p\}$ for $p\in M$, we have
$$\lambda(y, p, \pi^{-1}(p))=\chi_{\{p\}}(p)=1$$
As $\lambda(y, p, \cdot)$ is a probability measure, the equality implies $\text{supp}(\lambda(y, p, \cdot))\subset \pi^{-1}(p)$. Therefore
\begin{align*}
    \int_{\text{Fr}(M)}g(y, b)f(y, \pi(b))\lambda(y, p, db)&=\int_{\pi^{-1}(p)}g(y, b)f(y, \pi(b))\lambda(y, p, db)\\
    &=\int_{\pi^{-1}(p)}g(y, b)f(y, p)\lambda(y, p, db)\\
    &=f(y, p)\int_{\pi^{-1}(p)}g(y, b)\;\lambda(y, p, db)\\
    &=f(y, p)\int_{\text{Fr}(M)}g(y, b)\;\lambda(y, p, db)
\end{align*}

\end{proof}

\begin{theorem}\

\noindent  Let $M$ be a smooth $m$-dimensional  manifold, and let $Y$ be a locally compact Hausdorff space. Let $G$ be a Lie group which acts smoothly on $M$ by $\tilde{\alpha}$ and acts on $Y$ be $\tilde{\beta}$.  Denote the induced action from $G$ on $\text{Fr}(M)$ by $\tilde{\gamma}$. Let
$$\mathcal{C}_1=(((C_0(\text{Fr}(M)), G, \gamma), (C_0(Y), G, \beta), ((C_0(Y), \phi_\theta), \beta)), ((C_0(\text{Fr}(M)), G, \gamma), C_0(\text{Fr}(M)), \gamma))$$
and
$$\mathcal{C}_2=(((C_0(M), G, \alpha), (C_0(Y), G, \beta), ((C_0(Y), \phi_\kappa), \beta)), ((C_0(M), G, \alpha),C_0(M), \alpha ))$$
be classical $G$-equivariant couplings. Let 
$$\theta:Y\times\text{Bor}(\text{Fr}(M))\to [0, 1]\quad\quad \kappa:Y\times\text{Bor}(M)\to [0, 1]$$
be the $G$-invariant weakly continuous Markov kernels associated to $\phi_1$ and $\phi_2$, respectively. Let $(\Gamma_{\kappa}, \pi_\kappa, V_\kappa, W_\kappa)$ and $(\Gamma_\theta, \pi_\theta, V_\theta, W_\theta)$ be the quadruples associated to $\kappa$ and $\theta$, respectively. If $\pi_*(\theta)=\kappa$ and if there exists a weakly continuous Markov 
$$\lambda:Y\times M\times\text{Bor}(\text{Fr}(M))\to [0, 1]$$
such that for all $y\in Y$, $p\in M$, $B\in\text{Bor}(\text{Fr}(M))$
$$\theta(y, B)=\int_{M} \lambda(y, p, B)\;\kappa(y, dp),$$
then the induced $C_0(Y)$-linear isometry 
$$U:\Gamma_\kappa\to \Gamma_\theta\quad\quad U(\pi_\kappa(f)V_\kappa g)=\tilde{\pi}_\theta(\pi^*(f))V_\theta g$$
has orthogonally complementable range. 
   
\end{theorem}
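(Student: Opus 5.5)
The range of $U$ should be orthogonally complementable because $\lambda$ supplies an explicit adjoint. We will construct a bounded $C_0(Y)$-linear map $P:\Gamma_\theta\to\Gamma_\kappa$ with $\langle Ux,z\rangle_{\Gamma_\theta}=\langle x,Pz\rangle_{\Gamma_\kappa}$ for all $x\in\Gamma_\kappa$ and $z\in\Gamma_\theta$. Then $U$ is an adjointable isometry, so $U^*U=1_{\Gamma_\kappa}$. It follows that $UU^*$ is a self-adjoint idempotent in $\mathcal{L}(\Gamma_\theta)$ with range $U(\Gamma_\kappa)$, and hence $\Gamma_\theta=U(\Gamma_\kappa)\oplus\ker(UU^*)$ orthogonally.

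The candidate for $P$ is fiberwise conditional expectation. On $C_0(\text{Fr}(M)\times Y)$ we set
$$(P_0h)(p,y)=\int_{\text{Fr}(M)}h(b,y)\,\lambda(y,p,db).$$
\textbf{First step: $P_0h\in C_0(M\times Y)$.} Continuity follows from weak continuity of $(y,p)\mapsto\lambda(y,p,\cdot)$, by the argument of Lemma \ref{Lemma: building Gamma_0} with $Y$ replaced by $Y\times M$. If $P_0h$ only lands in $C_b$, we can instead define $P$ on the dense submodule spanned by elements $\pi_\theta(k)V_\theta g$ with $k\in C_0(\text{Fr}(M))$ and $g\in C_0(Y)$, whose images are $g(y)\int k\,d\lambda(y,p,\cdot)$. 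These lie in $C_b(M)\cdot C_0(Y)$, and via the multiplier extension $\mathcal{M}$ of the earlier lemma they still give elements of $\Gamma_\kappa$.

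\textbf{Second step: boundedness.} Cauchy--Schwarz for the probability measure $\lambda(y,p,\cdot)$ gives $|P_0h(p,y)|^2\le\int|h(b,y)|^2\lambda(y,p,db)$. Integrating against $\kappa(y,dp)$ and using the disintegration $\theta(y,\cdot)=\int\lambda(y,p,\cdot)\,\kappa(y,dp)$ yields $\langle P_0h,P_0h\rangle_\kappa(y)\le\langle h,h\rangle_\theta(y)$. So $P_0$ is contractive for the seminorms, preserves null spaces, and extends to a contraction $P:\Gamma_\theta\to\Gamma_\kappa$. It is clearly $C_0(Y)$-linear.

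\textbf{Third step: adjoint identity.} By continuity it suffices to check it for $x=\pi_\kappa(f)V_\kappa g$ and $h\in C_0(\text{Fr}(M)\times Y)$. Here $Ux$ is the function $(b,y)\mapsto f(\pi(b))g(y)$, and
$$\langle Ux,h\rangle(y)=\int_{\text{Fr}(M)}\overline{f(\pi(b))g(y)}\,h(b,y)\,\theta(y,db)=\int_M\int_{\text{Fr}(M)}\overline{f(\pi(b))g(y)}\,h(b,y)\,\lambda(y,p,db)\,\kappa(y,dp).$$
The preceding lemma moves $\overline{f(\pi(b))}$ out of the inner integral as $\overline{f(p)}$. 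That lemma is stated for $f\in C_0(Y\times M)$ and $g\in C_0(Y\times\text{Fr}(M))$, so we take $f(y,p)=\overline{f(p)g(y)}$, or a $C_0$ cutoff of it. The right-hand side then becomes $\int_M\overline{f(p)g(y)}(P_0h)(p,y)\,\kappa(y,dp)=\langle x,Ph\rangle(y)$.

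The main obstacle is the first step, namely making $P$ genuinely land in $\Gamma_\kappa$ and verifying the integral manipulations. Continuity and decay of $P_0h$ in $(p,y)$ are the issue, since vanishing at infinity in $p$ is not automatic. We would handle this with the dense-submodule/multiplier route described above, together with the measurability needed for Fubini, which comes from the kernel lemma proved earlier. Once $P$ exists, the formal adjoint argument above completes the proof.
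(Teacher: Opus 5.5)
Your proposal is correct and follows the paper's proof essentially step for step. The paper defines the same fiberwise conditional expectation $V(f)(y,p)=\int_{\text{Fr}(M)}f(y,b)\,\lambda(y,p,db)$, bounds it by Cauchy--Schwarz together with the disintegration $\theta(y,\cdot)=\int_M\lambda(y,p,\cdot)\,\kappa(y,dp)$, verifies $V=U^*$ on dense subspaces using the preceding lemma, and concludes from $U^*U=1_{\Gamma_\kappa}$. The only differences are that the paper simply asserts $V(f)\in C_0(Y\times M)$ and also checks $VU=1_{\Gamma_\kappa}$ directly (redundant, since $U$ preserves inner products), whereas you rightly flag the decay in the $M$-direction as needing care and route around it with the multiplier extension $\mathcal{M}$ (one small slip: those images lie in $C_b(M\times Y)\cdot C_0(Y)$ rather than $C_b(M)\cdot C_0(Y)$, since the $\lambda$-integral depends on $y$ too).
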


\begin{proof}\

\noindent Define the map
$$V:C_0(Y\times \text{Fr}(M))\to C_0(Y\times M)\quad\quad V(f)(y, p)=\int_{\text{Fr}(M)} f(y, b)\;\lambda(y, p, db)$$
As $f(y, \cdot)$ is a continuous function which is bounded and $\lambda(y, p, \cdot)$ is a probability measure, it follows $V(f)$ is a well-defined function. Applying the same argument utilized in proving the map $\tilde{\kappa}(f)$ was continuous in Lemma \ref{Lemma: building Gamma_0}, it follows $V(f)$ is continuous. Similarly, applying the same argument showing $\tilde{\kappa}(f)$ vanishes at infinity in Lemma \ref{Lemma: building Gamma_0}, it follows $V(f)$ vanishes at infinity. Thus $V$ is a well-defined map. Furthermore, it is clear that $V$ is $C_0(Y)$-linear.

We now claim $V$ is bounded with respect to the semi-norms on $C_0(Y\times \text{Fr}(M))$ and $C_0(Y\times M)$. Indeed, we have
\begin{align*}
    ||V(f)||^2&=\sup\limits_{y\in Y}\int_{M}\overline{V(f)(y, p)}V(f)(y, p)\kappa(y, dp)\\
    &=\sup\limits_{y\in Y}\int_{M}\left|\int_{\text{Fr}(M)}f(y, b)\;\lambda(y, p, db)\right|^2\kappa(y, dp)\\
    &\leq \sup\limits_{y\in Y}\int_{M}\int_{\text{Fr}(M)}|f(y, b)|^2\;\lambda(y, p, db)\kappa(y, dp)\\
    &= \sup\limits_{y\in Y}\int_{\text{Fr}(M)}\int_{M}|f(y, b)|^2\;\lambda(y, p, db)\kappa(y, dp)\\
    &=\sup\limits_{y\in Y}\int_{\text{Fr}(M)}|f(y, b)|^2\;\theta(y, db)=||f||^2
\end{align*}
Therefore $||V(f)||\leq ||f||$. Hence the map descends to a well-defined $C_0(Y)$-linear bounded map $V:\Gamma_\theta\to \Gamma_\kappa$.

We claim $V$ is the adjoint of $U$. As both maps are continuous and $C_0(Y)$-linear, it suffices to prove the claim on the dense subspaces $C_0(Y\times M)/N_\kappa$ and $C_0(Y\times \text{Fr}(M))/N_\theta$. Given $[f]\in C_0(Y\times M)/N_\kappa$ and $[g]\in C_0(Y\times \text{Fr}(M))/N_\kappa$, we have for all $y\in Y$ that
\begin{align*}
    \langle V([g]), [f]\rangle (y)&=\int_{M}\overline{V(g)(y, p)} f(y, p)\;\kappa(y, dp)\\
    &=\int_{M}\left(\int_{\text{Fr}(M)}\overline{g(y, b)}\;\lambda(y, p,  db)\right)f(y, p)\;\kappa(y, dp)\\
    &=\int_{M}\int_{\text{Fr}(M)}\overline{g(y, b)}f(y, \pi(b))\lambda(y, p, db)\kappa(y, dp)\\
    &=\int_{\text{Fr}(M)}\int_{M}\overline{g(y, b)}f(y, \pi(b))\lambda(y, p, db)\kappa(y, dp)\\
    &=\int_{\text{Fr}(M)}\overline{g(y, b)}f(y, \pi(b))\theta(y, db)=\langle [g], U([f])\rangle(y)
\end{align*}
Thus $V$ is the adjoint of $U$.

Now we claim $VU=1_{\Gamma_\kappa}$. To prove the claim, it suffices show the equality holds on the dense submodule $C_0(Y\times M)/N_\kappa$. As $VU$ is defined on $C_0(Y\times M)$, it suffices to show $VU$ is the identity map on $C_0(Y\times M)$. Given $f\in C_0(Y\times M)$, we have
\begin{align*}
    (VUf)(y, p)&=\int_{\text{Fr}(M)}U(f)(y, b)\;\lambda(y, p, db)=\int_{\text{Fr}(M)}f(y, \pi(b))\;\lambda(y, p, db)\\
    &=f(y, p)\int_{\text{Fr}(M)}\lambda(y, p, db)=f(y, p)
\end{align*}
Hence $VU=1_{\Gamma_\kappa}$ as claimed. As $U$ is an adjointable isometry with $U^*U=1_{\Gamma_\kappa}$, it follows that the range of $U$ is orthogonally complementable.

\end{proof}

\section{Hilbert Module Quantum Reference Frames}

In this section we introduce our model of quantum reference frames using Hilbert modules. We begin by generalizing classical $G$-equivariant couplings and the construction of the relational joint system. Using the generalized relational joint system, we generalize the notion of a relativization maps to arrive at our definition of a Hilbert module quantum reference frame. We then consider how our framework models quantum reference frames for Hilbert spaces and $C^*$-algebras.

\subsection{Generalizing classical Hilbert module reference frames}

In our setup for Hilbert module classical reference frames, we start with a classical $G$-equivariant coupling for the Hilbert $C_0(X)$-module $C_0(X)$ given by the data
$$(((C_0(X), G, \alpha), (C_0(Y), G, \beta), ((C_0(Y), \phi), \beta)), ((C_0(X), G, \alpha), C_0(X), \alpha)).$$
Generalizing the $C^*$-algebras and Hilbert modules to be arbitrary, we obtain the following generalization of a classical $G$-equivariant coupling.

\begin{definition}\

	\noindent Let $E_A$ be a Hilbert $A$-module, and let $G$ be a group. Define a $G$-equivariant coupling for $E_A$ as the data
	$$(((A, G, \alpha), (B, G, \beta), ((E_B, \phi), W)), ((A, G, \alpha), E_A, U))$$
	where
	\begin{itemize}
		\item $(A, G, \alpha)$ is a $C^*$-dynamical system.
		\item $(B, G, \beta)$ is a $C^*$-dynamical system.
		\item $((E_B, \phi), W)$ is a non-degenerate positive equivariant $C^*$-correspondence from $(A, G, \alpha)$ to $(B, G, \beta)$.
		\item $((A, G, \alpha), E_A, U)$ is a Hilbert module dynamical system.
	\end{itemize}

\end{definition}

\begin{remark}\

\noindent As the definition of a $G$-equivariant coupling comes from generalizing the definition of a classical $G$-equivariant coupling, we interpret the equivariant correspondence $((E_B, \phi), W)$ as a generalized $G$-invariant conditioning of $B$ on $A$.

\end{remark}

In the classical setting, we constructed the relational joint space and relational joint system by using a $G$-invariant weakly continuous Markov kernel. From Theorem \ref{Theorem: KSGNS data for MK}, the relational joint space and relational joint system can be unitarily equivalently constructed by applying the equivariant KSGNS construction to the non-degenerate positive equivariant $C^*$-correspondence determined by the Markov kernel. Therefore, given a $G$-equivariant coupling
$$(((A, G, \alpha), (B, G, \beta), ((E_B, \phi), W)), ((A, G, \alpha), E_A, U))$$
for a Hilbert $A$-module $E_A$, we apply the equivariant KSGNS construction to $((E_B, \phi), W)$ to obtain a unitarily unique quadruple $(F_\phi, \pi_\phi, V_\phi, \widetilde{W})$. 

From the classical setting, we would be led to define the relational joint space as $F_\phi$ and define the relational joint system as $\mathcal{L}(F_\phi)$; however, this is not quite right as we haven't include the original Hilbert module $E_A$. Instead, we first need to apply the equivariant interior tensor product with respect to the equivariant $C^*$-correspondence $((F_\phi, \pi_\phi), \widetilde{W})$ to convert the Hilbert module dynamical system $((A, G, \alpha), E_A, U)$ to the Hilbert module dynamical system
$$((B, G, \beta), E_A\otimes_{\pi_\phi} F_\phi, U\otimes_{\pi_\phi}\widetilde{W}).$$
From this Hilbert module dynamical system, we take the relational joint space as $E_A\otimes_{\pi_\phi}F_\phi$, and we take the relational joint system as $\mathcal{L}(E_A\otimes_{\pi_\phi}F_\phi)$. Using the action $\text{Ad}(U\otimes_{\pi_\phi}\widetilde{W})$ on $\mathcal{L}(E_A\otimes_{\pi_\phi}F_\phi)$, we identify the invariant relational observables as those contained in the subalgebra $\mathcal{L}(E_A\otimes_{\pi_\phi}F_\phi)^G$. 

We note that while it may appear that this construction is different than the classical construction, the two are in fact the same. To see this, let us fix a classical $G$-equivariant coupling for $C_0(X)$:
$$(((C_0(X), G, \alpha), (C_0(Y), G, \beta), ((C_0(Y), \phi), \beta)), ((C_0(X), G, \alpha), C_0(X), \alpha)).$$
Applying the construction described above, we obtain the Hilbert module dynamical system
$$((C_0(Y), G, \beta), C_0(X)\otimes_{\pi_\phi} F_\phi, \alpha\otimes_{\pi_\phi} W)$$
As we are viewing $C_0(X)$ as a Hilbert module over itself and $\pi_\phi$ is a non-degenerate $*$-algebra map, the assignment
$$C_0(X)\otimes_{\pi_\phi} F_\phi\to F_\phi \quad\quad f\dot{\otimes}y\mapsto \pi_\phi(f)y$$
defines a unitary map of the Hilbert $C_0(Y)$-modules. Under this identification, the resulting Hilbert module dynamical system on $F_\phi$ is given by
$$((C_0(Y), G, \beta),  F_\phi,  W)$$
so that the relational joint space is given by $F_\phi$ and the relational joint system is given by $\mathcal{L}(F_\phi)$. Therefore, when applying the construction above to classical $G$-equivariant couplings, we canonically recover the description of the relational joint space and relational joint system as described in Section 3.

\begin{definition}\

\noindent Let $E_A$ be a Hilbert $A$-module, let $G$ be a group, and let 
	$$(((A, G, \alpha), (B, G, \beta), ((E_B, \phi), W)), ((A, G, \alpha), E_A, U))$$
	be a $G$-equivariant coupling for $E_A$. Let $(F_\phi, \pi_\phi, V_\phi, \tilde{W})$ be the unitarily unique quadruple obtain from applying Theorem \ref{Thrm: Equiv Dil} to $((E_B, \phi), W)$. 
    \begin{itemize}
        \item Define the relational joint space for the $G$-equivariant coupling as $E_A\otimes_{\pi_\phi}F_\phi$.
        \item Define the relational joint system for the $G$-equivariant coupling as $\mathcal{L}(E_A\otimes_{\pi_\phi}F_\phi)$. 
        \item Define the reference frame for the $G$-equivariant coupling as $\mathcal{L}(E_A\otimes_{\pi_\phi}F_\phi)^{G}$.
    \end{itemize}

\end{definition}

Now we generalize the data of a relativization map. In the classical setting,  relativization maps were used to fix the image of the unital $G$-equivariant $*$-algebra map
$$\widetilde{\pi}_\phi:\mathcal{L}(C_0(X))\to \mathcal{L}(F_\phi).$$
Under the identification of $C_0(X)\otimes_{\pi_\phi}F_\phi$ and $F_\phi$, the map $\widetilde{\pi}_\phi$ corresponds to the induced unital $G$-equivariant $*$-algebra map
$$\mathcal{L}(C_0(X))\to \mathcal{L}(C_0(X)\otimes_{\pi_\phi} F_\phi)\quad\quad T\mapsto T\otimes_{\pi_\phi}I$$
from the equivariant interior tensor product. Using this induced map in place of $\tilde{\pi}_\phi$, we obtain the following definition of relativization map for $G$-equivariant couplings.

\begin{definition}\

\noindent Let $E_A$ be a Hilbert $A$-module, let $G$ be a group, and let 
	$$(((A, G, \alpha), (B, G, \beta), ((E_B, \phi), W)), ((A, G, \alpha), E_A, U))$$
	be a $G$-equivariant coupling for $E_A$. Let $(F_\phi, \pi_\phi, V_\phi, \tilde{W})$ be the unitarily unique quadruple obtain from applying Theorem \ref{Thrm: Equiv Dil} to $((E_B, \phi), W)$. Define a relativization map for the equivariant coupling as the data of a function
	$$\pounds:\mathcal{L}(E_A)\to \mathcal{L}(E_A\otimes_{\pi_\phi}F_\phi)^{G}$$
    which satisfies the following conditions:
    \begin{enumerate}
        \item $\pounds$ is completely positive.
        \item $\pounds$ is strictly continuous on the unit ball in $\mathcal{L}(E_A)$.
        \item $\pounds$ makes the following diagram commute:
		\begin{center}
		\begin{tikzcd}
			{\mathcal{L}(E_A)} &&& \\
			\\
			{\mathcal{L}(E_A)^{G}} &&& {\mathcal{L}(E_A\otimes_{\pi_\phi} F_\phi)^{G}}
			\arrow["\pounds"{description}, dashed, from=1-1, to=3-4]
			\arrow[{description}, hook', from=3-1, to=1-1]
			\arrow["{T\mapsto T\otimes_{\pi_\phi}I}"{description}, from=3-1, to=3-4]
		\end{tikzcd}
		\end{center}
        \end{enumerate}
    
	\end{definition}

Similar to the relativization maps for classical $G$-equivariant couplings, relativization maps for $G$-equivariant couplings enjoy a few properties. First, if $\pounds$ is a relativization map, then, as $I\in \mathcal{L}(E_A)^{G}$ and the bottom map in the diagram is a unital $*$-algebra homomorphism, it follows $\pounds$ is a unital completely positive map. Therefore, the pullback $\pounds^*$ yields a well-defined map on states. Second, this formulation of the relativization map frames the existence and possible uniqueness of $\pounds$ as an extension problem. Third, as $\pounds$ is a unital completely positive map which is strictly continuous on the unit ball in $\mathcal{L}(E_A)$, the restriction $\pounds|_{\mathcal{K}(E_A)}$ defines a non-degenerate completely positive map. Finally, as the next example shows, it is not true that every $G$-equivariant coupling admits a relativization map.
    
\begin{example}\label{Ex: non-existence example}\

\noindent The following example demonstrates the possibility for there to not exist a relativization map for an equivariant coupling. Let $\mathbb{F}_2$ be the free group on two generators equipped with the discrete topology. Take $A=\mathbb{C}$, $E_A=\ell^2(\mathbb{F}_2)$, and $G=\mathbb{F}_2$. Let $\lambda:\mathbb{F}_2\to \mathcal{B}(\ell^2(\mathbb{F}_2))$ be the left regular representation of $\mathbb{F}_2$, and let $\text{trv}$ denote the trivial action of $\mathbb{F}_2$ on $\mathbb{C}$. Then
	$$(((\mathbb{C}, \mathbb{F}_2, \text{trv}), (\mathbb{C}, \mathbb{F}_2, \text{trv}), ((\mathbb{C}, 1_{\mathbb{C}}), \text{trv})), ((\mathbb{C}, \mathbb{F}_2, \text{trv}), \ell^2(\mathbb{F}_2), \lambda)) $$
	defines an equivariant coupling. As $1_{\mathbb{C}}$ is a unital $*$-algebra homomorphism, we can take $(\mathbb{C}, 1_{\mathbb{C}}, 1_{\mathbb{C}}, \text{trv})$ as the quadruple from the equivariant KSGNS construction. Applying the interior tensor product, we obtain the Hilbert space
	$$\ell^2(\mathbb{F}_2)\otimes_{1_\mathbb{C}}\mathbb{C}=\ell^2(\mathbb{F}_2)\hat{\otimes}\mathbb{C}\cong \ell^2(\mathbb{F}_2),$$
	so that the reference frame system is given by $\mathcal{B}(\ell^2(\mathbb{F}_2))^{\text{Ad}(\lambda)}=\lambda(\mathbb{F}_2)'$. Therefore, the data of a relativization map for the equivariant coupling is equivalent to a conditional expectation $\pounds:\mathcal{B}(\ell^2(\mathbb{F}_2))\to\lambda(\mathbb{F}_2)'$. As $\mathbb{F}_2$ is not amenable (see example 4.4.5 in \cite{takesaki2003theory3}), $\lambda(\mathbb{F}_2)'$ is not  injective as a von Neumann algebra (Proposition 6.4 and Corollary 7.2 in \cite{Connes1976}). Thus, there does not exists a conditional expectation $\pounds$ (Corrollary 1.9 in \cite{eleftherakis2023similarity}) which shows there does not exist a relativization map for the equivariant coupling.
	
\end{example}

Combining together the data of a $G$-equivariant coupling and relativization map, we obtain a definition for Hilbert module quantum reference frame.
    
\begin{definition}\

		\noindent Let $E_A$ be a Hilbert $A$-module. Define a Hilbert module quantum reference frame for $E_A$ as the data
		$$((((A, G, \alpha), (B, G, \beta), ((E_B, \phi), W)), ((A, G, \alpha), E_A, U)), \pounds)$$
		where
		\begin{itemize}
			\item $(((A, G, \alpha), (B, G, \beta), ((E_B, \phi), W)), ((A, G, \alpha), E_A, U))$ is a $G$-equivariant coupling for $E_A$.
			\item $\pounds$ is a relativization map for the $G$-equivariant coupling.
			
		\end{itemize}
		
	\end{definition}

    \subsection{Hilbert module quantum reference frames and Hilbert spaces}

 The simplest possible $G$-equivariant couplings to consider are when $A=\mathbb{C}=B$ so that the Hilbert modules over $A$ and $B$ become Hilbert spaces. When applying the equivariant KSGNS and interior tensor product in this setting, we canonically recover the standard joint space and joint system of Hilbert spaces. Using this identification, we are able to utilize the operational Hilbert space formulation of quantum reference frames to generate examples of Hilbert module quantum reference frames.

\begin{proposition}\label{Prop: Hilbert space case}\

\noindent Let $H$ be a Hilbert space. Every $G$-equivariant coupling for $H$ of the form
$$((\mathbb{C}, G, \alpha), (\mathbb{C}, G, \beta), ((K, \phi), W), ((\mathbb{C}, G, \alpha), H, U))$$
determines and is determined by the data $(G, (K, W), (H, U))$ where $G$ is a group and both $(K, W)$ and $(H, U)$ are unitary representations of $G$ on $K$ and $H$, respectively. Furthermore, given the data $(G, (K, W), (H, U))$, 
\begin{enumerate}
    \item the relational joint space  for the $G$-equivariant coupling associated to $(G, (K, W), (H, U))$  is canonically unitarily isomorphic to $H\hat{\otimes}K$,
    \item the relational joint system  for the $G$-equivariant coupling associated to $(G, (K, W), (H, U))$  is canonically isomorphic to $\mathcal{B}(H\hat{\otimes} K)$, 
    \item the reference frame system for the $G$-equivariant coupling associated to $(G, (K, W), (H, U))$   is canonically isomorphic to $\mathcal{B}(H\hat{\otimes}K)^{\text{Ad}(U\hat{\otimes} W)}$,
    \item and a relativization map for the $G$-equivariant coupling associated to $(G, (K, W), (H, U))$ determines and is determined by a completely positive map $\pounds:\mathcal{B}(H)\to \mathcal{B}(H\hat{\otimes}K)^{\text{Ad}(U\hat{\otimes}W)}$ which is strictly continuous on the unit ball in $\mathcal{B}(H)$ and makes the following diagram commute:
\begin{center}
    \begin{tikzcd}
	{\mathcal{B}(H)} &&& \\
	\\
	{\mathcal{B}(H)^{\text{Ad}(U)}} &&& {\mathcal{B}(H\hat{\otimes}K)^{\text{Ad}(U\hat{\otimes}W)}}
	\arrow["\pounds"{description}, dashed, from=1-1, to=3-4]
	\arrow[hook', from=3-1, to=1-1]
	\arrow["{T\mapsto T\hat{\otimes}I}"{description}, from=3-1, to=3-4]
\end{tikzcd}
\end{center}
\end{enumerate}
\end{proposition}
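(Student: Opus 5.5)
The plan is to first pin down the data of such a coupling, then compute the equivariant KSGNS quadruple explicitly, and finally identify the interior tensor product with the Hilbert space tensor product.

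\textbf{The coupling data.} Since $\textnormal{Aut}_{*\textnormal{-alg}}(\mathbb{C})$ is trivial, $\alpha$ and $\beta$ are both the trivial action. A non-degenerate completely positive map $\phi:\mathbb{C}\to\mathcal{B}(K)$ is determined by $\phi(1)$. Non-degeneracy, applied to the approximate unit $1$, forces $\phi(1)=1_K$. So $\phi$ is the unital inclusion $\lambda\mapsto\lambda 1_K$ and carries no freedom.

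The remaining conditions reduce as follows. A $\beta_g$-adjointable unitary for trivial $\beta_g$ is just a unitary in $\mathcal{B}(K)$. The intertwining condition $W_g\phi(\lambda)=\phi(\lambda)W_g$ is automatic. Hence $W$ is exactly a unitary representation of $G$ on $K$, and by the remark after the definition of Hilbert module dynamical systems, $U$ is a unitary representation on $H$. Conversely, any such data $(G,(K,W),(H,U))$ assembles into a $G$-equivariant coupling, which gives the bijection.

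\textbf{The KSGNS quadruple.} Because $\phi$ is already a $*$-homomorphism, I will check directly that $(K,\phi,1_K,W)$ satisfies conditions $(1)$--$(4)$ of Theorem \ref{Thrm: Equiv Dil}, as was done in Example \ref{Ex: non-existence example}:
\begin{itemize}
\item density holds since $\phi(\mathbb{C})K=K$;
\item $\phi(\lambda)=1^*\phi(\lambda)1$;
\item $1_K W_g=W_g 1_K$.
\end{itemize}
By the uniqueness clause, $F_\phi\cong K$ with $\widetilde{W}=W$, and this identification is canonical via that unitary.

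\textbf{The tensor product.} For the interior tensor product $H\otimes_{\pi_\phi}K$ over $\mathbb{C}$, the balancing submodule $N$ is zero. The inner product is $\langle x_1\otimes y_1,x_2\otimes y_2\rangle=\langle x_1,x_2\rangle\langle y_1,y_2\rangle$, which is the standard one on the algebraic tensor product. So its completion is $H\hat{\otimes}K$, giving $(1)$. Under this identification, the operator $U_g\otimes_\rho W_g$ from Proposition \ref{Prop: equiv interior tensor product} agrees with $U_g\hat{\otimes}W_g$ on simple tensors, and hence everywhere by density and continuity. Likewise $T\otimes_\rho I=T\hat{\otimes}I$.

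\textbf{The operator algebras and relativization maps.} For Hilbert spaces $\mathcal{L}=\mathcal{B}$, so the unitary identification induces the isomorphism $\mathcal{L}(H\otimes_{\pi_\phi}K)\cong\mathcal{B}(H\hat{\otimes}K)$, giving $(2)$. Conjugating by the intertwined actions carries the fixed points of $\textnormal{Ad}(U\otimes_{\pi_\phi}W)$ onto those of $\textnormal{Ad}(U\hat{\otimes}W)$, giving $(3)$.

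For $(4)$, compose a relativization map with this $*$-isomorphism; composing with its inverse gives the converse. A spatial $*$-isomorphism preserves complete positivity and is strictly continuous on bounded sets, since it is implemented by a unitary. It also transports the bottom arrow $T\mapsto T\otimes_{\pi_\phi}I$ of the diagram to $T\mapsto T\hat{\otimes}I$. Hence the diagrams correspond, which gives $(4)$.

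\textbf{Main obstacle.} There is no deep step. The only care needed is making "canonical" precise, namely checking that every identification is induced by the single unitary $x\dot\otimes y\mapsto x\otimes y$. This ensures the group actions and strict topologies transfer compatibly.
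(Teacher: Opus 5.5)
Your proposal is correct and follows essentially the same route as the paper: you use the triviality of $\textnormal{Aut}_{*\textnormal{-alg}}(\mathbb{C})$ to identify the coupling data, take $(K,\phi,1_K,W)$ as the equivariant KSGNS quadruple, identify $H\otimes_{\pi_\phi}F_\phi$ with $H\hat{\otimes}K$, and transport relativization maps by unitary conjugation. The only bookkeeping difference is that the paper writes the identifying unitary explicitly as $I\otimes V$, with $V:F_\phi\to K$ the unitary from the uniqueness clause, whereas you absorb $V$ by choosing $K$ as the representative of $F_\phi$. The two arguments amount to the same thing.
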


\begin{proof}\

\noindent First, suppose we have $G$-equivariant coupling
$$((\mathbb{C}, G, \alpha), (\mathbb{C}, G, \beta), ((K, \phi), W), ((\mathbb{C}, G, \alpha), H, U)).$$
As for each $g\in G$, $\alpha_g=1_{\mathbb{C}}=\beta_g$, the maps $W_g$ and $U_g$ are unitary maps on $K$ and $H$, respectively, for all $g\in G$. Thus $(K, W)$ and $(H, U)$ are unitary representations. Hence, we obtain the data $(G, (K, W), (H, U))$.

Now suppose we have the data $(G, (K, W), (H, U))$. Let $\alpha:G\to \text{Aut}_{*\text{-alg}}(\mathbb{C})$ be the unique group homomorphism, then for each $g\in G$, $W_g$ and $U_g$ are both $\alpha_g=1_{\mathbb{C}}$ adjointable unitaries. Therefore $((\mathbb{C}, G, \alpha), H, U)$ defines a Hilbert module dynamical system on $H$. Let 
$$\phi:\mathbb{C}\to \mathcal{B}(K)\quad\quad \phi(\lambda)=\lambda I$$
which is the unique non-degenerate completely positive map. As 
$$\phi(\alpha_g(\lambda))\circ W_g=\lambda W_g=W_g\circ\phi(\lambda)$$
for all $g\in G$ and $\lambda\in\mathbb{C}$, the data $((K, \phi), W)$ forms a non-degenerate positive equivariant $C^*$-correspondence from $(\mathbb{C}, G, \alpha)$ to $(\mathbb{C}, G, \alpha)$. Thus
$$((\mathbb{C}, G, \alpha), (\mathbb{C}, G, \alpha), ((K, \phi), W), ((\mathbb{C}, G, \alpha), H, U))$$
defines a $G$-equivariant coupling for $H$. It is clear these two constructions are inverse of each other.

Now suppose we have the data $(G, (K, W), (H, U))$, and let $(F_\phi, \pi_\phi, V_\phi, \tilde{W})$ be the unitarily unique quadruple obtained from applying the equivariant KSGNS construction to the non-degenerate positive $C^*$-correspondence $((K, \phi), W)$. As 
$$\phi:\mathbb{C}\to \mathcal{B}(K)\quad\quad \phi(\lambda)=\lambda I$$
is a unital $*$-algebra homomorphism, the data quadruple $(K, \phi, I, W)$ satisfies the conditions for the quadruple from the equivariant KSGNS construction for the correspondence $((K, \phi), W)$. Therefore, from the unitary uniqueness in the equivariant KSGNS construction, there exists a canonical unitary $V:F_\phi\to K$ intertwining $\phi$ and $\pi_\phi$ as well as $W$ and $\tilde{W}$. Applying the interior tensor product of $H$ and $K$ with respect to $\phi$ yields the Hilbert space 
$$H\otimes_\phi K=H\hat{\otimes}K$$
Thus, the map 
$$I\otimes V:H\otimes_{\pi_\phi}F_\phi\to H\hat{\otimes} K\quad\quad (I\otimes V)(x\dot{\otimes}y)=x\otimes V(y)$$
defines a unitary isomorphism of the Hilbert spaces. Furthermore, the intertwining property of $\phi$ for the representations implies
$$(U_g\hat{\otimes} W_g)\circ (I\otimes V)=(I\otimes V)\circ (U_g\hat{\otimes}\tilde{W}_g)$$
for all $g\in G$. Thus we obtain a $*$-algebra isomorphisms
$$\mathcal{B}(H\otimes_{\pi_\phi}F_\phi)\to\mathcal{B}(H\hat{\otimes}K)\quad\quad T\mapsto (I\otimes V)T(I\otimes V^*)$$
for which preserves the fixed points. Under this isomorphism, we have that for all $T\in \mathcal{B}(H)$,
$$(I\otimes V)(T\otimes_{\pi_\phi}I)(I\otimes V^*)=T\hat{\otimes}I$$
Therefore we have established $(1)-(3)$ from the canonical unitary isomorphism $V:F_\phi\to K$.

Now we prove $(4)$. Suppose we have a completely positive map $\pounds:\mathcal{B}(H)\to \mathcal{B}(H\hat{\otimes}K)^{\text{Ad}(U\hat{\otimes}W)}$ which is strictly continuous on the unit ball in $\mathcal{B}(H)$ and makes the following diagram commute:
\begin{center}
    \begin{tikzcd}
	{\mathcal{B}(H)} &&& \\
	\\
	{\mathcal{B}(H)^{\text{Ad}(U)}} &&& {\mathcal{B}(H\hat{\otimes}K)^{\text{Ad}(U\hat{\otimes}W)}}
	\arrow["\pounds"{description}, dashed, from=1-1, to=3-4]
	\arrow[hook', from=3-1, to=1-1]
	\arrow["{T\mapsto T\hat{\otimes}I}"{description}, from=3-1, to=3-4]
\end{tikzcd}
\end{center}
Define
$$\pounds':\mathcal{B}(H)\to \mathcal{B}(H\otimes_{\pi_\phi} F_\phi)^{\text{Ad}(U\otimes_{\pi_\phi} \tilde{W})}\quad\quad \pounds'(T)=(I\otimes V^*)\pounds(T)(I\otimes V)$$
As $\pounds'$ is the composition of a strictly continuous completely positive map and $*$-algebra isomorphism, $\pounds'$ is also completely positive and strictly continuous on the unit ball. Furthermore, given $T\in \mathcal{B}(H)^{\text{Ad}(U)}$, we have
\begin{align*}
    \pounds'(T)=(I\otimes V^*)\pounds(T)(I\otimes V)=(I\otimes V^*)(T\hat{\otimes}I)(I\otimes V)=T\otimes_{\pi_\phi}I
\end{align*}
Thus $\pounds'$ is a relativization map for the $G$-equivariant coupling associated to $(G, (K, W), (H, U))$.  Similarly, given $\pounds'$ we can define $\pounds(T)=(I\otimes V)\pounds'(T)(I\otimes V^*)$. By the same argument, $\pounds$ is a completely positive map which is strictly continuous on the unit ball in $\mathcal{B}(H)$ and makes the following diagram commute:
\begin{center}
    \begin{tikzcd}
	{\mathcal{B}(H)} &&& \\
	\\
	{\mathcal{B}(H)^{\text{Ad}(U)}} &&& {\mathcal{B}(H\hat{\otimes}K)^{\text{Ad}(U\hat{\otimes}W)}}
	\arrow["\pounds"{description}, dashed, from=1-1, to=3-4]
	\arrow[hook', from=3-1, to=1-1]
	\arrow["{T\mapsto T\hat{\otimes}I}"{description}, from=3-1, to=3-4]
\end{tikzcd}
\end{center}
It is clear the construction between $\mathcal{L}$ and $\mathcal{L}'$ are inverses of each other. Thus $(4)$ holds.

\end{proof}

\begin{lemma}\label{Lemma: equivalence of normal and strict continuity}
\

\noindent Let $H$ and $K$ be Hilbert spaces. Let $\phi:\mathcal{B}(H)\to \mathcal{B}(K)$ be a unital completely positive map. Then $\phi$ is normal if and only if $\phi$ is strictly continuous on the unit ball.

\end{lemma}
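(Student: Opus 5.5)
The plan is to identify the strict topology on $\mathcal{B}(H)=\mathcal{L}(H)$ with a familiar operator topology. Regard $H$ as a Hilbert $\mathbb{C}$-module. The strict topology is generated by the seminorms $T\mapsto \|Tx\|$ and $T\mapsto \|T^*x\|$ for $x\in H$, so it is exactly the strong$^*$ operator topology. The lemma therefore says: a unital completely positive $\phi$ is normal if and only if, on the unit ball, strong$^*$ convergence $T_\lambda\to T$ implies strong$^*$ convergence $\phi(T_\lambda)\to\phi(T)$. I will use normality in the form of order continuity: $\phi(T_\lambda)\uparrow\phi(T)$ whenever $(T_\lambda)$ is a bounded increasing net of positive operators with $T_\lambda\uparrow T$. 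Equivalently, $\phi$ is $\sigma$-weakly continuous, hence weak-operator continuous on bounded sets, since the two topologies agree there.

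\emph{Strictly continuous on the unit ball $\Rightarrow$ normal.} Let $0\le T_\lambda\uparrow T$ be a bounded increasing net; after scaling by a positive constant it lies in the unit ball. By Vigier's theorem $T_\lambda\to T$ strongly. All operators involved are self-adjoint, so the convergence is also strong$^*$. Strict continuity then gives $\phi(T_\lambda)\to\phi(T)$ strongly. Positivity makes $(\phi(T_\lambda))$ an increasing net, and its strong limit is its supremum. Hence $\phi(T_\lambda)\uparrow\phi(T)$, which is normality.

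\emph{Normal $\Rightarrow$ strictly continuous on the unit ball.} Let $(T_\lambda)$ lie in the unit ball with $T_\lambda\to T$ strong$^*$, and set $S_\lambda=T_\lambda-T$. For each $x$ we have $\langle S_\lambda^*S_\lambda x,x\rangle=\|S_\lambda x\|^2\to 0$. Since $(S_\lambda^*S_\lambda)$ is bounded and positive, polarization gives $S_\lambda^*S_\lambda\to 0$ weakly. On bounded sets this is $\sigma$-weak convergence, so normality yields $\phi(S_\lambda^*S_\lambda)\to 0$ weakly. Since $\phi$ is unital and completely positive, the Kadison--Schwarz inequality $\phi(S^*S)\ge\phi(S)^*\phi(S)$ applies, giving
$$\|(\phi(T_\lambda)-\phi(T))y\|^2\le\langle\phi(S_\lambda^*S_\lambda)y,y\rangle\to 0$$
for every $y\in K$. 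The identical argument with $S_\lambda^*$ in place of $S_\lambda$ (using $T_\lambda^*\to T^*$ strongly and $\phi(S^*)=\phi(S)^*$) controls $\|(\phi(T_\lambda)-\phi(T))^*y\|$. Hence $\phi(T_\lambda)\to\phi(T)$ strong$^*$, i.e. strictly.

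The main obstacle is this second direction. Normality gives only weak-type continuity, and the upgrade to strong$^*$ convergence rests entirely on the Kadison--Schwarz estimate. That estimate is where complete positivity and unitality are used; 2-positivity plus unitality would already suffice. The remaining ingredients are standard: identifying the strict topology with the strong$^*$ topology, the agreement of the weak-operator and $\sigma$-weak topologies on bounded sets, and Vigier's theorem.
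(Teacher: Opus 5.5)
Your proof is correct. For normal $\Rightarrow$ strictly continuous on the unit ball you follow the paper's argument essentially verbatim:
\begin{itemize}
\item pass to $S_\lambda=T_\lambda-T$;
\item get $S_\lambda^*S_\lambda\to 0$ ultraweakly from weak convergence plus boundedness;
\item apply normality;
\item upgrade to strong convergence of $\phi(S_\lambda)$ and $\phi(S_\lambda)^*$ via the Kadison--Schwarz inequality.
\end{itemize}

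The converse is where you genuinely differ. The paper argues that strict continuity on the ball yields strong-operator, then weak-operator, then ultraweak continuity on the ball because each topology is finer than the next. It then extends off the ball with a uniform-boundedness remark about sequences. However, comparing topologies only lets you coarsen the topology on the \emph{target}. On the \emph{domain}, continuity with respect to a finer topology is the weaker property. So those steps do not follow as written; making them rigorous needs extra input, such as a Krein--Smulian argument and nets rather than sequences.

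Your route avoids this entirely and is complete:
\begin{itemize}
\item you take normality as order continuity;
\item Vigier's theorem turns a bounded increasing net of positive operators (rescaled into the unit ball) into a strong$^*$-convergent net;
\item strict continuity is then used exactly in the direction it is given;
\item the strong limit of the increasing net $\phi(T_\lambda)$ is identified with its supremum.
\end{itemize}
The cost is appealing to the standard theorem that order continuity and $\sigma$-weak continuity coincide for positive maps. You also rely on this theorem in the second direction.
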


\begin{proof}\

\noindent First, suppose $\phi$ is strictly continuous on the unit ball, then $\phi$ is continuous in the strong operator topology on the unit ball. As the strong operator topology is finer than the weak operator topology, $\phi$ is continuous with respect to the weak operator topology on the unit ball. As $\phi(B_1(\mathcal{B}(H)))\subset B_1(\mathcal{B}(K))$ and the weak operator topology agrees with the ultra-weak operator topology on norm bounded sets, the map $\phi$ is ultra-weak continuous on the unit ball. As every ultra-weak convergent sequence is norm bounded by an application of the Uniform Boundedness Principle, it follows $\phi$ is ultra-weak continuous. Thus $\phi$ is normal.

Now suppose $\phi$ is normal. We claim $\phi$ maps strictly convergent nets in the unit ball to strictly convergent nets in $\mathcal{B}(K)$. As the strict topology forms a topological vector space and $\phi$ is linear, it suffices to work with strictly convergent nets in $B_1(\mathcal{B}(H))$ which converge to $0\in \mathcal{B}(H)$. Let $(T_\lambda)_{\lambda\in\Lambda}$ be a net of operators in $B_1(\mathcal{B}(H))$ which converge in the strict topology to $0\in B_1(\mathcal{B}(H))$. As $(T_\lambda)_{\lambda\in\Lambda}$ is uniformly norm bounded, the net $(T_\lambda^*T_\lambda)_{\lambda\in\Lambda}$ converges to $0$ in the strict topology. As the strict topology is finer than the ultra-weak topology on the unit ball, then $(T_\lambda^*T_\lambda)_{\Lambda\in\Lambda}$ converges to $0$ in the ultra-weak topology. Using the continuity of $\phi$, we have $(\phi(T_\lambda^*T_\lambda))_{\lambda\in\Lambda}$ converges to $\phi(0)=0$ in the ultra-weak topology on $\mathcal{B}(K)$.

Fix $x\in K$. The rank-1 projection $\langle x, \cdot\rangle x$ defines a trace-class operator on $K$. Therefore, by definition of the ultra-weak topology, $(\langle x,\phi(T_\lambda^*T_\lambda)x\rangle)_{\lambda\in\Lambda}$ converges to $\langle x, 0x\rangle=0$. As $\phi$ is a unital completely positive map, the map $\phi$ satisfies the Kadison-Schwartz Inequality (Proposition 3.3 in \cite{paulsen2002completely}): for all $S\in \mathcal{B}(H)$, $\phi(S)^*\phi(S)\leq \phi(S^*S)$. Therefore
\begin{align*}
    ||\phi(T_\lambda)x||^2=\langle \phi(T_\lambda)x, \phi(T_\lambda)x\rangle=\langle x, \phi(T_\lambda)^*\phi(T_\lambda)x\rangle\leq \langle x, \phi(T_\lambda^*T_\lambda)x\rangle
\end{align*}
Thus $\langle x, \phi(T_\lambda^*T_\lambda)x\rangle\to 0$ implies $\phi(T_\lambda)x\to 0$. A similar argument applied to $(T_\lambda T_\lambda^*)_{\lambda\in\Lambda}$ shows $\phi(T_\lambda)^*x\to 0$. Hence $(\phi(T_\lambda))_{\lambda\in\Lambda}$ converges to $0\in\mathcal{B}(K)$ with respect to the strict topology. Thus $\phi$ is continuous with respect to the strict topology on the unit ball in $\mathcal{B}(H)$. 

\end{proof}

\begin{theorem}\

\noindent Let $H_S$ be a Hilbert space, let $G$ be a locally compact Hausdorff, second countable topological group, and let $(H_S, U_S)$ be a unitary representation of $G$ on $H_S$. If  $(H_R, E, U_R)$ is a covariant system as defined in \cite{Loveridge_2018}, then the data
$((G, (H_S, U_S), (H_R, U_R)), \yen_E)$ defines a Hilbert module quantum reference frame for $H_S$ where $\yen_E$ is the relativization map associated to the triple $(H_R, E, U_R)$ in the framework of \cite{Loveridge_2018}.

\end{theorem}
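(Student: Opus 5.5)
By Proposition \ref{Prop: Hilbert space case}, the data $(G,(H_S,U_S),(H_R,U_R))$ determines a $G$-equivariant coupling for $H_S$, with $A=B=\mathbb{C}$ and trivial actions. Its reference frame system is canonically $\mathcal{B}(H_S\hat{\otimes}H_R)^{\text{Ad}(U_S\hat{\otimes}U_R)}$. The same proposition says a relativization map is exactly a completely positive map $\pounds:\mathcal{B}(H_S)\to\mathcal{B}(H_S\hat{\otimes}H_R)^{\text{Ad}(U_S\hat{\otimes}U_R)}$ that is strictly continuous on the unit ball and restricts to $T\mapsto T\hat{\otimes}I$ on $\mathcal{B}(H_S)^{\text{Ad}(U_S)}$. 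So the whole proof reduces to checking these three properties for $\yen_E$.

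I would first recall the construction of \cite{Loveridge_2018}. The covariant POVM $E$ on $\text{Bor}(G)$ satisfies $U_R(g)E(X)U_R(g)^*=E(gX)$. The relativization map $\yen_E$ is defined weakly by
$$\langle\psi\otimes\varphi,\yen_E(T)\,\psi'\otimes\varphi'\rangle=\int_G\langle\psi,U_S(g)TU_S(g)^*\psi'\rangle\,d\langle\varphi,E(g)\varphi'\rangle,$$
extended sesquilinearly. Equivalently, one can dilate $E$ by Naimark to a projection-valued measure $P$ on $K\supset H_R$ with isometry $J:H_R\to K$, and set
$$\yen_E(T)=(I\otimes J)^*\Big(\int_G U_S(g)TU_S(g)^*\otimes dP(g)\Big)(I\otimes J).$$
In this form $\yen_E$ is a compression of a $*$-homomorphism $T\mapsto\int_G\text{Ad}(U_S(g))T\otimes dP(g)$, so it is completely positive and unital. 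It is also normal, by the standard results of \cite{Loveridge_2018}, or directly from the weak formula via dominated convergence on vector states.

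The three properties then follow in turn. \emph{Invariance:} substituting $g\mapsto hg$ in the integral and using covariance of $E$ gives $\text{Ad}(U_S(h)\otimes U_R(h))\yen_E(T)=\yen_E(T)$. \emph{Commutativity of the diagram:} if $T\in\mathcal{B}(H_S)^{\text{Ad}(U_S)}$, the integrand is constantly $T$, and $E(G)=I$ gives $\yen_E(T)=T\otimes I$. \emph{Strict continuity on the unit ball:} $\yen_E$ is unital, completely positive and normal, so Lemma \ref{Lemma: equivalence of normal and strict continuity} applies.

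I expect the main obstacle to be the rigorous justification of normality together with well-definedness. That is, one must check that the weakly defined sesquilinear form is bounded, so that it defines an operator, and that this operator is ultraweakly continuous in $T$. The second-countability and local compactness of $G$ are needed here to make the operator-valued integral meaningful. My first attempt would go through the Naimark dilation: $T\mapsto\int\text{Ad}(U_S(g))T\otimes dP(g)$ is a normal representation, being an integral of normal maps against a spectral measure, and compressing by $I\otimes J$ preserves normality. Otherwise I would cite the corresponding properties proved in \cite{Loveridge_2018}.
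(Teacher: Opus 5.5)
Your proposal is correct and follows the same route as the paper. Like the paper, you reduce via Proposition~\ref{Prop: Hilbert space case} to complete positivity, invariance of the range and the commuting diagram, and you get strict continuity on the unit ball from normality through Lemma~\ref{Lemma: equivalence of normal and strict continuity}; the only difference is that the paper simply cites Proposition~3 of \cite{Loveridge_2018} for all the properties of $\yen_E$ that you re-derive.

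One caution if you verify normality yourself: drop the ``dominated convergence on vector states'' shortcut, since dominated convergence fails for nets. Instead, observe that $T\mapsto\int_G\langle U_S(g)^*\psi,TU_S(g)^*\psi'\rangle\,d\langle\varphi,E(g)\varphi'\rangle$ is a Bochner integral of normal functionals, hence normal, and that such functionals for elementary tensors span a norm-dense subspace of the predual.
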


\begin{proof}\

\noindent As established in Proposition $3$ of \cite{Loveridge_2018}, $\yen_E$ is a unital completely positive normal map making the following diagram commute:
\begin{center}
    \begin{tikzcd}
	{\mathcal{B}(H_S)} &&& \\
	\\
	{\mathcal{B}(H_S)^{\text{Ad}(U_S)}} &&& {\mathcal{B}(H_S\hat{\otimes}H_R)^{\text{Ad}(U_S\hat{\otimes}U_R)}}
	\arrow["\yen_E"{description}, dashed, from=1-1, to=3-4]
	\arrow[hook', from=3-1, to=1-1]
	\arrow["{T\mapsto T\hat{\otimes}I}"{description}, from=3-1, to=3-4]
\end{tikzcd}
\end{center}
Using Lemma \ref{Lemma: equivalence of normal and strict continuity}, $\yen_E$ is a unital completely positive map which is strictly continuous on the unit ball. Therefore the data
$((G, (H_S, U_S), (H_R, U_R)), \yen_E)$ defines a Hilbert module quantum reference frame for $H_S$ by Proposition \ref{Prop: Hilbert space case}.

\end{proof}

\subsection{Hilbert module quantum reference frames and $C^*$-algebras}

Another simplified setting of Hilbert module quantum reference frames occurs when the $C^*$-algebras are viewed as Hilbert modules over themselves. In this case, the construction of the relational joint space and relation joint system reduces to only having to apply the equivariant KSGNS construction. From this reduction, we recover our model of classical reference frames when the algebras are commutative due to the correspondence between non-degenerate completely positive $G$-equivariant maps and $G$-invariant weakly continuous Markov kernels. 

\begin{proposition}\label{Prop: C^*-algebra case}\

\noindent Let $(A, G, \alpha)$ and $(B, G, \beta)$ be a $C^*$-dynamical systems. Every non-degenerate completely positive $G$-equivariant map $\phi:A\to M(B)$ determines and is determined by a $G$ -equivariant coupling of the form
$$(((A, G, \alpha), (B, G, \beta), ((B, \phi), \beta)), ((A, G, \alpha), A, \alpha)).$$
Furthermore, if $(F_\phi, \pi_\phi, V_\phi, W)$ is the unitarily unique quadruple obtained from apply the equivariant KSGNS construction to the non-degenerate positive equivariant $C^*$-correspondence $((B, \phi), \beta)$, then
\begin{enumerate}
    \item the relational joint space is canonically unitarily isomorphic to $F_\phi$,
    \item  the relational joint system is canonically isomorphic to $\mathcal{L}(F_\phi)$,
    \item the reference frame system is canonically isomorphic to $\mathcal{L}(F_\phi)^{\text{Ad}(W)}$,
    \item and a relativization map for the equivariant coupling determines and is determined by a completely positive map $\pounds:M(A)\to \mathcal{L}(F_\phi)^{\text{Ad}(W)}$ which is strictly continuous on the unit ball in $M(A)$ and make the following diagram commute
\begin{center}
    \begin{tikzcd}
	{M(A)} &&& \\
	\\
	{M(A)^{\text{Ad}(\alpha)}} &&& {\mathcal{L}(F_\phi)^{\text{Ad}(W)}}
	\arrow["\pounds"{description}, dashed, from=1-1, to=3-4]
	\arrow[hook', from=3-1, to=1-1]
	\arrow["{T\mapsto \tilde{\pi}_\phi(T)}"{description}, from=3-1, to=3-4]
\end{tikzcd}
\end{center}
where $\tilde{\pi}_\phi$ is the unique unital $*$-algebra homomorphism which extends $\pi_\phi$ to $M(A)$.
\end{enumerate}

\end{proposition}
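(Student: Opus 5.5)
The plan mirrors the proof of Proposition \ref{Prop: Hilbert space case}: set up the bijection, then identify $A\otimes_{\pi_\phi}F_\phi$ with $F_\phi$ equivariantly, and finally transport relativization maps along that identification.

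\textbf{The bijection.} Regard $B$ as a Hilbert module over itself, so that $\mathcal{L}(B)=M(B)$. A non-degenerate completely positive $G$-equivariant map $\phi:A\to M(B)$ then gives a non-degenerate positive $C^*$-correspondence $(B,\phi)$.
\begin{itemize}
\item Each $\beta_g$ is a $\beta_g$-adjointable unitary on $B$, with adjoint $\beta_{g^{-1}}$.
\item For $b\in B$ we have $\beta_g(\phi(a)b)=\tilde{\beta}_g(\phi(a))\beta_g(b)$, and $\tilde{\beta}_g(\phi(a))=\phi(\alpha_g(a))$. Hence $\beta_g\circ\phi(a)=\phi(\alpha_g(a))\circ\beta_g$, which is the intertwining condition.
\item Similarly, $((A,G,\alpha),A,\alpha)$ is a Hilbert module dynamical system.
\end{itemize}
Conversely, from the intertwining relation in a coupling of the stated form, the same identity read backwards gives $\tilde{\beta}_g(\phi(a))=\phi(\alpha_g(a))$, so $\phi$ is equivariant. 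The two assignments are visibly inverse to each other.

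\textbf{Parts (1)--(3).} Define
$$\Psi:A\otimes_{\pi_\phi}F_\phi\to F_\phi,\qquad a\dot{\otimes}x\mapsto \pi_\phi(a)x.$$
\begin{itemize}
\item $\Psi$ is isometric on the algebraic tensor product, because $\langle a_1\dot{\otimes}x_1,a_2\dot{\otimes}x_2\rangle=\langle x_1,\pi_\phi(a_1^*a_2)x_2\rangle=\langle\pi_\phi(a_1)x_1,\pi_\phi(a_2)x_2\rangle$. It therefore extends to the completion.
\item $\Psi$ has dense range because $\pi_\phi$ is non-degenerate (condition $(1)$ of Theorem \ref{Thrm: Equiv Dil}). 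So $\Psi$ is a surjective $B$-linear isometry that preserves inner products, hence a unitary.
\item On simple tensors, $\Psi\circ(\alpha_g\otimes_{\pi_\phi}W_g)=W_g\circ\Psi$, since $\pi_\phi(\alpha_g(a))W_gx=W_g\pi_\phi(a)x$ by the equivariance of $((F_\phi,\pi_\phi),W)$. By the uniqueness in Proposition \ref{Prop: equiv interior tensor product}, this holds everywhere.
\end{itemize}
Thus $\text{Ad}(\Psi)$ is a $G$-equivariant $*$-isomorphism $\mathcal{L}(A\otimes_{\pi_\phi}F_\phi)\cong\mathcal{L}(F_\phi)$ that carries fixed points to fixed points. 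This gives (1)--(3).

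\textbf{Part (4).} Use $\mathcal{L}(A)=M(A)$, with the $G$-action $\text{Ad}(\alpha)$ there equal to $\tilde{\alpha}$. The main step is to check
$$\Psi(T\otimes_{\pi_\phi}I)\Psi^*=\tilde{\pi}_\phi(T)\quad\text{for all } T\in M(A).$$
On simple tensors this reads $\pi_\phi(Ta)x=\tilde{\pi}_\phi(T)\pi_\phi(a)x$, which holds because $\tilde{\pi}_\phi$ extends $\pi_\phi$ multiplicatively: $\tilde{\pi}_\phi(T)\pi_\phi(a)=\pi_\phi(Ta)$. This is the defining property of the extension of a non-degenerate representation. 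It then extends to all of $A\otimes_{\pi_\phi}F_\phi$ by density and continuity.

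Given this, conjugation by $\Psi$ sends relativization maps $\pounds'$ to maps $\pounds=\text{Ad}(\Psi)\circ\pounds'$, and $\text{Ad}(\Psi^*)$ goes back.
\begin{itemize}
\item Complete positivity and strict continuity on the unit ball are preserved, because $\text{Ad}(\Psi)$ is a unitary conjugation and is therefore strictly continuous.
\item Commutativity of the diagram transfers because $\text{Ad}(\Psi)$ takes $T\otimes_{\pi_\phi}I$ to $\tilde{\pi}_\phi(T)$.
\end{itemize}

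The step I expect to require the most care is this identification of $T\otimes_{\pi_\phi}I$ with $\tilde{\pi}_\phi(T)$ for multipliers $T$, together with the matching of the $G$-action on $M(A)$ with $\text{Ad}(\alpha)$. Everything else is routine density and uniqueness.
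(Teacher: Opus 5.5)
Your proposal is correct and takes the same route as the paper: the bijection comes from the equivalence of $\tilde{\beta}_g(\phi(a))=\phi(\alpha_g(a))$ with $\beta_g\circ\phi(a)=\phi(\alpha_g(a))\circ\beta_g$, and (1)--(3) come from the unitary $a\dot{\otimes}x\mapsto\pi_\phi(a)x$ from $A\otimes_{\pi_\phi}F_\phi$ onto $F_\phi$, which intertwines $\alpha\otimes_{\pi_\phi}W$ with $W$; for (4), relativization maps are transported by conjugation using $\Psi(T\otimes_{\pi_\phi}I)\Psi^*=\tilde{\pi}_\phi(T)$. You also spell out verifications the paper leaves implicit, namely the isometry and dense-range checks and the multiplier identity $\tilde{\pi}_\phi(T)\pi_\phi(a)=\pi_\phi(Ta)$.
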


\begin{proof}\

\noindent First, suppose we have a $G$-equivariant coupling for $A$ given by
$$(((A, G, \alpha), (B, G, \beta), ((B, \phi), \beta)), ((A, G, \alpha), A, \alpha))$$
As $((B, \phi), \beta)$ is a non-degenerate positive equivariant $C^*$-correspondence from $(A, G, \alpha)$ to $(B, G, \beta)$, the map $\phi:A\to M(B)$ is a non-degenerate completely positive map which satisfies the condition
$$\beta_g\circ \phi(a)=\phi(\alpha_g(a))\circ \beta_g$$
for all $g\in G$ and $a\in A$. Therefore, if $\tilde{\beta}_g$ is the unique $*$-algebra isomorphism on $M(B)$ induced by $\beta_g$, then $\tilde{\beta}_g(\phi(a))=\phi(\alpha_g(a))$ for all $a\in A$ and $g\in G$. Thus $\phi:A\to M(B)$ is a non-degenerate completely positive $G$-equivariant map.

Now suppose we have a non-degenerate completely positive $G$-equivariant map $\phi$, then for all $g\in G$ and $a\in A$, we have
$$\tilde{\beta}_g(\phi(a))=\phi(\alpha_g(a))$$
which implies $\beta_g\circ \phi(a)=\phi(\alpha_g(a))\circ\beta_g$. As for each $g\in G$, $\beta_g$ is a $\beta_g$-adjointable unitary on the Hilbert $B$-module $B$, the pair $((B, \phi), \beta)$ forms a non-degenerate positive equivariant $C^*$-correspondence from $(A, G, \alpha)$ to $(B, G, \beta)$. As for each $g\in G$, $\alpha_g$ is an $\alpha_g$-adjointable unitary on the Hilbert $A$-module $A$, the data 
$$(((A, G, \alpha), (B, G, \beta), ((B, \phi), \beta)), ((A, G, \alpha), A, \alpha))$$
forms a $G$-equivariant coupling for $A$. It is clear these two constructions are inverses of each other.

Fix a $G$-equivariant coupling 
$$(((A, G, \alpha), (B, G, \beta), ((B, \phi), \beta)), ((A, G, \alpha), A, \alpha))$$
for $A$, and let $(F_\phi, \pi_\phi, V_\phi, W)$ be the quadruple obtains from the equivariant KSGNS construction. As $\pi_\phi:A\to \mathcal{L}(F_\phi)$ is a non-degenerate $*$-algebra homomorphism, the map
$$U:A\otimes_{\pi_\phi}F_\phi\to F_\phi\quad\quad U(a\dot{\otimes}x)=\pi_\phi(a)x$$
defines a unitary isomorphism of the Hilbert $B$-modules. Using $U$, we obtain a $*$-algebra isomorphism
$$U(-)U^*:\mathcal{L}(A\otimes_{\pi_\phi}F_\phi)\to \mathcal{L}(F_\phi)\quad\quad T\mapsto U TU^*$$
Furthermore, we have for all $g\in G$ that $U(\alpha_g\otimes_{\pi_\phi}W_g)U^*=W_g$ which implies the isomorphism $U(-)U^*$ descends to a $*$-algebra isomorphism 
$$\mathcal{L}(A\otimes_{\pi_\phi}F_\phi)^{\text{Ad}(\alpha\otimes_{\pi_\phi}W)}\to \mathcal{L}(F_\phi)^{\text{Ad}(W)}.$$
Thus, from the canonical unitary $U$, we obtain $(1)-(3)$.

We observe that for all $T\in M(A)$,
$$U\tilde{\pi}_\phi(T)U^*=U(T\otimes_{\pi_\phi}I)U^*=\tilde{\pi}_\phi(T).$$
Suppose we have a completely positive map 
 $\pounds:M(A)\to \mathcal{L}(F_\phi)^{\text{Ad}(W)}$ which is strictly continuous on the unit ball in $M(A)$ and makes the following diagram commute
\begin{center}
    \begin{tikzcd}
	{M(A)} &&& \\
	\\
	{M(A)^{\text{Ad}(\alpha)}} &&& {\mathcal{L}(F_\phi)^{\text{Ad}(W)}}
	\arrow["\pounds"{description}, dashed, from=1-1, to=3-4]
	\arrow[hook', from=3-1, to=1-1]
	\arrow["{T\mapsto \tilde{\pi}_\phi(T)}"{description}, from=3-1, to=3-4]
\end{tikzcd}
\end{center}
Define
$$\pounds':M(A)\to \mathcal{L}(A\otimes_{\pi_\phi}F_\phi)^{\text{Ad}(\alpha\otimes_{\pi_\phi}W)}\quad\quad \pounds'(T)=U^*\pounds(T)U$$
As the composition of a unital $*$-algebra homomorphism and completely positive map which is strictly continuous on the unit ball, $\pounds'$ is a completely positive map which is strictly continuous on the unit ball. Furthermore, for all $T\in M(A)^{\text{Ad}(\alpha)}$, 
$$\pounds'(T)=U^*\pounds(T)U=U^*\tilde{\pi}_\phi(T)U=T\otimes_{\pi_\phi}I.$$
Thus $\pounds'$ defines a relativization map for the $G$-equivariant coupling. Similarly, given $\pounds'$ we can define $\pounds(T)=U\pounds'(T)U^*$. By the same argument, $\pounds$ is a completely positive map which is strictly continuous on the unit ball in $M(A)$ and makes the following diagram commute:
\begin{center}
    \begin{tikzcd}
	{M(A)} &&& \\
	\\
	{M(A)^{\text{Ad}(\alpha)}} &&& {\mathcal{L}(F_\phi)^{\text{Ad}(W)}}
	\arrow["\pounds"{description}, dashed, from=1-1, to=3-4]
	\arrow[hook', from=3-1, to=1-1]
	\arrow["{T\mapsto \tilde{\pi}_\phi(T)}"{description}, from=3-1, to=3-4]
\end{tikzcd}
\end{center}
It is clear the construction between $\mathcal{L}$ and $\mathcal{L}'$ are inverses of each other. Thus $(4)$ holds.

\end{proof}

\begin{example}\

\noindent Let $(A, G, \alpha)$ be a $C^*$-dynamical system, and let $(\mathbb{C}, G, \text{trv})$ denote the trivial $C^*$-dynamical system on $\mathbb{C}$. Observe, non-degenerate completely positive $G$-equivariant maps $\phi:A\to \mathbb{C}$ are in one-to-one correspondence with $G$-invariant states on $A$. Given a $G$-invariant state $\phi\in S(A)$, we can apply the GNS construction to obtain a quadruple $(H_\phi, \pi_\phi, \Omega_\phi, U)$ where $(H_\phi, \pi_\phi, \Omega_\phi)$ is the usual triple from the GNS construction and $U:G\to \mathcal{B}(H_\phi)$ is a unitary representation with the property that for all $g\in G$, $U_g\circ \pi_\phi(a)=\pi_\phi(\alpha_g(a))\circ U_g$. Note, if we define
$$V_\phi:\mathbb{C}\to H_\phi\quad\quad V_\phi(\lambda)=\lambda\Omega_\phi,$$
then the quadruple $(H_\phi, \pi_\phi, V_\phi, U)$ satisfies the conditions for the quadruple when applying the KSGNS construction to the correspondence $((\mathbb{C}, \phi), \text{trv})$. Therefore, it follows the relational joint space is given by $H_\phi$, the relational joint system is given by $\mathcal{B}(H_\phi)$, and the reference frame system is given by $\mathcal{B}(H_\phi)^{G}$. Furthermore, a relativization map for the equivariant coupling determined by the state $\phi$ is given by a completely positive map $\pounds:M(A)\to \mathcal{B}(H_\phi)^{G}$ which is strictly continuous on the unit ball in $M(A)$ and makes the following diagram commute:
\begin{center}
    \begin{tikzcd}
	{M(A)} &&& \\
	\\
	{M(A)^{G}} &&& {\mathcal{B}(H_\phi)^{G}}
	\arrow["\pounds"{description}, dashed, from=1-1, to=3-4]
	\arrow[hook', from=3-1, to=1-1]
	\arrow["{T\mapsto \tilde{\pi}_\phi(T)}"{description}, from=3-1, to=3-4]
\end{tikzcd}
\end{center}
   
\end{example}

\begin{example}\

\noindent We can model a qubit using the $C^*$-algebra $M_2(\mathbb{C})$. Suppose we have the $\mathbb{Z}/2\mathbb{Z}$ symmetry given by 
$$\alpha:\mathbb{Z}/2\mathbb{Z}\times M_2(\mathbb{C})\to M_2(\mathbb{C})\quad\quad \alpha([1], A)=\sigma_zA\sigma_z.$$
Observe the fixed points of $M_2(\mathbb{C})$ with respect to this action are given by
$$M_2(\mathbb{C})^{\mathbb{Z}/2\mathbb{Z}}=\left\{\begin{bmatrix}
    a & 0\\
    0 & b
\end{bmatrix}\in M_2(\mathbb{C}): a, b\in\mathbb{C}\right\}\cong\mathbb{C}\oplus\mathbb{C}.$$
Suppose we utilize another qubit as a reference system, and we condition the original qubit by the reference qubit via the phase flip quantum channel
$$\phi_p:M_2(\mathbb{C})\to M_2(\mathbb{C})\quad\quad \phi_p(A)=(1-p)A+p\sigma_zA\sigma_z$$
for a fixed $p\in [0, 1]$. Therefore, we obtain a $\mathbb{Z}/2\mathbb{Z}$-equivariant coupling
$$(((M_2(\mathbb{C}), \mathbb{Z}/2\mathbb{Z}, \alpha), (M_2(\mathbb{C}), \mathbb{Z}/2\mathbb{Z}, \alpha), ((M_2(\mathbb{C}), \phi_p), \alpha)), ((M_2(\mathbb{C}), \mathbb{Z}/2\mathbb{Z}, \alpha), M_2(\mathbb{C}), \alpha)).$$

We observe that in the case $p=0$ or $p=1$, $\phi_p$ is a unital $*$-algebra homomorphisms. In both cases, the resulting relational joint space is isomorphic to $M_2(\mathbb{C})$, and the resulting relational joint system is isomorphic to $M_2(\mathbb{C})$. Furthermore, under these identifications,  relativization map for the equivariant coupling is equivalent to a completely positive map $\pounds:M_2(\mathbb{C})\to M_2(\mathbb{C})^{\mathbb{Z}/2\mathbb{Z}}$ making the following diagram commute:
\begin{center}
    \begin{tikzcd}
	{M_2(\mathbb{C})} &&& \\
	\\
	{M_2(\mathbb{C})^{\mathbb{Z}/2\mathbb{Z}}} &&& {M_2(\mathbb{C})^{\mathbb{Z}/2\mathbb{Z}}}
	\arrow["\pounds"{description}, dashed, from=1-1, to=3-4]
	\arrow[hook', from=3-1, to=1-1]
	\arrow["{\phi_p}"{description}, from=3-1, to=3-4]
\end{tikzcd}
\end{center}
For example, we can take $\pounds$ as the map $\pounds(A)=\frac{1}{2}\left(A+\sigma_zA\sigma_z\right)$ which is a conditional expectation from $M_2(\mathbb{C})$ to the subalgebra $M_2(\mathbb{C})^{\mathbb{Z}/2\mathbb{Z}}$.

Now suppose $p\in (0, 1)$. Let $(F_{\phi_p}, \pi_{\phi_p}, V_{\pi_p}, W)$ be the unitarily  unique quadruple obtained from applying Theorem \ref{Thrm: Equiv Dil} to $((M_2(\mathbb{C}), \phi_p), \alpha)$. We claim $F_{\phi_p}\cong M_2(\mathbb{C})\oplus M_2(\mathbb{C})$. Define the map 
\begin{align*}
\Phi_{p}&:M_2(\mathbb{C})\otimes_{\text{alg}}M_2(\mathbb{C})\to M_2(\mathbb{C})\oplus M_2(\mathbb{C})\\
\Phi_p\left(\sum\limits_{i=1}^n A_i\otimes B_i\right)&=\left(\sum\limits_{i=1}^n \sqrt{1-p}A_iB_i, \sum\limits_{i=1}^n\sqrt{p}A_i\sigma_zB_i\right)
\end{align*}
It is clear $\Phi_p$ is $M_2(\mathbb{C})$-linear. Furthermore, using the standard basis of $M_2(\mathbb{C})\otimes_{\text{alg}}M_2(\mathbb{C})$, it follows $\Phi_p$ is surjective. As
\begin{align*}
    \langle\sum\limits_{i=1}^n A_i\otimes B_i, \sum\limits_{i=1}^n A_i\otimes B_i \rangle&=\sum\limits_{i, j=1}^n B_i^*\phi_p(A_i^*A_j)B_j=\sum\limits_{i, j}^n (1-p)B_i^*A_i^*A_jB_j+\sum\limits_{i, j=1}^n pB_i^*\sigma_zA_i^*A_j\sigma_zB_j\\
    &=\left(\sum\limits_{i=1}^n \sqrt{1-p}A_iB_i\right)^*\left(\sum\limits_{i=1}^n \sqrt{1-p}A_iB_i\right)\\
    &\quad\quad\quad+\left(\sum\limits_{i=1}^n \sqrt{p}A_i\sigma_zB_i\right)^*\left(\sum\limits_{i=1}^n \sqrt{p}A_i\sigma_zB_i\right)\\
    &=\langle\Phi_p(\sum\limits_{i=1}^n A_i\otimes B_i ), \Phi_p(\sum\limits_{i=1}^n A_i\otimes B_i) \rangle
\end{align*}
it follows the kernel of $\Phi_p$ is precisely the submodule of elements of $M_2(\mathbb{C})\otimes_{\text{alg}}M_2(\mathbb{C})$ with norm zero. Thus $\Phi_p$ descends to a well-defined $M_2(\mathbb{C})$-linear bijective isometry 
$$F_{\phi_p}\to M_2(\mathbb{C})\oplus M_2(\mathbb{C})$$
which we also denote by $\Phi_p$. Hence $\Phi_p$ defines a unitary operator establishing the isomorphism $F_{\phi_p}\cong M_2(\mathbb{C})\oplus M_2(\mathbb{C})$. Note, under this isomorphism, the induced action of $\mathbb{Z}/2\mathbb{Z}$ on $M_2(\mathbb{C})\oplus M_2(\mathbb{C})$ is given by
$$(A, B)\mapsto (\sigma_zA\sigma_z, \sigma_zB\sigma_z)$$

Using the isomorphism $F_{\phi_p}\cong M_2(\mathbb{C})\oplus M_2(\mathbb{C})$, we obtain the $*$-algebra isomorphism 
$$\mathcal{L}(F_{\phi_p})\cong M_2(M_2(\mathbb{C}))\cong M_4(\mathbb{C}).$$
Under this identification, the induced $\mathbb{Z}/2\mathbb{Z}$ action on $M_4(\mathbb{C})$ is given by
$$\begin{bmatrix}
    A & B\\
    C & D
\end{bmatrix}\mapsto \begin{bmatrix}
    \sigma_z A\sigma_z & \sigma_z B\sigma_z\\
    \sigma_z C\sigma_z & \sigma_z D\sigma_z
\end{bmatrix}$$
so that 
$$M_4(\mathbb{C})^{\mathbb{Z}/2\mathbb{Z}}=\left\{\begin{bmatrix}
    A & B\\
    C & D
\end{bmatrix}\in M_4(\mathbb{C}): A, B, C, D\text{ are diagonal matrices}\right\}\cong M_2(\mathbb{C})\oplus M_2(\mathbb{C})$$
where the last isomorphism is given by
$$\begin{bmatrix}
    a & 0 & c & 0\\
    0 & b & 0 & d\\
    e & 0 & g & 0\\
    0 & f & 0 & h
\end{bmatrix}\mapsto \left(\begin{bmatrix}
    a & c\\
    e & g
\end{bmatrix},\begin{bmatrix}
    b & d\\
    f & h
\end{bmatrix} \right)$$
Thus, a relativization map for the equivariant coupling when $p\in (0, 1)$ is equivalent to the data of a completely positive map $\pounds:M_2(\mathbb{C})\to M_2(\mathbb{C})\oplus M_2(\mathbb{C})$ which is strictly continuous on the unit ball and makes the following diagram commute:
\begin{center}
    \begin{tikzcd}
	{M_2(\mathbb{C})} &&&& \\
	\\
	{M_2(\mathbb{C})^{\mathbb{Z}/2\mathbb{Z}}} &&&& {M_2(\mathbb{C})\oplus M_2(\mathbb{C})}
	\arrow["\pounds"{description}, dashed, from=1-1, to=3-5]
	\arrow[hook', from=3-1, to=1-1]
	\arrow["{(aE_{11}+bE_{22})\mapsto (aI, bI)}"{description}, from=3-1, to=3-5]
\end{tikzcd}
\end{center}
In this example, there only exists one relativization map $\pounds$ which is given by
$$\pounds':M_2(\mathbb{C})\to M_2(\mathbb{C})\oplus M_2(\mathbb{C})\quad\quad \pounds'\left(\begin{bmatrix}
    a & b\\
    c & d
\end{bmatrix}\right)=(aI, dI).$$
Indeed, give $\pounds$, we can write $\pounds=(\pounds_1, \pounds_2)$ where $\pounds_i:M_2(\mathbb{C})\to M_2(\mathbb{C})$ is a unital completely positive satisfying the property  $\pounds_i(aE_{jj})=a\delta_{ij}I$. As $\pounds_i$ is a completely positive, the map
$$\pounds_{i, 2}:M_2(M_2(\mathbb{C}))\to M_2(M_2(\mathbb{C}))\quad\quad \begin{bmatrix}
    A & B\\
    C & D
\end{bmatrix}\mapsto \begin{bmatrix}
    \pounds_i(A) & \pounds_i(B)\\
    \pounds_i(C) & \pounds_i(D)
\end{bmatrix}$$
is positive. In the case $i=1$, we have
$$\pounds_{1, 2}\left(\begin{bmatrix}
    E_{11} & E_{12}\\
    E_{21} & E_{22}
\end{bmatrix}\right)=\begin{bmatrix}
    I & \pounds_{1}(E_{12})\\
    \pounds_1(E_{21}) & 0
\end{bmatrix}.$$
As 
$$\begin{bmatrix}
    E_{11} & E_{12}\\
    E_{21} & E_{22}
\end{bmatrix}\in M_2(M_2(\mathbb{C}))$$ is positive, the resulting image needs to be positive; however, it is only positive when $\pounds_1(E_{12})=0=\pounds_1(E_{21})$. A similar argument shows 
$$\pounds_2(E_{11})=\pounds_2(E_{12})=\pounds_2(E_{21})=0\quad\quad \pounds_2(E_{22})=I$$
Hence $\pounds=\pounds'$ proving uniqueness.

\end{example}

    \appendix

    \section{Correspondence Between Weakly Continuous Markov Kernels and Non-degenerate Completely Positive Maps}\label{App; A}

    In this appendix we prove a bijective correspondence between 
    weakly continuous Markov kernels from $Y$ to $X$ and non-degenerate completely positive maps $C_0(X)\to C_b(Y)$. Given a weakly continuous Markov kernel $\kappa:Y\times\text{Bor}(X)\to [0, 1]$, we build  a non-degenerate completely positive map by integrating against $\kappa$; in particular, we define
    $$\phi_\kappa:C_0(X)\to C_b(Y)\quad\quad \phi_{\kappa}(f)(y)=\int_{X}f(x)\;\kappa(y, dx)$$
    Showing this map is well-defined, $\mathbb{C}$-linear, and completely positive is rather straight forward. The challenging part is in establishing $\phi_{\kappa}$ is non-degenerate. To establish non-degeneracy, we utilize the two following lemmas.
    
    \begin{lemma}\label{Lemma: Strict topology and uni con on compact}\

    \noindent Let $X$ be a locally compact Hausdorff space, and let $B_1(C_b(X))$ denote the unit ball in $C_b(X)$.  A net $(f_\lambda)_{\lambda\in\Lambda}$ in $B_1(C_b(X))$ converges to $f\in B_1(C_b(X))$ strictly if and only if $(f_\lambda)_{\lambda\in\Lambda}$  converges to $f$  uniformly on every compact subset of $X$.
    
    \end{lemma}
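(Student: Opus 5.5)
The plan is to use the identification $C_b(X)=M(C_0(X))=\mathcal{L}(C_0(X))$, where $C_0(X)$ is viewed as a Hilbert module over itself. Under it, the strict topology on $C_b(X)$ is generated by the seminorms $f\mapsto \|fh\|_{\sup}$ for $h\in C_0(X)$; since $C_b(X)$ is commutative and $\|f^*h\|_{\sup}=\|fh\|_{\sup}$, the adjoint seminorms add nothing. Because the strict topology is linear, I would replace $f_\lambda$ by $f_\lambda-f$, which lies in the ball of radius $2$, and prove both directions for a net with $\|f_\lambda\|_{\sup}\le 2$ converging to $0$.

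\emph{Strict implies uniform on compacts.} Fix a compact $K\subset X$. Since $X$ is locally compact Hausdorff, Urysohn's Lemma gives $h\in C_c(X)\subset C_0(X)$ with $0\le h\le 1$ and $h|_K=1$. Then
$$\sup_{x\in K}|f_\lambda(x)|\le \|f_\lambda h\|_{\sup}\to 0.$$
This direction needs no boundedness.

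\emph{Uniform on compacts implies strict.} Fix $h\in C_0(X)$ and $\epsilon>0$. Choose a compact $K$ with $|h(x)|<\epsilon/4$ for $x\notin K$. Splitting the supremum over $K$ and over $X\setminus K$ gives
$$\|f_\lambda h\|_{\sup}\le \|h\|_{\sup}\sup_{x\in K}|f_\lambda(x)|+2\cdot\frac{\epsilon}{4}.$$
By hypothesis the first term is below $\epsilon/2$ for $\lambda$ large, so $\|f_\lambda h\|_{\sup}<\epsilon$ eventually.

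The one point needing care is the uniform bound used on the complement of $K$. It is exactly where the hypothesis that the net lies in the unit ball enters, and without it the converse direction fails. I would remark on this explicitly, and I would check that the paper's strict topology on $\mathcal{L}(C_0(X))$ matches the seminorms above, which holds since $(fh)^*=\overline{f}\,\overline{h}$ has the same sup norm as $fh$.
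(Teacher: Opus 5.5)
Your proposal is correct and follows the paper's proof essentially step for step. The forward direction uses Urysohn's Lemma to get a function equal to $1$ on $K$, and the converse splits the supremum over a compact set outside of which $|h|$ is small, using the bound $\|f_\lambda-f\|_{\sup}\le 2$ that comes from the unit-ball hypothesis. The only cosmetic difference is that you dispose of the adjoint seminorms at once via $|\overline{f}h|=|fh|$, whereas the paper estimates the conjugate terms separately.
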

    
    \begin{proof}\
    
    \noindent First suppose the net converges strictly. Let $K\subset X$ be a compact set. Using Urysohn's Lemma, there exists $g\in C_0(X)$ such that $g(x)=1$ for all $x\in K$. Let $\epsilon>0$. By assumption, there exists $\lambda_0\in \Lambda$ such that for all $\lambda\in \Lambda$ with $\lambda_0\leq \lambda$, $||f_\lambda g-fg||_{\sup}<\epsilon$. Thus, 
    $$\sup\limits_{x\in K}|f_\lambda(x)-f(x)|=\sup\limits_{x\in K}|f_\lambda(x)g(x)-f(x)g(x)|\leq ||f_\lambda g-fg||_{\sup}<\epsilon$$
    showing $f_\lambda\to f$ uniformly on $K$.
    
    Now suppose the net converges uniformly on every compact subset of $X$.  Let $g\in C_0(X)$, and let $\epsilon>0$. As $g$ vanishes at infinity, there exists a compact subset $K\subset X$ such that $|g(x)|<\frac{\epsilon}{2}$ for all $x\in X\backslash K$. Let $M>0$ such that $|g(x)|\leq M$ for all $x\in K$.  By assumption, there exists $\lambda_0\in\Lambda$ such that for all $\lambda\in\Lambda$ with $\lambda_0\leq \lambda$,
    $$\sup\limits_{x\in K}|f_{\lambda_0}(x)-f(x)|<\frac{\epsilon}{M}.$$
    Thus given $\lambda\in \Lambda$ with $\lambda_0\leq \lambda$, we have for all $x\in X\backslash K$
    $$|f_\lambda(x)g(x)-f(x)g(x)|\leq |f_\lambda (x)-f(x)|*|g(x)|< 2*\frac{\epsilon}{2}=\epsilon$$
    and
    $$|\overline{f}_\lambda(x)g(x)-\overline{f}(x)g(x)|\leq |\overline{f}_\lambda (x)-\overline{f}(x)|*|g(x)|< 2*\frac{\epsilon}{2}=\epsilon,$$
    while for all $x\in K$,
    $$|f_\lambda(x)g(x)-f(x)g(x)|\leq |f_\lambda(x)-f(x)|*M<\epsilon$$
    and
    $$|\overline{f}_\lambda(x)g(x)-\overline{f}(x)g(x)|\leq |\overline{f}_\lambda(x)-\overline{f}(x)|*M<\epsilon.$$
    Thus $||f_\lambda g-fg||_{\sup}<\epsilon$ and $||\overline{f}_\lambda g-\overline{f}g||_{\sup}<\epsilon$ showing $(f_\lambda)_{\lambda\in\Lambda}$ converges to $f$  in the strict topology.
        
    \end{proof}
    
    \begin{lemma}\label{Lemma: Prokhorov Result}\

    \noindent Let $X$ be a locally compact Hausdorff topological space. If $K\subset \text{Prob}_R(X, \text{Bor}(X))$ is a non-empty, weak compact subset , then for each $\epsilon>0$, there exists a non-empty compact subset $K_\epsilon\subset X$ such that for all $\mu\in K$, $\mu(X\backslash K_\epsilon)<\epsilon$.
    
    \end{lemma}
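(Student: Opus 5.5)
This is a Prokhorov-type statement: weakly compact families of Radon probability measures are uniformly tight. I would prove it by contradiction, using compactness of $K$ together with inner regularity of each measure.

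\textbf{Step 1: a local tightness estimate around each measure.} Fix $\epsilon>0$. For each $\mu\in K$, inner regularity of the Radon measure $\mu$ gives a compact $C_\mu\subset X$ with $\mu(X\backslash C_\mu)<\epsilon/2$. By local compactness, choose an open $O_\mu\supset C_\mu$ with compact closure. Urysohn's Lemma then gives $h_\mu\in C_c(X)$ with $0\le h_\mu\le 1$, $h_\mu=1$ on $C_\mu$, and $\text{supp}(h_\mu)\subset\overline{O_\mu}$. This yields
$$\Phi_{h_\mu}(\mu)=\int_X h_\mu\,d\mu\ge \mu(C_\mu)>1-\epsilon/2.$$
Since $h_\mu\in C_b(X)$, the functional $\Phi_{h_\mu}$ is weakly continuous. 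Hence
$$N_\mu=\{\nu\in \text{Prob}_R(X,\text{Bor}(X)):\Phi_{h_\mu}(\nu)>1-\epsilon\}$$
is a weakly open neighborhood of $\mu$.

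\textbf{Step 2: use compactness of $K$.} The sets $\{N_\mu\}_{\mu\in K}$ cover $K$. Weak compactness gives a finite subcover $N_{\mu_1},\ldots,N_{\mu_n}$. Set
$$K_\epsilon=\bigcup_{i=1}^n\overline{O_{\mu_i}},$$
which is compact and nonempty because $K\neq\emptyset$; if some $\overline{O_{\mu_i}}$ happens to be empty, add a point.

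\textbf{Step 3: verify the bound.} Take $\nu\in K$ and pick $i$ with $\nu\in N_{\mu_i}$. Since $0\le h_{\mu_i}\le \chi_{K_\epsilon}$,
$$\nu(K_\epsilon)\ge\int_X h_{\mu_i}\,d\nu>1-\epsilon.$$
Therefore $\nu(X\backslash K_\epsilon)<\epsilon$, as required.

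The main point needing care is Step 1. The neighborhood must be built from a function in $C_b(X)$, so that it is genuinely open in the weak topology, while that function is still dominated by an indicator of a compact set. Using compactly supported Urysohn functions handles both requirements at once. Once Step 1 is in place, the rest is a standard finite-subcover argument, and no metrizability or sequential arguments are needed.
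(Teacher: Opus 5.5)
Your proof is correct, and it differs from the paper's, which gives no argument of its own and simply invokes Corollary 8.10.11 of \cite{bogachev2007measure}. You instead prove the statement directly, using inner regularity of each $\mu\in K$, the locally compact form of Urysohn's Lemma, and a finite subcover of $K$.

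The crux, which you identify correctly, is the following. The condition ``$\nu$ gives mass greater than $1-\epsilon$ to a fixed compact set'' is not weakly open, but $\int_X h\,d\nu>1-\epsilon$ is. Local compactness lets you take $h\in C_c(X)$ with $0\le h\le\chi_{\overline{O}}$ for a compact $\overline{O}$. So the weakly open condition still bounds the mass of a compact set from below.

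Your route makes the appendix self-contained, using only tools the paper already uses elsewhere. The citation instead draws on Bogachev's general theory of uniform tightness, which also covers spaces that are not locally compact, such as complete separable metric spaces. That generality matters in settings like infinite-dimensional separable Banach spaces. There every compactly supported continuous function vanishes identically, so your Step 1 would break down.

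One small wording point: you announce a proof by contradiction, but what you write is a direct proof, so that opening sentence should be dropped.
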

    
    \begin{proof}\
    
    \noindent This immediately follows from Corollary 8.10.11 in \cite{bogachev2007measure}.
    
    \end{proof}

    \begin{proposition}\label{Prop: weak Markov Kernel to NCP map}\

    \noindent Let $X$ and $Y$ be locally compact Hausdorff topological spaces, and let $\kappa:Y\times\text{Bor}(X)\to [0, 1]$ be a weakly continuous Markov kernel. Then the map
    $$\phi_{\kappa}:C_0(X)\to C_b(Y)\quad\quad \phi_{\kappa}(f)(y)=\int_{X}f(x)\;\kappa(y, dx)$$
    is a well-defined, non-degenerate, completely positive map.
        
    \end{proposition}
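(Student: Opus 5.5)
We split the argument into three parts: well-definedness, complete positivity, and non-degeneracy. For well-definedness, fix $f\in C_0(X)$. Since $\kappa(y,\cdot)$ is a probability measure, $|\phi_\kappa(f)(y)|\leq \|f\|_{\sup}$, so $\phi_\kappa(f)$ is bounded. Continuity in $y$ is exactly the statement that $y\mapsto \Phi_f(\kappa(y,\cdot))$ is continuous. This holds because $f\in C_0(X)\subset C_b(X)$ and $y\mapsto\kappa(y,\cdot)$ is weakly continuous. Linearity is immediate from linearity of the integral.

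For complete positivity, we use that $C_b(Y)$ is commutative. Any positive map into a commutative $C^*$-algebra is completely positive, so it suffices to check positivity. If $f\geq 0$, then $\int f\,d\kappa(y,\cdot)\geq 0$ for every $y$, and hence $\phi_\kappa(f)\geq 0$.

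The main obstacle is non-degeneracy. Here we must produce an approximate unit $(e_\lambda)$ of $C_0(X)$ with $\phi_\kappa(e_\lambda)\to 1$ strictly in $C_b(Y)=M(C_0(Y))$. We take the standard approximate unit indexed by compact sets $K\subset X$ ordered by inclusion. By Urysohn, each $e_K\in C_c(X)$ satisfies $0\leq e_K\leq 1$ and $e_K|_K=1$; this is an approximate unit for $C_0(X)$. The images $\phi_\kappa(e_K)$ lie in the unit ball of $C_b(Y)$. By Lemma \ref{Lemma: Strict topology and uni con on compact}, it therefore suffices to show that $\phi_\kappa(e_K)\to 1$ uniformly on every compact $L\subset Y$.

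Fix such an $L$. Since $y\mapsto\kappa(y,\cdot)$ is weakly continuous, the set $\{\kappa(y,\cdot):y\in L\}$ is weakly compact in $\text{Prob}_R(X,\text{Bor}(X))$. Given $\epsilon>0$, Lemma \ref{Lemma: Prokhorov Result} then provides a compact $K_\epsilon\subset X$ with $\kappa(y,X\backslash K_\epsilon)<\epsilon$ for all $y\in L$. For $K\supseteq K_\epsilon$ and $y\in L$ we get
\[
0\leq 1-\phi_\kappa(e_K)(y)=\int_X (1-e_K)\,\kappa(y,dx)\leq \kappa(y,X\backslash K)\leq \kappa(y,X\backslash K_\epsilon)<\epsilon .
\]
This gives uniform convergence on $L$, and hence strict convergence, so $\phi_\kappa$ is non-degenerate. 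The only delicate point is the tightness step, which is handled entirely by the Prokhorov-type Lemma \ref{Lemma: Prokhorov Result} applied to the continuous image of a compact set.
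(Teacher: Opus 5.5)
Your proof is correct. The first two parts coincide with the paper's: well-definedness via weak continuity, and complete positivity via positivity into a commutative algebra.

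For non-degeneracy you use the same two ingredients as the paper, Lemma~\ref{Lemma: Strict topology and uni con on compact} and the tightness Lemma~\ref{Lemma: Prokhorov Result}, but you deploy them differently. The paper extends the integral formula to a unital map $\tilde{\phi}_\kappa:C_b(X)\to C_b(Y)$. It proves this map is strictly continuous on the unit ball for an \emph{arbitrary} strictly convergent net $f_\lambda\to f$, via the bound $\sup_{K_\epsilon}|f_\lambda-f|+2\kappa(y,X\backslash K_\epsilon)$. It then invokes Corollary 5.7 of \cite{lance1995hilbert} to conclude non-degeneracy. You instead verify the approximate-unit definition directly for the single net $e_K\to 1$.

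Your route is shorter and does not rely on that external equivalence. The paper's route gives the extra fact that integration against $\kappa$ on all of $C_b(X)=M(C_0(X))$ is the strictly continuous unital extension of $\phi_\kappa$.

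One minor point: if approximate units are required to be increasing, as is customary, an arbitrary Urysohn choice of the $e_K$ need not qualify. This is harmless. Any approximate unit $(u_\lambda)$ of $C_0(X)$ converges to $1$ uniformly on $K_\epsilon$: take $g\in C_0(X)$ equal to $1$ on $K_\epsilon$ and use $\|gu_\lambda-g\|_{\sup}\to 0$. Hence $0\leq 1-\phi_\kappa(u_\lambda)(y)\leq \sup_{K_\epsilon}(1-u_\lambda)+\kappa(y,X\backslash K_\epsilon)$ for $y\in L$, and your argument goes through with this estimate.
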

    
    \begin{proof}\
    
    \noindent We begin by verifying the map $\phi_\kappa$ is well-defined. Fix $f\in C_0(X)$. As $f$ is a continuous function which vanishes at infinity, the function $f$ is a continuous and bounded. Therefore, as $\kappa(y, \cdot)$ is a finite measure on $(X, \text{Bor}(X))$ for all $y\in Y$,  the map
    $$Y\to \mathbb{C}\quad\quad y\mapsto \int_{X}f(x)\;\kappa(y, dx)$$
    forms a well-defined function. Furthermore, as $|f(x)|\leq ||f||_{\sup}<\infty$, we have for all $y\in Y$
    \begin{align*}
        |\phi_\kappa(f)(y)|=\left|\int_{X}f(x)\;\kappa(y, dx)\right|\leq \int_{X}|f(x)|\;\kappa(y, dx)\leq ||f||_{\sup}\kappa(y, X)=||f||_{\sup}
    \end{align*}
     showing $\phi_\kappa(f)$ is a bounded function with $||\phi_\kappa(f)||_{\sup}\leq ||f||_{\sup}$. Thus, the only remaining thing to check in order to know $\phi$ is well-defined is that $\phi_\kappa(f)$ is continuous. Let $(y_\lambda)_{\lambda\in\Lambda}$ be a net in $Y$ which converges to $y\in Y$. As $\kappa$ is weakly continuous, the net $(\kappa(y_\lambda, \cdot))_{\lambda\in\Lambda}$ converges to $\kappa(y, \cdot)$ in the weak topology on $\text{Prob}_r(X, \text{Bor}(X))$; that is, for all $g\in C_b(X)$
     $$\phi_\kappa(g)(y)=\int_{X}g(x)\;\kappa(y, dx)=\lim\limits_{\lambda\to\infty}\int_Xg(x)\;\kappa(y_\lambda, dx)=\lim\limits_{\lambda\to\infty}\phi(g)(y_\lambda)$$
     Taking $g=f$ shows $\phi_\kappa(f)$ is continuous. Hence $\phi_\kappa$ is a well defined map.
     
    Using $\mathbb{C}$-linearity of the integral, it follows $\phi_\kappa$ is $\mathbb{C}$-linear.  To see $\phi_\kappa$ is completely positive, it suffices to show $\phi_\kappa$ is positive (Theorem 3.9 in \cite{paulsen2002completely}). Let $f\in C_0(X)$ be positive; that is, $f(X)\subset [0, \infty)$. Since $\kappa(y, \cdot)$ is a probability measure (thus a positive measure), then 
    $$ \phi_{\kappa}(f)(y)=\int_{X}\;f(x)\;\kappa(y, dx)\in [0, \infty)$$
    for all $y\in Y$. Therefore $\phi_\kappa(f)$ is a positive element in $C_b(Y)$
    showing $\phi_\kappa$ is positive. Hence $\phi_\kappa$ is completely positive.
    
    Finally, we show $\phi_\kappa$ is non-degenerate. Define
    $$\tilde{\phi}_\kappa:C_b(X)\to C_b(Y)\quad\quad\tilde{\phi}_\kappa(f)(y)=\int_{X}f(x)\;\kappa(y, dx).$$
    Using the argument above, $\tilde{\phi}_{\kappa}$ is a well-defined, completely positive map satisfying the bound $||\tilde{\phi}_\kappa(f)||_{\sup}\leq ||f||_{\sup}$ for all $f\in C_b(X)$. Furthermore, as $\kappa(y, \cdot)$ is probability measure for each $y\in Y$, the $\tilde{\phi}$ is unital. Observe, $\tilde{\phi}_\kappa|_{C_0(X)}=\phi_\kappa$. Therefore, to know $\phi_\kappa$ is non-degenerate, it suffices to prove $\tilde{\phi}_\kappa$ is strictly continuous on the unit ball in $C_b(X)$ (Corollary 5.7 in \cite{lance1995hilbert}). 
    
    Let $(f_\lambda)_{\lambda\in\Lambda}$ be a net in $B_1(C_b(X))$ which converges strictly to $f\in B_1(C_b(X))$. As $(\tilde{\phi}_\kappa(f_\lambda))_{\lambda\in\Lambda}$ is a net in $B_1(C_b(Y))$ and $\tilde{\phi}_\kappa(f)\in B_1(C_b(Y))$, it suffices to show $(\tilde{\phi}_\kappa(f_\lambda))_{\lambda\in\Lambda}$ converges to $\tilde{\phi}_\kappa(f)$ uniformly on every compact subset of $Y$ (Lemma \ref{Lemma: Strict topology and uni con on compact}). Let $K\subset Y$ be a non-empty compact subset, and let $\epsilon>0$. As the map
    $$F:Y\to \text{Prob}_R(X, \text{Bor}(X))\quad\quad F(y)=\kappa(y, \cdot)$$
    is a continuous with respect to the weak topology, $F(K)$ is a non-empty compact subset of $\text{Prob}_R(X, \text{Bor}(X))$. Using Lemma \ref{Lemma: Prokhorov Result}, there exists a non-empty compact subset $K_\epsilon\subset X$ such that $\kappa(y, X\backslash K_\epsilon)<\frac{\epsilon}{4}$ for all $y\in K$. Using Lemma \ref{Lemma: Strict topology and uni con on compact}, the net $(f_\lambda)_{\lambda\in\Lambda}$ converges to $f$ uniformly on $K_{\epsilon}$. Thus, there exists $\lambda_0\in \Lambda$ such that for all $\lambda\in\Lambda$ with $\lambda\geq\lambda_0$, $$\sup\limits_{x\in K_{\epsilon}}|f_\lambda(x)-f(x)|<\frac{\epsilon}{2}.$$
    
    Fix $\lambda\in\Lambda$ with $\lambda\geq \lambda_0$.  For all $y\in K$, we have
    \begin{align*}
        |\tilde{\phi}_{\kappa}(f_\lambda)(y)-\tilde{\phi}_{\kappa}(f)(y)|&\leq \int_{X}|f_\lambda(x)-f(x)|\;\kappa(y, dx)\\
        &=\int_{K_{\epsilon}}|f_\lambda(x)-f(x)|\;\kappa(y, dx)+\int_{X\backslash K_\epsilon}|f_\lambda(x)-f(x)|\;\kappa(y, dx)\\
        &\leq \kappa(y, K_\epsilon)\left(\sup\limits_{x\in K_\epsilon}|f_\lambda(x)-f(x)|\right)+\kappa(y, X\backslash K_\epsilon) ||f_\lambda-f||_{\sup}\\
        &\leq \kappa(y, X)\left(\sup\limits_{x\in K_\epsilon}|f_\lambda(x)-f(x)|\right)+2\kappa(y, X\backslash K_\epsilon)\\
        &=\left(\sup\limits_{x\in K_\epsilon}|f_\lambda(x)-f(x)|\right)+2\kappa(y, X\backslash K_\epsilon)<\epsilon
    \end{align*}
    Therefore $\sup\limits_{y\in K}|\tilde{\phi}_\kappa(f_\lambda)(y)-\tilde{\phi}_\kappa(f)(y)|<\epsilon$. Hence $(\tilde{\phi}_\kappa(f_\lambda))_{\lambda\in\Lambda}$ converges to $\tilde{\phi}_\kappa(f)$ on every compact subset of $Y$ proving $\tilde{\phi}_{\kappa}$ is continuous on the unit ball with respect to the strict topology. Therefore we conclude $\phi_\kappa$ is non-degenerate.
    
    \end{proof}
    
    We now show how one can build a weakly continuous Markov kernel from a non-degenerate completely positive map $\phi:C_0(X)\to C_b(Y)$. For each $y\in Y$, we obtain the positive linear functional 
    $$\text{ev}_y\circ \phi:C_0(X)\to \mathbb{C}\quad\quad (\text{ev}_y\circ \phi)(f)=\phi(f)(y).$$
    Using the Riesz-Markov-Kakutani Representation Theorem, we obtain a regular probability measure $\kappa(y, \cdot)$ on $(X, \text{Bor}(X))$. As we vary over $Y$, we obtain a $Y$-family of regular probability measures. From the construction of this $Y$-family of measures, it follows each measure is a Radon probability measure as well as the induced map
    $$Y\to \text{Prob}_{R}(X, \text{Bor}(X))\quad\quad y\mapsto \kappa(y, \cdot)$$
    is continuous with represent to the weak topology. Thus, the only thing left to check is that the induced map $Y\to [0, 1]$ is Borel measurable which turns out to be the challenging part.
    
    To establish this, we first show evaluation at a Borel measurable subset of $X$ forms a Borel measurable functions on $\text{Prob}_R(X, \text{Bor})$ \textit{with respect to} the weak topology. Using this as well as the induced map $Y\to\text{Prob}_R(X, \text{Bor}(X))$ is continuous with respect to the weak topology, the composition will be Borel measureable. By construction of the $Y$-family of probability measures, this composition is the induced map on $Y$. To prove the evaluation at Borel measurable subsets of $X$ define Borel measurable functions on $\text{Prob}_R(X, \text{Bor}(X))$, we follow the argument presented in Lemma 4.64 of \cite{parzygnat2017discrete} which establishes the same claim in the case $X$ is compact. As the argument is exactly the same, we omit the proof here.
    
    \begin{lemma}\label{Lemma: Borel evaluation}\

    \noindent Let $X$ be a locally compact Hausdorff topological space. For each $B\in\text{Bor}(X)$, the map
    $$\text{ev}_B:\text{Prob}_R(X, \text{Bor}(X))\to \mathbb{C}\quad\quad \text{ev}_B(\mu)=\mu(B)$$
    is Borel measurable with respect to the weak topology on $\text{Prob}_R(X, \text{Bor}(X))$.
        
    \end{lemma}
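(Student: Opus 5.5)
The plan is a monotone class argument, mirroring the proof of the earlier lemma on the measurability of $\Phi_f$ for bounded Borel $f$, but organized around sets rather than functions. Let
$$\mathcal{S}=\{B\in\text{Bor}(X): \text{ev}_B \text{ is Borel measurable w.r.t.\ the weak topology}\}.$$
The aim is to show that $\mathcal{S}$ is a $\lambda$-system containing a $\pi$-system that generates $\text{Bor}(X)$, namely the open sets. Dynkin's $\pi$-$\lambda$ Theorem then gives $\mathcal{S}=\text{Bor}(X)$.

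The $\lambda$-system properties are routine.
\begin{enumerate}
\item $\text{ev}_X\equiv 1$ is constant, so $X\in\mathcal{S}$.
\item $\text{ev}_{X\backslash B}=1-\text{ev}_B$, so $\mathcal{S}$ is closed under complements.
\item For an increasing sequence $B_n\in\mathcal{S}$, continuity from below gives $\text{ev}_{\bigcup B_n}=\lim_n \text{ev}_{B_n}$ pointwise on $\text{Prob}_R(X,\text{Bor}(X))$. A pointwise limit of Borel functions is Borel, so $\bigcup B_n\in\mathcal{S}$.
\end{enumerate}

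The main step is showing that every open $U$ lies in $\mathcal{S}$. Without second countability, $\chi_U$ is not a sequential increasing limit of functions in $C_0(X)$, so the earlier argument does not transfer directly. I would use the Radon property instead. Inner regularity on open sets gives
$$\mu(U)=\sup\left\{\int_X f\,d\mu : f\in C_c(X),\ 0\le f\le 1,\ \text{supp} f\subset U\right\}.$$
Hence $\text{ev}_U$ is a supremum of the weakly continuous maps $\Phi_f$, and so it is lower semicontinuous for the weak topology. A lower semicontinuous function has open superlevel sets $\{\text{ev}_U>t\}$, so $\text{ev}_U$ is Borel measurable. This handles uncountable families with no sequential approximation, which is exactly the obstacle in the non-second-countable, non-compact setting.

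One point needs checking: that the supremum formula holds for every Radon probability measure on a locally compact Hausdorff space. This is the standard consequence of inner regularity on open sets combined with Urysohn's Lemma: for compact $K\subset U$, pick $f\in C_c(X)$ with $\chi_K\le f\le\chi_U$. This follows the argument in \cite{parzygnat2017discrete}, with $C_c(X)$ replacing $C(X)$.
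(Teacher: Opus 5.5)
Your proof is correct and follows the same route as the paper, which gives no argument of its own and defers to Lemma 4.64 of \cite{parzygnat2017discrete}; your version makes the locally compact adaptation explicit, since inner regularity plus Urysohn's Lemma with $C_c(X)$ makes $\text{ev}_U$ a supremum of the weakly continuous $\Phi_f$, hence lower semicontinuous, with no countability assumption. The one repair is in the Dynkin step: a family that contains $X$ and is closed under complements and increasing unions need not be a $\lambda$-system, so you should also record closure under proper differences, which follows at once from $\text{ev}_{B\backslash A}=\text{ev}_B-\text{ev}_A$ for $A\subset B$ in $\mathcal{S}$.
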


    \begin{proposition}\label{Prop: NCP map to weak Markov Kernel}\

    \noindent Let $X$ and $Y$ be locally compact Hausdorff topological spaces, and let $\phi:C_0(X)\to C_b(Y)$ be a non-degenerate completely positive map. For each $y\in Y$, let $\mu_y\in \mathcal{M}_r(X, \text{Bor}(X))$ be the unique positive regular measure associated to the positive linear functional
    $$\text{ev}_y\circ\phi:C_0(X)\to \mathbb{C}\quad\quad (\text{ev}_y\circ \phi)(f)=\phi(f)(y)$$
    from the Riesz-Markov-Kakutani Representation Theorem. Then the map
    $$\kappa_\phi:Y\times\text{Bor}(X)\to [0, 1]\quad\quad \kappa_\phi(y, B)=\mu_y(B)$$
    defines a weakly continuous Markov kernel which uniquely satisfies the following condition: for all $f\in C_0(X)$ and $y\in Y$,
    $$\phi(f)(y)=\int_{X}f(x)\;\kappa_\phi(y, dx).$$
    
    \end{proposition}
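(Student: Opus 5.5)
I will verify the required properties of $\kappa_\phi$ in the following order: that each $\kappa_\phi(y,\cdot)$ is a Radon probability measure, that $y\mapsto\kappa_\phi(y,\cdot)$ is weakly continuous, measurability in $y$, and uniqueness.

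\emph{Step 1: probability measures.} Since $\phi$ is non-degenerate, Corollary 5.7 in \cite{lance1995hilbert} gives a unital completely positive extension $\tilde{\phi}:C_b(X)\to C_b(Y)$ which is strictly continuous on the unit ball. Choose an approximate unit $(e_\lambda)$ of $C_0(X)$ with $0\le e_\lambda\le 1$ and $e_\lambda\to 1$ uniformly on compacta. Then, by Lemma \ref{Lemma: Strict topology and uni con on compact}, $\phi(e_\lambda)\to 1$ pointwise on $Y$. On the other hand, by the Riesz--Markov--Kakutani theorem, $\mu_y(X)=\sup_\lambda \int e_\lambda\,d\mu_y=\lim_\lambda \phi(e_\lambda)(y)=1$. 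So each $\mu_y$ is a regular probability measure, hence Radon, and $\kappa_\phi$ takes values in $[0,1]$.

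\emph{Step 2: weak continuity.} This is the main obstacle. The defining identity only gives continuity of $y\mapsto\int f\,d\mu_y=\phi(f)(y)$ for $f\in C_0(X)$, i.e.\ vague continuity. Weak continuity needs $\int g\,d\mu_y=\tilde{\phi}(g)(y)$ for all $g\in C_b(X)$. I will prove this identity first. Take $g\in C_b(X)$ with $\|g\|\le1$ and the approximate unit above. Then $ge_\lambda\to g$ strictly in $C_b(X)$, so $\phi(ge_\lambda)\to\tilde{\phi}(g)$ strictly in $C_b(Y)$, hence pointwise. At the same time $\int ge_\lambda\,d\mu_y\to\int g\,d\mu_y$ by dominated convergence, since $\mu_y$ is finite and $ge_\lambda\to g$ pointwise. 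Therefore $y\mapsto\int g\,d\mu_y=\tilde{\phi}(g)(y)$ is continuous for every $g\in C_b(X)$, which is exactly weak continuity.

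\emph{Step 3: measurability.} For $B\in\text{Bor}(X)$, the map $\kappa_\phi(\cdot,B)$ factors as $Y\to\text{Prob}_R(X,\text{Bor}(X))\xrightarrow{\text{ev}_B}[0,1]$. The first map is continuous by Step 2, and the second is Borel by Lemma \ref{Lemma: Borel evaluation}, so the composite is Borel.

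\emph{Step 4: uniqueness.} Suppose $\kappa'$ also satisfies $\phi(f)(y)=\int f\,d\kappa'(y,\cdot)$ for all $f\in C_0(X)$. Since $\kappa'(y,\cdot)$ is Radon, the uniqueness in the Riesz--Markov--Kakutani theorem forces $\kappa'(y,\cdot)=\mu_y$ for every $y$, so $\kappa'=\kappa_\phi$.
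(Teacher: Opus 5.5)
Steps 1, 3 and 4 are essentially the paper's argument; Step 1 is in fact a more explicit justification of the paper's claim that $\|\mathrm{ev}_y\circ\phi\|=1$. The problem is in Step 2.

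\textbf{The gap.} In Step 2 you assert $\int_X ge_\lambda\,d\mu_y\to\int_X g\,d\mu_y$ ``by dominated convergence''. But $(e_\lambda)$ is a net, and dominated convergence fails for nets. For example, indicator functions of finite subsets of $[0,1]$, directed by inclusion, converge pointwise to $1$ and are dominated by $1$, yet each has Lebesgue integral $0$. You also cannot pass to a sequence, since $C_0(X)$ has no countable approximate unit when $X$ is not $\sigma$-compact.

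\textbf{The repair.} The conclusion is still true, but it comes from tightness of $\mu_y$ together with the uniform convergence $e_\lambda\to 1$ on compacta, not from pointwise convergence. Fix $\epsilon>0$. Inner regularity of the finite Radon measure $\mu_y$ on the open set $X$ gives a compact $K\subset X$ with $\mu_y(X\setminus K)<\epsilon$; alternatively, apply Lemma \ref{Lemma: Prokhorov Result} to the compact set $\{\mu_y\}$. Since $|g|\le 1$, $0\le e_\lambda\le 1$ and $\mu_y(X)=1$,
\[
\Bigl|\int_X ge_\lambda\,d\mu_y-\int_X g\,d\mu_y\Bigr|\le \sup_{x\in K}|1-e_\lambda(x)|+\mu_y(X\setminus K)<\sup_{x\in K}|1-e_\lambda(x)|+\epsilon ,
\]
which is eventually below $2\epsilon$. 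With this replacement Step 2 is correct.

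\textbf{Comparison with the paper.} Once repaired, your proof of weak continuity takes a genuinely different route. The paper never uses the extension $\tilde\phi$. It fixes $y_0$ and uses that $\phi(g_\gamma)\to 1$ uniformly on a compact neighbourhood $\overline{V}$ of $y_0$. This makes the tails $\int_X(1-g_{\gamma_0})\,d\kappa_\phi(y,\cdot)$ uniformly small for $y\in\overline{V}$. It then runs a three-term estimate whose middle term $|\phi(fg_{\gamma_0})(y_\lambda)-\phi(fg_{\gamma_0})(y_0)|$ is controlled by continuity of $\phi(fg_{\gamma_0})$.

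You instead import the strictly continuous unital extension $\tilde\phi:C_b(X)\to C_b(Y)$ from Lance. You prove $\tilde\phi(g)(y)=\int_X g\,d\kappa_\phi(y,\cdot)$ for every $g\in C_b(X)$, after which weak continuity is immediate because $\tilde\phi(g)$ is continuous.

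The paper's argument is more elementary and self-contained. Yours relies on a heavier external theorem but yields more. It shows that the strict extension of $\phi$ is integration against $\kappa_\phi$ on all of $C_b(X)$, i.e.\ it coincides with the map $\tilde\phi_{\kappa_\phi}$ from the proof of Proposition \ref{Prop: weak Markov Kernel to NCP map}.
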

    
    \begin{proof}\
    
    \noindent We verify $\kappa_\phi$ satisfies the conditions to be a weakly continuous Markov kernel. We start by verifying $\kappa_\phi(y, \cdot)=\mu_y$ is a Radon probability measure. By the Riesz-Markov-Kakutani Representation Theorem, for all $y\in Y$, the regular positive measure $\mu_y$ on $(X, \text{Bor}(X))$ satisfies the two following conditions:
    \begin{enumerate}
    	\item $||\mu_y||=\mu(X)=||\text{ev}_y\circ \phi||$
    	\item for all $f\in C_0(X)$,
    	$\phi(f)(y)=\int_{X}f(x)\;d\mu_y(x)$.
    \end{enumerate}
    As $||\text{ev}_y\circ \phi||<\infty$, $\mu_y$ is a finite regular measure; thus, $\mu_y$ is a positive Radon measure. Furthermore, as $\text{ev}_y\circ\phi$ is non-degenerate completely positive map,  $||\text{ev}_y\circ \phi||=1$ which implies $\mu_y$ is a Radon probability measure. 
    
   Now we verify the map
   	$$Y\to \text{Prob}_R(X, \text{Bor}(X))\quad\quad y\mapsto \kappa_\phi(y, \cdot)$$
is continuous with respect to the weak topology. Let $(y_\lambda)_{\lambda\in\Lambda}$ be a net in $Y$ which converges to $y_0\in Y$, and let $\epsilon>0$. For each $\lambda\in\Lambda$ and $g\in C_0(X)$ with $0\leq g\leq 1$, we have
\begin{align*}
\left|\int_{X}f(x)\;\kappa_\phi(y_\lambda, dx)\right.&-\left.\int_{X}f(x)\;\kappa_\phi(y_0, dx)\right|\leq \left|\int_{X}f(x)g(x)\;\kappa_\phi(y_\lambda, dx)-\int_{X}f(x)\;\kappa_\phi(y_\lambda, dx)\right|\\   &+\left|\int_{X}f(x)g(x)\;\kappa_\phi(y_\lambda, dx)-\int_{X}f(x)g(x)\;\kappa_\phi(y_0, dx)\right|\\
&+\left|\int_{X}f(x)g(x)\;\kappa_\phi(y_0, dx)-\int_{X}f(x)\;\kappa_\phi(y_0, dx)\right|\\
&\leq \left\|f\right\|_{\sup}\int_{X}|1-g(x)|\kappa(y_\lambda, dx)+\left|\phi(fg)(y_\lambda)-\phi(fg)(y_0)\right|\\
&+ \left\|f\right\|_{\sup}\int_{X}|1-g(x)|\kappa(y_0, dx)\\
&=\left\|f\right\|_{\sup}\int_{X}(1-g(x))\kappa(y_\lambda, dx)+\left|\phi(fg)(y_\lambda)-\phi(fg)(y_0)\right|\\
&+ \left\|f\right\|_{\sup}\int_{X}(1-g(x))\kappa(y_0, dx)
\end{align*}
Let $(g_\gamma)_{\gamma\in\Gamma}$ be an approximate unit for $C_0(X)$. As $\phi$ is non-degenerate completely positive map, the net $(\phi(g_{\gamma}))_{\gamma\in\Gamma}$ converges to $1\in C_b(Y)$ strictly. By Lemma \ref{Lemma: Strict topology and uni con on compact}, the net converges to $1$ uniformly on every compact subset of $Y$. As $Y$ is a locally compact Hausdorff space, there exists an open subset $V\subset Y$ with $y_0\in V$ and $\overline{V}$ compact. Thus there exists $\gamma_0\in\Gamma$ such that for all $\gamma\in\Gamma$ for which $\gamma\geq \gamma_0$
$$\frac{\epsilon}{4||f||_{\sup}}>\sup\limits_{y\in \overline{V}}|1-\phi(g_\gamma)(y)|=\sup\limits_{y\in \overline{V}}\left|\int_X (1-g_\gamma(x))\;\kappa(y, dx)\right|=\sup\limits_{y\in \overline{V}}\int_X (1-g_\gamma(x))\;\kappa(y, dx).$$
As $fg_{\gamma_0}\in C_0(X)$, continuity of $\phi(fg_{\gamma_0})$ implies there exists $\lambda_0'\in \Lambda$ such that for all $\lambda\in\Lambda$ with $\lambda\geq \lambda_0'$,
$$|\phi(fg_{0})(y_\lambda)-\phi(fg_{0})(y_0)|<\frac{\epsilon}{2}$$
As $V\subset Y$ is open with $y_0\in V$ and $(y_\lambda)_{\lambda\in\Lambda}$ converges to $y_0$, there exists $\lambda_0''\in \Lambda$ such that for all $\lambda\in\Lambda$ with $\lambda\geq \lambda_0''$, $y_{\lambda}\in V$. Let $\lambda_0\in\Lambda$ with $\lambda_0\geq \lambda_0'$ and $\lambda_0\geq \lambda_0''$, then for all $\lambda\in\Lambda$ with $\lambda\geq \lambda_0$
$$\left|\int_{X}f(x)\;\kappa_\phi(y_\lambda, dx)-\int_{X}f(x)\;\kappa_\phi(y, dx)\right|<\epsilon.$$
Therefore we conclude the map
$$Y\to \text{Prob}_R(X, \text{Bor}(X))\quad\quad y\mapsto \kappa_\phi(y, \cdot)$$
is continuous with respect to the weak topology. 
    
    Finally, we verify the induced map
    $$Y\to [0, 1]\quad\quad y\mapsto \kappa_\phi(y, B)$$ is Borel measurable for all $B\in\text{Bor}(X)$. Fix $B\in\text{Bor}(X)$.  Note, the induced map on $Y$ is given by the composition
    $$Y\xrightarrow{y\mapsto \mu_y} \text{Prob}_R(X, \text{Bor}(X)\xrightarrow{\text{ev}_B}\mathbb{C}$$
    Since the map $y\mapsto \mu_y$ is continuous with respect to the weak topology and $\text{ev}_B$ is Borel measurable with respect to the weak topology by Lemma \ref{Lemma: Borel evaluation}, then the composition is Borel measurable. Hence the induced map is Borel measurable. Having checked the three conditions, we concluded  $\kappa_\phi$ is a weakly continuous Markov kernel.
    
    Now we prove the uniqueness claim. First, by construction of $\kappa_\phi$, we have for each $y\in Y$ and $f\in C_0(X)$
    $$\phi(f)(y)=\int_{X}f(x)\;d\mu_y(x)=\int_{X}f(x)\;\kappa_\phi(y, dx)$$
    Now suppose $\kappa'$ is a weakly continuous Markov kernel satisfying the condition that for all $y\in Y$ and $f\in C_0(X)$
    $$\phi(f)(y)=\int_{X}f(x)\;\kappa'(y, dx)$$
    By the uniqueness in the Riesz-Markov-Kakutani Representation Theorem, $\kappa'(y, \cdot)=\kappa_\phi(y, \cdot)$ for all $y\in Y$.  Hence $\kappa'=\kappa_\phi$ showing $\kappa_\phi$ is unique.
        
    \end{proof}
    
    \begin{corollary}\label{cor: bijective assignment}\

    \noindent Let $X$ and $Y$ be locally compact Hausdorff topological spaces. The following assignments are well-defined and inverses of each other:
    $$\left\{\begin{matrix} 
      \text{weakly continuous}\\ 
      \text{Markov kernels}\\ 
      \kappa:Y\times \text{Bor}(X)\to [0, 1] 
    \end{matrix}\right\}
    \xrightleftharpoons[\displaystyle\kappa_\phi\mapsfrom \phi]{\quad\displaystyle\kappa\mapsto \phi_\kappa\quad}
    \left\{\begin{matrix} 
      \text{non-degenerate completely}\\ 
      \text{positive maps}\\ 
      \phi:C_0(X)\to C_b(Y) 
    \end{matrix}\right\}$$

    \end{corollary}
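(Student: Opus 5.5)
The corollary follows by combining Proposition \ref{Prop: weak Markov Kernel to NCP map} and Proposition \ref{Prop: NCP map to weak Markov Kernel}. The first gives that $\kappa\mapsto\phi_\kappa$ lands in non-degenerate completely positive maps $C_0(X)\to C_b(Y)$. The second gives that $\phi\mapsto\kappa_\phi$ lands in weakly continuous Markov kernels. So both assignments are well-defined, and it remains to check that the two composites are identities.

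For $\phi\mapsto\kappa_\phi\mapsto\phi_{\kappa_\phi}$, I will evaluate at $f\in C_0(X)$ and $y\in Y$:
$$\phi_{\kappa_\phi}(f)(y)=\int_X f(x)\,\kappa_\phi(y,dx)=\phi(f)(y).$$
The second equality is exactly the defining property of $\kappa_\phi$ from Proposition \ref{Prop: NCP map to weak Markov Kernel}. Hence $\phi_{\kappa_\phi}=\phi$.

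For $\kappa\mapsto\phi_\kappa\mapsto\kappa_{\phi_\kappa}$, I will use the uniqueness clause of Proposition \ref{Prop: NCP map to weak Markov Kernel} applied to $\phi_\kappa$. The kernel $\kappa_{\phi_\kappa}$ is the unique weakly continuous Markov kernel satisfying
$$\phi_\kappa(f)(y)=\int_X f(x)\,\kappa'(y,dx)\quad\text{for all } f\in C_0(X),\ y\in Y.$$
The original $\kappa$ satisfies this identity by the definition of $\phi_\kappa$, and it is weakly continuous by hypothesis. Therefore $\kappa=\kappa_{\phi_\kappa}$.

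The only point needing care is that uniqueness in the Riesz–Markov–Kakutani theorem applies to $\kappa(y,\cdot)$. This uses that each $\kappa(y,\cdot)$ is Radon, hence regular, which is built into the definition of a Markov kernel here. I do not expect any step to be a genuine obstacle, since the analytic work has already been done in the two propositions.
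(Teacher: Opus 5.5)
Your proposal is correct and matches the paper's implicit argument. The two appendix propositions give well-definedness. The identity $\phi_{\kappa_\phi}=\phi$ is just the defining integral property of $\kappa_\phi$. The identity $\kappa_{\phi_\kappa}=\kappa$ follows from the uniqueness clause, that is, Riesz--Markov--Kakutani uniqueness, which applies because Radon probability measures are regular. The paper states the corollary without a separate proof precisely because it is this direct combination.
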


    \bibliographystyle{amsalpha} % Or amsalpha
    \bibliography{bibliography}

\end{document}